\definecolor{mycolor}{HTML}{F7F8E0}
\definecolor{myorange}{RGB}{245,156,74}
\definecolor{cadetgrey}{rgb}{0.57, 0.64, 0.69}
\definecolor{calpolypomonagreen}{rgb}{0.12, 0.3, 0.17}
\newcommand\cyr{%
\renewcommand\rmdefault{wncyr}%
\renewcommand\sfdefault{wncyss}%
\renewcommand\encodingdefault{OT2}%
\normalfont
\selectfont}
\DeclareTextFontCommand{\textcyr}{\cyr}
\let\@wraptoccontribs\wraptoccontribs
\numberwithin{equation}{section}
\newtheorem{thm}{Theorem}[section]
\newtheorem{thm-quote}[thm]{``Theorem"}
\newtheorem{cor}[thm]{Corollary}
\newtheorem{lem}[thm]{Lemma}
\newtheorem{prop}[thm]{Proposition}
\newtheorem{assu}[thm]{Assumption}
\newtheorem{conj}[thm]{Conjecture}
\theoremstyle{definition}
\newtheorem{defn}[thm]{Definition}
\newtheorem{rem}[thm]{Remark}
\newtheorem{ques}[thm]{Question}
\newcommand{\KS}{\mathbf{KS}}
\newcommand{\KSbar}{\overline{\mathbf{KS}}}
\newcommand{\ks}{\boldsymbol{\kappa}}
\newcommand{\kn}{\widetilde{\boldsymbol{\delta}}}
\newcommand{\widedelta}{\widetilde{\delta}}
\newcommand{\sha}{\textrm{{\cyr SH}}}
\begin{document}

\title{The refined Tamagawa number conjectures for $\mathrm{GL}_2$}
\author{Chan-Ho Kim}
\thanks{Chan-Ho Kim was partially supported 
by a KIAS Individual Grant (SP054103) via the Center for Mathematical Challenges at Korea Institute for Advanced Study,
by the National Research Foundation of Korea(NRF) grant funded by the Korea government(MSIT) (No. 2018R1C1B6007009, 2019R1A6A1A11051177), 
by research funds for newly appointed professors of Jeonbuk National University in 2024, and
by Global-Learning \& Academic research institution for Master’s$\cdot$Ph.D. Students, and Postdocs (LAMP) Program of the National Research Foundation of Korea (NRF) funded by the Ministry of Education (No. RS-2024-00443714).
}
\contrib[with an appendix in collaboration with]{Robert Pollack}
\thanks{Robert Pollack was partially supported by NSF grant DMS-2302285.}
\address{(Chan-Ho Kim) Department of Mathematics and Institute of Pure and Applied Mathematics,
Jeonbuk National University,
567 Baekje-daero, Deokjin-gu, Jeonju, Jeollabuk-do 54896, Republic of Korea}
\email{chanho.math@gmail.com}
\address{(Robert Pollack) Department of Mathematics, The University of Arizona, 617 N. Santa Rita Ave., Tucson, AZ 85721-0089, USA}
\email{rpollack@arizona.edu}
\date{\today}
\subjclass[2010]{11F67, 11G40, 11R23}
\keywords{Tamagawa number conjecture, refined Iwasawa theory, Kato's Euler systems, Heegner cycles, Kolyvagin systems, Kurihara numbers, modular symbols}
\begin{abstract}
Let $f$ be a cuspidal newform and $p \geq 3$ a prime such that the associated $p$-adic Galois representation has large image.
We establish a new and refined ``Birch and Swinnerton-Dyer type" formula for Bloch--Kato Selmer groups of the central critical twist of $f$ via \emph{Kolyvagin derivatives of $L$-values} instead of complex analytic or $p$-adic variation of $L$-values only under the Iwasawa main conjecture localized at the augmentation ideal. 
Our formula determines the exact rank and module structure of the Selmer groups and is insensitive to weight, the local behavior of $f$ at $p$,  and analytic rank.

As consequences, we prove the non-vanishing of Kato's Kolyvagin system and complete a ``discrete" analogue of the Beilinson--Bloch--Kato conjecture for modular forms at ordinary primes.
We also obtain the higher weight analogue of the $p$-converse to the theorem of Gross--Zagier and Kolyvagin, the $p$-parity conjecture, and a new computational upper bound of Selmer ranks.
We also discuss how to formulate the refined conjecture on the non-vanishing of Kato's Kolyvagin system for modular forms of general weight.

In the appendix with Robert Pollack, we compute several numerical examples on the structure of Selmer groups of elliptic curves and modular forms of higher weight.
Sometimes our computation provides a deeper understanding of Selmer groups than what is predicted by Birch and Swinnerton-Dyer conjecture.
\end{abstract}

\maketitle

\setcounter{tocdepth}{1}
\tableofcontents

\section*{Introduction}
This paper deals with the problem of determining the \emph{exact} ranks and all the \emph{higher} Fitting ideals of Bloch--Kato Selmer groups of elliptic curves and modular forms via a different type of variation of the associated special $L$-values in a quite general setting.

Let $p  \geq 3$  be a prime and $f \in S_k(\Gamma_0(N))$ be a newform. Denote by $W^\dagger_f$ the discrete Galois module of the central critical twist of the associated $p$-adic Galois representation $\rho_f$. It is widely believed that the Bloch--Kato Selmer group $\mathrm{Sel}(\mathbb{Q}, W^\dagger_f)$ encodes the arithmetic of $f$ and has a deep connection with the central critical $L$-value $L(f, k/2)$ and its complex or $p$-adic variation in the framework of Bloch--Kato's Tamagawa number conjecture for motives \cite{bloch-kato}.

There have been substantial progresses towards the arithmetic of special $L$-values of elliptic curves and modular forms for decades, such as the analytic rank zero and one cases, their $p$-converses, and the $p$-part of the leading term formula, by establishing various Iwasawa main conjectures. See $\S$\ref{subsubsec:known-limitations-bsd-bloch-kato} for more details. However, our understanding is still extremely limited when the analytic rank is $> 1$, the weight goes beyond the Fontaine--Laffaille range (without the good ordinary assumption), \emph{or} the associated local automorphic representation at $p$ is supercuspidal. 

Under only the large image assumption on $\rho_f$ and the Iwasawa main conjecture  localized at the augmentation ideal, we establish an explicit special $L$-value formula for the structure of $\mathrm{Sel}(\mathbb{Q}, W^\dagger_f)$ (Theorems \ref{thm:main-central-critical} and \ref{thm:non-triviality-kn}).
This formula is more refined than (all the known results on) the corresponding Tamagawa number conjecture and does not need any condition on low analytic ranks or the local behavior of $f$ at $p$ mentioned above. The main difference from other approaches is that we focus directly on \emph{Kolyvagin derivatives of special $L$-values} instead of complex analytic or $p$-adic derivatives of special $L$-values.

Since the Iwasawa main conjecture for modular forms at ordinary primes (up to a power of $p$) is confirmed, we are able to describe the following important application (Theorem \ref{thm:main-central-critical}, Theorem \ref{thm:non-triviality-kn}, and Corollary \ref{cor:discrete-bsd}).
\begin{thm}
Let $f \in S_k(\Gamma_0(N))$ be a newform and $p  \geq 5$  be a good ordinary prime for $f$ such that $\rho_f$ has large image and is $p$-distinguished.
Then the module structure of $\mathrm{Sel}(\mathbb{Q}, W^\dagger_f)$ is determined by ``Kolyvagin derivatives of special $L$-values" as in (\ref{eqn:structure-bloch-kato}) and the corresponding Kato's Kolyvagin system is non-trivial. 
In particular, the analogues of the Beilinson--Bloch--Kato conjecture and the $p$-part of the Birch and Swinnerton-Dyer formula hold for $f$ as in (\ref{eqn:bsd-type-formulas}), so the exact Selmer rank is detected by our formula.
\end{thm}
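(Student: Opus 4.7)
The plan is to reduce the theorem to the paper's main conditional results, Theorems \ref{thm:main-central-critical} and \ref{thm:non-triviality-kn}, whose sole standing hypotheses --- beyond the large image of $\rho_f$, which is given --- are the Iwasawa main conjecture localized at the augmentation ideal of the cyclotomic Iwasawa algebra. Thus the heart of the matter is to verify this localized main conjecture in the good ordinary, $p$-distinguished setting, and then read off the stated consequences.

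First I would invoke Theorem \ref{thm:main-central-critical} to obtain the formula (\ref{eqn:structure-bloch-kato}), which expresses the full module structure of $\mathrm{Sel}(\mathbb{Q}, W^\dagger_f)$ --- its rank and all higher Fitting ideals --- in terms of Kurihara numbers, i.e., Kolyvagin derivatives of modular symbols. The mechanism I would appeal to as a black box here is Kato's Euler system combined with the Kolyvagin system machinery and the explicit reciprocity law that identifies the derivative classes of Kato's system with the modular-symbol-based Kurihara numbers used in the paper.

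The key step where the good ordinary and $p$-distinguished hypotheses enter is the verification of the localized Iwasawa main conjecture, and this is where I expect the substantive external input to go. Here I would combine Kato's divisibility in the cyclotomic main conjecture for ordinary modular forms with the reverse divisibility of Skinner--Urban; the latter uses exactly the $p \geq 5$, ordinary, large image, and $p$-distinguished assumptions and yields equality of characteristic ideals up to a power of $p$. Since the augmentation ideal is a height-one prime insensitive to $p$-power discrepancies, the localized main conjecture then holds unconditionally in this setting. This is the main obstacle in the sense that it is the only place where a deep external ingredient is inserted; once it is in place, Theorem \ref{thm:non-triviality-kn} gives the non-triviality of Kato's Kolyvagin system automatically.

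Finally, Corollary \ref{cor:discrete-bsd} is a direct readout: the corank of $\mathrm{Sel}(\mathbb{Q}, W^\dagger_f)$ is recovered from the vanishing order of the Fitting ideal sequence (producing the exact Selmer rank), and the order of the zeroth Fitting ideal yields the $p$-part of the BSD-type leading-term formula (\ref{eqn:bsd-type-formulas}) together with the discrete analogue of the Beilinson--Bloch--Kato statement. No condition on analytic rank, weight, or local behavior at $p$ enters beyond what is needed to import the ordinary main conjecture.
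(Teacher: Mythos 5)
Your proposal follows essentially the same route as the paper: deduce the statement from Theorem \ref{thm:main-central-critical} and Theorem \ref{thm:non-triviality-kn}, with the only substantive input being the ordinary Iwasawa main conjecture up to powers of $p$, which suffices because the augmentation ideal is a height-one prime different from $\pi\Lambda$. One small correction: in this generality (arbitrary weight $k$ and level $N$) the reverse divisibility you attribute to Skinner--Urban actually requires their extra hypotheses ($k \equiv 2 \pmod{p-1}$ and a prime $\ell \| N$ where $\overline{\rho}_f$ is ramified), so the paper instead invokes Wan's result in Theorem \ref{thm:skinner-urban-wan}.(Wan), which needs only $p \geq 5$, ordinariness, $p$-distinguishedness, and residual irreducibility.
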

In order to obtain such an \emph{exact} special $L$-value formula for Bloch--Kato Selmer groups (at least conjecturally),  it is natural to expect that both the choice of integral canonical periods and the use of integral $p$-adic Hodge theory are essential.
Surprisingly, our formula is insensitive to the choice of integral periods and our approach completely bypasses integral $p$-adic Hodge theory.
This is an interesting and ironic part of our work.

Another counter-intuitive aspect is that the Iwasawa main conjecture \emph{localized at the augmentation ideal} is strong enough to detect the exact bound of Selmer groups. From this point of view,  we observe that the ``analytic" main conjecture formulated by J.  Pottharst \cite{jay-cyc,jay-families} also encodes the exact bound of Selmer groups although $p$ is inverted therein.

Although it is widely believed that an Euler system provides an upper bound of a Selmer group only,  many ``Euler system-type" results on the upper bounds of Selmer groups would upgrade to those with the exact bounds by adapting our approach (e.g. \cite{kim-gross-zagier}).

Our method can be described as a \emph{non-standard} combination of Mazur--Rubin's Kolyvagin system argument, the systematic iteration of the global duality argument, the functional equation of modular symbols, the self-duality of Galois representations,  and the rigidity of Kolyvagin systems.  
In particular, the roles of Kato's zeta elements and the Iwasawa main conjecture are significantly different from those in the standard Euler system argument.

\subsection*{Organization of this article}
In \S\ref{sec:main-result},  the main results (including the structure formula) and their arithmetic applications are stated.
The applications include the arithmetic of modular abelian varieties of $\mathrm{GL}_2$-type, 
the rank one $p$-converse to the theorem of Nekov\'{a}\v{r} and S.-W. Zhang, the $p$-parity conjecture for good ordinary forms,  and  a computational upper bound of Selmer ranks.
In \S\ref{sec:preliminaries}, we review the standard language of Selmer groups and modular symbols. The precise definition of Kurihara numbers,  which are ``Kolyvagin derivatives of special $L$-values", is given.
In \S\ref{sec:kato-zeta-elts-kolyvagin}, we review Kato's zeta elements and Mazur--Rubin's theory of Kolyvagin systems.
In \S\ref{sec:explicit-reciprocity-law},  we define the explicit reciprocity law with torsion coefficients and prove the formula relating Kato's Kolyvagin systems to Kurihara numbers.
In \S\ref{sec:proof-main-formulas}, we prove the main theorem by developing a ``relative" Kolyvagin system argument.
In \S\ref{sec:p-converse}, the rank one $p$-converse is proved.
In \S\ref{sec:formulation-refined-conjecture},  we formulate the conjectures on the quantitative refinement of the non-vanishing of Kato's Kolyvagin system and the collection of Kurihara numbers for modular forms.  
In Appendix \ref{sec:appendix} (written jointly with Robert Pollack), we compute several numerical examples on the structure of Selmer groups of elliptic curves and higher weight modular forms.

\section{The statement of the main theorems and their arithmetic applications} \label{sec:main-result}
\subsection{Setting the stage}
We first fix notation and briefly introduce the numerical invariants associated to Kurihara numbers in order to state the main results.
\subsubsection{} \label{subsubsec:fixed-isom}
Let $p$ be an odd prime and $f = \sum_{n \geq 1} a_n(f) q^n\in S_k(\Gamma_1(N),\psi)$ be a cuspidal newform with $k \geq 2$.
Denote by $\overline{f} = \sum_{n \geq 1} a_n(\overline{f}) q^n = \sum_{n \geq 1} \overline{a_n(f)} q^n\in S_k(\Gamma_1(N),\psi^{-1})$ the dual modular form of $f$
where $\overline{a_n(f)}$ is the complex conjugation of $a_n(f)$. Fix an abstract field isomorphism $\mathbb{C} \simeq \overline{\mathbb{Q}}_p$.
Let $F$ be the finite extension of $\mathbb{Q}_p$ obtained by adjoining the Fourier coefficients of $f$, $\mathcal{O} = \mathcal{O}_F$ the ring of integers, $\pi$ a uniformizer, and $\mathbb{F} = \mathcal{O}/\pi\mathcal{O}$ the residue field.
Let $V_f$ be the two-dimensional (cohomological) Galois representation associated to $f$ and write
$\rho_f : \mathrm{Gal}(\overline{\mathbb{Q}}/\mathbb{Q}) \to \mathrm{Aut}_{F}(V_f) \simeq \mathrm{GL}_2(F) $ \cite{deligne-modular-galois,jannsen-mixed-motives,scholl-modular-motives}. 
Denote by $T_f$ a Galois stable $\mathcal{O}$-lattice of $V_f$ and write $W_f = V_f / T_f$.  
For $r \in \mathbb{Z}$ and a Galois module $M$, write $M(r) = M \otimes \chi^{\otimes r}_{\mathrm{cyc}}$ where $\chi_{\mathrm{cyc}}$ is the $p$-adic cyclotomic character.

We say that \textbf{$\rho_f$ has large image} if
the image of $\mathrm{Gal}(\overline{\mathbb{Q}}/\mathbb{Q}(\zeta_{p^\infty}))$ under $\rho_f$ contains a conjugate of $\mathrm{SL}_2(\mathbb{Z}_p)$.
For a non-CM modular form $f$, $\rho_f$ has large image for all but finitely many primes \cite{ribet-img-1}.

When $\psi = \mathbf{1}$ (of conductor $N$), we have $f = \overline{f}$ and $k$ is even.
In this case, denote the self-dual twists by $V^\dagger_f = V_f(k/2)$, $T^\dagger_f = T_f(k/2)$, and $W^\dagger_f = W_f(k/2)$ for convenience.

\subsubsection{}
For an integer $m \geq 1$, denote by $\mathcal{N}_m$ be the set of square-free products of primes $\ell$ such that $\ell \nmid Np$, $\ell \equiv 1 \pmod{\pi^m}$ and $a_\ell(\overline{f}) \equiv \psi^{-1}(\ell) \cdot \ell^{k-1} + 1 \pmod{\pi^m}$.
Denote by $\mathcal{P}_m$ the set of primes in $\mathcal{N}_m$.
For $n \in \mathcal{N}_1$, let $I_n$ be the ideal of $\mathcal{O}$ generated by $\ell-1$ and $1-a_\ell(\overline{f}) + \psi^{-1}(\ell) \cdot \ell^{k-1}$ for primes $\ell$ dividing $n$, and denote by $\nu(n)$ the number of prime divisors of $n$. 
\subsubsection{} \label{subsubsec:numerical-invariants-kurihara-numbers}
We now introduce ``Kolyvagin derivatives of special $L$-values", which can be viewed as a discrete variation of the $L$-values.
Denote by $L(\overline{f}, s)$ the complex $L$-function of $\overline{f}$.
Let 
$$\kn^{\mathrm{min},r} = \left\lbrace \widedelta^{\mathrm{min},r}_n \in \mathcal{O}/I_n\mathcal{O} \right\rbrace_{ n \in \mathcal{N}_1 }$$
be the collection of Kurihara numbers for $\overline{f}$ at $s=r$  where $r$ is an integer with $1 \leq r \leq k-1$ (Definition \ref{defn:kurihara-numbers}).
It consists of a family of certain element $\widedelta^{\mathrm{min},r}_n$ in $\mathcal{O}/I_n\mathcal{O}$ (depending on $n$) which are explicitly built out from modular symbols normalized by minimal integral periods reviewed in $\S$\ref{subsec:modular-symbols-kurihara-numbers}.
The numerical invariants associated to $\kn^{\mathrm{min},r}$ are defined by
\begin{align*}
\mathrm{ord} \left(\kn^{\mathrm{min},r} \right) & =  \mathrm{min} \left\lbrace \nu(n) : \widedelta^{\mathrm{min},r}_n \neq 0 \right\rbrace, \\
\partial^{(0)} \left(\kn^{\mathrm{min},r} \right) & = \mathrm{ord}_\pi \left( \dfrac{L(\overline{f},r)}{ (-2 \pi \sqrt{-1})^r \cdot \Omega^{\pm}_{\overline{f}, \mathrm{min}} } \right) , \\
\mathrm{ord}_\pi \left(\widedelta^{\mathrm{min}, k-r}_n \right) &= \mathrm{max} \left\lbrace j : \widedelta^{\mathrm{min}, k-r}_n \in \pi^j \mathcal{O}/I_n\mathcal{O} \right\rbrace , \\
\partial^{(i)} \left(\kn^{\mathrm{min},r} \right) & = \mathrm{min} \left\lbrace \mathrm{ord}_\pi \left(\widedelta^{\mathrm{min}, k-r}_n \right) : n \in \mathcal{N}_1 \textrm{ with } \nu(n) = i \geq 0 \right\rbrace , \\
\partial^{(\infty)} \left(\kn^{\mathrm{min},r} \right) & = \mathrm{min} \left\lbrace \partial^{(i)} \left(\kn^{\mathrm{min},r} \right)  : i \geq 0 \right\rbrace = \mathrm{min} \left\lbrace \mathrm{ord}_\pi \left(\widedelta^{\mathrm{min}, k-r}_n \right) : n \in \mathcal{N}_1  \right\rbrace
\end{align*}
where the sign of the minimal integral period $\Omega^{\pm}_{\overline{f}, \mathrm{min}}$  coincides with that of $(-1)^{r-1}$.

Although $\kn^{\mathrm{min},r}$ looks unfamiliar at first, we have the following analogy between $\kn^{\mathrm{min},r}$ and the Taylor expansion of the complex $L$-function $L(\overline{f},s)$ at $s=r$ in mind
\begin{equation} \label{eqn:analogy}
\xymatrix{
\mathrm{ord} \left(\kn^{\mathrm{min},r} \right) \leftrightsquigarrow \mathrm{ord}_{s=r} L(\overline{f},s) , & \partial^{(i)} \left(\kn^{\mathrm{min},r} \right) \leftrightsquigarrow L^{(i)}(\overline{f},r)  .
}
\end{equation}
We call $\partial^{(\infty)} \left(\kn^{\mathrm{min},r} \right)$ the \textbf{analytic fudge factor of $\kn^{\mathrm{min},r}$}. 
See $\S$\ref{sec:formulation-refined-conjecture} for the naming.
The choice of integral periods will not affect our main theorems at all as explained in $\S$\ref{subsubsec:integral-periods}.

\subsection{The main theorems}
We are now ready to state our main technical results.
If the reader wants to check arithmetic applications first,  see Corollary \ref{cor:discrete-bsd} and other results in $\S$\ref{subsec:arithmetic-applications}.
\subsubsection{The structure theorem and a new Birch and Swinnerton-Dyer type formula}
For an $\mathcal{O}$-module $M$, let $M_{\mathrm{div}}$ denote its maximal divisible submodule and also write $M_{/\mathrm{div}} = M/ M_{\mathrm{div}}$.
\begin{thm}[Main Theorem I] \label{thm:main-central-critical}
Let $f \in S_k(\Gamma_0(N))$ be a newform and $p \geq 3$ a prime such that $\rho_f$ has large image.
If the collection of Kurihara numbers $\kn^{\mathrm{min},\dagger} = \kn^{\mathrm{min},k/2}$ for $f$ at $s = k/2$ does not vanish,
then the following arithmetic consequences follow:
\begin{enumerate}
\item[(Non-triv)] Kato's Kolyvagin system $\ks^{\mathrm{Kato}, \dagger}$ for $T^{\dagger}_f$ is non-trivial.
\item[(Str)] The structure of the Bloch--Kato Selmer group $\mathrm{Sel}(\mathbb{Q}, W^{\dagger}_f)$ is determined by $\kn^{\mathrm{min},\dagger}$, i.e.
there exists an isomorphism of co-finitely generated $\mathcal{O}$-modules
\begin{equation} \label{eqn:structure-bloch-kato}
\mathrm{Sel}(\mathbb{Q}, W^{\dagger}_f) \simeq (F/\mathcal{O})^{\oplus \mathrm{ord}(\kn^{\mathrm{min},\dagger} )} \oplus \bigoplus_{i\geq 0} \left( \mathcal{O}/\pi^{e_i} \mathcal{O} \right)^{\oplus 2} 
\end{equation}
where $e_i = \dfrac{1}{2} \cdot \left( \partial^{(\mathrm{ord}(\kn^{\mathrm{min},\dagger} ) + 2i )}(\kn^{\mathrm{min},\dagger} ) - \partial^{(\mathrm{ord}(\kn^{\mathrm{min},\dagger} ) + 2i+2)}(\kn^{\mathrm{min},\dagger} ) \right).$
\item[(``BSD")] The ``Birch and Swinnerton-Dyer type" formulas for $\mathrm{Sel}(\mathbb{Q}, W^{\dagger}_f)$ hold
\begin{align} \label{eqn:bsd-type-formulas}
\begin{split}
\mathrm{cork}_{\mathcal{O}}\left( \mathrm{Sel}(\mathbb{Q}, W^{\dagger}_f) \right) & = \mathrm{ord}(\kn^{\mathrm{min},\dagger} ), \\
  \mathrm{length}_{\mathcal{O}} \left( \mathrm{Sel}(\mathbb{Q}, W^{\dagger}_f)_{/\mathrm{div}} \right) & = \partial^{ (\mathrm{ord}(\kn^{\mathrm{min},\dagger} ) )}(\kn^{\mathrm{min},\dagger} ) - \partial^{(\infty)} (\kn^{\mathrm{min},\dagger} ).
\end{split}
\end{align}
\end{enumerate}
\end{thm}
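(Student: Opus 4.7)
The plan is to derive all three conclusions simultaneously from a refined Kolyvagin system argument centered on Kato's zeta elements, with the explicit reciprocity law of Section~\ref{sec:explicit-reciprocity-law} serving as the bridge between Kurihara numbers and the structure of the Selmer group. First I would invoke that reciprocity law to identify, up to an explicit unit, the image of Kato's Kolyvagin class $\ks^{\mathrm{Kato},\dagger}_n$ under the finite-to-singular localization at each prime $\ell \mid n$ with the Kurihara number $\widedelta^{\mathrm{min},\dagger}_n$. The assumed non-vanishing of $\kn^{\mathrm{min},\dagger}$ then forces some $\ks^{\mathrm{Kato},\dagger}_n \neq 0$, yielding (Non-triv); more strongly, it translates the invariants $\mathrm{ord}(\kn^{\mathrm{min},\dagger})$ and $\partial^{(i)}(\kn^{\mathrm{min},\dagger})$ into the corresponding order and elementary-divisor invariants of the Kolyvagin system $\ks^{\mathrm{Kato},\dagger}$ modulo each $\pi^m$.

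Next, by Mazur--Rubin's structure theorem for Kolyvagin systems, applied over the coefficient ring $\mathcal{O}/\pi^m\mathcal{O}$ and then passed to the limit in $m$, the non-trivial Kolyvagin system $\ks^{\mathrm{Kato},\dagger}$ furnishes upper bounds on the sequence of elementary divisors of the dual Selmer group. Concretely, this yields $\mathrm{cork}_{\mathcal{O}}(\mathrm{Sel}(\mathbb{Q}, W^{\dagger}_f)) \leq \mathrm{ord}(\kn^{\mathrm{min},\dagger})$, the corresponding upper bound on $\mathrm{length}_{\mathcal{O}}(\mathrm{Sel}(\mathbb{Q}, W^{\dagger}_f)_{/\mathrm{div}})$, and termwise upper bounds on the elementary divisors appearing on the right-hand side of (\ref{eqn:structure-bloch-kato}).

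To upgrade these upper bounds to equalities I would exploit the self-duality of $T^{\dagger}_f$ under the Weil pairing, together with the functional equation satisfied by the relevant sign of modular symbols at $s=k/2$. Under global Poitou--Tate duality, the strict and relaxed local conditions at a Kolyvagin prime are interchanged, so iterating the global duality exact sequence along maximally ramified chains of Kolyvagin primes produces a symmetric pairing whose invariants can be matched directly against the Kolyvagin-system bounds. The rigidity of rank-one Kolyvagin systems forces equality on each elementary divisor (and explains the factor $1/2$ in the $e_i$'s, as the divisors must occur in symmetric pairs), while the Iwasawa main conjecture localized at the augmentation ideal supplies the final ingredient: it guarantees that the appropriate localization of Kato's zeta element generates the characteristic ideal of the Iwasawa-theoretic Selmer group at the augmentation ideal, so the finite-level Kolyvagin bounds saturate rather than leaving slack. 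Combining this with the preceding paragraphs yields (Str), from which (``BSD'') follows by reading off the $\mathcal{O}$-corank and the length of the cotorsion part.

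The main obstacle will be this matching lower bound. Classical Kolyvagin system technology is intrinsically one-sided, and the non-standard combination of iterated global duality, self-duality of $T^{\dagger}_f$, the functional equation for modular symbols, and Kolyvagin system rigidity is the technical heart that closes the gap. Carrying this out in full generality --- without recourse to integral $p$-adic Hodge theory, and with no ordinary, Fontaine--Laffaille, or low analytic rank hypothesis --- is precisely where the approach departs from the standard Euler system paradigm, and I expect it to be the most delicate part of the argument.
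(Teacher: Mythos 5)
Your proposal assembles the right list of tools, but the mechanism you describe breaks at the first step and never recovers. The explicit reciprocity law of \S\ref{sec:explicit-reciprocity-law} does not identify $\widedelta^{\mathrm{min},\dagger}_n$ with the finite-to-singular image of $\kappa^{\mathrm{Kato},\dagger}_n$ at the primes $\ell \mid n$; it identifies it, up to a uniform constant, with $\mathrm{exp}^*\circ\mathrm{loc}^s_p(\kappa^{\mathrm{Kato},\dagger}_n)$, the singular quotient at $p$. Consequently the invariants of $\kn^{\mathrm{min},\dagger}$ are \emph{not} the Kolyvagin-system invariants of $\ks^{\mathrm{Kato},\dagger}$: by global duality, $\mathrm{ord}_\pi(\mathrm{loc}^s_p\kappa^{\dagger}_n)-\mathrm{ord}_\pi(\kappa^{\dagger}_n)$ equals the length of a cokernel term at $p$; half of the $\widedelta^{\mathrm{min},\dagger}_n$ vanish identically by the functional equation (Proposition \ref{prop:vanishing-delta-n}); and $\mathrm{ord}(\kn^{\mathrm{min},\dagger})$ equals $\mathrm{ord}(\ks^{\mathrm{Kato},\dagger})$ or $\mathrm{ord}(\ks^{\mathrm{Kato},\dagger})+1$ according as $\mathrm{loc}^s_p\kappa^{\dagger}_n$ is non-zero or zero at the relevant $n$. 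Moreover, Mazur--Rubin's structure theorem governs the $p$-strict group $\mathrm{Sel}_0$, not the Bloch--Kato group, so even your claimed ``upper bounds'' such as $\mathrm{cork}_{\mathcal{O}}\mathrm{Sel}(\mathbb{Q},W^{\dagger}_f)\leq\mathrm{ord}(\kn^{\mathrm{min},\dagger})$ are not a direct consequence of non-triviality; the comparison of $\mathrm{Sel}_n$ with $\mathrm{Sel}_{0,n}$ through the local condition at $p$ is precisely the content that has to be supplied.

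Two further points. The symmetric shape $(\mathcal{O}/\pi^{e_i}\mathcal{O})^{\oplus 2}$ and the factor $1/2$ do not come from rank-one rigidity: they come from Flach's generalized Cassels--Tate pairing, which gives $\mathrm{Sel}_n(\mathbb{Q},W^{\dagger}_f[I_n])\simeq M_n\oplus M_n$ and is indispensable exactly when the parity of $\nu(n)$ forces $\widedelta^{\mathrm{min},\dagger}_n=0$, i.e.\ when control through Kurihara numbers is unavailable; you never invoke this tool. The actual iteration adjoins \emph{pairs} of Kolyvagin primes $\ell_1,\ell_2$ (chosen by Chebotarev so that $\widedelta^{\mathrm{min},\dagger}_{n\ell_1\ell_2}\neq 0$ again), strips two copies of the largest cyclic factor at each step, and computes the drop in length as $\mathrm{ord}_\pi(\widedelta^{\mathrm{min},\dagger}_n)-\mathrm{ord}_\pi(\widedelta^{\mathrm{min},\dagger}_{n\ell_1\ell_2})$. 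Finally, invoking the Iwasawa main conjecture at the augmentation ideal as a ``saturating'' lower-bound input is structurally off: the theorem assumes only $\kn^{\mathrm{min},\dagger}\neq 0$, which is already equivalent (Theorem \ref{thm:mazur-rubin-main-conjecture}.(2) with Proposition \ref{prop:equivalence-nonvanishing}) to that localized main conjecture, and the structure argument uses nothing beyond the non-triviality of the Kolyvagin system together with global duality, self-duality and the functional equation; your sketch gives no mechanism by which a characteristic-ideal statement localized at the augmentation ideal would descend to exact finite-level module structure.
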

\begin{proof}
See Theorem \ref{thm:mazur-rubin-main-conjecture}.(2) and Proposition \ref{prop:equivalence-nonvanishing} for (Non-triv).
See $\S$\ref{subsec:proof-theorem-structure-1} and $\S$\ref{subsec:proof-theorem-structure-2} for (Str), and  (``BSD") is an immediate consequence of (Str).
\end{proof}
Theorem \ref{thm:main-central-critical} is independent of weight or the local behavior of $f$ at $p$.
The non-triviality of Kato's Kolyvagin system is the cyclotomic analogue of Kolyvagin's conjecture.

The first formula in (\ref{eqn:bsd-type-formulas}) corresponds to the rank part of Birch and Swinnerton-Dyer conjecture, and the second formula in (\ref{eqn:bsd-type-formulas}) corresponds to the $p$-part of the Birch and Swinnerton-Dyer formula. Therefore, (``BSD") confirms the analogy (\ref{eqn:analogy}).

In Appendix \ref{sec:appendix} in collaboration with Robert Pollack, we determine the structure of Bloch--Kato Selmer groups of various elliptic curves and modular forms by computing $\mathrm{ord}_\pi\left( \widedelta^{\mathrm{min}, \dagger}_n \right)$ numerically.
Many of them cannot be obtained from any known results on Birch and Swinnerton-Dyer conjecture and the Tamagawa number conjecture.

\subsubsection{The non-vanishing of Kurihara numbers}
From Theorem \ref{thm:main-central-critical}, it is natural to ask when $\kn^{\mathrm{min},\dagger}$ does not vanish.
Theorem \ref{thm:non-triviality-kn} below confirms that the non-vanishing of $\kn^{\mathrm{min},\dagger}$ holds for a large class of modular forms and is expected to hold always.

When $p \nmid N$, denote by $\alpha_p$, $\beta_p$ the roots of $X^2 - a_p(f) \cdot X + p^{k-1}$.
Denote by $\overline{\rho}_f$ the residual representation associated to $f$. Recall that a $p$-ordinary form $f$ is \textbf{$p$-distinguished} if the semi-simplification of $\overline{\rho}_f \vert_{ \mathrm{Gal}(\overline{\mathbb{Q}}_p/\mathbb{Q}_p)  }$ is the sum of two distinct characters.
\begin{thm}[Main Theorem II] \label{thm:non-triviality-kn}
Let $f \in S_k(\Gamma_0(N))$ be a newform and $p \geq 3$ a prime such that $\rho_f$ has large image.
If one of the following conditions is satisfied:
\begin{itemize}
\item[(rk0)] $\mathrm{ord}_{s=k/2}L(f,s) = 0$,
\item[(rk1 $+\epsilon$)] $\mathrm{ord}_{s=k/2}L(f,s) = 1$, $p^2 \nmid N$, $\alpha_p \neq \beta_p$ (when $p \nmid N$), and Assumption \ref{assu:abel-jacobi-injectivity} below holds, 
\item[(IMC at $\mathbf{1}$)] the Iwasawa main conjecture for $T^{\dagger}_f$ localized at the augmentation ideal holds (Conjecture \ref{conj:main-conjecture}), or
\item[(ord)] $p \geq 5$,  $f$ is good ordinary at $p$, and $f$ is $p$-distinguished (Theorem \ref{thm:skinner-urban-wan}),
\end{itemize}
then $\kn^{\mathrm{min},\dagger}$ does not vanish.
Conversely, if $\kn^{\mathrm{min},\dagger}$ does not vanish, then the Iwasawa main conjecture for $T^{\dagger}_f$ localized at the augmentation ideal holds.  In particular, either (rk0) or (rk1 $+\epsilon$) implies (IMC at $\mathbf{1}$).
\end{thm}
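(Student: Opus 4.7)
The plan is to establish both directions of the equivalence between non-vanishing of $\kn^{\mathrm{min},\dagger}$ and (IMC at $\mathbf{1}$), and then to treat each of the four sufficient conditions in turn, reducing each to this core equivalence (with (rk0) also admitting a direct argument).

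First I will set up the core equivalence. By the explicit reciprocity law with torsion coefficients developed in \S\ref{sec:explicit-reciprocity-law}, Kurihara numbers compute Kato's Kolyvagin classes modulo the natural Kolyvagin ideals; this identification, recorded as Proposition \ref{prop:equivalence-nonvanishing}, identifies non-vanishing of $\kn^{\mathrm{min},\dagger}$ with non-triviality of Kato's Kolyvagin system $\ks^{\mathrm{Kato},\dagger}$. The Mazur--Rubin Kolyvagin-system formalism, invoked here as the ``Mazur--Rubin main conjecture" (Theorem \ref{thm:mazur-rubin-main-conjecture}), then identifies non-triviality of a Kolyvagin system on the self-dual representation $T^{\dagger}_f$ precisely with the Iwasawa main conjecture for $T^{\dagger}_f$ after localization at the augmentation ideal. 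Chaining these yields the asserted equivalence and simultaneously proves both the converse direction and the (IMC at $\mathbf{1}$) implication in the forward direction.

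With the core equivalence in hand, (rk0) is immediate: if $L(f,k/2)\neq 0$ then by definition $\partial^{(0)}(\kn^{\mathrm{min},\dagger}) = \mathrm{ord}_\pi(\widedelta^{\mathrm{min},\dagger}_1)$ is finite, so the $n=1$ entry of the Kurihara collection is already nonzero. The (ord) case is a direct quotation of the ordinary Iwasawa main conjecture of Skinner--Urban--Wan (Theorem \ref{thm:skinner-urban-wan}): under the large image, $p$-distinguished, and good ordinary hypotheses the full cyclotomic IMC holds up to powers of $p$, and localizing at the augmentation ideal yields (IMC at $\mathbf{1}$), which in turn gives non-vanishing via the core equivalence.

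The case (rk1 $+\epsilon$) is the main obstacle, because one cannot extract non-vanishing directly from a vanishing central $L$-value. Here I would exploit Heegner-cycle input: choose an auxiliary imaginary quadratic field $K$ satisfying the Heegner hypothesis relative to $N$ such that the base-changed form has analytic rank one, and apply a $p$-adic Gross--Zagier/Nekov\'{a}\v{r}--S.-W. Zhang-type formula to produce a Heegner cycle whose Abel--Jacobi image sits in $\mathrm{Sel}(\mathbb{Q},W^{\dagger}_f)$; Assumption \ref{assu:abel-jacobi-injectivity} is precisely what guarantees this image is nonzero modulo $\pi$. A Chebotarev argument then produces a Kolyvagin prime $\ell \in \mathcal{P}_1$ at which the localization map detects the Heegner class, and the explicit reciprocity dictionary of \S\ref{sec:explicit-reciprocity-law} translates this into $\widedelta^{\mathrm{min},\dagger}_\ell \neq 0$, realizing the discrete shadow of the analogy $\partial^{(1)}(\kn^{\mathrm{min},\dagger}) \leftrightsquigarrow L'(f,k/2)$. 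The main technical difficulty is reconciling the anticyclotomic Heegner data with Kato's cyclotomic Kolyvagin system on the same $\mathbb{Q}$-rational Selmer group; this is where the hypotheses $p^2 \nmid N$ and $\alpha_p \neq \beta_p$ become essential, as they control the local condition at $p$ and allow one to transfer information between the two settings. Finally, the parenthetical implication ``(rk0) or (rk1 $+\epsilon$) implies (IMC at $\mathbf{1}$)" drops out of the core equivalence.
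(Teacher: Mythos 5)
Your treatment of (rk0), (ord), the core equivalence, and the converse coincides with the paper's proof: the paper also chains Proposition \ref{prop:equivalence-nonvanishing} (non-vanishing of $\kn^{\mathrm{min},\dagger}$ $\Leftrightarrow$ non-triviality of $\ks^{\mathrm{Kato},\dagger}$) with Theorem \ref{thm:mazur-rubin-main-conjecture}.(2) (non-triviality of the specialization at the augmentation ideal $\Leftrightarrow$ IMC localized there), handles (rk0) by observing $\widedelta^{\mathrm{min},\dagger}_1\neq 0$, and deduces (ord) from Theorem \ref{thm:skinner-urban-wan} plus the equivalence.

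The gap is in (rk1 $+\epsilon$). The paper settles this case by citing the proof of Perrin-Riou's conjecture on Kato's zeta elements (Bertolini--Darmon--Venerucci, used again as ``(PR)'' in the proof of Theorem \ref{thm:p-converse-higher-weight}): under analytic rank one, the Heegner hypothesis data, and Assumption \ref{assu:abel-jacobi-injectivity}, that theorem identifies (up to nonzero factors) $\mathrm{loc}_p\bigl(\kappa^{\mathrm{Kato},\dagger}_1\bigr)$ with the Bloch--Kato logarithm of the Heegner class, so the non-triviality of the Heegner cycle forces $\kappa^{\mathrm{Kato},\dagger}_1\neq 0$, hence $\ks^{\mathrm{Kato},\dagger}\neq 0$, hence $\kn^{\mathrm{min},\dagger}\neq 0$ via Proposition \ref{prop:equivalence-nonvanishing}. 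Your sketch replaces this with ``Heegner cycle gives a nonzero Selmer class, Chebotarev gives a Kolyvagin prime $\ell$ detecting it, and the explicit reciprocity dictionary of \S\ref{sec:explicit-reciprocity-law} gives $\widedelta^{\mathrm{min},\dagger}_\ell\neq 0$.'' That last step does not exist: the reciprocity law of \S\ref{sec:explicit-reciprocity-law} computes $\widedelta^{\mathrm{min},\dagger}_n$ as $\mathrm{exp}^*\circ\mathrm{loc}^s_p$ of \emph{Kato's} derivative class $\kappa^{\mathrm{Kato},\dagger}_n$, i.e.\ it is a statement at $p$ about the cyclotomic Euler system, and it says nothing about the localization at a Kolyvagin prime $\ell$ of an anticyclotomic Heegner class. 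Bridging the anticyclotomic Heegner data to Kato's classes is exactly the content you defer as ``the main technical difficulty,'' and the known resolution is precisely the Perrin-Riou-type comparison you never invoke; the hypotheses $p^2\nmid N$ and $\alpha_p\neq\beta_p$ are hypotheses of that comparison theorem, not devices for transferring local conditions in a Chebotarev argument. Two further inaccuracies: Assumption \ref{assu:abel-jacobi-injectivity} only guarantees non-vanishing of the Abel--Jacobi image in $\mathrm{H}^1(\mathbb{Q}_p,V^\dagger_f)$ with rational coefficients, not non-vanishing modulo $\pi$; and even a nonzero class in $\mathrm{Sel}(\mathbb{Q},V^\dagger_f)$ with nonzero localization at some $\ell$ gives no control of the Kurihara numbers without first relating it to $\ks^{\mathrm{Kato},\dagger}$.
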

\begin{proof}
The (rk0) case is trivial.
The (rk1 $+\epsilon$) case follows from the settlement of Perrin-Riou's conjecture on Kato's zeta elements \cite{bertolini-darmon-venerucci}.
The equivalence between $\kn^{\mathrm{min},\dagger} \neq 0$ and (IMC at $\mathbf{1}$) follows from Theorem \ref{thm:mazur-rubin-main-conjecture}.(2) and Proposition \ref{prop:equivalence-nonvanishing} again.
The (ord) case follows from (IMC at $\mathbf{1}$) and Theorem \ref{thm:skinner-urban-wan}.
\end{proof}
By computing $\mathrm{ord}_\pi\left( \widedelta^{\mathrm{min}, k-r}_n \right)$ for many $n$, we can verify (IMC at $\mathbf{1}$) numerically.
 It is conjectured that $\alpha_p \neq \beta_p$ always holds and  is true when $k=2$. The general weight case follows from Tate's conjecture on algebraic cycles \cite{coleman-edixhoven}.
\begin{assu} \label{assu:abel-jacobi-injectivity}
Under the following standard conjectures, the non-triviality of the Heegner cycle over a suitable imaginary quadratic field in the module of CM cycles is equivalent to the non-triviality of its image in $\mathrm{H}^1(\mathbb{Q}_p, V^\dagger_f)$.
\begin{itemize}
\item[(AJ)] The rational $p$-adic Abel--Jacobi map (given in (\ref{eqn:abel-jacobi-map}) below) is injective on CM cycles on Kuga--Sato variety of weight $k$ and level $N$ over an imaginary quadratic field $\mathcal{K}$ such that every prime divisor of $Np$ splits in $\mathcal{K}$ and $L(f, \chi_{\mathcal{K}/\mathbb{Q}}, k/2) \neq 0$ where $\chi_{\mathcal{K}/\mathbb{Q}}$ is the quadratic character associated to $\mathcal{K}$.
\item[({$\textrm{loc}_p$})] The restriction map $\mathrm{loc}_p : \mathrm{Sel}(\mathbb{Q}, V^\dagger_f) \to \mathrm{H}^1(\mathbb{Q}_p, V^\dagger_f)$ is non-zero.
\end{itemize}
\end{assu}
\begin{rem} \label{rem:abel-jacobi-injectivity}
\begin{enumerate}
\item The existence of such an imaginary quadratic field $\mathcal{K}$ in (AJ) is ensured by the work of Bump--Friedberg--Hoffstein \cite{bump-friedberg-hoffstein} or Murty--Murty \cite{murty-murty-mean}.
\item  (AJ)  holds when $k=2$.
It is conjectured that the $p$-adic Abel--Jacobi map is injective for smooth projective varieties over number fields.
See \cite[(2.1) Conj.(2)]{nekovar-p-adic-abel-jacobi}, \cite[Conj. 5.3.(1)]{bloch-kato}, and \cite[9.15. Conj.]{jannsen-mixed-motives}.
This issue is also observed in \cite[Thm. 1.1.9]{liu-tian-xiao-zhang-zhu}.
\item ({$\textrm{loc}_p$}) follows from the finiteness of the $\pi$-primary part of Tate--Shafarevich groups of corresponding modular abelian varieties when $k= 2$ and the Selmer corank is one. See \cite{skinner-converse,kim-p-converse} for example.
\end{enumerate}
\end{rem}
The non-triviality of $\ks^{\mathrm{Kato},\dagger}$ for elliptic curves with good ordinary reduction was studied by the author \cite{kim-structure-selmer} and Sakamoto \cite{sakamoto-p-selmer, sakamoto-p-3} independently.
During the writing of this article,  Burungale--Castella--Grossi--Skinner also obtained the non-triviality of Heegner point and Kato's Kolyvagin systems for elliptic curves with good ordinary reduction relaxing the large image assumption \cite{burungale-castella-grossi-skinner-indivisibility}.
In their work, they focused mainly on Tamagawa factors and the full Iwasawa main conjecture was used in the argument. 

We expect that the ordinary assumption in Theorem \ref{thm:non-triviality-kn} will be significantly relaxed soon  thanks to the recent advances on the Iwasawa main conjecture for modular forms at non-ordinary (and general) primes \cite{castella-liu-wan, burungale-skinner-tian-wan, wan-main-conj-ss-ec, castella-ciperiani-skinner-sprung, nakamura-kato-deformation, fouquet-wan}. This means that Theorem \ref{thm:main-central-critical} will become unconditional in a near future.

\subsection{Arithmetic applications} \label{subsec:arithmetic-applications}

\subsubsection{``Discrete" Beilinson--Bloch--Kato and the $p$-parity conjecture}
We complete the following analogue of the Beilinson--Bloch--Kato conjecture  for modular forms at good ordinary primes  \cite{bloch-algebraic-cycles-l-values,beilinson-height-pairings,bloch-kato}, which corresponds to the first analogy in (\ref{eqn:analogy}).
\begin{cor} \label{cor:discrete-bsd}
Let $p \geq 5$ be a prime and $f \in S_k(\Gamma_0(N))$ be a good ordinary and $p$-distinguished newform such that
$\rho_f$ has large image.
Then:
\begin{enumerate}
\item The cyclotomic analogue of Kolyvagin's conjecture holds, i.e.
Kato's Kolyvagin system $\ks^{\mathrm{Kato}, \dagger}$ for $T^\dagger_f$ is non-trivial.
\item The discrete analogue of the Beilinson--Bloch--Kato conjecture for $f$ holds:
$$\mathrm{ord}(\kn^{\mathrm{min},\dagger} ) = \mathrm{cork}_{\mathcal{O}}  \mathrm{Sel}(\mathbb{Q}, W^{\dagger}_f) . $$
\item The $p$-parity conjecture for $f$ holds:
$$\mathrm{ord}_{s=k/2}L(f,s) \equiv \mathrm{cork}_{\mathcal{O}}\mathrm{Sel}(\mathbb{Q}, W^{\dagger}_f)  \pmod{2}.$$
\end{enumerate}
\end{cor}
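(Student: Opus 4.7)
The plan is to derive all three claims from the two main theorems already stated, with only the parity assertion (3) requiring a genuinely new input. First I would verify that the hypotheses of the corollary ($p \geq 5$, $f$ good ordinary and $p$-distinguished at $p$, and $\rho_f$ having large image) place us in the (ord) case of Theorem \ref{thm:non-triviality-kn}, so that $\kn^{\mathrm{min},\dagger} \neq 0$. This in turn triggers Theorem \ref{thm:main-central-critical}, which is the engine driving everything.

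Statements (1) and (2) then fall out with no further work. Part (1) is exactly the conclusion (Non-triv) of Theorem \ref{thm:main-central-critical}, applied to $T^\dagger_f$. For (2), I would simply read off the first line of (``BSD''), which already gives
\[
\mathrm{ord}(\kn^{\mathrm{min},\dagger}) = \mathrm{cork}_{\mathcal{O}} \mathrm{Sel}(\mathbb{Q}, W^{\dagger}_f).
\]

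For (3), by (2) it suffices to show $\mathrm{ord}(\kn^{\mathrm{min},\dagger}) \equiv \mathrm{ord}_{s=k/2} L(f,s) \pmod{2}$, and the plan is to route both sides through the global root number $\epsilon(f) \in \{\pm 1\}$. On the analytic side, the functional equation $\Lambda(f,s) = \epsilon(f)\Lambda(f,k-s)$ forces $\mathrm{ord}_{s=k/2} L(f,s)$ to have the parity prescribed by $\epsilon(f)$. On the Kurihara side, the $\epsilon(f)$-eigenvalue of the Atkin--Lehner involution $w_N$ acting on the modular symbol of $f$, once transported through the explicit formula recalled in $\S$\ref{subsubsec:numerical-invariants-kurihara-numbers}, should impose that $\widedelta^{\mathrm{min},\dagger}_n$ vanishes unless $\nu(n)$ lies in a fixed residue class modulo $2$ determined by $\epsilon(f)$. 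Combining these two statements matches the parities and completes (3).

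The main obstacle is the second half of this parity matching, i.e.\ showing that the $w_N$-symmetry descends to an honest parity constraint on the support of $\kn^{\mathrm{min},\dagger}$. The difficulty is that the cuspidal involution interacts nontrivially with the Kolyvagin primes dividing $n$, so one has to track how the Kolyvagin congruence conditions defining $\mathcal{P}_1$ behave under the involution and how the choice of sign $(-1)^{r-1}$ with $r=k/2$ selecting $\Omega^{\pm}_{\overline{f},\mathrm{min}}$ couples to $\epsilon(f)$. I would handle this by combining the functional equation for modular symbols with the self-duality of $T^\dagger_f$ fixed in $\S$\ref{subsubsec:fixed-isom}, so that the sign appearing in the definition of the Kurihara numbers and the Atkin--Lehner sign are compared in a single calculation.
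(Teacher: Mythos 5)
Your proposal is correct and follows the paper's own route: parts (1) and (2) are exactly Theorem \ref{thm:non-triviality-kn}.(ord) feeding into Theorem \ref{thm:main-central-critical}, and part (3) is obtained by matching the parity of $\mathrm{ord}(\kn^{\mathrm{min},\dagger})$ with the sign $w(f)$ of the functional equation. The only thing to note is that the ``main obstacle'' you describe --- the parity constraint on the support of $\kn^{\mathrm{min},\dagger}$ --- is already established in the paper as the functional equation for Kurihara numbers (\ref{eqn:functional-equation-delta_n}) and Proposition \ref{prop:vanishing-delta-n} (derived from the functional equation for modular symbols), so no new Atkin--Lehner computation is required.
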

\begin{proof}
The first and the second statements follow from Theorems \ref{thm:main-central-critical} and \ref{thm:non-triviality-kn}.(ord).
The $p$-parity follows from the second statement and the functional equation for Kurihara numbers (\ref{eqn:functional-equation-delta_n}).
\end{proof}
Corollary \ref{cor:discrete-bsd} illustrates that the $p$-parity conjecture is an immediate consequence of (a small part of) the Iwasawa main conjecture.
The $p$-parity conjecture in this generality was first proved by Nekov\'{a}\v{r} \cite{nekovar-mazur-rubin}.

It is natural to ask the compatibility between the Beilinson--Bloch--Kato conjecture for modular forms and our discrete analogue.
The conjecture below is the comparison between two very different variations of the same $L$-value $L(f, k/2)$ and can be viewed as another quantitative refinement of the non-vanishing question of $\kn^{\mathrm{min},\dagger}$. See $\S$\ref{sec:formulation-refined-conjecture} for another direction.
\begin{conj}[Beilinson--Bloch--Kato] \label{conj:main-central-critical}
Let $f \in S_k(\Gamma_0(N))$ be a newform and $p \geq 3$ a prime such that $\rho_f$ has large image.
Then 
$$\mathrm{ord}_{s=k/2}L(f,s) = \mathrm{ord}(\kn^{\mathrm{min},\dagger} ).$$
\end{conj}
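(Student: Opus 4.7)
The strategy is to reduce both inequalities in Conjecture \ref{conj:main-central-critical} to the rank part of the Beilinson--Bloch--Kato conjecture via Corollary \ref{cor:discrete-bsd}.(2), and then to attempt a direct analytic attack on the cases lying beyond present BBK technology. Concretely, after identifying $\mathrm{ord}(\kn^{\mathrm{min},\dagger})$ with $\mathrm{cork}_{\mathcal{O}}\mathrm{Sel}(\mathbb{Q}, W^{\dagger}_f)$ through Corollary \ref{cor:discrete-bsd} (or equivalently Theorem \ref{thm:main-central-critical} together with (IMC at $\mathbf{1}$)), the conjecture becomes the rank formula $\mathrm{ord}_{s=k/2}L(f,s) = \mathrm{cork}_{\mathcal{O}}\mathrm{Sel}(\mathbb{Q}, W^{\dagger}_f)$. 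The inequality $\mathrm{ord}_{s=k/2}L(f,s) \leq \mathrm{cork}_{\mathcal{O}}\mathrm{Sel}(\mathbb{Q}, W^{\dagger}_f)$ is trivial in analytic rank zero and follows in analytic rank one from the Bertolini--Darmon--Venerucci settlement of Perrin-Riou's conjecture together with the $p$-adic Gross--Zagier formula and Heegner cycles; the reverse inequality is Kato's Euler system bound in analytic rank zero and again Bertolini--Darmon--Venerucci in analytic rank one.

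To push beyond known BBK cases, I would attempt a direct analytic attack on $\mathrm{ord}(\kn^{\mathrm{min},\dagger}) \leq \mathrm{ord}_{s=k/2}L(f,s)$ by expanding $\widedelta^{\mathrm{min},k/2}_n$ via the Birch--Stevens formula into a weighted sum of twisted central values $L(f,\chi,k/2)/\Omega^{\pm}_{f,\mathrm{min}}$ for Dirichlet characters $\chi$ of conductor dividing $n$. Since each prime $\ell | n$ in $\mathcal{N}_1$ satisfies $\ell \equiv 1 \pmod \pi$, all such characters reduce to the trivial character modulo $\pi$, so the resulting combination acquires the structure of a discrete $\nu(n)$-fold derivative of the twisted $L$-value complex at the trivial character. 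Assuming $\mathrm{ord}_{s=k/2}L(f,s) = r$, the task becomes to exhibit $n \in \mathcal{N}_1$ with $\nu(n) = r$ for which this discrete derivative survives modulo $\pi$. The natural tools would be mean-value and non-vanishing theorems for families of Dirichlet twists in the spirit of Bump--Friedberg--Hoffstein \cite{bump-friedberg-hoffstein} and Murty--Murty \cite{murty-murty-mean}, combined with an explicit identification of the leading discrete coefficient through a Waldspurger--Jacquet--Shalika type formula.

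The main obstacle is twofold. Arithmetically, the BBK rank formula is essentially wide open in analytic rank $\geq 2$ and seems to require a genuinely new Euler-system-like input going beyond Kato and Heegner to produce multiple independent Selmer classes from the higher-order vanishing of $L(f,s)$. Analytically, extracting the $r$-th Taylor coefficient of $L(f,s)$ from a finite congruence sum of modular symbols modulo $\pi$ appears to demand a derivative-type Waldspurger--Jacquet--Shalika formula that is not presently available, and the fact that the relevant twists reduce to the trivial character modulo $\pi$ makes the leading coefficient delicate to isolate. Consequently, Conjecture \ref{conj:main-central-critical} is likely to remain out of reach in analytic ranks $\geq 2$ until one of these ingredients is developed; conversely, any further progress on (IMC at $\mathbf{1}$) in non-ordinary or supercuspidal settings would immediately broaden the scope of the conjecture through the same reduction.
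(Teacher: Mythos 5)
The statement you are addressing is Conjecture \ref{conj:main-central-critical}: it is stated in the paper as an open conjecture, with no proof given (the paper only observes that it compares two different variations of the same central $L$-value and leaves it as a quantitative refinement of the non-vanishing question). Your text, to its credit, does not claim to close it either: after the reduction you describe, you explicitly concede that analytic rank $\geq 2$ is out of reach and that the analytic ingredient (a derivative-type Waldspurger--Jacquet--Shalika formula extracting the $r$-th Taylor coefficient from the mod $\pi$ congruence sum of modular symbols) does not exist. So this is not a proof, and the genuine gap is exactly the one you name: producing, from $\mathrm{ord}_{s=k/2}L(f,s)=r\geq 2$, an $n\in\mathcal{N}_1$ with $\nu(n)=r$ and $\widedelta^{\mathrm{min},\dagger}_n\not\equiv 0$, and conversely bounding $\mathrm{ord}_{s=k/2}L(f,s)$ by $\mathrm{ord}(\kn^{\mathrm{min},\dagger})$, neither of which follows from any input available in the paper.

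Beyond that, the partial reductions you do make import hypotheses that are not part of the conjecture. Corollary \ref{cor:discrete-bsd}.(2) requires $p\geq 5$, good ordinary and $p$-distinguished, and Theorem \ref{thm:main-central-critical} requires the non-vanishing of $\kn^{\mathrm{min},\dagger}$ (equivalently, by Theorem \ref{thm:non-triviality-kn}, the main conjecture localized at the augmentation ideal), whereas Conjecture \ref{conj:main-central-critical} is stated for any $p\geq 3$ and any newform with large image; indeed the non-vanishing of $\kn^{\mathrm{min},\dagger}$ is itself a consequence of the conjecture, so assuming it is partly circular. Likewise, your claim that analytic rank one is settled quietly uses, in weight $k\geq 4$, the injectivity of the $p$-adic Abel--Jacobi map and the non-degeneracy of the height pairing on CM cycles (Assumption \ref{assu:abel-jacobi-injectivity} and the hypothesis (non-deg) of Theorem \ref{thm:p-converse-higher-weight}), which are themselves open. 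So even the cases you present as known are only conditional in the generality of the conjecture, and the core of the statement remains open, as the paper intends.
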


\subsubsection{The analytic rank zero case}
Another simple but interesting corollary is the following analytic rank zero case.
\begin{cor} \label{cor:analytic-rank-zero}
Let $f \in S_k(\Gamma_0(N))$ be a newform and $p \geq 3$ a prime such that $\rho_f$ has large image.
If $L(f, k/2) \neq 0$, then
\[
\mathrm{length}_{\mathcal{O}} \left( \mathrm{Sel}(\mathbb{Q}, W^\dagger_{f} ) \right) = \mathrm{ord}_\pi \left( \dfrac{L(f,k/2)}{ (-2 \pi \sqrt{-1})^{k/2} \cdot \Omega^{\pm}_{f, \mathrm{min}} } \right) - \partial^{(\infty)} \left( \kn^{\mathrm{min},\dagger} \right)  
\]
where  the sign of the minimal integral period  coincides with that of $(-1)^{k/2-1}$.
\end{cor}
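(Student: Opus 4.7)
The plan is to view this as an essentially immediate consequence of Theorems \ref{thm:main-central-critical} and \ref{thm:non-triviality-kn} once one verifies that the analytic rank zero hypothesis pins down $\mathrm{ord}(\kn^{\mathrm{min},\dagger})$ to be $0$. Since $f \in S_k(\Gamma_0(N))$ has trivial nebentype we have $f = \overline{f}$, so the Kurihara numbers in question are attached directly to $f$, and throughout one works in the self-dual situation $r = k/2$ of \S\ref{subsubsec:fixed-isom}.

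First I would feed the hypothesis $L(f,k/2)\ne 0$ into case (rk0) of Theorem \ref{thm:non-triviality-kn} to get that $\kn^{\mathrm{min},\dagger}$ does not vanish. In fact one needs the slightly sharper statement that the element indexed by $n = 1$ is already nonzero: by construction $\widedelta^{\mathrm{min},\dagger}_{1} \in \mathcal{O}$ is (up to a unit) the algebraic part $L(f,k/2)/\bigl((-2\pi\sqrt{-1})^{k/2}\,\Omega^{\pm}_{f,\mathrm{min}}\bigr)$ of the central $L$-value, and this is precisely the quantity whose $\pi$-valuation is $\partial^{(0)}(\kn^{\mathrm{min},\dagger})$ in \S\ref{subsubsec:numerical-invariants-kurihara-numbers}. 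Since the $L$-value is nonzero and the algebraic part lives in $\mathcal{O}$, we have $\widedelta^{\mathrm{min},\dagger}_{1} \ne 0$, hence $\mathrm{ord}(\kn^{\mathrm{min},\dagger}) = \nu(1) = 0$.

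With the non-vanishing of $\kn^{\mathrm{min},\dagger}$ secured, Theorem \ref{thm:main-central-critical} applies. The first BSD-type formula in (\ref{eqn:bsd-type-formulas}) gives
\[
\mathrm{cork}_{\mathcal{O}}\bigl(\mathrm{Sel}(\mathbb{Q},W^\dagger_f)\bigr) = \mathrm{ord}(\kn^{\mathrm{min},\dagger}) = 0,
\]
so the Bloch--Kato Selmer group is finite and in particular equals its own $/\mathrm{div}$ quotient. The second formula in (\ref{eqn:bsd-type-formulas}) then reads
\[
\mathrm{length}_{\mathcal{O}}\bigl(\mathrm{Sel}(\mathbb{Q},W^\dagger_f)\bigr) = \partial^{(0)}(\kn^{\mathrm{min},\dagger}) - \partial^{(\infty)}(\kn^{\mathrm{min},\dagger}),
\]
and substituting the defining formula for $\partial^{(0)}(\kn^{\mathrm{min},\dagger})$ from \S\ref{subsubsec:numerical-invariants-kurihara-numbers} yields the stated identity.

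There is no real obstacle here beyond bookkeeping; the only point that deserves care is the identification $\partial^{(0)}(\kn^{\mathrm{min},\dagger}) = \mathrm{ord}_\pi\bigl(L(f,k/2)/((-2\pi\sqrt{-1})^{k/2}\Omega^{\pm}_{f,\mathrm{min}})\bigr)$, which is tautological once the sign convention on $\Omega^{\pm}_{f,\mathrm{min}}$ (matching the parity of $(-1)^{k/2-1}$, as required in both the definition and the corollary's statement) is recorded. Thus the corollary is essentially the analytic rank zero specialization of the structure theorem.
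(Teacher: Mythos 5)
Your derivation is correct: since $\widedelta^{\mathrm{min},\dagger}_1$ is, by Birch's lemma, $(k/2-1)!$ times the normalized central $L$-value (and $I_1=0$, so it lives in $\mathcal{O}$), the hypothesis $L(f,k/2)\neq 0$ does give $\widedelta^{\mathrm{min},\dagger}_1\neq 0$, hence $\mathrm{ord}(\kn^{\mathrm{min},\dagger})=0$, and then feeding this into (``BSD'') of Theorem \ref{thm:main-central-critical} together with the definition of $\partial^{(0)}$ in \S\ref{subsubsec:numerical-invariants-kurihara-numbers} yields the stated formula. However, this is a genuinely different (and heavier) route than the paper's: the paper treats this corollary as the central-point special case of Theorem \ref{thm:main-all-critical}, whose proof is the direct chain in Proposition \ref{prop:corank-zero} (global duality, Theorem \ref{thm:kato-kolyvagin-main}, Lemma \ref{lem:vanishing-localization}, core rank one, and the explicit reciprocity law), which by Remark \ref{rem:corank-zero} needs neither the self-duality nor the structure theorem, and therefore extends to all critical points and nontrivial nebentypus; your argument instead invokes the full Theorem \ref{thm:main-central-critical}, whose proof in \S\ref{sec:proof-main-formulas} uses self-duality, the functional equation, and Flach's pairing, so it is valid here but does not generalize beyond $r=k/2$, $\psi=\mathbf{1}$. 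One small caution: your phrase ``up to a unit'' identifying $\widedelta^{\mathrm{min},\dagger}_1$ with the normalized $L$-value is only literally true when $(k/2-1)!$ is a $\pi$-adic unit; your logic is unaffected because you only use $\widedelta^{\mathrm{min},\dagger}_1\neq 0$ and then the paper's explicit definition of $\partial^{(0)}\left(\kn^{\mathrm{min},\dagger}\right)$ as the valuation of the normalized $L$-value, but it is worth recording that this identification (also implicit in the paper's own passage from Proposition \ref{prop:corank-zero} to Theorem \ref{thm:main-all-critical}) carries the factorial factor in large weight.
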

Corollary \ref{cor:analytic-rank-zero} generalizes to all critical points and the $\psi \neq \mathbf{1}$ case. 
\begin{thm} \label{thm:main-all-critical}
Let $f \in S_k(\Gamma_1(N), \psi)$ be a newform and $p \geq 3$ a prime such that $\rho_f$ has large image.
For an integer $r$ with $1 \leq r \leq k-1$, if
$(1 - a_p(\overline{f}) p^{-r} + \psi^{-1}(p) p^{k-1-2r} ) \cdot L(\overline{f}, r) \neq 0 $, 
then
\[
\mathrm{length}_{\mathcal{O}} \left( \mathrm{Sel}(\mathbb{Q}, W_{\overline{f}}(r) ) \right) = \mathrm{ord}_\pi \left( \dfrac{L(\overline{f},r)}{ (-2 \pi \sqrt{-1})^r \cdot \Omega^{\pm}_{\overline{f}, \mathrm{min}} } \right) - \partial^{(\infty)} \left( \kn^{\mathrm{min},r} \right)  
\]
where  the sign of the minimal integral period  coincides with that of $(-1)^{r-1}$.
In particular, $\mathrm{Sel}(\mathbb{Q}, W_{\overline{f}}(r))$ is trivial for all but finitely many places of the Hecke field of $f$.
\end{thm}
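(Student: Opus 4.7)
The plan is to run a ``one-sided'' Mazur--Rubin Kolyvagin system argument for $T_{\overline{f}}(r)$ using only Kato's Kolyvagin system $\ks^{\mathrm{Kato},r}$, and to translate the output into the language of Kurihara numbers via the explicit reciprocity law of \S\ref{sec:explicit-reciprocity-law}. The situation is strictly easier than that of Theorem \ref{thm:main-central-critical}: since the analytic rank vanishes, we need only an exact length and not the full module structure, so the self-dual symmetrization and the functional equation machinery used in \S\ref{sec:proof-main-formulas} are not required.

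First I would specialize the explicit reciprocity law of \S\ref{sec:explicit-reciprocity-law} to $n = 1$, which should identify $\widedelta^{\mathrm{min},r}_1$, up to a $p$-adic unit, with the Euler-factor-modified quantity $(1 - a_p(\overline{f}) p^{-r} + \psi^{-1}(p) p^{k-1-2r}) \cdot L(\overline{f},r)/((-2\pi\sqrt{-1})^r \cdot \Omega^{\pm}_{\overline{f},\mathrm{min}})$ appearing in the hypothesis. The nonvanishing assumption therefore forces $\widedelta^{\mathrm{min},r}_1 \neq 0$ and $\mathrm{ord}(\kn^{\mathrm{min},r}) = 0$, and it identifies $\partial^{(0)}(\kn^{\mathrm{min},r})$ with the $\pi$-adic valuation of the normalized $L$-value displayed in the theorem.

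Next, since the cyclotomic twist does not affect the mod $\pi$ image of Galois, the large image assumption yields all Mazur--Rubin hypotheses for $T_{\overline{f}}(r)$, and Kato's zeta elements produce a Kolyvagin system of core rank one with nonzero leading class. The Mazur--Rubin structure theorem applied to $\ks^{\mathrm{Kato},r}$ then bounds the dual Selmer group by the difference between the $\pi$-valuation of the leading class and the minimal $\pi$-valuation along Kolyvagin primes; via the explicit reciprocity law again, these two $\pi$-valuations rewrite as $\partial^{(0)}(\kn^{\mathrm{min},r})$ and $\partial^{(\infty)}(\kn^{\mathrm{min},r})$ respectively. Since $L(\overline{f},r) \neq 0$, Kato's theorem supplies the finiteness of $\mathrm{Sel}(\mathbb{Q}, W_{\overline{f}}(r))$, and in this finite setting Tate global duality promotes the dual Selmer length to that of $\mathrm{Sel}(\mathbb{Q}, W_{\overline{f}}(r))$ itself. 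The final assertion is then immediate, since for cofinitely many $\pi$ both $\partial^{(0)}$ and $\partial^{(\infty)}$ vanish, forcing the length to be zero.

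The main obstacle, I expect, is the bookkeeping in the explicit reciprocity law at a general twist $r \neq k/2$: one must track the $p$-Euler factor $(1 - a_p(\overline{f}) p^{-r} + \psi^{-1}(p) p^{k-1-2r})$ and the sign conventions for $\Omega^{\pm}_{\overline{f},\mathrm{min}}$ depending on the parity of $r-1$, while also accommodating the $p \mid N$ case where the Euler factor degenerates. The argument at $r = k/2$ developed in \S\ref{sec:proof-main-formulas} should supply the template, and the only essentially new ingredient is the Euler factor computation at $p$.
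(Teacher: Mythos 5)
Your overall architecture (Mazur--Rubin theory for the core-rank-one canonical structure plus the torsion-coefficient reciprocity law, with no self-duality or functional equation input) is indeed the paper's route: the proof is the $(1)\Rightarrow(2)$ direction of Proposition \ref{prop:corank-zero}, cf.\ Remark \ref{rem:corank-zero}. However, there is a genuine gap at the step where you convert Kolyvagin-system invariants into Kurihara-number invariants. The structure theorem (Theorem \ref{thm:kato-kolyvagin-main}) controls the \emph{strict} Selmer group $\mathrm{Sel}_0(\mathbb{Q}, W_{\overline{f}}(r))$, and its output is expressed through the valuations $\mathrm{ord}_\pi(\kappa^{\mathrm{Kato},k-r}_n)$ of the \emph{global} classes, whereas the reciprocity law of \S\ref{sec:explicit-reciprocity-law} computes only $\mathrm{exp}^*\circ\mathrm{loc}^s_p(\kappa^{\mathrm{Kato},k-r}_n)$, i.e.\ the valuation of the localization at $p$ in the singular quotient. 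These differ: by (\ref{eqn:kappa_n-widedelta_n}), $\mathrm{ord}_\pi(\mathrm{loc}^s_p\kappa^{\mathrm{Kato},k-r}_1)-\mathrm{ord}_\pi(\kappa^{\mathrm{Kato},k-r}_1)$ equals $\mathrm{length}_{\mathcal{O}}(\mathrm{coker}(\mathrm{loc}^s_p(\mathrm{Sel}^{k-r}_{\mathrm{rel}}))^\vee)$, which by (\ref{eqn:kappa_n-widedelta_n-sequence}) is exactly $\mathrm{length}_{\mathcal{O}}\mathrm{Sel}(\mathbb{Q},W_{\overline{f}}(r))-\mathrm{length}_{\mathcal{O}}\mathrm{Sel}_0(\mathbb{Q},W_{\overline{f}}(r))$ and is in general positive. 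So your two identifications --- ``the $\pi$-valuation of the leading class rewrites as $\partial^{(0)}(\kn^{\mathrm{min},r})$'' and ``global duality promotes the dual (strict) Selmer length to that of $\mathrm{Sel}(\mathbb{Q},W_{\overline{f}}(r))$'' --- are each false in general; they produce the right final formula only because the two discrepancies cancel, and that cancellation \emph{is} the content of the paper's bookkeeping, which combines the two displayed global-duality identities so as to replace $\mathrm{ord}_\pi(\kappa_1)$ by $\mathrm{ord}_\pi(\mathrm{loc}^s_p\kappa_1)$ before invoking the reciprocity law.

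A second omission: even after localizing you need $\partial^{(\infty)}(\ks^{\mathrm{Kato},k-r})=\partial^{(\infty)}(\mathrm{loc}^s_p\ks^{\mathrm{Kato},k-r})$ in order to read the fudge factor off $\kn^{\mathrm{min},r}$; this is not formal and is Lemma \ref{lem:vanishing-localization}, whose proof again uses the structure theorem, Lemma \ref{lem:surjectivity-at-ell} and the Chebotarev input (Propositions \ref{prop:chebotarev-mazur-rubin} and \ref{prop:chebotarev-sakamoto}). Finally, a smaller inaccuracy: $\widedelta^{\mathrm{min},r}_1$ carries no Euler factor at $p$ --- by Birch's lemma it is essentially $L(\overline{f},r)/((-2\pi\sqrt{-1})^r\Omega^{\pm}_{\overline{f},\mathrm{min}})$ --- while the Euler factor and the uncomputable constant $C^{\pm}_{\mathrm{per}}$ occur only in $\mathrm{exp}^*\circ\mathrm{loc}^s_p\kappa^{\mathrm{Kato},k-r}_1$; the hypothesis that the Euler factor is nonzero is what keeps this uniform constant nonzero (and underlies Theorem \ref{thm:bloch-kato-exponential}), but it is not a unit, which is precisely why only the \emph{difference} $\partial^{(0)}(\kn^{\mathrm{min},r})-\partial^{(\infty)}(\kn^{\mathrm{min},r})$ is accessible. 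With the duality bookkeeping, Lemma \ref{lem:vanishing-localization}, and the rigidity statement (Theorem \ref{thm:core-rank-modular-forms-char-zero}) used to pass to Kato's Kolyvagin system, your outline becomes the paper's proof.
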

\begin{proof}
See Proposition \ref{prop:corank-zero} and Remark \ref{rem:corank-zero}.
\end{proof}
Theorem \ref{thm:main-all-critical} does not require the Iwasawa main conjecture for $T_f(k-r)$ as well as any assumption on weight or local automorphic representation.
See Corollary \ref{cor:main-all-critical-before-dual-exp} and \S\ref{sec:formulation-refined-conjecture} for the precise relation between  Theorem \ref{thm:main-all-critical} and Kato--Fontaine--Perrin-Riou's formulation of Tamagawa number conjecture. 
It seems even difficult to find any precise conjectural formula for Bloch--Kato Selmer groups in this generality mainly due to the difficulty of the ``good" choice of periods.

Thanks to the work of Jacquet--Shalika \cite{jacquet-shalika} (cf. \cite[Thm. 13.5]{kato-euler-systems}), we have 
$L(\overline{f}, s) \neq 0$ if $\mathrm{Re}(s) \geq \frac{k+1}{2}$.
The functional equation implies that $L(\overline{f}, r) \neq 0$ if $r \neq k/2$.
In addition, 
$1 - a_p(\overline{f}) p^{-r} + \psi^{-1}(p) p^{k-1-2r} $ vanishes
 only when
\begin{enumerate}
\item[(NVE$_{\mathrm{cris}}$)]
$p$ does not divide $N$, $r = \frac{k-1}{2}$, and $a_p(\overline{f}) = (1 + \psi^{-1}(p) )\cdot p^{\frac{k-1}{2}}$, 
\item[(NVE$_{\mathrm{st}}$)]
$p$ divides $N$ exactly once, $\psi^{-1}(p)=1$, $r = \frac{k-2}{2}$, and $a_p(\overline{f}) =  p^{\frac{k-2}{2}}$, or
\item[(NVE$_{\psi}$)]
$p$ divides $N$, $\mathrm{ord}_p(\mathrm{cond}(\psi^{-1})) = \mathrm{ord}_p(N)$, $r = \frac{k-1}{2}$, and $a_p(\overline{f}) = p^{\frac{k-1}{2}}$.
\end{enumerate}
See \cite[Lem. 10.1.6]{epw2} for the proof. When $\psi^{-1}(p) = 1$ (e.g. $\psi = \mathbf{1}$), it is expected that (NVE$_{\mathrm{cris}}$) never happens \cite[4.1 Thm.]{coleman-edixhoven}.
Note that  (NVE$_{\mathrm{cris}}$) and (NVE$_{\mathrm{st}}$) are different from the exceptional zeros in the $p$-adic Birch and Swinnerton-Dyer conjecture (cf. \cite[\S15]{mtt} and \cite[p. 2]{benois-near-central}).

\subsubsection{Modular forms of weight two at good ordinary primes} \label{subsubsec:weight-two-good-ordinary}
When $f \in S_2(\Gamma_0(N))$ and $p^2 \nmid N$, denote by $\kn^{\mathrm{can},\dagger}$ the collection of Kurihara numbers for $f$ at $s = 1$ normalized by the canonical periods \cite{vatsal-cong} which are minimally integral due to the work of Ash--Stevens \cite{ash-stevens}.
\begin{cor} \label{cor:good-ordinary-weight-two}
Let $f \in S_2(\Gamma_0(N))$ be a newform and $p \geq 5$ a good ordinary prime for $f$ such that
 $\rho_f$ has large image.
Then $\ks^{\mathrm{Kato}, \dagger}$ is non-trivial, and equivalently $\kn^{\mathrm{can},\dagger}$ does not vanish, and
there exists an isomorphism of co-finitely generated $\mathcal{O}$-modules
$$\mathrm{Sel}(\mathbb{Q}, W_f(1)) \simeq (F/\mathcal{O})^{\oplus \mathrm{ord}(\kn^{\mathrm{can},\dagger} )} \oplus \bigoplus_{i\geq 0} \left( \mathcal{O}/\pi^{e_i} \mathcal{O} \right)^{\oplus 2} $$
where 
$e_i = \dfrac{1}{2} \cdot \left( \partial^{(\mathrm{ord}(\kn^{\mathrm{can},\dagger} ) + 2i )}(\kn^{\mathrm{can},\dagger} ) - \partial^{(\mathrm{ord}(\kn^{\mathrm{can},\dagger} ) + 2i+2)}(\kn^{\mathrm{can},\dagger} ) \right)$.
 Thus, we have
\begin{align*}
\mathrm{cork}_{\mathcal{O}}\mathrm{Sel}(\mathbb{Q}, W_f(1)) & = \mathrm{ord}(\kn^{\mathrm{can},\dagger} ), \\
  \mathrm{length}_{\mathcal{O}}\mathrm{Sel}(\mathbb{Q}, W_f(1))_{/\mathrm{div}} & = \partial^{ (\mathrm{ord}(\kn^{\mathrm{can},\dagger} ) )}(\kn^{\mathrm{can},\dagger} ) - \partial^{(\infty)} (\kn^{\mathrm{can},\dagger} ).
\end{align*}
\end{cor}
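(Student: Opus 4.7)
The plan is to deduce Corollary \ref{cor:good-ordinary-weight-two} as a direct specialization of Theorems \ref{thm:main-central-critical} and \ref{thm:non-triviality-kn}, modulo two small verifications: (i) that the $p$-distinguished hypothesis in case (ord) of Theorem \ref{thm:non-triviality-kn} is automatic in weight two, and (ii) that replacing the minimal integral period by the canonical period does not change the numerical invariants of the associated collection of Kurihara numbers, so that the structural formula can be rewritten verbatim in terms of $\kn^{\mathrm{can},\dagger}$.

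For (i), I would observe that because $p$ is good ordinary, $\overline{\rho}_f\vert_{G_{\mathbb{Q}_p}}$ is reducible with semisimplification $\epsilon_1 \oplus \epsilon_2$, where $\epsilon_1$ is unramified sending $\mathrm{Frob}_p$ to the unit root reduction and $\epsilon_2 = \omega \cdot \epsilon_1^{-1}$ with $\omega$ the mod-$p$ cyclotomic character; here I use that $\det \overline{\rho}_f$ reduces to $\omega$ in weight two. Since $p \geq 5$, $\omega$ is nontrivial on inertia $I_p$ while $\epsilon_1^{\pm 1}$ is trivial on $I_p$, so $\epsilon_1 \neq \epsilon_2$ and $f$ is automatically $p$-distinguished. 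Applying Theorem \ref{thm:non-triviality-kn}(ord) then yields the non-vanishing of $\kn^{\mathrm{min},\dagger}$, and Theorem \ref{thm:main-central-critical} in turn supplies the non-triviality of $\ks^{\mathrm{Kato},\dagger}$ and the structural decomposition of $\mathrm{Sel}(\mathbb{Q}, W_f(1)) = \mathrm{Sel}(\mathbb{Q}, W^\dagger_f)$ expressed in terms of $\mathrm{ord}(\kn^{\mathrm{min},\dagger})$ and $\{\partial^{(j)}(\kn^{\mathrm{min},\dagger})\}_{j \geq 0}$.

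For (ii), the key input, already cited in the statement of the corollary, is the result of Ash--Stevens that the canonical periods $\Omega^{\pm}_{f,\mathrm{can}}$ coincide with the minimal integral periods $\Omega^{\pm}_{f,\mathrm{min}}$ up to a unit in $\mathcal{O}^{\times}$ whenever $f \in S_2(\Gamma_0(N))$ and $p^2 \nmid N$. Since the Kurihara number $\widedelta^{?,\dagger}_n$ constructed in $\S$\ref{subsec:modular-symbols-kurihara-numbers} is linear in the inverse of the chosen period normalization, rescaling the period by a unit of $\mathcal{O}$ multiplies each $\widedelta^{?,\dagger}_n$ by that same unit in $\mathcal{O}/I_n\mathcal{O}$. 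It follows that $\mathrm{ord}(\kn^{\mathrm{can},\dagger}) = \mathrm{ord}(\kn^{\mathrm{min},\dagger})$ and $\partial^{(i)}(\kn^{\mathrm{can},\dagger}) = \partial^{(i)}(\kn^{\mathrm{min},\dagger})$ for every $i \geq 0$, so the formulas of Theorem \ref{thm:main-central-critical} translate word-for-word into the corollary's statement.

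The main theorems carry all of the real content of the corollary; the proposed argument is essentially bookkeeping, and I expect no serious obstacle. If anything, the mildly delicate point is checking that the Ash--Stevens reference is quoted in the precise form asserting a unit ratio between $\Omega^{\pm}_{f,\mathrm{can}}$ and $\Omega^{\pm}_{f,\mathrm{min}}$ rather than merely integrality of one of the two, but since this equivalence is already invoked in the very definition of $\kn^{\mathrm{can},\dagger}$ in the paragraph preceding the corollary, the deduction is immediate.
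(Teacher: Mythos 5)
Your proposal is correct and takes essentially the same route as the paper, whose proof simply deduces the corollary as the $k=2$ case of Theorem \ref{thm:main-central-critical} combined with Theorem \ref{thm:non-triviality-kn}(ord). Your two supplementary checks — that $p$-distinguishedness is automatic for a good ordinary prime in weight two (the ramified character restricts to $\omega$ on inertia, hence differs from the unramified one), and that the Ash--Stevens unit ratio between canonical and minimal periods leaves all invariants $\mathrm{ord}$ and $\partial^{(i)}$ unchanged — are exactly the points the paper leaves implicit, and both are verified correctly.
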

\begin{proof}
This is the $k =2$ case of Theorem \ref{thm:main-central-critical} with the combination of Theorem \ref{thm:non-triviality-kn}.(ord)
\end{proof}
It is known that the Dirichlet density of the ordinary primes for a modular form of weight two is one \cite[(2.7)]{ogus-hodge-crystalline}, \cite[$\S$7]{hida-jams-2013}. 
\subsubsection{Modular forms of weight two with analytic rank $\leq 1$}
When the analytic rank is $\leq 1$, we have the following structural result on modular abelian varieties, which improves the work of Gross--Zagier and Kolyvagin \cite{gross-zagier-original, kolyvagin-euler-systems}.  It goes beyond the standard Euler system argument,  but it does not use the Iwasawa main conjecture at all (cf. \cite{jetchev-skinner-wan, castella-ciperiani-skinner-sprung}).
\begin{cor} \label{cor:analytic-rank-one-weight-two}
Let $f \in S_2(\Gamma_0(N))$ be a newform and $p \geq 3$ a prime such that $p^2 \nmid N$ and $\rho_f$ has large image.
Denote by $A_f$ the modular abelian variety of $\mathrm{GL}_2$-type associated to $f$,  and write $R = \mathrm{End}_{\overline{\mathbb{Q}}}(A_f)$.
If $\mathrm{ord}_{s=1}L(f, s) \leq 1$, then we have
\begin{align*}
\mathrm{cork}_{\mathcal{O}}\mathrm{Sel}(\mathbb{Q}, W_f(1)) & = \mathrm{dim}_F ( A_f(\mathbb{Q}) \otimes_{R} F ) = \mathrm{ord}(\kn^{\mathrm{can},\dagger} ) = \mathrm{ord}_{s=1}L(f, s), \\
\sha(A_f/\mathbb{Q})[\pi^\infty] & \simeq \bigoplus_{i\geq 0} \left( \mathcal{O}/\pi^{e_i} \mathcal{O} \right)^{\oplus 2} 
\end{align*}
where $e_i = \dfrac{1}{2} \cdot \left( \partial^{(\mathrm{ord}(\kn^{\mathrm{can},\dagger} ) + 2i )}(\kn^{\mathrm{can},\dagger} ) - \partial^{(\mathrm{ord}(\kn^{\mathrm{can},\dagger} ) + 2i+2)}(\kn^{\mathrm{can},\dagger} ) \right)$. In particular, 
$$  \mathrm{length}_{\mathcal{O}}\sha(A_f/\mathbb{Q})[\pi^\infty] = \partial^{ (\mathrm{ord}(\kn^{\mathrm{can},\dagger} ) )}(\kn^{\mathrm{can},\dagger} ) - \partial^{(\infty)} (\kn^{\mathrm{can},\dagger} ).$$
\end{cor}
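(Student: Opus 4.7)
The plan is to deduce the corollary directly from Theorems \ref{thm:main-central-critical} and \ref{thm:non-triviality-kn} combined with the classical Gross--Zagier--Kolyvagin theorem, using no Iwasawa main conjecture. The novelty compared to Corollary \ref{cor:good-ordinary-weight-two} is that we do not assume $f$ is ordinary at $p$, but in exchange we restrict to analytic rank $\leq 1$, which is precisely the regime where Kolyvagin's Euler system for Heegner points already provides strong unconditional input.

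First I would verify $\kn^{\mathrm{can},\dagger} \neq 0$ via Theorem \ref{thm:non-triviality-kn}. In analytic rank zero this is immediate from case (rk0). In analytic rank one I want case (rk1 $+\epsilon$): the hypothesis $p^2 \nmid N$ is given, the condition $\alpha_p \neq \beta_p$ is automatic for $k=2$ by \cite{coleman-edixhoven}, and (AJ) in Assumption \ref{assu:abel-jacobi-injectivity} is vacuous in weight two by Remark \ref{rem:abel-jacobi-injectivity}(2). The remaining condition ($\mathrm{loc}_p$) follows, as recorded in Remark \ref{rem:abel-jacobi-injectivity}(3), from the one-dimensionality of $\mathrm{Sel}(\mathbb{Q}, V_f(1))$ over $F$ together with the finiteness of $\sha(A_f/\mathbb{Q})[\pi^\infty]$; both are precisely what Kolyvagin's Heegner point theorem supplies for $A_f$ in analytic rank $\leq 1$ under the large image hypothesis on $\rho_f$.

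Having $\kn^{\mathrm{can},\dagger} \neq 0$, I would then apply Theorem \ref{thm:main-central-critical} at $k=2$, $s=1$ with the canonical periods (minimally integral by \cite{ash-stevens}) to obtain the direct sum decomposition
\[
\mathrm{Sel}(\mathbb{Q}, W_f(1)) \simeq (F/\mathcal{O})^{\oplus \mathrm{ord}(\kn^{\mathrm{can},\dagger})} \oplus \bigoplus_{i \geq 0} \left( \mathcal{O}/\pi^{e_i}\mathcal{O} \right)^{\oplus 2}
\]
with the $e_i$ as stated. For $k=2$ the Bloch--Kato Selmer group $\mathrm{Sel}(\mathbb{Q}, W_f(1))$ coincides with the usual $\pi^\infty$-Selmer group of $A_f$, and the Kummer sequence
\[
0 \to A_f(\mathbb{Q}) \otimes_R F/\mathcal{O} \to \mathrm{Sel}(\mathbb{Q}, W_f(1)) \to \sha(A_f/\mathbb{Q})[\pi^\infty] \to 0,
\]
combined with the finiteness of $\sha(A_f/\mathbb{Q})[\pi^\infty]$ from Kolyvagin, identifies the maximal divisible submodule with $A_f(\mathbb{Q}) \otimes_R F/\mathcal{O}$ and the finite quotient with $\sha(A_f/\mathbb{Q})[\pi^\infty]$. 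Matching against the structure theorem then yields the displayed description of $\sha(A_f/\mathbb{Q})[\pi^\infty]$ and the length formula.

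The four-way rank identity closes the chain: Gross--Zagier--Kolyvagin gives $\mathrm{dim}_F(A_f(\mathbb{Q}) \otimes_R F) = \mathrm{ord}_{s=1}L(f,s)$ in analytic rank $\leq 1$; by the previous step this dimension equals $\mathrm{cork}_\mathcal{O} \mathrm{Sel}(\mathbb{Q}, W_f(1))$; and the structure theorem equates the latter with $\mathrm{ord}(\kn^{\mathrm{can},\dagger})$. The one delicate point is the apparent circularity in the first step, where checking ($\mathrm{loc}_p$) seems to rely on the very finiteness of $\sha$ that the structure theorem is meant to refine: this is resolved by noting that Kolyvagin's proof of finiteness of $\sha(A_f/\mathbb{Q})[\pi^\infty]$, via Heegner points over a suitable auxiliary imaginary quadratic field $\mathcal{K}$ as in Remark \ref{rem:abel-jacobi-injectivity}(1), is logically independent of Theorem \ref{thm:main-central-critical} and may legitimately be cited as an input. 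Once this input is in place the argument is bookkeeping.
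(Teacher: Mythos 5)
Your proposal is correct and follows essentially the same route as the paper: non-vanishing of $\kn^{\mathrm{can},\dagger}$ via Theorem \ref{thm:non-triviality-kn} (cases (rk0)/(rk1 $+\epsilon$), with Assumption \ref{assu:abel-jacobi-injectivity} automatic in weight two for analytic rank $\leq 1$), then Theorem \ref{thm:main-central-critical} combined with Gross--Zagier--Kolyvagin and the Kummer-sequence identification of the divisible part with $A_f(\mathbb{Q}) \otimes_R F/\mathcal{O}$ and the quotient with $\sha(A_f/\mathbb{Q})[\pi^\infty]$. Your explicit resolution of the apparent circularity in checking ($\mathrm{loc}_p$) is exactly the point the paper leaves implicit, and it is handled correctly.
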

\begin{proof}
It follows easily from Theorem \ref{thm:main-central-critical} and the work of Gross--Zagier and Kolyvagin \cite{gross-zagier-original, kolyvagin-euler-systems}. Since the weight of $f$ is two and $\mathrm{ord}_{s=1}L(f, s) \leq 1$, Assumption \ref{assu:abel-jacobi-injectivity} is automatic.
\end{proof}
Comparing Corollary \ref{cor:analytic-rank-one-weight-two} with the $\pi$-part of the BSD formula for $A_f$,  it is natural to ask the relation between $\partial^{(\infty)} (\kn^{\mathrm{can},\dagger} )$ and the $\pi$-valuation of the product of local Tamagawa factors of $A_f$.
 See $\S$\ref{sec:formulation-refined-conjecture} for further details.

\subsubsection{The computational upper bound of Selmer ranks}
The following upper bound of Selmer ranks is immediate from Theorem \ref{thm:main-central-critical}.
\begin{cor} \label{cor:upper-bound}
Let $f \in S_k(\Gamma_0(N))$ be a newform and $p \geq 3$ a prime such that $\rho_f$ has large image.
If $\widedelta^{\mathrm{min},\dagger}_n \neq 0$ for some $n \in \mathcal{N}_1$, then 
$$\mathrm{cork}_{\mathcal{O}}\mathrm{Sel}(\mathbb{Q}, W^{\dagger}_f)  \leq  \nu(n) .$$
\end{cor}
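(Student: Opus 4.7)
The plan is essentially to invoke Theorem \ref{thm:main-central-critical} and chase the definitions. First, observe that the hypothesis ``$\widedelta^{\mathrm{min},\dagger}_n \neq 0$ for some $n \in \mathcal{N}_1$'' is precisely the statement that the collection $\kn^{\mathrm{min},\dagger} = \{\widedelta^{\mathrm{min},\dagger}_m\}_{m \in \mathcal{N}_1}$ does not vanish in the sense needed to trigger the conclusion of Theorem \ref{thm:main-central-critical}. Hence the rank part of (``BSD") applies and gives
\[
\mathrm{cork}_{\mathcal{O}} \mathrm{Sel}(\mathbb{Q}, W^{\dagger}_f) = \mathrm{ord}(\kn^{\mathrm{min},\dagger}).
\]

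Next, by the very definition of $\mathrm{ord}(\kn^{\mathrm{min},\dagger})$ recalled in $\S$\ref{subsubsec:numerical-invariants-kurihara-numbers},
\[
\mathrm{ord}(\kn^{\mathrm{min},\dagger}) = \min\bigl\{ \nu(m) : m \in \mathcal{N}_1,\ \widedelta^{\mathrm{min},\dagger}_m \neq 0 \bigr\},
\]
and since the particular $n$ in the hypothesis contributes to this set, we obtain $\mathrm{ord}(\kn^{\mathrm{min},\dagger}) \leq \nu(n)$. Combining the two displayed inequalities yields $\mathrm{cork}_{\mathcal{O}}\mathrm{Sel}(\mathbb{Q}, W^{\dagger}_f) \leq \nu(n)$, which is the desired bound. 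There is no real obstacle here, as all the work is done inside Theorem \ref{thm:main-central-critical}; the point of isolating this corollary is the practical one that, to produce an upper bound on the Selmer corank, one only needs to exhibit a single $n \in \mathcal{N}_1$ for which the Kurihara number $\widedelta^{\mathrm{min},\dagger}_n$ can be shown (e.g.\ by direct computation with modular symbols) to be non-zero, without needing to know the entire collection $\kn^{\mathrm{min},\dagger}$.
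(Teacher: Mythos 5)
Your argument is correct and is exactly how the paper treats this statement: the paper declares the corollary ``immediate from Theorem \ref{thm:main-central-critical}'', and your chain---non-vanishing of a single $\widedelta^{\mathrm{min},\dagger}_n$ means $\kn^{\mathrm{min},\dagger} \neq 0$, so the rank part of (``BSD") gives $\mathrm{cork}_{\mathcal{O}}\mathrm{Sel}(\mathbb{Q}, W^{\dagger}_f) = \mathrm{ord}(\kn^{\mathrm{min},\dagger}) \leq \nu(n)$ by the definition of $\mathrm{ord}$---is precisely that intended deduction. Nothing further is needed.
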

This bound is completely different from the upper bound coming from $p$-adic $L$-functions via the $p$-adic Birch and Swinnerton-Dyer conjecture for modular forms of finite slope \cite[$\S$18]{kato-euler-systems}.

\subsubsection{The rank one $p$-converse for modular forms of higher weight} \label{subsubsec:p-converse-intro}
We discuss the rank one $p$-converse to the theorem of Gross--Zagier, Kolyvagin,  Nekov\'{a}\v{r}, and Zhang for modular forms of higher weight \cite{gross-zagier-original, kolyvagin-euler-systems, nekovar-kolyvagin, nekovar-p-adic-height, shou-wu-zhang-heegner-cycles}.
The theorem\footnote{This is the generalization of the theorem of Gross--Zagier and Kolyvagin to modular forms of higher weight.} of Nekov\'{a}\v{r} and Zhang is recalled in Theorem \ref{thm:nekovar-zhang}.
We recall some background briefly and state the converse result.

Let $\mathrm{KS}^{k-2}_N$ be the $(k-1)$-dimensional Kuga--Sato variety of weight $k$ and full level $N$.
For a number field $K$, let $\mathrm{CH}^{k/2} (\mathrm{KS}^{k-2}_N/K)_0 $ be the group of homologically trivial cycles of codimension $k/2$ on $\mathrm{KS}^{k-2}_N$ defined over $K$ modulo rational equivalence.
Let
\begin{equation} \label{eqn:abel-jacobi-map}
\Phi_{K} \otimes \mathbb{Q}_p  : \mathrm{CH}^{k/2} (\mathrm{KS}^{k-2}_N/K)_0 \otimes \mathbb{Q}_p \to \mathrm{H}^1(K, \mathrm{H}^{k-1}_{\mathrm{\acute{e}t}} (\mathrm{KS}^{k-2}_{N, \overline{K}} , \mathbb{Z}_p(k/2))  )  \otimes \mathbb{Q}_p
\end{equation}
be the (rational) $p$-adic Abel--Jacobi map where $\mathrm{KS}^{k-2}_{N, \overline{K}}$ is the base change of $\mathrm{KS}^{k-2}_{N}$ to $\overline{K}$.
This map is conjectured to be injective (Assumption \ref{assu:abel-jacobi-injectivity}).
Let $f \in S_k(\Gamma_0(N))$ be a newform with $k \geq 4$.
Denote by 
\begin{align*}
\Phi_{f,K} \otimes F & : \mathrm{CH}^{k/2} (\mathrm{KS}^{k-2}_N/K)_0 \otimes F \to \mathrm{Sel}(K, V^\dagger_f), \\
\Phi_{f,K} \otimes F/\mathcal{O} & : \mathrm{CH}^{k/2} (\mathrm{KS}^{k-2}_N/K)_0 \otimes F/\mathcal{O} \to \mathrm{Sel}(K, W^\dagger_f) 
\end{align*}
the $f$-isotypic quotients of the rational and discrete versions of the $p$-adic Abel--Jacobi map, respectively.
The \textbf{$\pi$-primary part of the Tate--Shafarevich group of $W^\dagger_f$ over $K$} is defined by
$\sha(f^\dagger/K)[\pi^\infty] := \mathrm{coker}\left( \Phi_{f,K} \otimes F/\mathcal{O} \right) $.

For an imaginary quadratic field $\mathcal{K}$ of discriminant $-D_{\mathcal{K}} <0$ satisfying 
 $(D_{\mathcal{K}}, p)=1$ and
 every prime divisor of $N$ splits in $\mathcal{K}$ (the Heegner hypothesis),
denote by
$\mathrm{CM}_{k/2}(X(N)_{\mathcal{K}}) \subseteq \mathrm{CH}^{k/2} (\mathrm{KS}^{k-2}_N/\mathcal{K})_0$
 the module of CM cycles on $\mathrm{KS}^{k-2}_{N}$ \cite[$\S$2 and $\S$5.3]{shou-wu-zhang-heegner-cycles}. 

Let $s_{\mathcal{K}} \in \mathrm{CM}_{k/2}(X(N)_{\mathcal{K}})$ be the Heegner cycle of $f$ over such an imaginary quadratic field $\mathcal{K}$
and  $\kappa^{\mathrm{Hg},\dagger,  \mathcal{K}}_1 \in \mathrm{Sel}(\mathcal{K}, V^\dagger_f)^{-w(f)}$ be the $p$-adic Abel--Jacobi image of $s_{\mathcal{K}}$
where $\mathrm{Sel}(\mathcal{K}, V^\dagger_f)^{-w(f)}$ is the submodule of $\mathrm{Sel}(\mathcal{K}, V^\dagger_f)$, on which complex conjugation acts as $-w(f)$, and $w(f)$ is the sign of functional equation of $L(f, s)$.

\begin{thm} \label{thm:p-converse-higher-weight-ordinary}
Let $p \geq 5$ be a prime and $f \in S_k(\Gamma_0(N))$ be a good ordinary and $p$-distinguished newform with $k \geq 4$ such that $\rho_f$ has large image.
If
\begin{itemize}
\item[(cork1)]  $\mathrm{cork}_{\mathcal{O}}\mathrm{Sel}(\mathbb{Q}, W^\dagger_f) = 1$, and
\item[({$\mathrm{loc}_p$})] the restriction map $\mathrm{loc}_p : \mathrm{Sel}(\mathbb{Q}, V^\dagger_f) \to \mathrm{H}^1(\mathbb{Q}_p, V^\dagger_f)$ is non-zero (Assumption \ref{assu:abel-jacobi-injectivity}),
\end{itemize}
then $\kappa^{\mathrm{Hg},\dagger,  \mathcal{K}}_1$ is non-zero where
 $\mathcal{K}$ is an imaginary quadratic field such that every prime divisor of $Np$ splits in $\mathcal{K}$  and  $ L(f, \chi_{\mathcal{K}/\mathbb{Q}}, k/2) \neq 0$.
If we further assume that
\begin{itemize}
\item[(non-deg)] the height pairing on $\mathrm{CM}_{k/2}(X(N)_{\mathcal{K}})$ is non-degenerate over the same imaginary quadratic field $\mathcal{K}$, 
\end{itemize}
then
$$\mathrm{ord}_{s=k/2}L(f, s)  =  \mathrm{cork}_{\mathcal{O}} \left( \mathrm{Sel}(\mathbb{Q}, W^\dagger_f) \right) = \mathrm{dim}_{F} \left( \mathrm{im}(\Phi_{f, \mathbb{Q}} \otimes F) \right)    = 1, $$
and $\mathrm{length}_{\mathcal{O}} \sha(f^\dagger/\mathbb{Q})[\pi^\infty]  < \infty $.
\end{thm}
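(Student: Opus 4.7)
The plan is to promote the corank information over $\mathbb{Q}$ to corank one over a carefully chosen imaginary quadratic field $\mathcal{K}$, then to upgrade the latter to the non-vanishing of $\kappa^{\mathrm{Hg},\dagger,\mathcal{K}}_1$ via an anticyclotomic analogue of the Kolyvagin-system argument used earlier in the paper for Kato's side, and finally to pass to analytic information via the higher-weight Gross--Zagier formula. Since $f$ is good ordinary and $p$-distinguished, Corollary \ref{cor:discrete-bsd} is available, so (cork1) is equivalent to $\mathrm{ord}(\kn^{\mathrm{min},\dagger}) = 1$; in particular both $\kn^{\mathrm{min},\dagger}$ and $\ks^{\mathrm{Kato},\dagger}$ are non-trivial. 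The $p$-parity part of Corollary \ref{cor:discrete-bsd} further forces the sign of the functional equation to satisfy $w(f) = -1$ and, in particular, $L(f, k/2) = 0$, so the analytic rank is already known to be at least one.

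Next, pick an imaginary quadratic field $\mathcal{K}$ in which every prime of $Np$ splits and such that $L(f, \chi_{\mathcal{K}/\mathbb{Q}}, k/2) \neq 0$; existence is guaranteed by Bump--Friedberg--Hoffstein or Murty--Murty as recorded in Remark \ref{rem:abel-jacobi-injectivity}. Applying Corollary \ref{cor:analytic-rank-zero} to the quadratic twist $f \otimes \chi_{\mathcal{K}/\mathbb{Q}}$, the non-vanishing of its central $L$-value forces $\mathrm{cork}_{\mathcal{O}}\mathrm{Sel}(\mathbb{Q}, W^\dagger_{f \otimes \chi_{\mathcal{K}/\mathbb{Q}}}) = 0$. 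Combined with (cork1) via the $\chi_{\mathcal{K}/\mathbb{Q}}$-eigenspace decomposition
\[
\mathrm{Sel}(\mathcal{K}, W^\dagger_f) \simeq \mathrm{Sel}(\mathbb{Q}, W^\dagger_f) \oplus \mathrm{Sel}(\mathbb{Q}, W^\dagger_{f \otimes \chi_{\mathcal{K}/\mathbb{Q}}})
\]
(valid since $p$ is odd), this yields $\mathrm{cork}_{\mathcal{O}}\mathrm{Sel}(\mathcal{K}, W^\dagger_f) = 1$, with the one-dimensional part concentrated in the $(-w(f)) = +1$ eigenspace for complex conjugation, i.e.\ coming from $\mathrm{Sel}(\mathbb{Q}, V^\dagger_f)$.

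The crux is to deduce $\kappa^{\mathrm{Hg},\dagger,\mathcal{K}}_1 \neq 0$ from this corank-one situation together with ($\mathrm{loc}_p$). My approach is to transplant the relative Kolyvagin-system argument of \S\ref{sec:proof-main-formulas} to the anticyclotomic setting, treating the Heegner cycle $s_\mathcal{K}$ as the anticyclotomic counterpart of Kato's zeta element. Concretely, $s_\mathcal{K}$ produces, via Nekov\'{a}\v{r}'s higher-weight extension of Howard's big Heegner construction, an anticyclotomic class whose image under the augmentation map is $\kappa^{\mathrm{Hg},\dagger,\mathcal{K}}_1$; the anticyclotomic main conjecture for $T^\dagger_f$ localized at the augmentation ideal, available in the ordinary $p$-distinguished higher-weight range thanks to Burungale--Castella--Grossi--Skinner and the references cited in the introduction, identifies this big class up to units with a generator of the corank-one Selmer module over $\mathcal{K}$. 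The hypothesis ($\mathrm{loc}_p$) is precisely what prevents the resulting generator from being killed at $p$ in the augmentation fibre. I expect the descent from the non-vanishing of the big Heegner class to that of its bottom layer $\kappa^{\mathrm{Hg},\dagger,\mathcal{K}}_1$ to be the main technical obstacle, to be handled by the rigidity of anticyclotomic Kolyvagin systems in parallel with the Kato-side rigidity used in \S\ref{sec:proof-main-formulas}.

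Once $\kappa^{\mathrm{Hg},\dagger,\mathcal{K}}_1 \neq 0$ is in hand, the assumption (non-deg) together with the injectivity of the $p$-adic Abel--Jacobi map on $\mathrm{CM}_{k/2}(X(N)_\mathcal{K})$ that it entails forces $s_\mathcal{K}$ to be non-torsion in $\mathrm{CM}_{k/2}(X(N)_\mathcal{K}) \otimes F$. The higher-weight Gross--Zagier--Nekov\'{a}\v{r}--S.-W.~Zhang formula (Theorem \ref{thm:nekovar-zhang}) then yields $L'(f, k/2) \neq 0$, so together with $L(f, k/2) = 0$ one concludes $\mathrm{ord}_{s=k/2}L(f,s) = 1$. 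Finally, feeding the non-trivial Heegner cycle into Nekov\'{a}\v{r}'s higher-weight Kolyvagin machinery produces an upper bound on $\mathrm{cork}_{\mathcal{O}}\mathrm{Sel}(\mathcal{K}, W^\dagger_f)$ matching the lower bound already established, together with the finiteness of $\sha(f^\dagger/\mathbb{Q})[\pi^\infty]$ and the equality $\mathrm{dim}_F \mathrm{im}(\Phi_{f,\mathbb{Q}} \otimes F) = 1$.
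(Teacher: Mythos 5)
Your opening reductions are fine but sit away from the real difficulty, and the one step that carries the theorem is left unproven. Deriving $w(f)=-1$ from (cork1) via Corollary \ref{cor:discrete-bsd}, and getting $\mathrm{cork}_{\mathcal{O}}\mathrm{Sel}(\mathcal{K},W^\dagger_f)=1$ from Corollary \ref{cor:analytic-rank-zero} applied to $f\otimes\chi_{\mathcal{K}/\mathbb{Q}}$, are both reasonable (and the latter is not even needed in the paper's argument). The crux, however, is the implication (cork1) $+$ ($\mathrm{loc}_p$) $\Rightarrow \kappa^{\mathrm{Hg},\dagger,\mathcal{K}}_1\neq 0$, and your route to it rests on two inputs that are not available: an anticyclotomic main conjecture localized at the augmentation ideal for \emph{higher-weight} good ordinary forms, which you attribute to \cite{burungale-castella-grossi-skinner-indivisibility} even though that work concerns elliptic curves (weight two), and a ``descent'' from a big Heegner class to its bottom layer $\kappa^{\mathrm{Hg},\dagger,\mathcal{K}}_1$ which you yourself flag as an unresolved technical obstacle. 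Moreover the role of ($\mathrm{loc}_p$) in your sketch is only asserted, not argued. So the central non-vanishing statement is not established by the proposal.

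For comparison, the paper stays entirely on the cyclotomic/Kato side and uses Perrin-Riou's conjecture as the bridge: Theorem \ref{thm:skinner-urban-wan} supplies (IMC at $\mathbf{1}$) in the ordinary $p$-distinguished case; then (cork1) and ($\mathrm{loc}_p$) force $\mathrm{Sel}_0(\mathbb{Q},W^\dagger_f)$ to be cofinite of corank zero, which by Theorem \ref{thm:mazur-rubin-main-conjecture} is equivalent to $\kappa^{\mathrm{Kato},\dagger}_1\neq 0$, and ($\mathrm{loc}_p$) again gives $\mathrm{loc}_p(\kappa^{\mathrm{Kato},\dagger}_1)\neq 0$; the settled Perrin-Riou conjecture (\cite[Thm. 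B]{bertolini-darmon-venerucci} together with Kato's explicit reciprocity law \cite[Thm. 14.2]{kato-euler-systems}) then converts this into $\mathrm{loc}_p(\kappa^{\mathrm{Hg},\dagger,\mathcal{K}}_1)\neq 0$, hence $\kappa^{\mathrm{Hg},\dagger,\mathcal{K}}_1\neq 0$; the finiteness of $\sha(f^\dagger/\mathbb{Q})[\pi^\infty]$ and, under (non-deg), the equality $\mathrm{ord}_{s=k/2}L(f,s)=1$ then follow from Theorem \ref{thm:nekovar-zhang} and the Gross--Zagier--Zhang formula, exactly as in your last paragraph. One further small correction there: (non-deg) does not ``entail'' injectivity of the $p$-adic Abel--Jacobi map, and no injectivity is needed to pass from $\kappa^{\mathrm{Hg},\dagger,\mathcal{K}}_1\neq 0$ to $s_{\mathcal{K}}\neq 0$, since the class is by definition the Abel--Jacobi image of $s_{\mathcal{K}}$; injectivity would only be relevant for the converse direction.
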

\begin{proof}
It follows from Theorem \ref{thm:skinner-urban-wan} and Theorem \ref{thm:p-converse-higher-weight}.
\end{proof}



\subsection{Remarks on Theorems \ref{thm:main-central-critical} and  \ref{thm:main-all-critical}}

\subsubsection{} \label{subsubsec:known-limitations-bsd-bloch-kato}
In the framework of the Tamagawa number conjecture \cite{bloch-kato,fontaine-special-values,kato-iwasawa-hodge,kato-lecture-1,fontaine-perrin-riou,perrin-riou-book, kato-icm-2002, fukaya-kato}, all the known results on the exact bound of Bloch--Kato Selmer groups of modular forms require \emph{all} the assumptions below:
\begin{itemize}
\item[(low)] the analytic rank $\leq 1$,
\item[(IMC)] the \emph{full} Iwasawa main conjecture, \emph{and}
\item[(ordFL)] the good ordinary assumption ($p \nmid N \cdot a_p(f) $) or the Fontaine--Laffaille assumption ($p \nmid N$ and $2 \leq k \leq p-1$).
\end{itemize}
Any of these assumptions has played a crucial role to deduce the exact Birch and Swinnerton-Dyer type formula in the literature
\cite{coates-wiles-bsd-1977, gross-zagier-original, rubin-tate-shafarevich, kolyvagin-euler-systems,rubin-main-conj-cm,rubin-p-converse, kato-euler-systems,skinner-urban-icm, kobayashi-gross-zagier,skinner-urban, wei-zhang-mazur-tate,wan_hilbert, berti-bertolini-venerucci, jetchev-skinner-wan, castella-cambridge,wan-iwasawa-2017, skinner-converse, burungale-tian-p-converse, castella-grossi-lee-skinner, thackeray-thesis-jnt, kim-p-converse,wan-main-conj-ss-ec,castella-ciperiani-skinner-sprung,kazim-pollack-sasaki,burungale-skinner-tian-wan, fouquet-wan, sweeting-kolyvagin, longo-vigni-tamagawa}.
All the approaches are based heavily on the Euler system method and various generalizations of Gross--Zagier formula as well as the verification of certain Iwasawa main conjectures.

When the analytic rank is zero, we refer to \cite[Thm. 3.36]{skinner-urban}, \cite[Prop. 14.21]{kato-euler-systems}, and \cite[Cor. 6.2]{wan-iwasawa-2017} 
in order to observe how Hida theory \cite[$\S$3.3.11]{skinner-urban} and Fontaine--Laffaille modules \cite[$\S$14.17]{kato-euler-systems} are used.
These kinds of restrictions still remain in the similar results for motives arising from Shimura varieties associated to higher rank groups.

In our main results, all three assumptions above are significantly weakened.
In particular, Theorem \ref{thm:main-all-critical} is completely independent of (IMC) and (ordFL).
This is possible since our approach does not use any Hida theory or integral $p$-adic Hodge theory.


\subsubsection{}
Kolyvagin first studied the \emph{structure} of Selmer groups of elliptic curves over an imaginary quadratic field satisfying the Heegner hypothesis via Heegner point Kolyvagin systems \cite{kolyvagin-structure-sha, kolyvagin-selmer}.
Later, Mazur--Rubin developed the theory of Kolyvagin systems and obtained the structure theorem of $p$-strict Selmer groups of elliptic curves over $\mathbb{Q}$ via Kato's Kolyvagin systems \cite{mazur-rubin-book}.
In order to utilize these structure theorems practically, one needs to compute the divisibility index of each Kato's or Heegner point Kolyvagin system class explicitly.
However, this computation seems quite non-trivial due to their abstract nature.

Kurihara independently developed the theory of Kolyvagin systems of Gauss sum type and obtained the structure theorem of Selmer groups of elliptic curves over $\mathbb{Q}$ via Kurihara numbers \cite{kurihara-iwasawa-2012, kurihara-munster, kurihara-analytic-quantities}.
His argument requires several non-trivial assumptions including the Iwasawa main conjecture and the $\mu = 0$ conjecture.

In this sense, our formulas take only the advantages from both approaches. 

\subsubsection{}
It seems that there is no good way to compute Fontaine--Perrin-Riou's local Tamagawa ideals \cite{fontaine-perrin-riou} for \emph{higher weight} modular forms in general.
In other words,  it seems difficult to verify the $p$-part of the Tamagawa number conjecture for modular forms numerically even when the analytic rank is zero.
Our formula completely avoids the computation of local Tamagawa ideals (in particular, at $p$) but captures the exact bound of Selmer group.
Indeed, the analytic fudge factor $\partial^{(\infty)} \left( \kn^{\mathrm{min},r} \right)$ plays the role of local Tamagawa ideals as in Theorems \ref{thm:main-central-critical} and \ref{thm:main-all-critical}. 
Since we bypass the computation of the local Tamagawa ideals, we do not have any exact control of $\partial^{(\infty)} \left( \kn^{\mathrm{min},r} \right)$, and this is the cost we should pay.  See $\S$\ref{sec:formulation-refined-conjecture}.
However,  if the reader focuses mainly on Bloch--Kato Selmer groups, this cost is negligible.

\subsubsection{} \label{subsubsec:integral-periods}
It is widely believed that the choice of the ``correct" integral complex periods is essential to obtain the formulas for the exact bound of Bloch--Kato Selmer groups.
Although a delicate choice of \emph{canonical} integral periods is an important topic in the arithmetic of modular forms (e.g. \cite{vatsal-cong}), our approach completely bypasses this issue. \emph{Any} non-minimal integral periods can play the exactly same role in Theorems  \ref{thm:main-central-critical} and \ref{thm:main-all-critical} without any modification because the difference will be cancelled out.
The minimal integral periods are used only for convenience.
This feature essentially  comes from the rigidity of Kolyvagin systems and this nature was not observed well in the case of elliptic curves \cite{kim-structure-selmer}.


\section{Selmer groups and modular symbols} \label{sec:preliminaries}
\subsection{Galois cohomology} \label{subsec:galois-cohomology}
\subsubsection{}
Let $p$ be an odd prime.
Let $F$ be a finite extension of $\mathbb{Q}_p$ with ring of integers $\mathcal{O}$ and uniformizer $\pi$.
Let $V$ be a finite dimensional $F$-vector space endowed with continuous action of $\mathrm{Gal}(\overline{\mathbb{Q}}/\mathbb{Q})$.
Let $T \subseteq V$ be a Galois stable $\mathcal{O}$-lattice and write $W = V/T$.
\subsubsection{} 
Let $K$ be a non-archimedean local field of residual characteristic $\ell \neq p$.
The finite local condition of $\mathrm{H}^1(K, V)$ is defined  by the unramified subspace
$\mathrm{H}^1_f(K, V) = \mathrm{H}^1_{\mathrm{ur}}(K, V)$.
Also, $\mathrm{H}^1_f(K, T)$ is defined by the preimage of $\mathrm{H}^1_f(K, V)$ in $\mathrm{H}^1(K, T)$ and $\mathrm{H}^1_f(K, W)$ is defined by the image of $\mathrm{H}^1_f(K, V)$ in $\mathrm{H}^1(K, W)$.
For each $m \geq 1$,  $\mathrm{H}^1_f(K, W[\pi^m])$ and $\mathrm{H}^1_f(K, T/\pi^mT)$ are defined similarly.


\subsubsection{}
Let $K$ be a finite extension of $\mathbb{Q}_p$.
We assume that $V$ is de Rham as a representation of $\mathrm{Gal}(\overline{K}/K)$.
Define
$\mathrm{H}^1_f(K, V) = \mathrm{ker} \left( \mathrm{H}^1(K, V) \to \mathrm{H}^1(K, \mathbf{B}_{\mathrm{cris}} \otimes V) \right) $
where $\mathbf{B}_{\mathrm{cris}}$ is Fontaine's crystalline period ring.
Also, $\mathrm{H}^1_f(K, T)$, $\mathrm{H}^1_f(K, T/\pi^mT)$, $\mathrm{H}^1_f(K, W)$, and $\mathrm{H}^1_f(K, W[\pi^m])$ are defined similarly.
See \cite[$\S$3]{bloch-kato} for details.
\begin{thm}[Bloch--Kato] \label{thm:bloch-kato-exponential}
If $V$ is  de Rham and $\mathrm{det}(1 - \varphi \cdot X : \mathbf{D}_{\mathrm{dR}}(V))\vert_{X=1} \neq 0$, then
$$\mathrm{exp} : \mathbf{D}_{\mathrm{dR}}(V) / \mathrm{Fil}^0\mathbf{D}_{\mathrm{dR}}(V) \simeq \mathrm{H}^1_f(K, V).$$
\end{thm}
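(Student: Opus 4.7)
The plan is to derive the isomorphism from the long exact cohomology sequence associated with Fontaine's fundamental short exact sequence
\begin{equation*}
0 \to \mathbb{Q}_p \to \mathbf{B}_{\mathrm{cris}}^{\varphi=1} \to \mathbf{B}_{\mathrm{dR}}/\mathbf{B}_{\mathrm{dR}}^+ \to 0,
\end{equation*}
which is one of the pillars of $p$-adic Hodge theory. I would first tensor this sequence with the de Rham representation $V$ over $\mathbb{Q}_p$ (the terms remain exact because the maps are morphisms of topological $\mathbb{Q}_p$-vector spaces and $V$ is $\mathbb{Q}_p$-flat) and then pass to continuous Galois cohomology of $G_K = \mathrm{Gal}(\overline{K}/K)$.

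Next, I would identify the relevant pieces. The $G_K$-invariants of $\mathbf{B}_{\mathrm{dR}}/\mathbf{B}_{\mathrm{dR}}^+ \otimes V$ are naturally identified with $\mathbf{D}_{\mathrm{dR}}(V)/\mathrm{Fil}^0\mathbf{D}_{\mathrm{dR}}(V)$; here the de Rham hypothesis enters in a crucial way, guaranteeing that $\dim_K \mathbf{D}_{\mathrm{dR}}(V) = \dim_{\mathbb{Q}_p} V$ so that taking invariants commutes with the quotient. The long exact sequence then produces a connecting homomorphism
\begin{equation*}
\partial : \mathbf{D}_{\mathrm{dR}}(V)/\mathrm{Fil}^0\mathbf{D}_{\mathrm{dR}}(V) \longrightarrow \mathrm{H}^1(K, V),
\end{equation*}
which is the Bloch--Kato exponential map $\mathrm{exp}$. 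By its very construction, the image of $\partial$ lies in the kernel of $\mathrm{H}^1(K, V) \to \mathrm{H}^1(K, \mathbf{B}_{\mathrm{cris}}^{\varphi=1} \otimes V)$, and composing with the inclusion $\mathbf{B}_{\mathrm{cris}}^{\varphi=1} \hookrightarrow \mathbf{B}_{\mathrm{cris}}$ shows $\mathrm{im}(\partial) \subseteq \mathrm{H}^1_f(K, V)$ by definition of the crystalline local condition.

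For injectivity of $\mathrm{exp}$, I would use the standard computation identifying the kernel of $\partial$ with the image of $(\mathbf{B}_{\mathrm{cris}}^{\varphi=1} \otimes V)^{G_K} \to (\mathbf{B}_{\mathrm{dR}}/\mathbf{B}_{\mathrm{dR}}^+ \otimes V)^{G_K}$; the non-vanishing hypothesis $\det(1 - \varphi \mid \mathbf{D}_{\mathrm{cris}}(V)) \neq 0$ (this is the reading of the determinant condition in the statement, since the Frobenius acts on $\mathbf{D}_{\mathrm{cris}}(V)$ and the two modules agree after tensoring appropriately) forces $(\mathbf{B}_{\mathrm{cris}} \otimes V)^{\varphi=1,\, G_K} = \mathbf{D}_{\mathrm{cris}}(V)^{\varphi=1} = 0$, killing the kernel. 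For surjectivity onto $\mathrm{H}^1_f(K, V)$, I would dualize: the Bloch--Kato dual exponential construction and the $p$-adic local Tate duality between $\mathrm{H}^1_f(K, V)$ and $\mathrm{H}^1_f(K, V^*(1))$ (under $V^*(1)$ being de Rham, which is automatic) reduce the claim to matching dimensions, and then the formula
\begin{equation*}
\dim_K \bigl(\mathbf{D}_{\mathrm{dR}}(V)/\mathrm{Fil}^0\mathbf{D}_{\mathrm{dR}}(V)\bigr) + \dim_K \mathrm{Fil}^0 \mathbf{D}_{\mathrm{dR}}(V) = \dim_{\mathbb{Q}_p} V
\end{equation*}
together with Bloch--Kato's dimension computation for $\mathrm{H}^1_f$ closes the argument.

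The main obstacle will be the passage between de Rham cohomology and the crystalline local condition: one must verify that the connecting map truly surjects onto $\mathrm{H}^1_f(K, V)$ and not merely lands inside it. This is where the non-degeneracy hypothesis on $1 - \varphi$ is indispensable, since without it the image of $\mathrm{H}^0(K, \mathbf{B}_{\mathrm{cris}} \otimes V)$ under the Frobenius-difference map fails to hit everything needed. The rest of the proof is a bookkeeping exercise balancing dimensions on both sides, which I would organize through the compatible short exact sequences involving $\mathbf{B}_{\mathrm{cris}}^{\varphi=1}$ and $\mathbf{B}_{\mathrm{dR}}^+$ as carried out in \cite{bloch-kato}.
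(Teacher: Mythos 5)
The paper gives no argument of its own here---its ``proof'' is just the citation to \cite[Thm. 4.1.(ii)]{bloch-kato}---and your sketch reconstructs exactly the standard argument from that source: the fundamental exact sequence $0 \to \mathbb{Q}_p \to \mathbf{B}_{\mathrm{cris}}^{\varphi=1} \to \mathbf{B}_{\mathrm{dR}}/\mathbf{B}_{\mathrm{dR}}^+ \to 0$, the identification $\mathrm{H}^0\left(K,(\mathbf{B}_{\mathrm{dR}}/\mathbf{B}_{\mathrm{dR}}^+)\otimes V\right) \simeq \mathbf{D}_{\mathrm{dR}}(V)/\mathrm{Fil}^0\mathbf{D}_{\mathrm{dR}}(V)$ for de Rham $V$, injectivity of the connecting map from $\mathbf{D}_{\mathrm{cris}}(V)^{\varphi=1}=0$, and surjectivity onto $\mathrm{H}^1_f(K,V)$ by the Bloch--Kato dimension formula $\dim_{\mathbb{Q}_p}\mathrm{H}^1_f(K,V)=\dim_{\mathbb{Q}_p}\mathrm{H}^0(K,V)+\dim_{\mathbb{Q}_p}\left(\mathbf{D}_{\mathrm{dR}}(V)/\mathrm{Fil}^0\mathbf{D}_{\mathrm{dR}}(V)\right)$ once $\mathrm{H}^0(K,V)=0$. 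You also correctly isolate the one genuine subtlety---the image of the connecting map is a priori only $\mathrm{H}^1_e$, the kernel of $\mathrm{H}^1(K,V)\to\mathrm{H}^1(K,\mathbf{B}_{\mathrm{cris}}^{\varphi=1}\otimes V)$, and it is the invertibility of $1-\varphi$ on $\mathbf{D}_{\mathrm{cris}}(V)$ (the intended reading of the determinant hypothesis, which the paper writes with $\mathbf{D}_{\mathrm{dR}}$) that kills $\mathrm{H}^1_f/\mathrm{H}^1_e\simeq\mathbf{D}_{\mathrm{cris}}(V)/(1-\varphi)\mathbf{D}_{\mathrm{cris}}(V)$---so the proposal is correct and essentially coincides with the cited proof.
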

\begin{proof}
See \cite[Thm. 4.1.(ii)]{bloch-kato}.
\end{proof}
We recall
$\mathrm{det}(1 - \varphi \cdot X : \mathbf{D}_{\mathrm{dR}}( V_{\overline{f}}(r) ))\vert_{X=1} = 1 - a_p(\overline{f}) \cdot p^{-r} - \psi^{-1}(p) \cdot p^{k-1-2r}$
\cite[$\S$14.10]{kato-euler-systems}.
\subsubsection{}
When $\ell \equiv 1 \pmod{\pi^m}$ and $T/\pi^mT$ is unramified at $\ell$, the transverse local condition for $T/\pi^mT$ is defined by
$\mathrm{H}^1_{\mathrm{tr}}(\mathbb{Q}_\ell, T/\pi^mT ) = 
\mathrm{H}^1(\mathbb{Q}_\ell(\zeta_\ell)/\mathbb{Q}_\ell, \mathrm{H}^0(\mathbb{Q}_\ell(\zeta_\ell), T/\pi^mT) ) $
where $\zeta_\ell$ is a primitive $\ell$-th root or unity.
For $\ell \in \mathcal{N}_m$,  we have the \textbf{finite-singular comparison isomorphism}
$\phi^{\mathrm{fs}}_\ell : \mathrm{H}^1_{f}(\mathbb{Q}_\ell, T/\pi^mT) \simeq \mathrm{H}^1_{/f}(\mathbb{Q}_\ell, T/\pi^mT)$
where $\mathrm{H}^1_{/f}(-) = \mathrm{H}^1(-) / \mathrm{H}^1_f(-)$. 
We make a choice of generator of $\mathbb{F}^\times_\ell$ in the isomorphism, and it is compatible with the choice made in $\S$\ref{subsubsec:kurihara-numbers} below.
If $\ell \in \mathcal{N}_m$, we identify $\mathrm{H}^1_{/f}(\mathbb{Q}_\ell, T/\pi^mT) = \mathrm{H}^1_{\mathrm{tr}}(\mathbb{Q}_\ell, T/\pi^m T)$.
See \cite[$\S$1.2]{mazur-rubin-book} for details.

\subsubsection{}
The localization map at $\ell$ and its singular quotient are denoted by
\[
\xymatrix{
\mathrm{loc}_\ell : \mathrm{H}^1(\mathbb{Q}, T/\pi^mT) \to \mathrm{H}^1(\mathbb{Q}_\ell, T/\pi^mT) , 
& \mathrm{loc}^s_\ell : \mathrm{H}^1(\mathbb{Q}, T/\pi^mT) \to \mathrm{H}^1_{/f}(\mathbb{Q}_\ell, T/\pi^mT) .
}
\]

\subsection{Selmer structures and Selmer groups} \label{subsec:selmer-structure}
\subsubsection{}
Let $\Sigma$ be a finite set of places of $\mathbb{Q}$ containing $p$, $\infty$ and all ramified primes of $T$.
Denote by $\mathbb{Q}_{\Sigma}$ the maximal extension of $\mathbb{Q}$ unramified outside $\Sigma$.

Let $\mathcal{F}$ be a Selmer structure on $T /\pi^m T$, i.e a collection of the following data
\begin{itemize}
\item a finite set $\Sigma(\mathcal{F})$ of places of $\mathbb{Q}$, including $\infty$, $p$ and all primes where $T /\pi^m T$ is ramified,
\item for each $\ell \in \Sigma(\mathcal{F})$, a local condition on $T/\pi^m T$ is given, i.e. a choice of 
$\mathrm{H}^1_{\mathcal{F}}(\mathbb{Q}_\ell, T /\pi^m T) \subseteq \mathrm{H}^1(\mathbb{Q}_\ell, T /\pi^m T) $.
\end{itemize}
For a prime $q \not \in \Sigma$, we fix $\mathrm{H}^1_{\mathcal{F}}(\mathbb{Q}_q, T/\pi^m T)  = \mathrm{H}^1_{f}(\mathbb{Q}_q, T/\pi^m T)$.

The Selmer group of $T /\pi^m T$ (with respect to Selmer structure $\mathcal{F}$) is defined by
$$\mathrm{Sel}_{\mathcal{F}}(\mathbb{Q}, T /\pi^m T)  = \mathrm{ker} \left( \mathrm{H}^1(\mathbb{Q}_{\Sigma(\mathcal{F})}/\mathbb{Q}, T/\pi^m T) \to \bigoplus_{\ell \in \Sigma(\mathcal{F})} \dfrac{\mathrm{H}^1(\mathbb{Q}_\ell, T/ \pi^m T)}{\mathrm{H}^1_{\mathcal{F}}(\mathbb{Q}_\ell, T/ \pi^m T) } \right) .$$
Write $W^*(1) = \mathrm{Hom}(T, \mu_{p^\infty})$.
For a Selmer structure $\mathcal{F}$ on $T/\pi^m T$, the dual Selmer structure $\mathcal{F}^*$ on $W^*(1)[\pi^m]$ is defined by taking $\mathrm{H}^1_{\mathcal{F}^*}(\mathbb{Q}_\ell, W^*(1)[\pi^m]) = \mathrm{H}^1_{\mathcal{F}}(\mathbb{Q}_\ell, T/\pi^m T)^\perp$
with respect to the local Tate pairing for all $\ell \in \Sigma(\mathcal{F})$. 
The corresponding Selmer group $\mathrm{Sel}_{\mathcal{F}^*}(\mathbb{Q}, W^*(1)[\pi^m])$ is defined similarly.

\subsubsection{}
The \textbf{Bloch--Kato Selmer structure $\mathcal{F}_{\mathrm{BK}}$} is defined by
$\mathrm{H}^1_{\mathcal{F}_{\mathrm{BK}}}(\mathbb{Q}_\ell, T/\pi^m T) = \mathrm{H}^1_{f}(\mathbb{Q}_\ell, T/\pi^m T)$ for every prime $\ell$.
We write
\[
\xymatrix{
\mathrm{Sel}(\mathbb{Q}, T/\pi^mT) = \mathrm{Sel}_{\mathcal{F}_{\mathrm{BK}}}(\mathbb{Q}, T/\pi^m T), & \mathrm{Sel}(\mathbb{Q}, W^*(1)[\pi^m]) = \mathrm{Sel}_{\mathcal{F}^*_{\mathrm{BK}}}(\mathbb{Q}, W^*(1)[\pi^m])
}
\]
since the dual Bloch--Kato Selmer structure is also the Bloch--Kato Selmer structure due to the local Tate duality \cite[Prop. 1.4.3]{rubin-book}.

The \textbf{canonical Selmer structure $\mathcal{F}_{\mathrm{can}}$}
is defined by 
$\mathrm{H}^1_{\mathcal{F}_{\mathrm{can}}}(\mathbb{Q}_\ell, T/\pi^m T) = \mathrm{H}^1_{f}(\mathbb{Q}_\ell, T/\pi^m T)$ for every prime $\ell \neq p$
and 
$\mathrm{H}^1_{\mathcal{F}_{\mathrm{can}}}(\mathbb{Q}_p, T/\pi^m T) = \mathrm{H}^1(\mathbb{Q}_p, T/\pi^m T)$.
We write
\[
\xymatrix{
\mathrm{Sel}_{\mathrm{rel}}(\mathbb{Q}, T/\pi^m T) = \mathrm{Sel}_{\mathcal{F}_{\mathrm{can}}}(\mathbb{Q}, T/\pi^m T), & \mathrm{Sel}_0(\mathbb{Q}, W^*(1)[\pi^m ]) = \mathrm{Sel}_{\mathcal{F}^*_{\mathrm{can}}}(\mathbb{Q}, W^*(1)[\pi^m]).
}
\]

The compact Selmer groups of $T$ are defined by taking the projective limit of Selmer groups of $T/\pi^m T$.
The discrete Selmer groups of $W^*(1)$ are also defined by taking the direct limit of Selmer groups of $W^*(1)[\pi^m]$.

\subsubsection{}
For a given Selmer structure $\mathcal{F}$ on $T/\pi^mT$ and $n \in \mathcal{N}_m$, 
the Selmer structure $\mathcal{F}(n)$ is defined by
\begin{itemize}
\item $\mathrm{H}^1_{\mathcal{F}(n)}(\mathbb{Q}_\ell, T/\pi^m T) = \mathrm{H}^1_{\mathcal{F}}(\mathbb{Q}_\ell, T/\pi^m T)$ for $\ell$ not dividing $n$, and
\item $\mathrm{H}^1_{\mathcal{F}(n)}(\mathbb{Q}_\ell, T/\pi^m T) = \mathrm{H}^1_{\mathrm{tr}}(\mathbb{Q}_\ell, T/\pi^m T)$ for $\ell$ dividing $n$.
\end{itemize}
The dual Selmer structure $\mathcal{F}(n)^*$ on $W^*(1)[\pi^m]$ becomes $\mathcal{F}^*(n)$ thanks to the local Tate duality again \cite[Prop. 1.3.2.(ii)]{mazur-rubin-book}.
For $n \in \mathcal{N}_m$, we write
\[
\xymatrix@R=0em{
\mathrm{Sel}_{\mathrm{rel}, n}(\mathbb{Q}, T/I_nT)  = \mathrm{Sel}_{\mathcal{F}_{\mathrm{can}}(n)}(\mathbb{Q}, T/I_nT) , & \mathrm{Sel}_{n}(\mathbb{Q}, T/I_nT)  = \mathrm{Sel}_{\mathcal{F}_{\mathrm{BK}}(n)}(\mathbb{Q}, T/I_nT) , \\
\mathrm{Sel}_{n}(\mathbb{Q}, W^*(1)[I_n])  = \mathrm{Sel}_{\mathcal{F}^*_{\mathrm{BK}}(n)}(\mathbb{Q}, W^*(1)[I_n]), & \mathrm{Sel}_{0,n}(\mathbb{Q}, W^*(1)[I_n]) = \mathrm{Sel}_{\mathcal{F}^*_{\mathrm{can}}(n)}(\mathbb{Q}, W^*(1)[I_n]).
}
\]

%
%

\subsection{Modular symbols and Kurihara numbers} \label{subsec:modular-symbols-kurihara-numbers}
We review modular symbols closely following \cite{bellaiche-book} and introduce Kurihara numbers for higher weight modular forms.
\subsubsection{Period integrals}
For an integer $r$ with $1 \leq r \leq k-1$, a positive integer $a > 0$, and an integer $n$, the period integral is defined by
$$\lambda_{\overline{f}}( z^{r-1} ; a,n ) = (\sqrt{-1})^{r}  \cdot \int^{\infty}_{0} \overline{f}(iy+ a/n) \cdot y^{r-1} dy .
$$
Following \cite[$\S$5.3.3]{bellaiche-book}, we define
$$\lambda^{\pm}_{\overline{f}}( z^{r-1}; a,n ) =
\dfrac{1}{2} \cdot \left( 
\lambda_{\overline{f}}( z^{r-1}; a,n )
\pm
\lambda_{\overline{f}}( z^{r-1}; -a,n ) \right) .$$
In \cite[(8.6)]{mtt}, the convention of period integrals is given by
$( - 2 \pi \sqrt{-1} ) \cdot n^{r-1} \cdot \lambda_{\overline{f}}( z^{r-1} ; a,n )$.


\subsubsection{Minimal integral periods and modular symbols}
\begin{defn} \label{defn:minimal-periods}
The \textbf{minimal integral periods $\Omega^{\pm}_{\overline{f}, \mathrm{min}}$ of $\overline{f}$}
are defined uniquely up to $\mathcal{O}^\times$
such that
\begin{equation} \label{eqn:period-integrality}
\lambda^{\pm, \mathrm{min}}_{\overline{f}}(z^{r-1}; a, n) :=   \dfrac{\lambda^{\pm}_{\overline{f}}(z^{r-1}; a, n)}{\Omega^{\pm}_{\overline{f}, \mathrm{min}}} \in \mathcal{O}
\end{equation}
for an integer $r$ with $1 \leq r \leq k-1$, a positive integer $a > 0$, and an integer $n$, and
\begin{equation} \label{eqn:period-minimality}
\lambda^{\pm, \mathrm{min}}_{\overline{f}}(z^{r-1}; a, n) \in \mathcal{O}^\times
\end{equation}
for some integer $r$ with $1 \leq r \leq k-1$, some positive integer $a > 0$, and some integer $n$.
The value $\lambda^{\pm, \mathrm{min}}_{\overline{f}}(z^{r-1}; a, n)$ is called the \textbf{minimally normalized modular symbol}.
\end{defn}
\begin{rem} \label{rem:minimal-periods}
The existence of the minimal integral periods up to $\mathcal{O}^\times$ follows from \cite[Prop., p. 7]{mtt}.
See also \cite[Rem. 5.4.10]{bellaiche-book}.
If we require (\ref{eqn:period-integrality}) only, we just call them (non-minimal) integral periods.
We can also choose the minimal integral periods in the same way but with fixed $r$.
Any choice does not affect the main result of this paper.
\end{rem}

\subsubsection{Kurihara numbers}  \label{subsubsec:kurihara-numbers}
For each prime $\ell \in \mathcal{N}_m$, we fix a primitive root $\eta_\ell$ mod $\ell$ and define
 $\mathrm{log}_{\eta_\ell}(a) \in \mathbb{Z}/(\ell-1)\mathbb{Z}$ by $\eta^{ \mathrm{log}_{\eta_\ell}(a)}_\ell \equiv a \pmod{\ell}$.
\begin{defn} \label{defn:kurihara-numbers}
Let $n \in \mathcal{N}_1$.
The \textbf{Kurihara number for $\overline{f}$ at $s = r$} is defined by
$$\widedelta^{\mathrm{min}, r}_n := \sum_{a \in (\mathbb{Z}/n\mathbb{Z})^\times}  \overline{ \lambda^{\pm, \mathrm{min}}_{\overline{f}}(z^{r-1}; a, n) } \cdot \left( \prod_{\ell \vert n} \overline{ \mathrm{log}_{\eta_\ell} (a) } \right)  \in \mathcal{O}/I_n \mathcal{O}$$
where $\overline{(-)}$ is the mod $I_n$ reduction of $(-)$ and the sign of the modular symbol coincides with that of $(-1)^{r-1}$.
The \textbf{collection of Kurihara numbers  for $\overline{f}$ at $s = r$} is defined by
$$\kn^{\mathrm{min}, r} = \left \lbrace 
\widedelta^{\mathrm{min}, r}_n \in \mathcal{O}/I_n \mathcal{O} : n \in \mathcal{N}_1
\right \rbrace .$$
When $\psi = \mathbf{1}$ and $r =k/2$, we replace $k/2$ by $\dagger$ in the above notation as in Theorem \ref{thm:main-central-critical}.
\end{defn}


\subsubsection{The functional equation for Kurihara numbers}
We assume that $\psi = \mathbf{1}$ and consider the $r = k/2$ case here.
By using the functional equation for modular symbols (e.g. \cite[$\S$6]{mtt}, \cite[Prop. 2.5]{ota-rank-part}), we observe that
\begin{equation} \label{eqn:functional-equation-delta_n}
w(f) \cdot (-1)^{\nu(n)} \cdot \widetilde{\delta}^{\mathrm{min},\dagger}_n = \widetilde{\delta}^{\mathrm{min},\dagger}_n \in \mathcal{O}/I_n \mathcal{O} .
\end{equation}
where $w(f)$ is the sign of the functional equation associated to $L(f,s)$. See also \cite[p. 220]{kurihara-munster} and \cite[Lem. 4, p. 347]{kurihara-iwasawa-2012}. The following statement is straightforward from (\ref{eqn:functional-equation-delta_n}).
\begin{prop} \label{prop:vanishing-delta-n}
If $(-1)^{\nu(n)} \neq w(f)$, then $\widetilde{\delta}^{\mathrm{min},\dagger}_n  = 0$.
In particular, if $\widedelta^{\mathrm{min},\dagger}_n \neq 0$, then $\widedelta^{\mathrm{min},\dagger}_{n\ell} = 0$ for every $\ell \in \mathcal{N}_1$ with $(n, \ell) =1$.
\end{prop}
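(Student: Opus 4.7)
The plan is to read off both assertions as direct arithmetic consequences of the functional equation (\ref{eqn:functional-equation-delta_n}) for Kurihara numbers, which has already been established just above the proposition. There is essentially no further geometric or cohomological input required; the only nontrivial ingredient is that $p$ is odd, so that $2 \in \mathcal{O}^\times$.

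First I would rearrange (\ref{eqn:functional-equation-delta_n}) into the single identity
\begin{equation*}
\bigl( w(f) \cdot (-1)^{\nu(n)} - 1 \bigr) \cdot \widetilde{\delta}^{\mathrm{min},\dagger}_n = 0 \quad \text{in } \mathcal{O}/I_n \mathcal{O}.
\end{equation*}
Now assume $(-1)^{\nu(n)} \neq w(f)$. Since both $w(f)$ and $(-1)^{\nu(n)}$ lie in $\{\pm 1\}$, their product is $-1$, so the scalar in front of $\widetilde{\delta}^{\mathrm{min},\dagger}_n$ is $-2$. Because $p \geq 3$ is odd, $2$ is a unit in $\mathcal{O}$ and hence in the quotient $\mathcal{O}/I_n \mathcal{O}$. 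Multiplying through by $(-2)^{-1}$ yields $\widetilde{\delta}^{\mathrm{min},\dagger}_n = 0$, which is the first assertion.

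For the ``in particular'' clause, I would argue contrapositively. If $\widedelta^{\mathrm{min},\dagger}_n \neq 0$, then the first assertion forces $(-1)^{\nu(n)} = w(f)$. For any prime $\ell \in \mathcal{N}_1$ coprime to $n$, one has $\nu(n\ell) = \nu(n) + 1$, and therefore
\begin{equation*}
(-1)^{\nu(n\ell)} = -(-1)^{\nu(n)} = -w(f) \neq w(f).
\end{equation*}
Applying the first assertion to $n\ell$ in place of $n$ immediately gives $\widedelta^{\mathrm{min},\dagger}_{n\ell} = 0$. There is no real obstacle here: the only point to verify carefully is that the functional equation (\ref{eqn:functional-equation-delta_n}) is an honest identity in $\mathcal{O}/I_n\mathcal{O}$ (not merely modulo $\pi$), and that the coefficient ring allows inversion of $2$; both are already in place.
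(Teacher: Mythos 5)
Your proposal is correct and is exactly the paper's argument: the paper states that the proposition is straightforward from the functional equation (\ref{eqn:functional-equation-delta_n}), and your derivation simply spells out the elementary algebra, including the (implicit but necessary) observation that $2$ is a unit in $\mathcal{O}/I_n\mathcal{O}$ because $p$ is odd. Nothing further is needed.
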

This vanishing is independent of the normalization of modular symbols.

\section{Kato's zeta elements and Kolyvagin systems (including the $p=3$ case)} \label{sec:kato-zeta-elts-kolyvagin}
\subsection{Canonical Kato's Euler systems} \label{subsec:canonical-Kato-Euler-systems}
We quickly review the notion of the canonical Kato's Euler system following \cite{kim-kato}.
\subsubsection{}
Let $L$ be a finite abelian extension of $\mathbb{Q}$.
Write
$$\mathrm{H}^1_{\mathcal{I}w}(L, V_f(k-r)) = \varprojlim_s \mathrm{H}^1(L(\zeta_{p^s}), V_f(k-r))$$
where the inverse limit is taken with respect to the corestriction maps.
The notation `$\mathrm{H}^1_{\mathrm{Iw}}$' is reserved for the cyclotomic $\mathbb{Z}_p$-extension.

By using the comparison between \'{e}tale and Betti cohomologies, the complex conjugation $c$ acts on $V_f$.
Let $V^\pm_f = (V_f)^{c= \pm 1} \subseteq V_f$ be the $c$-eigenspace with eigenvalue $\pm 1$, respectively.
For $\gamma \in V_f$, we write $\gamma = \gamma^+ + \gamma^-$ with $\gamma^{\pm} \in V^{\pm}_f$, respectively.
\begin{thm}[Kato] \label{thm:zeta-morphism}
Suppose that $\rho_f$ has large image.
For each finite abelian extension $L$ of $\mathbb{Q}$, there exists a canonical $\mathcal{O}$-linear morphism (integral zeta morphism)
$$T_f \to \mathrm{H}^1_{\mathcal{I}w}(L, T_f)$$
defined by $\gamma \mapsto \mathbf{z}^{(p)}_{L,\gamma}$
satisfying the following properties.

For every integers $n \geq 0$ and $r$ with $1 \leq r \leq k-1$, denote by 
$z^{\mathrm{Kato}, k-r}_{L(\zeta_{p^n}), \gamma}$ the image of
$\mathbf{z}^{(p)}_{L, \gamma} \otimes (\zeta_{p^s})^{\otimes k-r}_s \in \mathrm{H}^1_{\mathcal{I}w}(L, T_f(k-r))$
in $\mathrm{H}^1(L(\zeta_{p^n}), T_f(k-r))$.
\begin{enumerate}
\item $\left\lbrace z^{\mathrm{Kato}, k-r}_{L(\zeta_{p^n}), \gamma} \right\rbrace_{L(\zeta_{p^n})}$ satisfies the axioms of the Euler system for $T_f(k-r)$ in the sense of Rubin \cite{rubin-book}.
\item  Write
$ \mathrm{exp}^*\circ \mathrm{loc}_p\left( z^{\mathrm{Kato}, k-r}_{L(\zeta_{p^n}), \gamma} \right) = c_{k-r,L,p^n, \gamma} \cdot \omega_{f}$
where $c_{k-r,L,p^n, \gamma} \in L(\zeta_{p^n}) \otimes F$ and $\omega_f \in \mathrm{Fil}^0\mathbf{D}_{\mathrm{dR}}(V_f(k-r))$ is a non-zero element.
For any character $\chi : \mathrm{Gal}(L(\zeta_{p^n})/\mathbb{Q}) \to \mathbb{C}^\times$,
we have
$$ \sum_{\sigma \in \mathrm{Gal}(L(\zeta_{p^n})/\mathbb{Q})} \sigma \left(  c_{k-r,L,p^n, \gamma}  \right) \cdot \chi(\sigma) \cdot \mathrm{per}_f \left( \omega_{f} \right) =  (2\pi \sqrt{-1})^{k-r-1} \cdot L^{(p)}(\overline{f}, \chi, r) \cdot \gamma^{\pm}$$
where $\mathrm{per}_f$ is Kato's period map in \cite[$\S$6.3]{kato-euler-systems},
$L^{(p)}(\overline{f}, \chi, r)$ is the $\chi$-twisted $L$-value of $\overline{f}$ at $s=r$  with the Euler factor at $p$ removed,
 and the sign of $\gamma^{\pm}$ coincides with that of $(-1)^{k-r-1} \cdot \chi(-1)$.
\end{enumerate}
\end{thm}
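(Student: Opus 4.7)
The plan is to follow Kato's construction of zeta elements via Siegel units on modular curves and then project to the $f$-isotypic component of the \'etale cohomology of the Kuga--Sato variety of weight $k$, over appropriate cyclotomic towers.

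First, I would recall that Siegel units $g_{a,b}$ on the modular curve $Y(M)$, for an appropriate auxiliary level $M$, satisfy distribution relations under the covering maps $Y(M\ell) \to Y(M)$. Their Steinberg symbols $\{g_{a_1,b_1}, g_{a_2,b_2}\}$ in $K_2$ of modular curves map, via the Soul\'e--Beilinson \'etale regulator, to classes in $\mathrm{H}^2_{\mathrm{\acute{e}t}}(Y(M), \mathbb{Z}_p(2))$. Via cup product with the $(k-2)$-nd symmetric power of the relative cohomology class of the universal elliptic curve and pushforward along the Kuga--Sato projection, these classes produce elements in $\mathrm{H}^1(\mathbb{Q}(\zeta_{p^n}), \mathrm{H}^1_{\mathrm{\acute{e}t}}(Y(M)_{\overline{\mathbb{Q}}}, \mathrm{Sym}^{k-2}\mathscr{H}(1)))$. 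Projecting to the $f$-isotypic quotient, base-changing to $L(\zeta_{p^n})$, and taking the inverse limit in $n$ yields the candidate element $\mathbf{z}^{(p)}_{L,\gamma}$. The linear dependence on the Betti class $\gamma \in T_f$ is built in through the comparison between \'etale and Betti realizations implicit in Kato's period map.

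Next, the Euler system axioms --- norm-compatibility along the cyclotomic tower and the correct Euler factor at primes $\ell \nmid Np$ --- follow from the norm relations for Siegel units at $\ell$ combined with the Eichler--Shimura congruence $T_\ell \equiv \mathrm{Frob}_\ell + \ell^{k-1}\langle \ell \rangle \mathrm{Frob}^{-1}_\ell$ acting on $V_f$. The subtle point is integrality: the construction a priori produces classes in $V_f$, and the reason they land in $T_f$-valued Iwasawa cohomology for any chosen Galois-stable lattice is the large image hypothesis on $\rho_f$, which makes the residual representation absolutely irreducible so that every Galois-stable $\mathcal{O}$-lattice is unique up to scaling, rendering the integral refinement canonical.

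For the explicit reciprocity law in (2), I would invoke Kato's identification of the image of the zeta element under $\mathrm{exp}^*$ via the comparison between $p$-adic and complex periods. Concretely, the dual exponential sends the localization at $p$ of $z^{\mathrm{Kato},k-r}_{L(\zeta_{p^n}),\gamma}$ to a value in $L(\zeta_{p^n}) \otimes F$ that, after pairing against a Dirichlet character $\chi$ of $\mathrm{Gal}(L(\zeta_{p^n})/\mathbb{Q})$, realizes a Mellin-transform-like integral of $\overline{f}$ twisted by $\chi$. Removal of the Euler factor at $p$ is automatic from the $p$-depletion built into the construction, and comparison with $\mathrm{per}_f$ extracts the numerical $L$-value $L^{(p)}(\overline{f}, \chi, r)$. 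The sign $\pm$ of $\gamma^{\pm}$ is dictated by complex conjugation, matching $(-1)^{k-r-1}\chi(-1)$ because $c$ acts on the Tate twist $\chi_{\mathrm{cyc}}^{\otimes (k-r)}$ by $(-1)^{k-r}$ and on $\chi$ by $\chi(-1)$. The main obstacle is this explicit reciprocity law itself, which is Kato's deepest input and relies on a delicate identification between the syntomic realization of the zeta element and an integral modular form; since I am invoking the result from \cite{kato-euler-systems} together with its integral refinement from \cite{kim-kato} rather than reproving it, the real work on our side reduces to fixing normalizations --- the choice of $\omega_f \in \mathrm{Fil}^0\mathbf{D}_{\mathrm{dR}}(V_f(k-r))$, the decomposition $\gamma = \gamma^+ + \gamma^-$ consistent with the fixed isomorphism $\mathbb{C} \simeq \overline{\mathbb{Q}}_p$ of $\S$\ref{subsubsec:fixed-isom}, and $\mathrm{per}_f$ --- so that both sides of (2) live in the same space and agree on the nose.
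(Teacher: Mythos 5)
Your overall architecture (Beilinson--Kato elements from Siegel units, projection to the $f$-isotypic part, quoting Kato for the explicit reciprocity law) is the intended one, but your treatment of the single point that actually has to be verified here --- the \emph{integrality} of the zeta morphism, i.e.\ that $\gamma \mapsto \mathbf{z}^{(p)}_{L,\gamma}$ takes values in $\mathrm{H}^1_{\mathcal{I}w}(L, T_f)$ for $\gamma \in T_f$ --- contains a genuine gap. You argue: large image $\Rightarrow$ residual absolute irreducibility $\Rightarrow$ the Galois-stable lattice is unique up to homothety $\Rightarrow$ the integral refinement is ``canonical.'' Uniqueness of the lattice does not make a rational class integral. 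The construction produces classes in $\mathrm{H}^1(L(\zeta_{p^n}), V_f(k-r))$ only after projecting integral cohomology of the modular curve to the $f$-part and after removing the auxiliary $(c,d)$-factors that regularize the Siegel units; both steps a priori introduce denominators, and these are completely invisible to the homothety class of $T_f$. If residual irreducibility sufficed, the theorem would hold under a hypothesis much weaker than the $\mathrm{SL}_2(\mathbb{Z}_p)$-image condition that Kato (and this paper) actually impose, which should be a warning sign.

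What the paper does instead: it quotes Kato (Thm.\ 12.5 and 16.6 of \cite{kato-euler-systems}) for $L=\mathbb{Q}$, the rational recipe over general abelian $L$ together with the tame Euler relations (\cite[Prop.\ 8.12]{kato-euler-systems}, \cite{delbourgo-book}, \cite{kim-kato}), and then isolates integrality over $L$ as the only thing left to check. That is settled by observing that large image forces $\mathrm{H}^0(L(\zeta_{p^\infty}), \overline{\rho}_f) = 0$, hence $\mathrm{H}^1_{\mathcal{I}w}(L, T_f) \simeq \mathrm{H}^1_{\mathcal{I}w}(\mathbb{Q}, T_f \otimes_{\mathcal{O}} \mathcal{O}[\mathrm{Gal}(L/\mathbb{Q})])$ is \emph{free} over the Iwasawa algebra, after which Kato's descent argument in \cite[\S 13.14]{kato-euler-systems} --- the place where the full $\mathrm{SL}_2(\mathbb{Z}_p)$-image hypothesis is genuinely used --- produces the integral zeta morphism over $L$. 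Your proposal never invokes this freeness, nor any substitute for the \S 13.14 descent, so the integrality claim (the crux of the statement as formulated, since everything else is quoted) is unsupported. To repair it, replace the lattice-uniqueness remark by the vanishing-of-$\mathrm{H}^0$/freeness argument and the appeal to \cite[\S 13.14]{kato-euler-systems} (or cite the integral zeta morphism over $L$ from \cite{kim-kato} or \cite{kataoka-thesis} directly).
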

\begin{proof}
When $L = \mathbb{Q}$, this is \cite[Thm. 12.5 and Thm. 16.6]{kato-euler-systems}.
We may assume that $L$ is unramified at $p$ without loss of generality. 
If the coefficient module is $V_f(k-r)$ instead of $T_f(k-r)$, the explicit recipe of $z^{\mathrm{Kato}, k-r}_{L(\zeta_{p^n}), \gamma}$ can be found in \cite[Appendix A]{delbourgo-book} and \cite{kim-kato}, and the Euler system relation along the tame direction follows easily  from \cite[Prop. 8.12]{kato-euler-systems}.
Therefore, it suffices to check the integrality of the zeta morphism.
The large image assumption implies $\mathrm{H}^0( L(\zeta_{p^\infty}), \overline{\rho}_f ) = 0$.
Thus, $\mathrm{H}^1_{\mathcal{I}w}(L, T_f) \simeq \mathrm{H}^1_{\mathcal{I}w}(\mathbb{Q}, T_f \otimes_{\mathcal{O}} \mathcal{O}[\mathrm{Gal}(L/\mathbb{Q})])$ is free over $\Lambda_{\mathcal{I}w}$ (e.g. \cite[Rem. 6.5]{sakamoto-stark-systems}.)
The integrality of the zeta morphism over $L$ now follows from \cite[$\S$13.14]{kato-euler-systems}.
See also \cite[Thm. 6.1]{kataoka-thesis} and \cite{kim-kato}.
\end{proof}
\begin{defn}
Suppose that $\rho_f$ has large image.
For each $\gamma \in T_f$, 
the family of cohomology classes
$$\mathbf{z}^{\mathrm{Kato}, k-r}_{\gamma} = \left\lbrace z^{\mathrm{Kato}, k-r}_{L(\zeta_{p^n}), \gamma} \in \mathrm{H}^1(L(\zeta_{p^n}), T_f(k-r)) : L/\mathbb{Q}, \textrm{ finite abelian}, n \geq 0 \right\rbrace$$
forms an Euler system for $T_f(k-r)$ in the sense of Rubin \cite{rubin-book} as in Theorem \ref{thm:zeta-morphism}.
When $\gamma = \gamma^+ + \gamma^-$ with $T^\pm_f = \mathcal{O}_F \cdot \gamma^\pm$, we omit $\gamma$ and 
$\mathbf{z}^{\mathrm{Kato}, k-r}$ is called the \textbf{canonical Kato's Euler system for $T_f(k-r)$}.
\end{defn}

\subsubsection{}
By applying the Euler-to-Kolyvagin system map \cite[Thm. 3.2.4]{mazur-rubin-book} to $\mathbf{z}^{\mathrm{Kato}, k-r}$, we obtain Kato's Kolyvagin system
$$\ks^{\mathrm{Kato}, k-r}  = \left\lbrace \kappa^{\mathrm{Kato}, k-r}_n \in \mathrm{Sel}_{\mathrm{rel},n}(\mathbb{Q}, T_f(k-r)/I_n) \right\rbrace_{ n \in \mathcal{N}_1 },$$
which is a family of cohomology classes satisfying the axiom (\ref{eqn:axiom-kolyvagin-systems}) given below.

\subsubsection{}
For each $n \in \mathcal{N}_1$, the Kolyvagin derivative operator at $n$ is defined by
$D_{\mathbb{Q}(\zeta_n)}  = \prod_{\ell \vert n} \sum_{i=1}^{\ell-2} i \cdot \sigma^i_{\eta_\ell}$
where $\eta_\ell$ is a fixed primitive root mod $\ell \in \mathcal{P}_1$ as in $\S$\ref{subsubsec:kurihara-numbers}.
Then $\kappa^{\mathrm{Kato},k-r}_n$ can also be defined by the image of $D_{\mathbb{Q}(\zeta_n)} z^{\mathrm{Kato},k-r}_{\mathbb{Q}(\zeta_n)} \in \mathrm{H}^1(\mathbb{Q}(\zeta_n), T_f(k-r))$ in $\mathrm{H}^1(\mathbb{Q}, T_f(k-r)/I_n)$.

\subsection{Kolyvagin systems for modular forms} \label{subsec:kolyvagin-systems}
When we work with Kolyvagin systems, we always assume that $\rho_f$ has large image.

\subsubsection{} \label{subsubsec:kolyvagin-systems}
A \textbf{Kolyvagin system $\ks^{k-r}$ for $(T_f(k-r), \mathcal{F}_{\mathrm{can}}, \mathcal{P}_1)$}
is a collection of cohomology classes
$\ks^{k-r} = \left\lbrace  \kappa^{k-r}_n \in \mathrm{Sel}_{\mathcal{F}(n)}(\mathbb{Q}, T_f(k-r)/I_n) : n \in \mathcal{N}_1  \right\rbrace$
such that
\begin{equation} \label{eqn:axiom-kolyvagin-systems}
\mathrm{loc}^s_\ell ( \kappa^{k-r}_{n\ell} ) = \phi^{\mathrm{fs}}_\ell \circ \mathrm{loc}_\ell (\kappa^{k-r}_n) \in \mathrm{H}^1_{/f}(\mathbb{Q}_\ell, T_f(k-r)/I_{n\ell})
\end{equation}
 for $\ell \in \mathcal{N}_1$ with $(n,\ell) =1$.
Let $\KS(T_f(k-r)) = \KS(T_f(k-r), \mathcal{F}_{\mathrm{can}}, \mathcal{P}_1)$ denote the module of Kolyvagin systems and $\KSbar(T_f(k-r)) = \KSbar(T_f(k-r), \mathcal{F}_{\mathrm{can}}, \mathcal{P}_1)$ denote the generalized module of Kolyvagin systems defined by the completion \cite[$\S$3.1]{mazur-rubin-book}.
We do not distinguish $\KS$ and $\KSbar$ due to \cite[Cor. 4.5.3]{mazur-rubin-book} with the core rank one property (Theorem \ref{thm:core-rank-modular-forms-char-zero}) and \cite[Rem. 5.3.11]{mazur-rubin-book}.

Our convention of Kolyvagin systems depends on the choice of generators of $\mathrm{Gal}(\mathbb{Q}(\zeta_\ell)/\mathbb{Q})$ for each prime $\ell$ dividing $n$, and it corresponds to the choice of the primitive roots in $\S$\ref{subsubsec:kurihara-numbers}.

\subsubsection{}
When $p > 3$, the large image assumption is strong enough to satisfy all the working hypotheses for Kolyvagin systems \cite[$\S$3.5 and Lem. 6.2.3]{mazur-rubin-book}. 
When $p = 3$ and $\mathrm{Hom}(\overline{\rho}^\dagger_f, \mathrm{Hom}(\overline{\rho}^\dagger_f, \mathbb{F}(1) ) ) \neq 0$, we need a more argument following 
\cite{sakamoto-p-3}.
In \cite{mazur-rubin-book}, the condition $p > 3$ is used only in the refined Chebotarev density result below and its consequences.
\begin{prop}[Mazur--Rubin] \label{prop:chebotarev-mazur-rubin}
Let $p > 3$ be a prime and assume that $\rho_f$ has large image.
Let $c_1, c_2 \in \mathrm{H}^1(\mathbb{Q}, T_f(k-r)/\pi^m)$ and $c_3, c_4 \in \mathrm{H}^1(\mathbb{Q}, W_{\overline{f}}(r)[\pi^m])$.
For every $m' \in \mathbb{Z}_{>0}$, there exists a set $S \subseteq \mathcal{P}_{m'}$ of positive density such that for every $\ell \in S$, 
the localizations $\mathrm{loc}_{\ell} (c_i)$ are all non-zero.
\end{prop}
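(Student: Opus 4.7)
The plan is to follow the classical Mazur--Rubin Chebotarev strategy (see \cite[Prop.~3.6.1]{mazur-rubin-book}), adapted to accommodate the two different coefficient modules appearing here. Without loss of generality enlarge $m$ so that $m\geq m'$, and assume each $c_i\neq 0$. Set $M_1=M_2=T_f(k-r)/\pi^m$ and $M_3=M_4=W_{\overline{f}}(r)[\pi^m]$, and let $L/\mathbb{Q}$ be the finite Galois extension cut out by the $\mathrm{Gal}(\overline{\mathbb{Q}}/\mathbb{Q})$-action on $M_1\oplus M_3$, which automatically contains $\mathbb{Q}(\zeta_{p^m})$. The large image hypothesis implies that $\mathrm{Gal}(L/\mathbb{Q}(\zeta_{p^\infty}))$ surjects onto a conjugate of $\mathrm{SL}_2(\mathbb{Z}/\pi^m\mathbb{Z})$, and since $p>3$ this group acts absolutely irreducibly on each $M_j$ while the cohomology groups $\mathrm{H}^0(L/\mathbb{Q},M_j)$ and $\mathrm{H}^1(L/\mathbb{Q},M_j)$ both vanish.

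Next, inflation--restriction together with $\mathrm{H}^1(L/\mathbb{Q},M_j)=0$ shows that each non-zero $c_i$ restricts to a non-zero $\mathrm{Gal}(L/\mathbb{Q})$-equivariant homomorphism $\phi_i\colon\mathrm{Gal}(\overline{\mathbb{Q}}/L)\to M_i$. Since $M_i$ is irreducible as a $\mathrm{Gal}(L/\mathbb{Q})$-module, the image of $\phi_i$ is either zero or all of $M_i$; the hypothesis $c_i\neq 0$ forces the latter. Let $K_i$ be the fixed field of $\ker\phi_i$ and set $K=K_1K_2K_3K_4$. Then $\mathrm{Gal}(K/L)$ embeds into $\bigoplus_{i=1}^{4}M_i$ with every coordinate projection surjective, and it suffices to find $\sigma\in\mathrm{Gal}(K/L)$ with all four coordinates $\sigma_i\in M_i$ non-zero: any lift $\widetilde\sigma\in\mathrm{Gal}(K/\mathbb{Q})$ of $\sigma$ with $\widetilde\sigma|_L=\mathrm{id}$ will then serve as the Frobenius conjugacy class we need.

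To produce such a $\sigma$, I would apply a Goursat-type analysis inside $\bigoplus M_i$. For $p>3$, Schur's lemma gives $\mathrm{End}_{\mathrm{Gal}(L/\mathbb{Q})}(M_j)=\mathbb{F}$, and $M_1\not\cong M_3$ as $\mathrm{Gal}(L/\mathbb{Q})$-modules because they differ by a cyclotomic twist which remains non-trivial mod~$p$ when $p>3$. A standard sub-representation count then produces the desired $\sigma$: the four ``hyperplanes'' $\{\sigma_i=0\}$ have codimension one inside their respective isotypic components of $\bigoplus M_i$, and cannot jointly cover the submodule $\mathrm{Gal}(K/L)$ that surjects onto every factor. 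Finally, applying Chebotarev to $K/\mathbb{Q}$ yields a set $S$ of primes $\ell$ of positive density $1/[K:\mathbb{Q}]$ with $\mathrm{Frob}_\ell$ conjugate to $\widetilde\sigma$; for each such $\ell$, the condition $\widetilde\sigma|_L=1$ gives complete splitting in $L$ (so $\ell\equiv 1\pmod{\pi^m}$ and $a_\ell(\overline{f})\equiv\psi^{-1}(\ell)\ell^{k-1}+1\pmod{\pi^{m'}}$, i.e.\ $\ell\in\mathcal{P}_{m'}$), while the image of $\widetilde\sigma$ under $\phi_i$ recovers $\mathrm{loc}_\ell(c_i)$ via the natural isomorphism of unramified cohomology with the $\mathrm{Frob}_\ell$-coinvariants of $M_i$.

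The main obstacle is the Goursat step in the third paragraph: one has to exclude the possibility that the diagonal image $\mathrm{Gal}(K/L)\subseteq\bigoplus M_i$ has some coordinate projection that vanishes identically, equivalently that the four classes $c_i$ are sufficiently generic with respect to one another. This is exactly where the hypothesis $p>3$ enters essentially; for $p=3$ the residual representation $\overline{\rho}_f$ can admit non-trivial self-dualities of the form $\mathrm{Hom}(\overline{\rho}^\dagger_f,\mathrm{Hom}(\overline{\rho}^\dagger_f,\mathbb{F}(1)))\neq 0$, which produce unwanted $\mathrm{Gal}(L/\mathbb{Q})$-equivariant diagonals in $\bigoplus M_i$ and force the more delicate treatment of \cite{sakamoto-p-3}.
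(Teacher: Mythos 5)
Your overall strategy is the one behind the paper's proof, which is simply the citation to \cite[Prop.~3.6.1]{mazur-rubin-book}: kill $\mathrm{H}^1(L/\mathbb{Q},M_i)$ using a central element supplied by the large image hypothesis, view the restricted classes as $\mathrm{Gal}(L/\mathbb{Q})$-equivariant homomorphisms $\phi_i$ on $G_L$, find $\sigma\in\mathrm{Gal}(K/L)$ with all $\phi_i(\sigma)\neq 0$, and conclude by Chebotarev, noting that a Frobenius trivial on $L\supseteq\mathbb{Q}(\zeta_{p^{m'}})$ forces $\ell\in\mathcal{P}_{m'}$ and identifies $\mathrm{loc}_\ell(c_i)$ with $\phi_i(\mathrm{Frob}_\ell)$.

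However, your justification of the pivotal step is flawed. First, $M_i=T_f(k-r)/\pi^m$ is \emph{not} irreducible as a $\mathrm{Gal}(L/\mathbb{Q})$-module once $m>1$ (it contains $\pi M_i$), so you cannot conclude that $\phi_i$ is surjective; its image is only some $\pi^{j}M_i$. Second, and more seriously, the claim that $M_1\not\cong M_3$ because the cyclotomic twist ``remains non-trivial mod $p$ when $p>3$'' is wrong: in the paper's main application $\psi=\mathbf{1}$ and $r=k/2$, so $W^\dagger_f[\pi^m]\simeq T^\dagger_f/\pi^m$ and $M_1\cong M_3$ (this self-duality is used explicitly later in the paper), and in general the twist $\chi^{k-2r}$ can be trivial mod $p$ for any $p$. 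If your Goursat analysis genuinely needed distinct isotypic components and surjective coordinate projections, it would fail precisely in the self-dual situation where the proposition is applied. Neither input is needed: it suffices that each $\phi_i$ is non-zero on $A=\mathrm{Gal}(K/L)$, so each $\ker(\phi_i|_A)$ is a \emph{proper} subgroup of the finite abelian $p$-group $A$, and a finite abelian $p$-group is never the union of at most $p$ proper subgroups (pass to the Frattini quotient; a cyclic group cannot be covered at all). Since $p>3$, the four kernels cannot cover $A$, which yields the desired $\sigma$; conjugating $\sigma$ only moves the values $\phi_i(\sigma)$ by the $\mathrm{Gal}(L/\mathbb{Q})$-action, so non-vanishing is preserved on the whole Frobenius class. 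This covering bound, not the exclusion of self-dualities, is the true role of $p>3$, and its failure at $p=3$ (where $(\mathbb{Z}/3)^2$ is the union of its four lines) is exactly why Sakamoto's version \cite{sakamoto-p-3} needs the extra hypothesis $\dim_{\mathbb{F}}(\mathbb{F}c_1+\cdots+\mathbb{F}c_4)\geq 3$. Two smaller repairs: you cannot ``enlarge $m$'' since the classes live at level $m$ — instead enlarge the field, taking $L$ to trivialize the modules mod $\pi^{\max(m,m')}$; and $\mathbb{Q}(\zeta_{p^{m'}})\subseteq L$ is not automatic from the action on $M_1\oplus M_3$ alone, so adjoin $\mu_{p^{\max(m,m')}}$ to $L$ explicitly, which costs nothing.
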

\begin{proof}
See \cite[Prop. 3.6.1]{mazur-rubin-book}.
\end{proof}
When $p=3$ and the Galois representation is residually self-dual, Sakamoto obtained a slightly weaker result.
\begin{prop}[Sakamoto] \label{prop:chebotarev-sakamoto}
Let $p = 3$ and assume that $\psi = \mathbf{1}$ and $\rho_f$ has large image.
Let $c_1, c_2, c_3, c_4  \in \mathrm{H}^1(\mathbb{Q}, \overline{\rho}^\dagger_f)$ be non-zero elements such that
$$\mathrm{dim}_{\mathbb{F}} (\mathbb{F}c_1 + \mathbb{F}c_2 + \mathbb{F}c_3 + \mathbb{F}c_4 ) \geq 3.$$
For every $m' \in \mathbb{Z}_{>0}$, there are infinitely many primes $\ell \in \mathcal{P}_{m'}$ such that 
the localizations $\mathrm{loc}_{\ell} (c_i)$ are all non-zero in $\mathrm{H}^1(\mathbb{Q}_\ell, \overline{\rho}^\dagger_f)$.
\end{prop}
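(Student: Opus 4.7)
The plan is to realize the desired primes via the Chebotarev density theorem applied to a suitable compositum, following the general strategy of Mazur--Rubin (Proposition \ref{prop:chebotarev-mazur-rubin}) but with the additional cohomological and counting subtleties of the residually self-dual $p = 3$ setting handled directly.

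First, let $K_{m'}$ denote the finite Galois extension of $\mathbb{Q}$ cutting out $T^\dagger_f/\pi^{m'} T^\dagger_f$ together with $\mu_{p^{m'}}$, so that a prime $\ell$ lies in $\mathcal{P}_{m'}$ exactly when $\ell$ splits completely in $K_{m'}$. Since $G_{K_{m'}}$ acts trivially on $\overline{\rho}^\dagger_f$, each restriction $\tilde c_i := c_i|_{G_{K_{m'}}}$ is a $\mathrm{Gal}(K_{m'}/\mathbb{Q})$-equivariant homomorphism into $\overline{\rho}^\dagger_f$. A preliminary step is to check that these restrictions are non-zero, equivalently that $\mathrm{H}^1(\mathrm{Gal}(K_{m'}/\mathbb{Q}), \overline{\rho}^\dagger_f) = 0$. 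This reduces, via inflation-restriction along the normal subgroup $\mathrm{Gal}(K_{m'}/\mathbb{Q}(\overline{\rho}^\dagger_f))$, to the vanishing $\mathrm{H}^1(\mathrm{SL}_2(\mathbb{F}_p), \overline{\rho}^\dagger_f) = 0$ (a direct Sylow-$p$ computation, valid for all odd $p$) together with the observation that the abelian kernel sits inside $\mathrm{End}(\overline{\rho}^\dagger_f) \cong \mathbf{1} \oplus \mathrm{ad}^0(\overline{\rho}^\dagger_f)$ as a $\mathrm{Gal}(\mathbb{Q}(\overline{\rho}^\dagger_f)/\mathbb{Q})$-module, neither of whose irreducible constituents is isomorphic to the two-dimensional $\overline{\rho}^\dagger_f$. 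By absolute irreducibility, each $\tilde c_i$ is then surjective onto $\overline{\rho}^\dagger_f$.

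Next, let $L$ be the compositum of $K_{m'}$ with the fixed fields of the $\tilde c_i$, so that the joint cocycle map $\sigma \mapsto (\tilde c_1(\sigma), \ldots, \tilde c_4(\sigma))$ realizes $\mathrm{Gal}(L/K_{m'})$ as a $\mathrm{Gal}(K_{m'}/\mathbb{Q})$-stable subgroup of $(\overline{\rho}^\dagger_f)^{\oplus 4}$. By Schur's lemma the image has the form $\overline{\rho}^\dagger_f \otimes_{\mathbb{F}} V_0$ for a subspace $V_0 \subseteq \mathbb{F}^4$, and a duality identification matches $\dim_{\mathbb{F}} V_0$ with $\dim_{\mathbb{F}} \bigl( \sum_i \mathbb{F} \tilde c_i \bigr)$, which by injectivity of restriction equals $\dim_{\mathbb{F}} \bigl( \sum_i \mathbb{F} c_i \bigr) \geq 3$.

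Finally, we seek $\sigma \in \mathrm{Gal}(L/K_{m'})$ with $\tilde c_i(\sigma) \neq 0$ for every $i$. Each vanishing locus $\ker \pi_i \cap \mathrm{Gal}(L/K_{m'})$ is a proper $\mathbb{F}$-subspace of cardinality $|\overline{\rho}^\dagger_f|^{\dim V_0 - 1}$, so a union bound gives
\[
|\mathrm{Gal}(L/K_{m'})| - \sum_{i=1}^{4} |\ker \pi_i \cap \mathrm{Gal}(L/K_{m'})| \geq |\overline{\rho}^\dagger_f|^{\dim V_0 - 1} \bigl( |\overline{\rho}^\dagger_f| - 4 \bigr),
\]
which is strictly positive because $|\overline{\rho}^\dagger_f| \geq p^{2} \geq 9$. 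Once such a $\sigma$ is chosen, the Chebotarev density theorem applied to $L/\mathbb{Q}$ produces infinitely many primes $\ell$ whose Frobenius class coincides with that of $\sigma$; these primes satisfy $\ell \in \mathcal{P}_{m'}$ (since $\sigma|_{K_{m'}} = \mathrm{id}$) and $c_i(\mathrm{Frob}_\ell) = \tilde c_i(\sigma) \neq 0$ for every $i$, which under the unramified identification $\mathrm{H}^1_{\mathrm{ur}}(\mathbb{Q}_\ell, \overline{\rho}^\dagger_f) \cong \overline{\rho}^\dagger_f$ is the desired non-vanishing of $\mathrm{loc}_\ell(c_i)$.

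The main obstacle is the vanishing of inflation in the first step: for $p \geq 5$ this is routine, but in the residually self-dual $p = 3$ setting it requires careful control of the cohomology of $\mathrm{SL}_2(\mathbb{F}_3)$ (of order $24$) together with the adjoint decomposition of the abelian layer. The dimension hypothesis $\geq 3$ is then exactly what is needed for the counting to survive the small value $|\mathbb{F}_3| = 3$, where $\mathbb{P}^1(\mathbb{F}_3)$ has only four points and an uncontrolled image could otherwise be covered by the four hyperplanes $\ker \pi_i$.
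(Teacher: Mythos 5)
Your construction only ever constrains $\mathrm{Frob}_\ell$ to be trivial on $K_{m'}$, and this is where the proposal parts company with the result being quoted (the paper's proof is a citation of Sakamoto). Completely split primes do satisfy the congruences defining $\mathcal{P}_{m'}$ (though your ``exactly when'' is only an ``if''), and, after the repairs indicated below, your union bound does produce infinitely many such $\ell$ with all $\mathrm{loc}_\ell(c_i) \neq 0$. But note that your argument never actually uses the hypothesis $\mathrm{dim}_{\mathbb{F}}(\mathbb{F}c_1 + \cdots + \mathbb{F}c_4) \geq 3$ --- a clear sign that what you have proved is not the content of Sakamoto's lemma. For a prime whose Frobenius acts trivially on $\overline{\rho}^\dagger_f$, the unramified cohomology $\mathrm{H}^1_f(\mathbb{Q}_\ell, \overline{\rho}^\dagger_f)$ is two-dimensional, and such $\ell$ are useless as Kolyvagin primes: every downstream use of this proposition (Proposition \ref{prop:chebotarev}, Theorem \ref{thm:connected-graph} via Mazur--Rubin's graph argument and Sakamoto's Lemma 6.4, and the choices of auxiliary primes in \S\ref{subsec:toolbox}--\S\ref{subsec:proof-theorem-structure-2}) requires primes whose Frobenius is a prescribed \emph{nontrivial} unipotent element $\tau$, so that $\mathrm{H}^1_f(\mathbb{Q}_\ell, \cdot)$ is free of rank one and surjectivity of $\mathrm{loc}_\ell$ can be deduced from nonvanishing. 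In that setting the nonvanishing must be achieved in the one-dimensional quotient $\overline{\rho}^\dagger_f/(\tau - 1)\overline{\rho}^\dagger_f$ with Frobenius constrained to the coset $\tau \, \mathrm{Gal}(L/K_{m'})$, and the counting degenerates into exactly the problem you describe in your closing paragraph: four affine codimension-one conditions over $\mathbb{F}_3$, where a plane is the union of its four lines and where self-duality ties the classes together. That is precisely what the $\geq 3$ hypothesis and Sakamoto's finer analysis are for; your proof sidesteps the difficulty by establishing a strictly weaker statement, and the step that fails is the moment one additionally demands, as all the applications do, that $\mathrm{Frob}_\ell$ restrict to a fixed nontrivial element of $\mathrm{Gal}(K_{m'}/\mathbb{Q})$.

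Separately, several supporting claims are incorrect as stated, though fixable. The vanishing $\mathrm{H}^1(\mathrm{SL}_2(\mathbb{F}_3), \overline{\rho}^\dagger_f) = 0$ is not ``a direct Sylow-$p$ computation'': restriction to a Sylow $3$-subgroup lands in $\mathrm{H}^1(\mathbb{Z}/3\mathbb{Z}, \mathbb{F}_3^2) \cong \mathbb{F}_3 \neq 0$, so that argument proves nothing. The clean argument is that the central element $-I$ lies in the image, acts by $-1$ on $\overline{\rho}^\dagger_f$ and trivially by conjugation on $\mathrm{Gal}(K_{m'}/\mathbb{Q})$, whence $\mathrm{H}^i(\mathrm{Gal}(K_{m'}/\mathbb{Q}), \overline{\rho}^\dagger_f) = 0$ for all $i$ at once; this also disposes of your ``abelian kernel'' step, since $\mathrm{Gal}(K_{m'}/\mathbb{Q}(\overline{\rho}^\dagger_f))$ is neither abelian for $m' \geq 3$ nor a submodule of $\mathrm{End}(\overline{\rho}^\dagger_f)$ (it also has a cyclotomic part); only its mod-$p$ abelianization is relevant. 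Finally, the image of $\mathrm{Gal}(L/K_{m'})$ in $(\overline{\rho}^\dagger_f)^{\oplus 4}$ is a priori only an $\mathbb{F}_3$-structure stable under the Galois image, which need not contain $\mathbb{F}^\times$: Schur's lemma gives a submodule of the form $\mathbb{F}_3^2 \otimes_{\mathbb{F}_3} W$ with $W$ an $\mathbb{F}_3$-subspace, the restrictions $\tilde c_i$ need not surject onto $\overline{\rho}^\dagger_f$, and your $V_0$ need not be an $\mathbb{F}$-subspace. Your union bound survives because each $\ker \pi_i$ still has index at least $9$ in the image, but the structural claims as written (surjectivity, $\mathbb{F}$-linearity, and hence the stated cardinalities) are not justified.
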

\begin{proof}
See \cite[Lem. 5.2]{sakamoto-p-3}.
\end{proof}

\subsubsection{}
A prime $\ell \in \mathcal{P}_1$ is said to be \textbf{useful for (non-zero) $\kappa^{k-r}_n$} with $n \in \mathcal{N}_1$ if
 $(\ell ,n)=1$ and $\mathrm{loc}_\ell(\kappa^{k-r}_n) \neq 0$.
 By (\ref{eqn:axiom-kolyvagin-systems}), if  $\ell$ is a useful prime for a non-zero $\kappa^{k-r}_n$, then $\kappa^{k-r}_{n\ell} \neq 0$. 
\begin{prop} \label{prop:chebotarev}
There are infinitely many useful primes for a non-zero $\kappa^{k-r}_n$.
\end{prop}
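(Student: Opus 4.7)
The plan is to apply the refined Chebotarev density results recalled in Propositions~\ref{prop:chebotarev-mazur-rubin} and~\ref{prop:chebotarev-sakamoto} to the single class $\kappa^{k-r}_n$. Since $\ell \equiv 1 \pmod{\pi}$ holds for every $\ell \in \mathcal{P}_1$, we have $I_n \subseteq \pi\mathcal{O}$, so that $I_n = \pi^{a_n}\mathcal{O}$ for some $a_n \geq 1$. Under this identification the non-vanishing hypothesis on $\kappa^{k-r}_n$ says precisely that we are handed a non-zero class in $\mathrm{H}^1(\mathbb{Q}, T_f(k-r)/\pi^{a_n})$, and a useful prime is one in $\mathcal{P}_1$ coprime to $n$ whose localization map does not kill this class.

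For $p > 3$ I would invoke Proposition~\ref{prop:chebotarev-mazur-rubin} with $m = a_n$, $m' = 1$, and $c_1 = c_2 = \kappa^{k-r}_n$; the classes $c_3, c_4$ may be chosen as any two non-zero elements of $\mathrm{H}^1(\mathbb{Q}, W_{\overline{f}}(r)[\pi^{a_n}])$, whose existence is guaranteed by the large image hypothesis together with standard Euler characteristic bounds. The proposition then produces a positive-density subset of $\mathcal{P}_1$ on which $\mathrm{loc}_\ell(\kappa^{k-r}_n) \neq 0$, and discarding the finitely many primes dividing $n$ leaves infinitely many useful primes. For $p = 3$ (necessarily with $\psi = \mathbf{1}$ in our setting) the same strategy is executed via Proposition~\ref{prop:chebotarev-sakamoto}: after rescaling $\kappa^{k-r}_n$ by a suitable power of $\pi$, its mod-$\pi$ image $\overline{\kappa}^{k-r}_n \in \mathrm{H}^1(\mathbb{Q}, \overline{\rho}^\dagger_f)$ is non-zero, and I augment it by three auxiliary non-zero classes in the same cohomology group to form a collection whose $\mathbb{F}$-span has dimension at least three. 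Sakamoto's theorem then furnishes infinitely many $\ell \in \mathcal{P}_1$ with $\mathrm{loc}_\ell(\overline{\kappa}^{k-r}_n) \neq 0$, and hence $\mathrm{loc}_\ell(\kappa^{k-r}_n) \neq 0$.

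The main obstacle lies in verifying the three-dimensional span condition in the $p = 3$ case, since one must ensure that $\mathrm{H}^1(\mathbb{Q}, \overline{\rho}^\dagger_f)$ is large enough to supply three auxiliary classes linearly independent from $\overline{\kappa}^{k-r}_n$. Generically this follows from the large image hypothesis via the global Euler characteristic formula, but if the residual cohomology happens to be small a direct adaptation of Sakamoto's argument to a single class—essentially a Chebotarev density argument over the splitting field cut out by the image of $\overline{\kappa}^{k-r}_n$ under Kummer-theoretic evaluation—yields the same conclusion. In either regime only the non-vanishing of $\mathrm{loc}_\ell$ on one distinguished class is ultimately needed, so the argument is robust to any concrete choice of auxiliary data.
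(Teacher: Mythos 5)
Your overall route coincides with the paper's: the proof there consists of nothing more than invoking Propositions \ref{prop:chebotarev-mazur-rubin} and \ref{prop:chebotarev-sakamoto}, and your $p>3$ half is correct and is exactly what is intended — since $I_n=\pi^{a_n}\mathcal{O}$, the class $\kappa^{k-r}_n$ is a class modulo a power of $\pi$, which is precisely the kind of input Proposition \ref{prop:chebotarev-mazur-rubin} accepts, and taking $c_1=c_2=\kappa^{k-r}_n$ (with any auxiliary nonzero $c_3,c_4$) and discarding the finitely many primes dividing $n$ settles that case.

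The $p=3$ half, however, has a genuine gap. Proposition \ref{prop:chebotarev-sakamoto} concerns classes with coefficients in $\overline{\rho}^\dagger_f$, i.e.\ modulo $\pi$, whereas $\kappa^{k-r}_n$ lives modulo $I_n=\pi^{a_n}\mathcal{O}$, and your bridging step does not work as written. Multiplying $\kappa^{k-r}_n$ by a power of $\pi$ kills its mod-$\pi$ reduction, so ``rescaling'' must mean the torsion identification: if $j$ is maximal with $\pi^j\kappa^{k-r}_n\neq 0$, then (using $\mathrm{H}^0(\mathbb{Q},\overline{\rho}^\dagger_f)=0$) $\pi^j\kappa^{k-r}_n$ is the image of a nonzero $\bar c\in\mathrm{H}^1(\mathbb{Q},\overline{\rho}^\dagger_f)$ under the map induced by $\overline{\rho}^\dagger_f\simeq \pi^{a_n-1}\bigl(T^\dagger_f/I_n\bigr)\hookrightarrow T^\dagger_f/I_n$. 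But this map need not be injective on \emph{local} cohomology at the primes in question: for $\ell\in\mathcal{P}_1$ the Frobenius has $1$ as an eigenvalue, so $\mathrm{H}^0(\mathbb{Q}_\ell, T^\dagger_f/\pi^{a_n-1})$ is nonzero and the connecting map can produce a kernel. Hence $\mathrm{loc}_\ell(\bar c)\neq 0$ does not by itself give $\mathrm{loc}_\ell(\kappa^{k-r}_n)\neq 0$; the implication is clean only when the reduction of $\kappa^{k-r}_n$ under $T^\dagger_f/I_n\twoheadrightarrow T^\dagger_f/\pi$ is already nonzero, and the divisible case needs a separate argument. Relatedly, the obstacle you single out — producing three auxiliary classes to meet the $\dim\geq 3$ hypothesis — is the easy part, since the unrestricted group $\mathrm{H}^1(\mathbb{Q},\overline{\rho}^\dagger_f)$ is far larger than needed; and the proposed fallback, a ``direct adaptation of Sakamoto's argument to a single class,'' waves away exactly the point where $p=3$ is delicate, namely the possible nonvanishing of $\mathrm{H}^1(K/\mathbb{Q},\overline{\rho}^\dagger_f)$ for the field $K$ cut out by the representation, which is the reason the span hypothesis appears at all. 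To close the gap, either invoke a mod-$\pi^m$ form of the $p=3$ Chebotarev statement (as in Sakamoto's work) or carry out the reduction from modulus $I_n$ to modulus $\pi$ carefully, treating the case where $\kappa^{k-r}_n$ reduces to zero mod $\pi$.
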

\begin{proof}
It is immediate from Propositions \ref{prop:chebotarev-mazur-rubin} and \ref{prop:chebotarev-sakamoto}.
\end{proof}
\subsection{Kolyvagin systems over principal Artinian rings}
We fix a positive integer $m$ in this subsection.
\subsubsection{}
The following lemma plays an important role in our proof.
\begin{lem} \label{lem:surjectivity-at-ell}
Let $n \in \mathcal{N}_m$ and $\ell \in \mathcal{P}_m$ with $(n, \ell) = 1$, and  `$\ell\textrm{-str}$' denotes the strict local condition at $\ell$.
Let $\mathcal{F} = \mathcal{F}_{\mathrm{can}}$ or $\mathcal{F}_{\mathrm{BK}}$ in $\S$\ref{subsec:selmer-structure}.
If the restriction map 
$$\mathrm{Sel}_{\mathcal{F}(n)}(\mathbb{Q}, T_f(k-r)/\pi^m) \to \mathrm{H}^1_f(\mathbb{Q}_\ell, T_f(k-r)/\pi^m)$$
 is surjective, then
$\mathrm{Sel}_{\mathcal{F}^*(n\ell)}(\mathbb{Q}, W_{\overline{f}}(r)[\pi^m]) = \mathrm{Sel}_{\mathcal{F}^*(n),\ell\textrm{-}\mathrm{str}}(\mathbb{Q}, W_{\overline{f}}(r)[\pi^m])$.
\end{lem}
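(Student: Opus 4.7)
The plan is to extract the equality directly from the Poitou--Tate global duality exact sequence applied to a well-chosen pair of Selmer structures on $T_f(k-r)/\pi^m$, with the hypothesis entering as the surjectivity needed to kill the relevant connecting map.

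First I would compare the two Selmer structures that differ from $\mathcal{F}(n)$ only at $\ell$: the transverse structure $\mathcal{F}(n\ell)$ (which uses $\mathrm{H}^1_{\mathrm{tr}}$ at $\ell$) and the $\ell$-relaxed structure, call it $\mathcal{F}(n),\ell\mathrm{-rel}$, which uses the full local cohomology $\mathrm{H}^1(\mathbb{Q}_\ell, -)$ at $\ell$. Clearly $\mathcal{F}(n\ell) \leq \mathcal{F}(n),\ell\mathrm{-rel}$. Under local Tate duality, and using that the transverse condition is self-orthogonal at primes $\ell \in \mathcal{P}_m$ (Mazur--Rubin $\S 1.2$) while the strict and relaxed conditions are orthogonal complements of each other, the dual Selmer structures are $\mathcal{F}^*(n\ell)$ and $\mathcal{F}^*(n),\ell\mathrm{-str}$ respectively.

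Second, apply the Poitou--Tate (equivalently, Greenberg--Wiles) exact sequence to this pair. Using the finite--singular decomposition $\mathrm{H}^1(\mathbb{Q}_\ell, T_f(k-r)/\pi^m) = \mathrm{H}^1_f \oplus \mathrm{H}^1_{\mathrm{tr}}$ valid for $\ell \in \mathcal{N}_m$ to identify the quotient $\mathrm{H}^1/\mathrm{H}^1_{\mathrm{tr}} \simeq \mathrm{H}^1_f$, the sequence takes the form
\[
\mathrm{Sel}_{\mathcal{F}(n),\ell\mathrm{-rel}}(\mathbb{Q}, T_f(k-r)/\pi^m) \xrightarrow{\mathrm{loc}_\ell} \mathrm{H}^1_f(\mathbb{Q}_\ell, T_f(k-r)/\pi^m) \to \mathrm{Sel}_{\mathcal{F}^*(n\ell)}^\vee \to \mathrm{Sel}_{\mathcal{F}^*(n),\ell\mathrm{-str}}^\vee \to 0,
\]
where the last two Selmer groups are those for $W_{\overline{f}}(r)[\pi^m]$. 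The hypothesis asserts that the localization $\mathrm{Sel}_{\mathcal{F}(n)} \to \mathrm{H}^1_f(\mathbb{Q}_\ell, T_f(k-r)/\pi^m)$ is surjective; since $\mathrm{Sel}_{\mathcal{F}(n)} \subseteq \mathrm{Sel}_{\mathcal{F}(n),\ell\mathrm{-rel}}$, the map $\mathrm{loc}_\ell$ in the displayed sequence is a fortiori surjective, forcing the next connecting map to vanish. Exactness then yields $\mathrm{Sel}_{\mathcal{F}^*(n),\ell\mathrm{-str}} \xrightarrow{\sim} \mathrm{Sel}_{\mathcal{F}^*(n\ell)}$, and the reverse containment being upgraded to equality is automatic because the strict subspace (which is zero) sits inside the transverse subspace.

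The only step requiring attention is the bookkeeping of local conditions and verifying that the argument applies uniformly for both $\mathcal{F} = \mathcal{F}_{\mathrm{can}}$ and $\mathcal{F}_{\mathrm{BK}}$; this is immediate, since these two structures differ only at $p$ (not at $\ell$) and each is self-dual at the relevant places, so the Poitou--Tate formalism applies to both without modification.
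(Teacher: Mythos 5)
Your argument is correct and is essentially the same global-duality comparison that underlies the paper's one-line proof, which simply cites Mazur--Rubin's Lemma 4.1.7: compare the transverse-at-$\ell$ and relaxed-at-$\ell$ structures, invoke the Poitou--Tate five-term sequence together with the decomposition $\mathrm{H}^1(\mathbb{Q}_\ell,\cdot)=\mathrm{H}^1_f\oplus\mathrm{H}^1_{\mathrm{tr}}$ for $\ell\in\mathcal{P}_m$, and kill the connecting map using the assumed surjectivity. (Your closing aside that $\mathcal{F}_{\mathrm{can}}$ and $\mathcal{F}_{\mathrm{BK}}$ are ``self-dual at the relevant places'' is unnecessary and not quite accurate at $p$ for $\mathcal{F}_{\mathrm{can}}$, but the formalism requires no self-duality there, so this does not affect the proof.)
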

\begin{proof}
See \cite[Lem. 4.1.7]{mazur-rubin-book}.
\end{proof}
\subsubsection{}
Following \cite[$\S$4.1]{mazur-rubin-book}, 
for $n \in \mathcal{N}_m$,
we write
\begin{align*}
 \lambda(n, W_{\overline{f}}(r)[\pi^m]) & = \mathrm{length}_{\mathcal{O}} \mathrm{Sel}_{0, n}(\mathbb{Q}, W_{\overline{f}}(r)[\pi^m]), \\
 \lambda(n, T_{f}(k-r)/\pi^m) & = \mathrm{length}_{\mathcal{O}} \mathrm{Sel}_{\mathrm{rel}, n}(\mathbb{Q}, T_{f}(k-r)/\pi^m).
\end{align*}
We say $n \in \mathcal{N}_m$ is a \textbf{core vertex} if $\lambda(n, W_{\overline{f}}(r)[\pi^m])$ or $\lambda(n, T_{f}(k-r)/\pi^m)$ is zero \cite[Def. 4.1.8]{mazur-rubin-book}.
It is known that 
$\mathrm{Sel}_{\mathrm{rel}, n}(\mathbb{Q}, T_{f}(k-r)/\pi^m)$ and $\mathrm{Sel}_{0, n}(\mathbb{Q}, W_{\overline{f}}(r)[\pi^m])$ are free $\mathcal{O}/\pi^m\mathcal{O}$-modules if $n$ is a core vertex, and the ranks of these modules are independent of $n$ and one of them is zero \cite[Thm. 4.1.10]{mazur-rubin-book}.
The \textbf{core rank} of $T_f(k-r)/\pi^m$ with respect to the canonical Selmer structure is defined by $\chi(T_f(k-r)/\pi^m) = \mathrm{rk}_{ \mathcal{O}/\pi^m\mathcal{O} }\mathrm{Sel}_{\mathrm{rel}, n}(\mathbb{Q}, T_{f}(k-r)/\pi^m)$ for any core vertex $n$ \cite[Def. 4.1.11]{mazur-rubin-book}.

Since the core rank with respect to the Bloch--Kato Selmer structure is zero and the core rank with respect to the canonical Selmer structure is one \cite[Prop. 6.2.2]{mazur-rubin-book}, we have that $\KS(T_f(k-r)/\pi^m, \mathcal{F}_{\mathrm{BK}}, \mathcal{P}_m) = 0$ and $\KS(T_f(k-r)/\pi^m, \mathcal{F}_{\mathrm{can}}, \mathcal{P}_m)$ is free of rank one over $\mathcal{O}/\pi^m\mathcal{O}$ \cite[Thm. 4.2.2 and Cor. 4.5.2]{mazur-rubin-book}.

\begin{thm} \label{thm:splitting-mazur-rubin}
For every $m \geq 1$ and $n \in \mathcal{N}_m$, there exists a non-canonical isomorphism
\begin{equation} \label{eqn:sel-sel0-decomposition}
\mathrm{Sel}_{\mathrm{rel},n}(\mathbb{Q}, T_f(k-r)/\pi^m ) \simeq \mathcal{O}/I_n\mathcal{O} \oplus \mathrm{Sel}_{0,n}(\mathbb{Q}, W_{\overline{f}}(r)[\pi^m] ) .
\end{equation}
\end{thm}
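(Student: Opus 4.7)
The statement is precisely the version of Mazur--Rubin's core-rank-one structure theorem \cite[Thm.~4.1.13]{mazur-rubin-book} adapted to the Galois representation $T_f(k-r)$, so my plan is to follow that strategy directly, adjusting the Chebotarev input when $p = 3$ by replacing Proposition~\ref{prop:chebotarev-mazur-rubin} with Sakamoto's Proposition~\ref{prop:chebotarev-sakamoto}. Since the canonical Selmer structure on $T_f(k-r)/\pi^m$ has core rank one (as recalled in $\S$\ref{subsec:kolyvagin-systems}), Mazur--Rubin's core-vertex machinery yields the base case: for any core vertex $n' \in \mathcal{N}_m$, $\mathrm{Sel}_{\mathrm{rel}, n'}(\mathbb{Q}, T_f(k-r)/\pi^m)$ is free of rank one over $\mathcal{O}/\pi^m\mathcal{O}$ (equivalently, over $\mathcal{O}/I_{n'}\mathcal{O}$ as an $\mathcal{O}/\pi^m$-module) while $\mathrm{Sel}_{0, n'}(\mathbb{Q}, W_{\overline{f}}(r)[\pi^m])$ vanishes, which gives the desired splitting trivially.

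For a general $n \in \mathcal{N}_m$, I would first adjoin auxiliary primes $\ell_1, \ldots, \ell_s \in \mathcal{P}_m$ so that $n' := n\ell_1 \cdots \ell_s$ is a core vertex; the existence of such primes is produced inductively by Proposition~\ref{prop:chebotarev-mazur-rubin} when $p > 3$ and by Proposition~\ref{prop:chebotarev-sakamoto} when $p = 3$, since at each step one only needs to separate a uniformly bounded number of cohomology classes, well within Sakamoto's dimension-three threshold. I would then descend from $n'$ back to $n$ by reverse induction on $s$, removing one prime $\ell$ at a time. At each descent step, Lemma~\ref{lem:surjectivity-at-ell} combined with Poitou--Tate global duality provides a short exact sequence comparing $\mathrm{Sel}_{\mathrm{rel}, n\ell}$ with $\mathrm{Sel}_{\mathrm{rel}, n}$ and a dual one comparing $\mathrm{Sel}_{0, n\ell}$ with $\mathrm{Sel}_{0, n}$; the finite-singular isomorphism $\phi^{\mathrm{fs}}_\ell$ together with the freeness of rank one of both $\mathrm{H}^1_f(\mathbb{Q}_\ell, T_f(k-r)/\pi^m)$ and $\mathrm{H}^1_{\mathrm{tr}}(\mathbb{Q}_\ell, T_f(k-r)/\pi^m)$ ensures that the resulting extensions split in a controlled way, preserving the direct-summand decomposition across the induction.

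The hard part will be this second step: verifying that each swap at $\ell$ genuinely carries the splitting isomorphism from $n\ell$ to $n$, rather than merely preserving the abstract isomorphism class of each Selmer module individually. The delicate point is to match the ``extra'' $\mathcal{O}/\pi^m\mathcal{O}$ summand on the relaxed side against the corresponding piece appearing on the dual side via global duality; this matching is standard within the Mazur--Rubin framework but requires careful bookkeeping of local conditions. The passage to $p = 3$ affects only the Chebotarev input in the first step via Proposition~\ref{prop:chebotarev-sakamoto} and does not touch the main structural argument.
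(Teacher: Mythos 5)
Your proposal is essentially the paper's own route: the paper proves this statement simply by citing Mazur--Rubin's core-rank-one structure theorem \cite[Thm.~4.1.13(i), Thm.~5.2.5]{mazur-rubin-book} (with Sakamoto's Chebotarev input substituted when $p=3$), and your sketch is a reconstruction of that cited argument via core vertices, adaptively chosen Kolyvagin primes, and the exchange of local conditions at $\ell$ through Lemma~\ref{lem:surjectivity-at-ell} and global duality. The ``hard part'' you defer is precisely the bookkeeping carried out in Mazur--Rubin's proof, so your plan is correct and coincides with the paper's approach rather than offering a different one.
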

\begin{proof}
See \cite[Thm. 4.1.13.(i)]{mazur-rubin-book} and \cite[Thm. 5.2.5]{mazur-rubin-book} with help of the core rank one property.
\end{proof}

\subsubsection{}
Let $\mathcal{X}$ be the graph whose set of vertices is $\mathcal{N}_m$ and whenever $n, n\ell \in \mathcal{N}_m$ with $\ell$ prime, we join $n$ and $n\ell$ by an edge \cite[Def. 3.1.2]{mazur-rubin-book}, and we write
\[
\xymatrix{
 \mathcal{H}(n)  = \mathrm{Sel}_{\mathrm{rel}, n}(\mathbb{Q},  T_f(k-r)/\pi^m), & \mathcal{H}'(n)  =  \pi^{ \lambda(n, W_{\overline{f}}(r)[\pi^m])}\mathrm{Sel}_{\mathrm{rel}, n}(\mathbb{Q},  T_f(k-r)/\pi^m),
}
\]
and $\mathcal{H}'(n)$ is said to be the \textbf{stub Selmer submodule at $n$} \cite[Def. 4.3.1]{mazur-rubin-book}.
See \cite[Def. 3.1.2 and 4.3.1]{mazur-rubin-book} for the notion of sheaves of Selmer modules $\mathcal{H}$ and stub Selmer modules $\mathcal{H}'$ on $\mathcal{X}$. Their stalks at $n \in \mathcal{X}$ are  $\mathcal{H}(n)$ and $\mathcal{H}'(n)$, respectively.

Define a subgraph $\mathcal{X}^0$ of $\mathcal{X}$ as follows.
The vertices of $\mathcal{X}^0$ are the core vertices of $\mathcal{X}$, i.e. the $n \in \mathcal{N}_m$ with $\lambda(n, W_{\overline{f}}(r)[\pi]) = 0$. We join $n$ and $n\ell$ by an edge in $\mathcal{X}^0$ if and only if the restriction map $\mathrm{Sel}_{\mathrm{rel}, n} (\mathbb{Q}, T_f(k-r)/\pi) \to \mathrm{H}^1_{f} (\mathbb{Q}_\ell, T_f(k-r)/\pi)$ is non-zero.
If we restrict sheaves $\mathcal{H}$ and $\mathcal{H}'$ to $\mathcal{X}^0$, they coincide by definition \cite[Def. 4.3.6]{mazur-rubin-book}.

The following theorem is fundamental in the theory of Kolyvagin systems.
\begin{thm} \label{thm:connected-graph}
The graph $\mathcal{X}^0$ is connected.
\end{thm}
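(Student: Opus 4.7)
The plan is to follow Mazur--Rubin's strategy (their monograph, Theorem 4.3.12), which establishes connectivity under the core rank one property together with a sufficiently strong Chebotarev density input. Here the core rank is one by the paragraph preceding the statement, so at every $n \in \mathcal{X}^0$ we have $\mathrm{Sel}_{\mathrm{rel}, n}(\mathbb{Q}, T_f(k-r)/\pi) \cong \mathbb{F}$ and $\mathrm{Sel}_{0,n}(\mathbb{Q}, W_{\overline{f}}(r)[\pi]) = 0$ (Theorem \ref{thm:splitting-mazur-rubin}), and the Chebotarev input is supplied by Proposition \ref{prop:chebotarev-mazur-rubin} for $p \geq 5$ and by Proposition \ref{prop:chebotarev-sakamoto} for $p = 3$.

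First I would establish the basic edge-producing move. Fix $n \in \mathcal{X}^0$ and a generator $c_n$ of the one-dimensional $\mathbb{F}$-vector space $\mathrm{Sel}_{\mathrm{rel}, n}(\mathbb{Q}, T_f(k-r)/\pi)$. By the refined Chebotarev density, infinitely many primes $\ell \in \mathcal{P}_1$ coprime to $n$ satisfy $\mathrm{loc}_\ell(c_n) \neq 0$ inside the one-dimensional target $\mathrm{H}^1_f(\mathbb{Q}_\ell, T_f(k-r)/\pi)$, so $\mathrm{loc}_\ell$ is automatically surjective; Lemma \ref{lem:surjectivity-at-ell} then gives
\[
\mathrm{Sel}_{0, n\ell}(\mathbb{Q}, W_{\overline{f}}(r)[\pi]) \;=\; \mathrm{Sel}_{0, n,\, \ell\textrm{-}\mathrm{str}}(\mathbb{Q}, W_{\overline{f}}(r)[\pi]) \;\subseteq\; \mathrm{Sel}_{0,n}(\mathbb{Q}, W_{\overline{f}}(r)[\pi]) \;=\; 0,
\]
which forces $n\ell \in \mathcal{X}^0$ and produces the required edge between $n$ and $n\ell$ in $\mathcal{X}^0$. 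Interchanging the roles of the finite and transverse local conditions at $\ell$ via the finite-singular isomorphism, the symmetric argument shows that the move is reversible: if $n\ell \in \mathcal{X}^0$ and the edge exists, then $n \in \mathcal{X}^0$.

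Then I would connect two arbitrary core vertices $n_1, n_2 \in \mathcal{X}^0$ by concatenating such moves. Beginning at $n_1$, I would iteratively build a chain $n_1 = m_0, m_1, m_2, \ldots, m_s = n_2$ by selecting at each step a prime $\ell_{i+1}$ using the Chebotarev density applied to the finite packet of cohomology classes (generators of the current relaxed Selmer group, of the corresponding relaxed class at $n_2$, and of the dual classes governing the descent toward $n_2$) so that all localize non-trivially at $\ell_{i+1}$ simultaneously; the forward move keeps us in $\mathcal{X}^0$ and the reverse move lets us eventually remove any auxiliary primes to land at $n_2$.

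The main obstacle is the simultaneous coordination of the Chebotarev selections along the full path so that the relevant generators of both relaxed and dual Selmer groups localize non-trivially at the same prime at every stage; this is precisely what Propositions \ref{prop:chebotarev-mazur-rubin} and \ref{prop:chebotarev-sakamoto} are designed to accomplish. The $p = 3$ case is the most delicate, because Proposition \ref{prop:chebotarev-sakamoto} requires a three-dimensional $\mathbb{F}$-span of the cohomology classes supplied to the Chebotarev input, so an additional linear-independence check inside $\mathrm{H}^1(\mathbb{Q}, \overline{\rho}^\dagger_f)$ must be arranged before the density conclusion can be invoked.
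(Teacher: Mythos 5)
The paper's own ``proof'' of this theorem is a two-line citation of \cite[Thm. 4.3.12]{mazur-rubin-book} and, for $p=3$, \cite[Thm. 6.7]{sakamoto-p-3}, so the route you name is the intended one. Your forward move is correct: at a core vertex $n$ the relaxed Selmer group mod $\pi$ is one-dimensional, Proposition \ref{prop:chebotarev-mazur-rubin} supplies infinitely many new primes $\ell$ at which its generator localizes non-trivially, the target $\mathrm{H}^1_f(\mathbb{Q}_\ell, T_f(k-r)/\pi)$ is one-dimensional so the map is surjective, and Lemma \ref{lem:surjectivity-at-ell} then shows $n\ell$ is again a core vertex joined to $n$ by an edge.

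The gap is in your step that is supposed to reach the second vertex. To land at $n_2$ you must, at some stage, produce edges at the \emph{fixed} primes dividing $n_1n_2$: remove each $q\mid n_1$ with $q\nmid n_2$ and insert each $q\mid n_2$ with $q\nmid n_1$. Every such step requires the generator of the relevant relaxed Selmer group to localize non-trivially at the prescribed prime $q$, and this is precisely what the Chebotarev propositions cannot arrange: they only give freedom in choosing \emph{new} primes $\ell$ at which finitely many prescribed classes are simultaneously non-zero. Your sketch does not explain how the choice of the auxiliary primes forces the needed non-vanishing at the old primes (the generators change as the vertex changes, and the intermediate divisors need not be core vertices, so the ``dual classes governing the descent'' must be identified and controlled); this coordination is exactly the content of the comparison lemmas and Proposition 4.3.11 feeding into the induction proving \cite[Thm. 4.3.12]{mazur-rubin-book}, and is why the paper remarks that this theorem is the one statement needing the full strength of Propositions \ref{prop:chebotarev-mazur-rubin} and \ref{prop:chebotarev-sakamoto}. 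Two smaller points: the ``reverse move'' is true but does not follow from the finite-singular isomorphism alone; it needs the global duality comparison of the Selmer groups for the finite versus transverse condition at $\ell$. And for $p=3$ the three-dimensional span hypothesis in Proposition \ref{prop:chebotarev-sakamoto} is the actual obstruction, overcome in \cite[Lem. 6.4, Cor. 6.6, Thm. 6.7]{sakamoto-p-3}; flagging it as an additional check to be arranged leaves that case unproved. As the paper does, the clean course is to invoke those two results rather than reprove them.
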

\begin{proof}
See \cite[Thm. 4.3.12]{mazur-rubin-book} and \cite[Thm. 6.7]{sakamoto-p-3}.
\end{proof}
Theorem \ref{thm:connected-graph} is the only statement requiring the full strength of Propositions \ref{prop:chebotarev-mazur-rubin} and \ref{prop:chebotarev-sakamoto}. For details, see \cite[Prop. 4.3.11]{mazur-rubin-book} and \cite[Lem. 6.4 and Cor. 6.6]{sakamoto-p-3}.

\subsubsection{} 
Let 
$\ks^{k-r,(m)} = \left\lbrace \kappa^{k-r,(m)}_n \in \mathrm{Sel}_{\mathrm{rel},n}(\mathbb{Q}, T_f(k-r)/\pi^m) : n \in \mathcal{N}_m \right\rbrace$
 be a mod $\pi^m$ Kolyvagin system for $T_f(k-r)/\pi^m $.
\begin{thm} \label{thm:kolyvagin-system-location}
\begin{enumerate}
\item For every $n \in \mathcal{N}_m$, $\kappa^{k-r, (m)}_n \in \mathcal{H}'(n)$.
\item If $\ks^{k-r, (m)}$ is non-trivial, then there exists an integer $j \geq 0$ such that
$\left\langle \kappa^{k-r, (m)}_n \right\rangle  = \pi^j\mathcal{H}'(n)  = \pi^{j+\lambda(n, W_{\overline{f}}(r)[\pi^m])}\mathcal{H}(n) $
for every $n \in \mathcal{N}_m$.
\end{enumerate}
\end{thm}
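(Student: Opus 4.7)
The plan is to first establish (2) at core vertices of $\mathcal{X}^0$, then propagate the valuation index along $\mathcal{X}^0$ via the Kolyvagin axiom, and finally extend to arbitrary $n \in \mathcal{N}_m$ by an induction on $\lambda(n, W_{\overline{f}}(r)[\pi^m])$ that simultaneously yields (1). At a core vertex $n_0$, the equality $\lambda(n_0, W_{\overline{f}}(r)[\pi^m]) = 0$ gives $\mathcal{H}'(n_0) = \mathcal{H}(n_0)$, so (1) is automatic; moreover, by the core-rank-one property of $\mathcal{F}_{\mathrm{can}}$ together with \cite[Thm.~4.1.10]{mazur-rubin-book}, $\mathcal{H}(n_0)$ is free of rank one over $\mathcal{O}/\pi^m\mathcal{O}$, so I can define $j(n_0) \in \{0, 1, \ldots, m\}$ to be the $\pi$-valuation of $\kappa^{k-r,(m)}_{n_0}$ in that rank-one module (with the convention that $j(n_0) = m$ when the class is zero).

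For an edge $n_0 \leftrightarrow n_0\ell$ in $\mathcal{X}^0$, the definition of $\mathcal{X}^0$ combined with Nakayama's lemma upgrades the mod-$\pi$ restriction to a surjection $\mathrm{Sel}_{\mathrm{rel}, n_0}(\mathbb{Q}, T_f(k-r)/\pi^m) \twoheadrightarrow \mathrm{H}^1_f(\mathbb{Q}_\ell, T_f(k-r)/\pi^m)$. Lemma \ref{lem:surjectivity-at-ell} and local Tate duality then pin the precise $\pi$-valuation of $\mathrm{loc}_\ell(\kappa^{k-r,(m)}_{n_0})$ to $j(n_0)$, and the Kolyvagin axiom $\mathrm{loc}^s_\ell(\kappa^{k-r,(m)}_{n_0\ell}) = \phi^{\mathrm{fs}}_\ell \circ \mathrm{loc}_\ell(\kappa^{k-r,(m)}_{n_0})$ transports that valuation to $\kappa^{k-r,(m)}_{n_0\ell}$, forcing $j(n_0) = j(n_0\ell)$. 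By Theorem \ref{thm:connected-graph}, the common value $j := j(n_0)$ is independent of the core vertex chosen.

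For a general $n \in \mathcal{N}_m$, I induct on $\lambda := \lambda(n, W_{\overline{f}}(r)[\pi^m])$. The base case $\lambda = 0$ is the core case handled above. Otherwise, I invoke Proposition \ref{prop:chebotarev} applied to a carefully chosen set of mod-$\pi$ test classes — a generator of $\mathcal{H}(n)/\mathcal{H}'(n)$ together with representatives cutting down $\mathrm{Sel}_{0,n}(\mathbb{Q}, W_{\overline{f}}(r)[\pi])$ — to produce a prime $\ell \in \mathcal{P}_m$ coprime to $n$ for which $\lambda(n\ell, W_{\overline{f}}(r)[\pi^m]) = \lambda - 1$ and $\mathrm{loc}_\ell$ is non-zero on all the test classes. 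Combining the splitting (\ref{eqn:sel-sel0-decomposition}) with the axiom (\ref{eqn:axiom-kolyvagin-systems}) and Lemma \ref{lem:surjectivity-at-ell}, one sees that $\kappa^{k-r,(m)}_n$ is forced into the stub $\mathcal{H}'(n) \subseteq \mathcal{H}(n)$ and that its $\pi$-valuation exceeds that of $\kappa^{k-r,(m)}_{n\ell}$ (in $\mathcal{H}(n\ell)$) by exactly one. Descending down to a core vertex produces $\langle \kappa^{k-r,(m)}_n \rangle = \pi^{j + \lambda}\mathcal{H}(n) = \pi^j \mathcal{H}'(n)$, establishing (1) and (2) simultaneously.

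The main obstacle is the inductive descent step: one must find a single prime $\ell$ that simultaneously (i) strictly decreases $\lambda(n, W_{\overline{f}}(r)[\pi^m])$ via a Poitou--Tate global duality computation and (ii) is \emph{useful} for $\kappa^{k-r,(m)}_n$, so that no unwanted extra $\pi$-divisibility is absorbed into the new transverse direction at $\ell$. This is exactly what the refined Chebotarev input of Propositions \ref{prop:chebotarev-mazur-rubin} and \ref{prop:chebotarev-sakamoto} is designed to supply; when $p = 3$ one must also verify Sakamoto's three-dimensional spanning hypothesis for the specific quadruple of test classes involved, which is where the residually self-dual case requires care. Once this step is in hand, everything else is bookkeeping with the finite-singular isomorphism, the splitting (\ref{eqn:sel-sel0-decomposition}), and the axiom (\ref{eqn:axiom-kolyvagin-systems}).
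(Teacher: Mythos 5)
Your overall architecture (pin down a single exponent $j$ at core vertices via the connectivity of $\mathcal{X}^0$, then descend in $\lambda$ by Chebotarev-chosen primes) has the right flavor: it is the shape of the Mazur--Rubin argument, which is all the paper itself invokes here (the proof in the text is a citation of \cite[Thm.\ 4.4.1]{mazur-rubin-book} for (1) and \cite[Cor.\ 4.5.2(ii)]{mazur-rubin-book} for (2), with Theorem \ref{thm:connected-graph} as the key input). But the inductive descent step, which is where the entire content of part (1) lives, is asserted rather than proved, and as stated it cannot work. The Kolyvagin axiom (\ref{eqn:axiom-kolyvagin-systems}) relates $\kappa^{k-r,(m)}_n$ and $\kappa^{k-r,(m)}_{n\ell}$ only through their localizations at the single prime $\ell$, and these land in $\mathrm{H}^1_f(\mathbb{Q}_\ell, T_f(k-r)/\pi^m)$, a free rank-one $\mathcal{O}/\pi^m$-module. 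When $\lambda(n, W_{\overline{f}}(r)[\pi^m])>0$ the module $\mathcal{H}(n)\simeq \mathcal{O}/\pi^m\oplus \mathrm{Sel}_{0,n}(\mathbb{Q}, W_{\overline{f}}(r)[\pi^m])$ is not cyclic, so $\mathrm{loc}_\ell$ cannot be injective on it; consequently divisibility of $\mathrm{loc}_\ell(\kappa^{k-r,(m)}_n)$ never forces divisibility of $\kappa^{k-r,(m)}_n$, and it gives no control at all on the component of $\kappa^{k-r,(m)}_n$ in the torsion summand $\mathrm{Sel}_{0,n}$ --- yet membership in $\mathcal{H}'(n)=\pi^{\lambda(n, W_{\overline{f}}(r)[\pi^m])}\mathcal{H}(n)$ is precisely the statement that this component vanishes and the free component is divisible by $\pi^{\lambda}$. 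Even on the free component, the bound you could extract from the induction hypothesis at $n\ell$ is divisibility by $\pi^{\lambda(n\ell)}$, which falls short of $\pi^{\lambda(n)}$ by the size of the drop. Saying that the splitting (\ref{eqn:sel-sel0-decomposition}), the axiom, and Lemma \ref{lem:surjectivity-at-ell} ``force $\kappa_n$ into the stub'' is exactly the nontrivial content of \cite[Thm.\ 4.4.1]{mazur-rubin-book}, whose actual proof requires the stub-sheaf functoriality along edges and a considerably more delicate induction than the transport you describe.

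There is a second, smaller gap: you cannot arrange $\lambda(n\ell, W_{\overline{f}}(r)[\pi^m])=\lambda-1$ by Chebotarev non-vanishing alone. Under the surjectivity hypothesis feeding Lemma \ref{lem:surjectivity-at-ell}, the drop equals the length of $\mathrm{loc}_\ell\bigl(\mathrm{Sel}_{0,n}(\mathbb{Q}, W_{\overline{f}}(r)[\pi^m])\bigr)$; non-vanishing conditions (Propositions \ref{prop:chebotarev-mazur-rubin} and \ref{prop:chebotarev-sakmoto-placeholder}) bound this from below, and the standard choice (non-vanishing on $\pi^{d_1-1}$ times a generator of the largest cyclic factor) pins it to $d_1$, not to $1$; forcing a drop of exactly one would require making localizations vanish, which Chebotarev does not give you. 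Accordingly ``the valuation exceeds that of $\kappa_{n\ell}$ by exactly one'' should at best be ``by exactly $d_1$'', and even that two-sided equality needs the transverse localization at $n\ell$ to preserve valuations on the stub, which you gesture at but do not establish. The core-vertex and connectivity portion of your plan is sound (it is essentially the primitivity discussion behind \cite[Cor.\ 4.5.2]{mazur-rubin-book}), but as written the descent does not prove (1), and (2) rests on it. (One typo above: the second Chebotarev reference should of course be Proposition \ref{prop:chebotarev-sakamoto}.)
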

\begin{proof}
\begin{enumerate}
\item See \cite[Thm. 4.4.1]{mazur-rubin-book} since  the core rank is one.
The argument depends heavily on Theorem \ref{thm:connected-graph}.
\item See \cite[Cor. 4.5.2.(ii)]{mazur-rubin-book}.
\end{enumerate}
\end{proof}
We say that a Kolyvagin system \textbf{$\ks^{k-r,(m)}$ is primitive} if $\ks^{k-r,(1)}  = \left( \kappa^{k-r, (m)}_n \pmod{\pi} \right)_{n \in \mathcal{N}_1}$ is non-zero. This is equivalent to $j=0$ in Theorem \ref{thm:kolyvagin-system-location}. 
See \cite[Cor. 4.5.4 and Def. 4.5.5]{mazur-rubin-book}.
For $m \gg 0$, this integer $j$ becomes $\partial^{(\infty)} \left( \ks^{k-r} \right)$ in $\S$\ref{subsubsec:numerical-invariants-kappa} later.

\subsubsection{} \label{subsubsec:structure-fine-selmer-p^k}
Suppose that $\kappa^{k-r,(m)}_n \neq 0$ for some $n \in \mathcal{N}_m$.
Following \cite[Ex. 3.1.12]{mazur-rubin-book}, we obtain another Kolyvagin system $\ks^{k-r,n, (m)}$ for $T_f(k-r)/\pi^m$
defined by 
$\kappa^{k-r,n, (m)}_{n'} = \kappa^{k-r, (m)}_{n \cdot n'}$
where $n' \in \mathcal{N}_m$ with $(n',n)=1$.
\subsubsection{}
Let
$\partial^{(i)} ( \ks^{k-r,(m)} ) = \mathrm{min} \left\lbrace m - \mathrm{length}( \mathcal{O}/\pi^m \mathcal{O} \cdot \kappa^{k-r, (m)}_n ) : n \in \mathcal{N}_m, \nu(n) = i\right\rbrace $.
The following structure theorem provides the foundation of our proof of Theorems  \ref{thm:main-central-critical} and \ref{thm:main-all-critical}.
\begin{thm} \label{thm:structure-fine-selmer-p^k}
Suppose that $\kappa^{k-r,(m)}_n \neq 0$ for some $n \in \mathcal{N}_m$.
Write
$$\mathrm{Sel}_{0,n}(\mathbb{Q}, W_{\overline{f}}(r)[\pi^m]) \simeq \bigoplus_{i \geq 1}  \mathcal{O} / \pi^{d_i} \mathcal{O}$$
with non-negative integers
$d_1 \geq d_2 \geq \cdots $, and fix $j \geq 0$ such that
$\kappa^{k-r,(m)}_n$ generates $\pi^j \mathcal{H}'(n)$ (Theorem \ref{thm:kolyvagin-system-location}).
Then for every $r \geq 0$, we have
$$\partial^{(r)} (\ks^{k-r, n, (m)}) = \mathrm{min}\lbrace m, j+\sum_{i > r} d_i \rbrace $$
where $\ks^{k-r, n, (m)}$ is the Kolyvagin system defined in $\S$\ref{subsubsec:structure-fine-selmer-p^k}.
\end{thm}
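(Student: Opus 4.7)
The plan is to reduce the computation of each $\partial^{(\rho)}(\ks^{k-r,n,(m)})$ to the length of the cyclic $\mathcal{O}$-submodule generated by $\kappa^{k-r,n,(m)}_{n'}=\kappa^{k-r,(m)}_{nn'}$, and then to a purely combinatorial statement about how $\mathrm{Sel}_{0,nn'}(\mathbb{Q},W_{\overline{f}}(r)[\pi^m])$ shrinks as one enlarges $n'$.

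First, I would apply Theorem \ref{thm:kolyvagin-system-location}.(2) to the (non-trivial) shifted Kolyvagin system $\ks^{k-r,n,(m)}$, obtaining an integer $j''\geq 0$ with $\kappa^{k-r,n,(m)}_{n'}$ generating $\pi^{j''}\mathcal{H}'(nn')$ for every admissible $n'$. Evaluating at $n'=1$ and comparing with the assumption that $\kappa^{k-r,(m)}_n$ generates $\pi^{j}\mathcal{H}'(n)$ forces $j''=j$. Next, using the splitting of Theorem \ref{thm:splitting-mazur-rubin}, I would compute
$$\mathcal{H}'(nn')=\pi^{\lambda(nn',W_{\overline{f}}(r)[\pi^m])}\mathcal{H}(nn')\cong \pi^{\lambda(nn',W_{\overline{f}}(r)[\pi^m])}\mathcal{O}/\pi^m,$$
since $\pi^{\lambda(nn',W_{\overline{f}}(r)[\pi^m])}$ annihilates the $\mathrm{Sel}_{0,nn'}$-summand (its exponent exceeds every elementary divisor). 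Thus the cyclic submodule generated by $\kappa^{k-r,n,(m)}_{n'}$ has length $\max\{0,\,m-j-\lambda(nn',W_{\overline{f}}(r)[\pi^m])\}$, and the definition of $\partial^{(\rho)}$ yields
$$\partial^{(\rho)}(\ks^{k-r,n,(m)})=\min\Bigl\{m,\; j+\min_{\nu(n')=\rho,\,(n,n')=1}\lambda(nn',W_{\overline{f}}(r)[\pi^m])\Bigr\}.$$

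The theorem is then reduced to the combinatorial identity
$$\min_{\substack{\nu(n')=\rho\\ (n,n')=1}}\lambda(nn',W_{\overline{f}}(r)[\pi^m])=\sum_{i>\rho}d_i,$$
which I would prove by induction on $\rho$, adding one transverse prime at a time. The base case $\rho=0$ is immediate from the decomposition $\mathrm{Sel}_{0,n}\simeq\bigoplus\mathcal{O}/\pi^{d_i}$. For the inductive step, the lower bound is provided by global (Poitou--Tate) duality: replacing a finite local condition at a new prime $\ell$ by the transverse one cuts $\mathrm{Sel}_0$ down to the kernel of a localization map into a cyclic module, so its length can decrease by at most the largest elementary divisor. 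The matching upper bound is realized by producing, at each stage, a prime $\ell\in\mathcal{P}_m$ whose localization surjects onto a cyclic $\mathcal{O}/\pi^{d_1^{\text{current}}}$-summand of the current dual Selmer group; existence of such $\ell$ follows from Lemma \ref{lem:surjectivity-at-ell} combined with the Chebotarev-type Proposition \ref{prop:chebotarev-mazur-rubin} when $p>3$ and Proposition \ref{prop:chebotarev-sakamoto} when $p=3$, applied to a quadruple of test classes that simultaneously detects the top summand of $\mathrm{Sel}_{0,nn'}$ and its Pontryagin dual.

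The main obstacle is the inductive construction in Step 3: at each stage one must choose $\ell$ so that (i) the top invariant is genuinely peeled off from $\mathrm{Sel}_{0,nn'}$, (ii) the core rank is preserved, and (iii) in the $p=3$ residually self-dual case the four relevant test classes span a subspace of dimension at least $3$, as required by Proposition \ref{prop:chebotarev-sakamoto}. This last point is precisely what the connectedness Theorem \ref{thm:connected-graph} of the core vertex graph $\mathcal{X}^0$ guarantees, and the induction must be set up so that moving along an edge of $\mathcal{X}^0$ corresponds to removing exactly one elementary divisor. This is where the full weight of the Mazur--Rubin core Kolyvagin system machinery is brought to bear.
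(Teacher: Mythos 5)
Your skeleton is the right one, and it is in substance the same as the paper's: the paper proves this statement simply by quoting Mazur--Rubin (\cite[Prop.~4.5.8, Thm.~4.5.9]{mazur-rubin-book}), and your reduction --- stub generation (Theorem \ref{thm:kolyvagin-system-location}) together with the splitting (Theorem \ref{thm:splitting-mazur-rubin}) giving $\partial^{(\rho)}(\ks^{k-r,n,(m)})=\min\lbrace m,\ j+\min_{n'}\lambda(nn',W_{\overline{f}}(r)[\pi^m])\rbrace$, followed by an induction that peels off one elementary divisor per added transverse prime --- is exactly the Mazur--Rubin argument (your detour through the shifted system to see $j''=j$ is harmless but unnecessary, since the $j$ of Theorem \ref{thm:kolyvagin-system-location}.(2) is already uniform over all vertices of the original system).

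Two steps, however, are not justified as written. First, in the lower bound of your combinatorial identity, ``the length can decrease by at most the largest elementary divisor'' does not close the induction: $\mathrm{Sel}_{0,nn'\ell}$ is \emph{not} a submodule of $\mathrm{Sel}_{0,nn'}$ (it only contains the $\ell$-strict subgroup; equality with the kernel of $\mathrm{loc}_\ell$ needs the surjectivity hypothesis of Lemma \ref{lem:surjectivity-at-ell}), so its largest elementary divisor at an intermediate stage need not be bounded by the relevant $d_i$ of the original group, and iterating a pure length bound can undershoot $\sum_{i>\rho}d_i$. What you actually need --- and what does follow from the same input --- is the interlacing $d_i(\mathrm{Sel}_{0,nn'\ell})\geq d_{i+1}(\mathrm{Sel}_{0,nn'})$, obtained because the $\ell$-strict subgroup has cyclic quotient inside $\mathrm{Sel}_{0,nn'}$ and elementary divisors are monotone under inclusion; summing the interlaced inequalities gives $\lambda(nn')\geq\sum_{i>\rho}d_i$. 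Second, your handling of $p=3$ is a non sequitur: at each step you need simultaneous non-vanishing of the localizations of just two test classes (one of order $\pi^m$ in $\mathrm{Sel}_{\mathrm{rel},nn'}(\mathbb{Q},T_f(k-r)/\pi^m)$, to force the surjectivity in Lemma \ref{lem:surjectivity-at-ell}, and one generating the top cyclic summand of the current $\mathrm{Sel}_{0,nn'}$), whereas Proposition \ref{prop:chebotarev-sakamoto} requires four classes spanning a space of dimension at least three, and the connectedness of $\mathcal{X}^0$ (Theorem \ref{thm:connected-graph}) does not supply that hypothesis --- it is a separate statement, itself proved from the Chebotarev input, not a substitute for it. The residually self-dual $p=3$ case genuinely requires Sakamoto's additional arguments, which is precisely why the paper defers to \cite{mazur-rubin-book} together with \cite{sakamoto-p-3} instead of reproving the result.
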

\begin{proof}
It immediately follows from \cite[Prop. 4.5.8]{mazur-rubin-book} and \cite[Thm. 4.5.9]{mazur-rubin-book}.
\end{proof}

\subsection{Kolyvagin systems over discrete valuation rings}

\subsubsection{}
The following theorem reflects the rigidity of Kolyvagin systems.
\begin{thm} \label{thm:core-rank-modular-forms-char-zero}
Under our setting, the following statements hold.
\begin{enumerate}
\item $\KS(T_f(k-r), \mathcal{F}_{\mathrm{BK}}, \mathcal{P}_1) = 0$.
\item $\KS(T_f(k-r), \mathcal{F}_{\mathrm{can}}, \mathcal{P}_1)$ is free of rank one over $\mathcal{O}$.
\end{enumerate}
\end{thm}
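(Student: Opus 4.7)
The plan is to lift the finite-level results already recorded in $\S$\ref{subsubsec:structure-fine-selmer-p^k}---namely that $\KS(T_f(k-r)/\pi^m, \mathcal{F}_{\mathrm{BK}}, \mathcal{P}_m) = 0$ and $\KS(T_f(k-r)/\pi^m, \mathcal{F}_{\mathrm{can}}, \mathcal{P}_m) \simeq \mathcal{O}/\pi^m\mathcal{O}$ for every $m \geq 1$---to the integral level by an inverse limit argument, once the Mazur--Rubin working hypotheses and the core rank computation are in place.

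Part (1) is immediate: any Kolyvagin system in $\KS(T_f(k-r), \mathcal{F}_{\mathrm{BK}}, \mathcal{P}_1)$ reduces compatibly to a mod-$\pi^m$ Kolyvagin system for each $m$, and all of these vanish by the core-rank-zero property of $\mathcal{F}_{\mathrm{BK}}$.

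For part (2), I would invoke the Mazur--Rubin structure theorem \cite[Thm.~5.2.10]{mazur-rubin-book}: since the core rank of $T_f(k-r)$ with respect to $\mathcal{F}_{\mathrm{can}}$ equals one (by \cite[Prop.~6.2.2]{mazur-rubin-book} under the large image hypothesis), the generalized module $\KSbar(T_f(k-r), \mathcal{F}_{\mathrm{can}}, \mathcal{P}_1)$ is free of rank one over $\mathcal{O}$. Concretely, one verifies that the transition maps in the inverse system $\{\KS(T_f(k-r)/\pi^m, \mathcal{F}_{\mathrm{can}}, \mathcal{P}_m)\}_m$ are surjective using the primitivity criterion of Theorem~\ref{thm:kolyvagin-system-location}(2): a generator at level $m+1$ is primitive, i.e.\ has non-zero mod-$\pi$ reduction, and this property is preserved under the transition, so it reduces to a generator at level $m$. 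The identification $\KS = \KSbar$ then follows from \cite[Cor.~4.5.3]{mazur-rubin-book} combined with \cite[Rem.~5.3.11]{mazur-rubin-book}, both of which apply precisely when the core rank is one.

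The main obstacle is the uniform treatment of the case $p = 3$: the ingredients \cite[Prop.~6.2.2,~Thm.~5.2.10]{mazur-rubin-book} rely ultimately on the Chebotarev density statement \cite[Prop.~3.6.1]{mazur-rubin-book}, which is available only for $p > 3$. At $p = 3$ one must substitute Proposition~\ref{prop:chebotarev-sakamoto} throughout, along with the refined graph-theoretic arguments of \cite{sakamoto-p-3} needed to recover the connectedness in Theorem~\ref{thm:connected-graph}; once these substitutions are in place, the limit argument proceeds identically in both cases.
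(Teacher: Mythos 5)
Your proposal is correct and follows essentially the same route as the paper, whose proof is simply a citation of \cite[Thm.~5.2.10]{mazur-rubin-book}, with the $p=3$ case covered by the Sakamoto substitution for the Chebotarev input exactly as the paper arranges in $\S$\ref{subsec:kolyvagin-systems}. Your additional inverse-limit sketch just reproduces the internal argument of Mazur--Rubin (surjectivity of the maps $\KS(T_f(k-r)/\pi^{m+1}) \to \KS(T_f(k-r)/\pi^{m})$ and the identification $\KS = \KSbar$), so nothing genuinely different is added.
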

\begin{proof}
See \cite[Thm. 5.2.10]{mazur-rubin-book}.
\end{proof}

\subsubsection{} \label{subsubsec:numerical-invariants-kappa}
For a given Kolyvagin system $\ks^{k-r}$ for $T_f(k-r)$, the numerical invariants associated to $\ks^{k-r}$ are defined as follows.
\begin{align*}
\mathrm{ord} \left( \ks^{k-r} \right) & = \mathrm{min} \left\lbrace \nu(n) : n \in \mathcal{N}_1 , \kappa^{k-r}_n \neq 0 \right\rbrace , \\
\partial^{(0)} \left( \ks^{k-r} \right) & = \mathrm{max} \left\lbrace j : \kappa^{k-r}_1 \in \pi^j \mathrm{Sel}_{\mathrm{rel}}(\mathbb{Q}, T_f(k-r)) \right\rbrace , \\
\mathrm{ord}_\pi \left( \kappa^{k-r}_n \right) & = \mathrm{max} \left\lbrace j_n : \kappa^{k-r}_n \in \pi^{j_n} \mathrm{Sel}_{\mathrm{rel},n}(\mathbb{Q}, T_f(k-r)/I_n)  \right\rbrace , \\
\partial^{(i)}(\ks^{k-r})  & = \mathrm{min} \left\lbrace \mathrm{ord}_\pi \left( \kappa^{k-r}_n \right) : n \in \mathcal{N}_1 \textrm{ with } \nu(n) = i \right\rbrace , \\
\partial^{(\infty)}(\ks^{k-r})  & = \mathrm{min} \left\lbrace \partial^{(i)}(\ks^{k-r}) : i \geq 0 \right\rbrace .
\end{align*}
We say $\partial^{(0)} (\ks^{k-r})  = \infty$ when $\kappa^{k-r}_1=0$.
We also define the elementary divisors of $\ks^{k-r}$ by
$$e_i(\ks^{k-r}) = \partial^{(i)}(\ks^{k-r}) - \partial^{(i+1)}(\ks^{k-r})$$
where $i \geq \mathrm{ord}(\ks^{k-r})$.

\begin{thm} \label{thm:kato-kolyvagin-main}
Let $\ks^{k-r}$ be  a non-trivial Kolyvagin system for $T_f(k-r)$.
Then the following statements hold.
\begin{enumerate}
\item For every $s \geq 0$,
$\partial^{(s)}(\ks^{k-r}) = \lim_{m \to \infty} \partial^{(s)}(\ks^{k-r,(m)}) $.
\item The sequence $\partial^{(s)}(\ks^{k-r})$ is non-increasing, and finite for $s \geq \mathrm{ord}(\ks^{k-r})$.
\item The sequence $e_i(\ks^{k-r})$ is non-increasing, non-negative, and finite for $i  \geq \mathrm{ord}(\ks^{k-r})$.
\item 
For any other non-trivial Kolyvagin system $\ks^{\circ,k-r}$ in $\KS(T_f(k-r))$,
$\mathrm{ord}(\ks^{k-r}) = \mathrm{ord}(\ks^{\circ, k-r})$ and $e_i(\ks^{k-r}) = e_i(\ks^{\circ, k-r})$.
\item $\mathrm{cork}_{\mathcal{O}} \mathrm{Sel}_0(\mathbb{Q},W_{\overline{f}}(r)) = \mathrm{ord}(\ks^{k-r})$.
\item 
$\mathrm{Sel}_0(\mathbb{Q}, W_{\overline{f}}(r))_{/\mathrm{div}} \simeq \bigoplus_{i \geq \mathrm{ord}(\ks^{k-r})} \dfrac{\mathcal{O}}{\pi^{e_i(\ks^{k-r})}\mathcal{O}}$
\item  $\mathrm{length}_{\mathcal{O}} \mathrm{Sel}_0(\mathbb{Q}, W_{\overline{f}}(r))_{/\mathrm{div}} =
\partial^{(\mathrm{ord}(\ks^{k-r}))}(\ks^{k-r}) - \partial^{(\infty)}(\ks^{k-r})$
\item $\ks^{k-r}$ is primitive if and only if $\partial^{(\infty)}(\ks^{k-r}) = 0$.
\end{enumerate}
\end{thm}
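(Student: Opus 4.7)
The plan is to deduce every clause of Theorem \ref{thm:kato-kolyvagin-main} by reducing to the mod-$\pi^m$ structure result (Theorem \ref{thm:structure-fine-selmer-p^k}) and passing to the limit $m \to \infty$, with the rigidity statement (Theorem \ref{thm:core-rank-modular-forms-char-zero}.(2)) supplying the uniqueness assertion (4). Throughout, $\ks^{k-r,(m)}$ denotes the mod-$\pi^m$ reduction of $\ks^{k-r}$, which is a genuine Kolyvagin system over $\mathcal{O}/\pi^m\mathcal{O}$. Since $\ks^{k-r}$ is non-trivial, some $\kappa^{k-r}_{n_0}$ has finite $\pi$-valuation, and for every $m$ exceeding that valuation the reduction $\ks^{k-r,(m)}$ is non-trivial; this is the regime in which we work.

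For (1), the identity $\mathrm{ord}_\pi(\kappa^{k-r,(m)}_n) = \min\{m,\mathrm{ord}_\pi(\kappa^{k-r}_n)\}$ is immediate from the definition of valuation on $\mathcal{O}/\pi^m\mathcal{O}$, and taking minima over $n\in\mathcal{N}_m$ of fixed $\nu(n)=s$ and then $m\to\infty$ delivers $\partial^{(s)}(\ks^{k-r})=\lim_m\partial^{(s)}(\ks^{k-r,(m)})$. For (2), (3), (5), (6), (7), fix $m$ large and use Proposition \ref{prop:chebotarev} to select a non-vanishing $\kappa^{k-r,(m)}_n$ with $\nu(n)=\mathrm{ord}(\ks^{k-r,(m)})$; applying Theorem \ref{thm:structure-fine-selmer-p^k} to the shifted system $\ks^{k-r,n,(m)}$ yields the explicit formula
\[
\partial^{(s)}(\ks^{k-r,n,(m)}) = \min\bigl\{m,\, j+\textstyle\sum_{i>s}d_i\bigr\},
\]
from which the sequence in $s$ is visibly non-increasing, the differences $e_i=\partial^{(i)}-\partial^{(i+1)}$ are non-negative and non-increasing (because the elementary divisors $d_i$ of $\mathrm{Sel}_{0,n}$ are themselves non-increasing), and both terminate at the excess $j$. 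Standard control over the direct limit $\mathrm{Sel}_0(\mathbb{Q},W_{\overline{f}}(r)) = \varinjlim_m \mathrm{Sel}_0(\mathbb{Q},W_{\overline{f}}(r)[\pi^m])$, combined with (1), promotes the mod-$\pi^m$ decomposition to the full Selmer group: the ``infinite'' $d_i$ (those saturating at $m$) contribute divisible summands of total rank $\mathrm{ord}(\ks^{k-r})$, giving (5), while the finite $d_i$ stabilize to yield the module structure in (6), and (7) is obtained by summing elementary divisors.

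For (4), by Theorem \ref{thm:core-rank-modular-forms-char-zero}.(2) the module $\KS(T_f(k-r))$ is free of rank one over $\mathcal{O}$; fixing a generator $\ks^{\mathrm{gen}}$, write $\ks^{k-r}=a\cdot\ks^{\mathrm{gen}}$ and $\ks^{\circ,k-r}=b\cdot\ks^{\mathrm{gen}}$ for non-zero $a,b\in\mathcal{O}$. All invariants $\partial^{(s)}$ of $\ks^{k-r}$ and $\ks^{\circ,k-r}$ then differ from those of $\ks^{\mathrm{gen}}$ by the constant shifts $\mathrm{ord}_\pi(a)$ and $\mathrm{ord}_\pi(b)$ respectively, so the quantities $\mathrm{ord}$ and the consecutive differences $e_i$ are preserved. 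Finally (8) is a tautology from the definitions combined with Theorem \ref{thm:kolyvagin-system-location}.(2): primitivity means $\ks^{k-r,(1)}\neq 0$, equivalently some $\kappa^{k-r}_n$ has $\pi$-valuation zero, equivalently $\partial^{(\infty)}(\ks^{k-r})=0$.

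The main obstacle is the transition in paragraph~2 from the perturbed Selmer modules $\mathrm{Sel}_{0,n}$ (at non-trivial $n$) back to the unperturbed group $\mathrm{Sel}_0(\mathbb{Q},W_{\overline{f}}(r))$ we actually want to describe: when $\mathrm{ord}(\ks^{k-r})>0$ the class $\kappa^{k-r}_1$ vanishes, so one cannot read off the structure at $n=1$ directly, and the ``$n$-perturbed'' elementary divisors $d_i$ a priori include contributions from the added transverse conditions at primes dividing $n$. The point, which is exactly the content of the core-rank-one analysis in \cite[\S4.1--4.5]{mazur-rubin-book} underlying Theorem \ref{thm:structure-fine-selmer-p^k}, is that $\mathrm{ord}(\ks^{k-r})$ of the $d_i$ become infinite (contributing the divisible part) and the remaining ones are intrinsic to $\mathrm{Sel}_0(\mathbb{Q},W_{\overline{f}}(r))_{/\mathrm{div}}$; verifying this matching carefully as $m\to\infty$ and $n$ varies is the delicate bookkeeping required.
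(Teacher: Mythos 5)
Your overall strategy --- deduce everything from the finite-level structure theorem and pass to the limit in $m$, with the rank-one freeness of $\KS(T_f(k-r))$ handling (4) --- is exactly the route behind the paper's proof, which consists of the single citation to \cite[Thm. 5.2.12]{mazur-rubin-book}. But the decisive steps (5) and (6) are not actually carried out, and the sentence that is supposed to carry them contains a slip. The elementary divisors $d_i$ in Theorem \ref{thm:structure-fine-selmer-p^k} are those of the \emph{perturbed} group $\mathrm{Sel}_{0,n}(\mathbb{Q}, W_{\overline{f}}(r)[\pi^m])$ attached to the base $n$ you chose with $\nu(n)=\mathrm{ord}(\ks^{k-r,(m)})$; they are not divisors of $\mathrm{Sel}_{0}(\mathbb{Q}, W_{\overline{f}}(r)[\pi^m])$, and for that formula there is no meaningful notion of ``divisors saturating at $m$'' producing the divisible part (moreover, for a fixed $n$ one cannot even let $m\to\infty$, since $n\in\mathcal{N}_m$ forces $I_n\subseteq\pi^m\mathcal{O}$, so $n$ must deepen with $m$; and the invariants $\partial^{(s)}(\ks^{k-r,n,(m)})$ are a priori only upper bounds for $\partial^{(s+\nu(n))}(\ks^{k-r,(m)})$, being minima over the smaller index set of $n'$ coprime to $n$). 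The corank statement (5) and the identification of the stable divisors with those of $\mathrm{Sel}_0(\mathbb{Q}, W_{\overline{f}}(r))_{/\mathrm{div}}$ in (6) require either the unshifted form of \cite[Thm. 4.5.9]{mazur-rubin-book} applied to $\ks^{k-r,(m)}$ itself --- where the divisors are those of $\mathrm{Sel}_0(\mathbb{Q},W_{\overline{f}}(r)[\pi^m])=\mathrm{Sel}_0(\mathbb{Q},W_{\overline{f}}(r))[\pi^m]$, exactly $\mathrm{ord}$-many of which grow with $m$ --- or the duality/Chebotarev comparison showing that each well-chosen transverse prime dividing $n$ cuts the corank by exactly one and leaves $\mathrm{Sel}_{0}(\mathbb{Q},W_{\overline{f}}(r))_{/\mathrm{div}}$ intact. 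You explicitly defer this comparison to \cite[\S4.1--4.5]{mazur-rubin-book} as ``the main obstacle''; since that comparison is the entire content of the theorem, the proposal as written re-cites (5)--(7) rather than proving them.

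Two smaller points in the same vein. In (1), the claim $\mathrm{ord}_\pi(\kappa^{k-r,(m)}_n)=\min\{m,\mathrm{ord}_\pi(\kappa^{k-r}_n)\}$ is not ``immediate from the definition'': the reduction $\mathrm{Sel}_{\mathrm{rel},n}(\mathbb{Q},T_f(k-r)/I_n)\to \mathrm{Sel}_{\mathrm{rel},n}(\mathbb{Q},T_f(k-r)/\pi^m)$ can a priori increase divisibility, and the finite-level invariant is defined through $m-\mathrm{length}_{\mathcal{O}}(\mathcal{O}\cdot\kappa^{(m)}_n)$, i.e.\ through the annihilator, not through divisibility; identifying the two uses the stub-module statement (Theorem \ref{thm:kolyvagin-system-location}). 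The same issue affects (4): in a torsion Selmer module one can have $\mathrm{ord}_\pi(\pi^t x)>t+\mathrm{ord}_\pi(x)$, so writing $\ks^{k-r}=a\cdot\ks^{\mathrm{gen}}$ does not by itself shift every $\partial^{(s)}$ by $\mathrm{ord}_\pi(a)$; the correct argument is that, by Theorem \ref{thm:kolyvagin-system-location}.(2), any two non-trivial mod-$\pi^m$ systems generate $\pi^{j}\mathcal{H}'(n)$ and $\pi^{j^\circ}\mathcal{H}'(n)$ with $j,j^\circ$ independent of $n$, so their invariants differ by a uniform constant. All of these repairs come from \cite[\S4.5, \S5.2]{mazur-rubin-book}, which is precisely why the paper's proof is the citation to \cite[Thm. 5.2.12]{mazur-rubin-book}.
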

\begin{proof}
See \cite[Thm. 5.2.12]{mazur-rubin-book}.
\end{proof}
\begin{cor} \label{cor:kato-kolyvagin-main}
Let $\ks^{k-r}$ be  a non-trivial Kolyvagin system for $T_f(k-r)$.
\begin{enumerate}
\item $\mathrm{length}_{\mathcal{O}} \left( \mathrm{Sel}_0(\mathbb{Q}, W_{\overline{f}}(r)) \right)$ is finite if and only if $\kappa^{ k-r}_1 \neq 0$.
\item $\mathrm{length}_{\mathcal{O}} \left( \mathrm{Sel}_0(\mathbb{Q}, W_{\overline{f}}(r)) \right) \leq \partial^{(0)} (\ks^{k-r})$, with equality if and only if $\ks^{k-r}$ is primitive. 
\end{enumerate}
\end{cor}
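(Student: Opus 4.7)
The plan is to deduce both parts directly from Theorem \ref{thm:kato-kolyvagin-main}, treating the corollary as a repackaging of parts (5), (7), (8) of that theorem in terms of the single invariant $\partial^{(0)}(\ks^{k-r})$. No new Kolyvagin system machinery is required; the whole task is careful bookkeeping with the numerical invariants defined in $\S$\ref{subsubsec:numerical-invariants-kappa}.

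For part (1), I would start from Theorem \ref{thm:kato-kolyvagin-main}.(5), which identifies $\mathrm{cork}_{\mathcal{O}}\mathrm{Sel}_0(\mathbb{Q}, W_{\overline{f}}(r))$ with $\mathrm{ord}(\ks^{k-r})$. Since $\mathrm{length}_{\mathcal{O}}$ of a co-finitely generated $\mathcal{O}$-module is finite exactly when its corank vanishes, finiteness of $\mathrm{length}_{\mathcal{O}}\mathrm{Sel}_0(\mathbb{Q}, W_{\overline{f}}(r))$ is equivalent to $\mathrm{ord}(\ks^{k-r}) = 0$. But by the very definition of $\mathrm{ord}$ and the fact that $\nu(1) = 0$, the condition $\mathrm{ord}(\ks^{k-r}) = 0$ is the same as $\kappa^{k-r}_1 \neq 0$. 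This settles (1).

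For part (2), I would split into cases according to whether $\kappa^{k-r}_1$ vanishes. If $\kappa^{k-r}_1 = 0$, then $\partial^{(0)}(\ks^{k-r}) = \infty$ by definition, and by (1) both sides of the inequality are $\infty$, so the bound holds trivially. If $\kappa^{k-r}_1 \neq 0$, then $\mathrm{ord}(\ks^{k-r}) = 0$ and $\mathrm{Sel}_0(\mathbb{Q}, W_{\overline{f}}(r))$ is finite, hence coincides with its $/\mathrm{div}$ quotient. Applying Theorem \ref{thm:kato-kolyvagin-main}.(7) with $\mathrm{ord}(\ks^{k-r}) = 0$ yields
\[
\mathrm{length}_{\mathcal{O}}\mathrm{Sel}_0(\mathbb{Q}, W_{\overline{f}}(r)) = \partial^{(0)}(\ks^{k-r}) - \partial^{(\infty)}(\ks^{k-r}).
\]
Since $\partial^{(\infty)}(\ks^{k-r}) \geq 0$, the claimed inequality follows, and equality holds exactly when $\partial^{(\infty)}(\ks^{k-r}) = 0$. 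By Theorem \ref{thm:kato-kolyvagin-main}.(8) this last condition is exactly primitivity of $\ks^{k-r}$, completing the proof.

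There is no real obstacle here; the corollary is a clean unpacking of the structure theorem. The only mildly subtle point is handling the boundary case $\kappa^{k-r}_1 = 0$ consistently with the convention $\partial^{(0)}(\ks^{k-r}) = \infty$, which I would address explicitly so that the statement reads uniformly in both cases.
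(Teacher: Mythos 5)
Your argument is correct and is essentially the paper's proof: the paper simply cites \cite[Cor. 5.2.13]{mazur-rubin-book}, which is obtained precisely by this unpacking of Theorem \ref{thm:kato-kolyvagin-main} (parts (2), (5), (7), (8)), so no comparison beyond that is needed. The one caveat is that the clause ``with equality if and only if $\ks^{k-r}$ is primitive'' should be read in the case $\kappa^{k-r}_1 \neq 0$ (as in Mazur--Rubin's ``and in that case''), since when $\kappa^{k-r}_1 = 0$ both sides are infinite irrespective of primitivity; your case split settles the inequality there but not the biconditional, which is a defect of the bare statement rather than of your argument.
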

\begin{proof}
See \cite[Cor. 5.2.13]{mazur-rubin-book}.
\end{proof}

\subsubsection{}
The following ``Theorem \ref{thm:main-all-critical} before taking $\mathrm{exp}^*$" slightly refines Kato's one-sided divisibility \cite[Thm. 14.5]{kato-euler-systems}  of the Tamagawa number conjecture for modular motives formulated by Kato and Fontaine--Perrin-Riou when the analytic rank is zero.
See \cite[$\S$8.2]{kato-euler-systems} for the notation of \'{e}tale cohomology group $\mathrm{H}^i(\mathbb{Z}[1/p], T_f(k-r) )$.
\begin{cor}[Mazur--Rubin] \label{cor:main-all-critical-before-dual-exp}
Assume that $\rho_f$ has large image.
For an integer $r$ with $1 \leq r \leq k-1$, if
$(1 - a_p(\overline{f}) p^{-r} + \psi^{-1}(p) p^{k-1-2r} ) \cdot L(\overline{f}, r) \neq 0 $, 
then
\[
\mathrm{length}_{\mathcal{O}} \left( \mathrm{Sel}_0(\mathbb{Q}, W_{\overline{f}}(r) ) \right)  = 
\mathrm{length}_{\mathcal{O}} \left( \dfrac{  \mathrm{Sel}_{\mathrm{rel}}(\mathbb{Q}, T_{f}(k-r) )  }{ \kappa^{\mathrm{Kato},k-r}_1 } \right) - \partial^{(\infty)} \left( \ks^{\mathrm{Kato}, k-r} \right)  .
\]
If we further assume that $\mathrm{H}^0(\mathbb{Q}_p, W_{\overline{f}}(r)) = 0$ and  $\overline{\rho}_f$ is ramified at all primes dividing $N$, then
then Kato's formulation of the Tamagawa number conjecture \cite[Conj. 2.3.3, Chap. I]{kato-lecture-1}
$$\mathrm{length}_{\mathcal{O}} \left( \mathrm{H}^2(\mathbb{Z}[1/p], T_f(k-r) ) \right)  = 
\mathrm{length}_{\mathcal{O}} \left( \dfrac{  \mathrm{H}^1(\mathbb{Z}[1/p], T_{f}(k-r) )  }{ \kappa^{\mathrm{Kato},k-r}_1 } \right)$$
is equivalent to $\partial^{(\infty)} \left( \ks^{\mathrm{Kato}, k-r} \right) = 0$, i.e. Kato's Kolyvagin system for $T_f(k-r)$ is primitive.
\end{cor}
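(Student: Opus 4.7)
The plan is to combine Kato's explicit reciprocity (Theorem \ref{thm:zeta-morphism}) with the Mazur--Rubin structure theorem (Theorem \ref{thm:kato-kolyvagin-main}) for the first equality, and a Poitou--Tate comparison between Selmer groups and \'{e}tale cohomology for the reformulation of the Tamagawa number conjecture.

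The first step is to verify $\kappa^{\mathrm{Kato}, k-r}_1 \neq 0$. Specializing Theorem \ref{thm:zeta-morphism}(2) to $L = \mathbb{Q}$, $n = 0$, and the trivial character gives
\[
c_{k-r, \mathbb{Q}, 1, \gamma} \cdot \mathrm{per}_f(\omega_f) = (2\pi\sqrt{-1})^{k-r-1} \cdot \bigl(1 - a_p(\overline{f})p^{-r} + \psi^{-1}(p)p^{k-1-2r}\bigr) \cdot L(\overline{f}, r) \cdot \gamma^{\pm},
\]
which is nonzero by hypothesis. Hence $\mathrm{exp}^* \circ \mathrm{loc}_p\bigl(z^{\mathrm{Kato},k-r}_{\mathbb{Q}, \gamma}\bigr) \neq 0$, forcing $\kappa^{\mathrm{Kato},k-r}_1 \neq 0$.

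Applying Theorem \ref{thm:kato-kolyvagin-main}, this nonvanishing gives $\mathrm{ord}(\ks^{\mathrm{Kato},k-r}) = 0$ and $\partial^{(0)}(\ks^{\mathrm{Kato},k-r}) < \infty$. By part (5), $\mathrm{Sel}_0(\mathbb{Q}, W_{\overline{f}}(r))$ has corank zero and hence equals its $/\mathrm{div}$ quotient, so part (7) yields
\[
\mathrm{length}_\mathcal{O}\, \mathrm{Sel}_0(\mathbb{Q}, W_{\overline{f}}(r)) = \partial^{(0)}(\ks^{\mathrm{Kato},k-r}) - \partial^{(\infty)}(\ks^{\mathrm{Kato},k-r}).
\]
To match $\partial^{(0)}$ with $\mathrm{length}_\mathcal{O}\bigl(\mathrm{Sel}_{\mathrm{rel}}(\mathbb{Q}, T_f(k-r))/\kappa^{\mathrm{Kato},k-r}_1\bigr)$, I would show that $\mathrm{Sel}_{\mathrm{rel}}(\mathbb{Q}, T_f(k-r))$ is free of rank one over $\mathcal{O}$: the core rank one property of Theorem \ref{thm:core-rank-modular-forms-char-zero} combined with the finiteness of $\mathrm{Sel}_0$ pins down the $\mathcal{O}$-rank as one, while the large image assumption yields $\mathrm{H}^0(\mathbb{Q}, T_f(k-r)) = 0$ and hence torsion-freeness. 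Writing $\kappa^{\mathrm{Kato},k-r}_1 = \pi^{\partial^{(0)}(\ks^{\mathrm{Kato},k-r})} \cdot \alpha$ for a generator $\alpha$ then gives the first formula.

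For the second statement, I would identify $\mathrm{H}^i(\mathbb{Z}[1/p], T_f(k-r))$ with the relevant Selmer groups. The hypothesis that $\overline{\rho}_f$ is ramified at every prime $\ell \mid N$ forces the finite local conditions at bad primes to coincide with those inherited from $\mathrm{Spec}(\mathbb{Z}[1/p])$, identifying $\mathrm{H}^1(\mathbb{Z}[1/p], T_f(k-r))$ with $\mathrm{Sel}_{\mathrm{rel}}(\mathbb{Q}, T_f(k-r))$; the vanishing $\mathrm{H}^0(\mathbb{Q}_p, W_{\overline{f}}(r)) = 0$ together with Poitou--Tate global duality identifies $\mathrm{H}^2(\mathbb{Z}[1/p], T_f(k-r))$ with the Pontryagin dual of $\mathrm{Sel}_0(\mathbb{Q}, W_{\overline{f}}(r))$, of the same $\mathcal{O}$-length. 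Substituting into the first formula and invoking the primitivity criterion of Theorem \ref{thm:kato-kolyvagin-main}(8), Kato's TNC becomes equivalent to $\partial^{(\infty)}(\ks^{\mathrm{Kato},k-r}) = 0$. The main technical obstacle is this last Poitou--Tate bookkeeping: confirming that no stray Tamagawa contributions appear at bad primes or at $p$, which is precisely where the ramification and $\mathrm{H}^0$-vanishing hypotheses enter.
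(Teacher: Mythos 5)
Your proposal is correct and follows essentially the same route as the paper: nonvanishing of $\kappa^{\mathrm{Kato},k-r}_1$ via Kato's explicit reciprocity law (Theorem \ref{thm:zeta-morphism}) and the hypothesis, the first equality via Theorem \ref{thm:kato-kolyvagin-main}.(7) together with the rank-one freeness of $\mathrm{Sel}_{\mathrm{rel}}(\mathbb{Q},T_f(k-r))$ (which the paper delegates to \cite[Thm. 5.2.14]{mazur-rubin-book}), and the second statement via the Poitou--Tate comparison of local conditions with $\mathrm{H}^i(\mathbb{Z}[1/p],T_f(k-r))$ under the ramification and $\mathrm{H}^0(\mathbb{Q}_p,W_{\overline{f}}(r))=0$ hypotheses. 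The only cosmetic slip is attributing the core rank one property to Theorem \ref{thm:core-rank-modular-forms-char-zero} rather than to the splitting of Theorem \ref{thm:splitting-mazur-rubin}, but the substance is the same.
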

\begin{proof}
The first equality follows from Theorem \ref{thm:kato-kolyvagin-main}.(7). The non-triviality of $\ks^{\mathrm{Kato}, k-r}$ follows from Kato's explicit reciprocity law 
(Theorem \ref{thm:zeta-morphism}) and the non-vanishing assumption. 
The second statement follows easily from the comparison among local conditions of Selmer groups and $\mathrm{H}^2(\mathbb{Z}[1/p], T_f(k-r) )$ under our assumptions (e.g. \cite[p. 217]{kurihara-invent} and \cite[$\S$14.8]{kato-euler-systems}).
See also \cite[Thm. 5.2.14]{mazur-rubin-book}.
\end{proof}
The explicit comparison between $\mathrm{Sel}_0(\mathbb{Q}, W_{\overline{f}}(r) )$ and $\mathrm{H}^2(\mathbb{Z}[1/p], T_f(k-r))$ should reveal a precise relation between Kato's formulation of the Tamagawa number conjecture and the arithmetic interpretation of $\partial^{(\infty)} \left( \ks^{\mathrm{Kato}, k-r} \right)$.
See $\S$\ref{sec:formulation-refined-conjecture} for the precise formulation of the general conjecture.
See also \cite[Conj. 5.15]{bloch-kato}, \cite[4.5.2.$C_{\mathrm{BK}, \ell}(M)$ (p. 702)]{fontaine-perrin-riou}, \cite[Conj. 4 (p. 100)]{burns-flach-annalen}, \cite[Conj. 4 (p. 535)]{burns-flach-documenta}, and \cite[Conj. 3]{flach-survey}.


\subsection{Iwasawa theory}
\subsubsection{}
Let $\mathbb{Q}_\infty$ be the cyclotomic $\mathbb{Z}_p$-extension of $\mathbb{Q}$ and $\mathbb{Q}_s \subseteq \mathbb{Q}_\infty$ the cyclic subextension of $\mathbb{Q}$ of degree $p^s$ in $\mathbb{Q}_\infty$.
Let $\Lambda = \Lambda_{\mathrm{Iw}} = \mathbb{Z}_p\llbracket  \mathrm{Gal}(\mathbb{Q}_\infty/\mathbb{Q}) \rrbracket = \varprojlim_s \mathbb{Z}_p[  \mathrm{Gal}(\mathbb{Q}_s/\mathbb{Q}) ]$ be the Iwasawa algebra.

\subsubsection{}
Applying the $\Lambda$-adic version of Euler-to-Kolyvagin system map \cite[Thm. 5.3.3]{mazur-rubin-book} to Kato's Euler system $\mathbf{z}^{\mathrm{Kato},k-r}$, we obtain the $\Lambda$-adic Kato's Kolyvagin system $\ks^{\mathrm{Kato},k-r,\infty}$.
\begin{thm}[Kato--Rohrlich] \label{thm:kato-rohrlich}
Kato's zeta element $\kappa^{\mathrm{Kato},k-r,\infty}_{1} = z^{\mathrm{Kato},k-r}_{\mathbb{Q}_\infty}$ over $\mathbb{Q}_\infty$ is non-trivial.
\end{thm}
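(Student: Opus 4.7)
The strategy is a standard contradiction argument combining Kato's explicit reciprocity law (Theorem \ref{thm:zeta-morphism}) with Rohrlich's non-vanishing theorem for twisted $L$-values over the cyclotomic tower. Suppose, toward a contradiction, that $z^{\mathrm{Kato},k-r}_{\mathbb{Q}_\infty} = 0$ inside $\mathrm{H}^1_{\mathcal{I}w}(\mathbb{Q}, T_f(k-r)) = \varprojlim_s \mathrm{H}^1(\mathbb{Q}(\zeta_{p^s}), T_f(k-r))$. Then each of its finite-level components $z^{\mathrm{Kato}, k-r}_{\mathbb{Q}(\zeta_{p^n}), \gamma}$ must also vanish for every $n \geq 0$ and every $\gamma \in T_f$, because a trivial element of an inverse limit has all its coordinates trivial.

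Next, feed this finite-level vanishing into Theorem \ref{thm:zeta-morphism}.(2). Applying $\mathrm{exp}^* \circ \mathrm{loc}_p$ to $z^{\mathrm{Kato}, k-r}_{\mathbb{Q}(\zeta_{p^n}), \gamma} = 0$ forces $c_{k-r, \mathbb{Q}, p^n, \gamma} = 0$ in $\mathbb{Q}(\zeta_{p^n}) \otimes F$ for every $n$. Taking the $\chi$-isotypic piece, for any Dirichlet character $\chi$ of conductor dividing $p^n$ the displayed character-sum identity collapses to
\[
(2\pi\sqrt{-1})^{k-r-1} \cdot L^{(p)}(\overline{f}, \chi, r) \cdot \mathrm{per}_f(\omega_f) \cdot \gamma^{\pm} = 0.
\]
Since $\gamma^{\pm}$ and $\mathrm{per}_f(\omega_f)$ are nonzero by choice, and the removed Euler factor at $p$ is nonzero for all but finitely many $\chi$ of $p$-power conductor (as can be read off from the formula for $\mathrm{det}(1 - \varphi X : \DdR(V_{\overline{f}}(r)))$ recorded after Theorem \ref{thm:bloch-kato-exponential}), this would force $L(\overline{f}, \chi, r) = 0$ for infinitely many Dirichlet characters $\chi$ of $p$-power conductor.

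The contradiction is then supplied by Rohrlich's non-vanishing theorem, which asserts that for any fixed critical integer $r$ with $1 \leq r \leq k-1$ one has $L(\overline{f}, \chi, r) \neq 0$ for all but finitely many Dirichlet characters $\chi$ of $p$-power conductor. This rules out the hypothesis that $z^{\mathrm{Kato},k-r}_{\mathbb{Q}_\infty} = 0$ and proves the statement. The only genuinely hard input in the outline is Rohrlich's theorem itself, which is a deep analytic result; the rest is an essentially formal unravelling of Theorem \ref{thm:zeta-morphism} combined with the inverse-limit definition of $\mathrm{H}^1_{\mathcal{I}w}$. A minor technical point to check is that $\omega_f$ can be chosen so that $\mathrm{per}_f(\omega_f) \neq 0$, which is immediate from the comparison isomorphism between Betti and de Rham realizations of $V_f$, and that $\gamma^{\pm}$ is a nonzero generator of $T_f^{\pm}$, which is part of the setup of the canonical Kato Euler system.
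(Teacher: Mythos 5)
Your proposal is correct and is essentially the paper's own argument: the paper proves this in one line by combining Kato's explicit reciprocity law (Theorem \ref{thm:zeta-morphism}) with Rohrlich's generic non-vanishing of twisted $L$-values over the cyclotomic tower, which is exactly the contradiction you unravel at finite level. The only cosmetic caveat is that the element over $\mathbb{Q}_\infty$ lives in the limit over the layers $\mathbb{Q}_s$ of the cyclotomic $\mathbb{Z}_p$-extension rather than over $\mathbb{Q}(\zeta_{p^s})$, and for the nontrivial characters of $p$-power conductor that arise the Euler factor at $p$ is identically $1$, so $L^{(p)}(\overline{f},\chi,r)=L(\overline{f},\chi,r)$; neither point affects the argument.
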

\begin{proof}
By using Kato's explicit reciprocity law (Theorem \ref{thm:zeta-morphism}),
the conclusion follows from the generic non-vanishing of twisted $L$-values \cite{rohrlich-nonvanishing, rohrlich-nonvanishing-2}.
\end{proof}
For a co-finitely generated module $M$, write $M^\vee = \mathrm{Hom}(M , F/\mathcal{O})$.
\begin{thm}[Kato] \label{thm:kato-12-4}
Suppose that $\overline{\rho}_f$ is irreducible. Then:
\begin{enumerate}
\item 
 $\mathrm{H}^1_{\mathrm{Iw}}(\mathbb{Q}, T_f(k-r))$ is free of rank one over $\Lambda$.
\item 
$\mathrm{Sel}_0(\mathbb{Q}_{\infty}, W_{\overline{f}}(r) )^\vee$ is a finitely generated torsion $\Lambda$-module.
\end{enumerate}
\end{thm}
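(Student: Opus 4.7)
The plan is to handle the two parts separately, though both rely almost entirely on inputs already marshalled in the excerpt.

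For part (1), I would establish torsion-freeness first and then pin down the rank. Torsion-freeness: irreducibility of $\overline{\rho}_f$ together with the large image of $\rho_f$ gives $\mathrm{H}^0(\mathbb{Q}_s, T_f(k-r)/\pi) = 0$ for every finite layer $\mathbb{Q}_s \subset \mathbb{Q}_\infty$, since the cyclotomic $\mathbb{Z}_p$-extension lies inside $\mathbb{Q}(\zeta_{p^\infty})$ and $\mathrm{H}^0(\mathbb{Q}(\zeta_{p^\infty}), \overline{\rho}_f) = 0$ by the large image hypothesis. From the long exact sequence for multiplication by $\pi$, the groups $\mathrm{H}^1(\mathbb{Q}_s, T_f(k-r))$ are $\mathcal{O}$-torsion-free, and the inverse limit $\mathrm{H}^1_{\mathrm{Iw}}(\mathbb{Q}, T_f(k-r))$ then has no pseudo-null $\Lambda$-submodule. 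For the rank, I would apply the $\Lambda$-adic global Euler characteristic formula: for a Galois representation $T$ of $\mathrm{Gal}(\overline{\mathbb{Q}}/\mathbb{Q})$, the $\Lambda$-rank of $\mathrm{H}^1_{\mathrm{Iw}}(\mathbb{Q}, T)$ equals $\mathrm{rank}_\mathcal{O} T^{c=-1}$. For $T = T_f(k-r)$ this is $1$, because complex conjugation acts on the rank two module $T_f$ with eigenvalues $\pm 1$. Freeness then follows from a Nakayama argument: a finitely generated $\Lambda$-module of rank one with no pseudo-null submodule and generated mod the maximal ideal by one element must be free of rank one.

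For part (2), I would invoke the Euler system / Kolyvagin system divisibility that has already been set up in Section \ref{subsec:kolyvagin-systems}. Applying the $\Lambda$-adic Euler-to-Kolyvagin system map to $\mathbf{z}^{\mathrm{Kato},k-r}$ produces the $\Lambda$-adic Kato Kolyvagin system $\ks^{\mathrm{Kato},k-r,\infty}$ whose bottom class is $\kappa^{\mathrm{Kato},k-r,\infty}_1 = z^{\mathrm{Kato},k-r}_{\mathbb{Q}_\infty}$. The standard Euler system divisibility (Rubin's theorem, or equivalently the $\Lambda$-adic formalism of \cite{mazur-rubin-book}) then bounds the characteristic ideal of $\mathrm{Sel}_0(\mathbb{Q}_\infty, W_{\overline{f}}(r))^\vee$ by the $\Lambda$-index of $\Lambda \cdot \kappa^{\mathrm{Kato},k-r,\infty}_1$ inside $\mathrm{H}^1_{\mathrm{Iw}}(\mathbb{Q}, T_f(k-r))$. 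By part (1) the latter is free of rank one, and by Theorem \ref{thm:kato-rohrlich} the zeta element is non-trivial, so this index is a non-zero ideal of $\Lambda$. Consequently the characteristic ideal of $\mathrm{Sel}_0(\mathbb{Q}_\infty, W_{\overline{f}}(r))^\vee$ is non-zero, which is exactly the statement that this module is finitely generated and $\Lambda$-torsion.

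The main obstacle is the rank computation in part (1): justifying rigorously that the $\Lambda$-rank of $\mathrm{H}^1_{\mathrm{Iw}}$ equals exactly one requires a careful accounting of the local contributions (the archimedean place contributes $\mathrm{rank}_\mathcal{O} T_f^{c=-1}$, while the finite places $\ell \mid Np$ and $\ell = p$ cancel after passing to the limit). Everything else in the argument is either a formal consequence of Nakayama's lemma or an immediate invocation of the Euler system bound, once the non-vanishing input from Kato--Rohrlich is in hand.
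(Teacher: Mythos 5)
Your overall shape (residual irreducibility for the freeness statement, the zeta element plus Rohrlich for the torsionness statement) is the same as Kato's, which is all the paper does here: its ``proof'' of Theorem \ref{thm:kato-12-4} is a citation to \cite[Thm.~12.4]{kato-euler-systems} and \cite[Lem.~5.3.5, Thm.~5.3.6]{mazur-rubin-book}. But as written your argument has a genuine gap, and it sits exactly at the rank computation in part (1). The $\Lambda$-adic global Euler characteristic formula does not say $\mathrm{rank}_\Lambda \mathrm{H}^1_{\mathrm{Iw}}(\mathbb{Q},T) = \mathrm{rank}_{\mathcal{O}} T^{c=-1}$; with $\mathrm{H}^0_{\mathrm{Iw}} = 0$ it says $\mathrm{rank}_\Lambda \mathrm{H}^1_{\mathrm{Iw}} - \mathrm{rank}_\Lambda \mathrm{H}^2_{\mathrm{Iw}} = \mathrm{rank}_{\mathcal{O}} T^{c=-1} = 1$. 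Concluding that the rank is exactly one is equivalent to $\mathrm{H}^2_{\mathrm{Iw}}(\mathbb{Q},T_f(k-r))$ being $\Lambda$-torsion, i.e.\ to the weak Leopoldt conjecture for $V_f(k-r)$, and this is not obtained by ``a careful accounting of the local contributions'' (no finite local terms appear in the global Euler characteristic at all). For modular forms the only known proof of this torsionness is via the Euler system of zeta elements together with Rohrlich's nonvanishing (Theorem \ref{thm:kato-rohrlich}) --- precisely the input you reserve for part (2), and by Poitou--Tate it is essentially the same statement as part (2). Since your part (2) invokes part (1) (freeness of rank one) in order to know that the index of $\Lambda\kappa^{\mathrm{Kato},k-r,\infty}_1$ in $\mathrm{H}^1_{\mathrm{Iw}}$ is a nonzero ideal, your two parts feed into each other: as structured, the argument is circular.

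The standard way to break the circle (and what Kato and Mazur--Rubin actually do) is to run the Euler/Kolyvagin system machinery at each height one prime of $\Lambda$ (or at finite layers $\mathbb{Q}_s$), where one only needs torsion-freeness of $\mathrm{H}^1_{\mathrm{Iw}}$ and the nonvanishing of $z^{\mathrm{Kato},k-r}_{\mathbb{Q}_\infty}$, and which yields directly that $\mathrm{Sel}_0(\mathbb{Q}_\infty, W_{\overline{f}}(r))^\vee$ is torsion; the equality $\mathrm{rank}_\Lambda \mathrm{H}^1_{\mathrm{Iw}} = 1$ is then a consequence, not an input. Two smaller points in part (1): your Nakayama step asserts, without proof, that $\mathrm{H}^1_{\mathrm{Iw}}$ is generated by one element modulo the maximal ideal; given rank one and torsion-freeness this single-generation is essentially the whole content of freeness (the ideal $(\pi,\gamma-1)$ is torsion-free of rank one and not free), and it is here that the actual argument uses $\mathrm{H}^0(\mathbb{Q},T_f(k-r)/\pi)=0$ through an Euler characteristic/Tor computation over the residue field, not merely the absence of pseudo-null submodules. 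Likewise ``each $\mathrm{H}^1(\mathbb{Q}_s,T)$ is $\mathcal{O}$-torsion-free, hence the limit has no pseudo-null submodule'' is not an implication as stated; the clean route is $\mathrm{H}^1_{\mathrm{Iw}}[\lambda] \hookrightarrow \mathrm{H}^0$ of the quotient coefficients via Shapiro's lemma and the vanishing of $\mathrm{H}^0$ over $\mathbb{Q}_\infty$.
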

\begin{proof}
See \cite[Thm. 12.4]{kato-euler-systems}. See also \cite[Lem. 5.3.5 and Thm. 5.3.6]{mazur-rubin-book}.
\end{proof}
\subsubsection{}
We recall the Iwasawa main conjecture without $p$-adic $L$-functions \cite[Conj. 12.10]{kato-euler-systems} and the known results.
\begin{conj}[Iwasawa main conjecture] \label{conj:main-conjecture}
Assume that $\rho_f$ has large image.
For a height one prime $\mathfrak{P}$ of $\Lambda$,
the Iwasawa main conjecture  for $T_f(k-r)$ localized at $\mathfrak{P}$ means the equality
$$ \mathrm{ord}_{\mathfrak{P}} \left( \mathrm{char}_{\Lambda} \left( \dfrac{\mathrm{H}^1_{\mathrm{Iw}}(\mathbb{Q}, T_f(k-r))}{\Lambda \kappa^{\mathrm{Kato}, k-r, \infty}_1 }  \right)  \right) = 
\mathrm{ord}_{\mathfrak{P}} \left( \mathrm{char}_{\Lambda} \left( \mathrm{Sel}_0(\mathbb{Q}_{\infty}, W_{\overline{f}}(r) )^\vee  \right) \right) .$$
The Iwasawa main conjecture for $T_f(k-r)$ means the Iwasawa main conjecture  for $T_f(k-r)$ localized at every height one prime ideal.
\end{conj}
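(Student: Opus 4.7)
The conjecture is the classical Iwasawa main conjecture for $T_f(k-r)$; it asserts the equality of two characteristic ideals after localization at every height one prime $\mathfrak{P}$ of $\Lambda$, and the natural plan is to establish the two divisibilities by genuinely different methods, matching the shape of all known progress on main conjectures for $\mathrm{GL}_2$.

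For the Euler system divisibility
\[
\mathrm{ord}_{\mathfrak{P}} \left( \mathrm{char}_{\Lambda} \mathrm{Sel}_0(\mathbb{Q}_\infty, W_{\overline{f}}(r))^\vee \right) \leq \mathrm{ord}_{\mathfrak{P}} \left( \mathrm{char}_{\Lambda} \left( \mathrm{H}^1_{\mathrm{Iw}}(\mathbb{Q}, T_f(k-r)) / \Lambda \kappa^{\mathrm{Kato},k-r,\infty}_1 \right) \right),
\]
the plan is to apply the $\Lambda$-adic Euler-to-Kolyvagin system map of Mazur--Rubin to Kato's Euler system $\mathbf{z}^{\mathrm{Kato}, k-r}$, producing a $\Lambda$-adic Kato Kolyvagin system whose level-one class is the Iwasawa zeta element $\kappa^{\mathrm{Kato},k-r,\infty}_1$. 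The key inputs are its non-triviality (Theorem \ref{thm:kato-rohrlich}), the freeness and torsion statements of Theorem \ref{thm:kato-12-4}, and the large image hypothesis on $\overline{\rho}_f$; together these make the $\Lambda$-adic structure theorem for Kolyvagin systems apply and yield the displayed inequality. This is Kato's theorem.

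The opposite divisibility requires a genuinely different input. The standard plan is to construct enough non-trivial classes inside the Selmer group via congruences between Eisenstein series and cusp forms on a higher rank group, typically $\mathrm{GU}(2,2)$ or $\mathrm{GSp}_4$. In the good ordinary and $p$-distinguished setting Skinner--Urban and Wan carry this out, bounding $\mathrm{char}_{\Lambda} \mathrm{Sel}_0(\mathbb{Q}_\infty, W_{\overline{f}}(r))^\vee$ from below by (the image of) a $p$-adic $L$-function, and then converting this to the zeta-element side through Kato's explicit reciprocity law (Theorem \ref{thm:zeta-morphism}). For non-ordinary $p$ one would invoke the analogous constructions of Castella--Liu--Wan, Burungale--Skinner--Tian--Wan, Fouquet--Wan, or work instead with Pottharst's analytic main conjecture via Coleman families.

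The main obstacle is precisely this opposite divisibility in full generality. Current Eisenstein-congruence techniques require either good ordinary plus $p$-distinguished behavior, or the Fontaine--Laffaille range, to produce the integral lattice classes needed; the general supercuspidal and non-ordinary cases outside these ranges remain out of reach with present methods, and removing $p$-distinguishedness is already a delicate integral problem. Fortunately, for the main theorems of this paper only the localization at the augmentation ideal is required, and in the good ordinary, $p$-distinguished case this weaker statement is exactly what the cited Theorem \ref{thm:skinner-urban-wan} supplies.
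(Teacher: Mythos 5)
The statement you were asked about is Conjecture \ref{conj:main-conjecture}: it is an open conjecture in the paper (Kato's formulation of the Iwasawa main conjecture without $p$-adic $L$-functions), not a theorem, and the paper supplies no proof of it --- it only records the known partial results, namely the Euler-system divisibility and the equivalence of the localized conjecture with non-vanishing of the specialized Kolyvagin system (Theorem \ref{thm:mazur-rubin-main-conjecture}, due to Kato and Mazur--Rubin), and the ordinary, $p$-distinguished cases via Eisenstein congruences (Theorem \ref{thm:skinner-urban-wan}, due to Skinner--Urban and Wan). Your write-up is an accurate survey of exactly this state of affairs: the divisibility you state via the $\Lambda$-adic Kolyvagin system attached to $\mathbf{z}^{\mathrm{Kato},k-r}$ (using Theorems \ref{thm:kato-rohrlich} and \ref{thm:kato-12-4}) is in the correct direction and matches Theorem \ref{thm:mazur-rubin-main-conjecture}.(1), and your description of the reverse divisibility through congruences on $\mathrm{GU}(2,2)$ converted back to the zeta-element side by the explicit reciprocity law is the Skinner--Urban/Wan route that the paper cites.

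But precisely because you concede that the reverse divisibility is out of reach outside the ordinary/Fontaine--Laffaille-type settings, what you have written is a strategy outline rather than a proof, and it should not be presented as one: the ``main obstacle'' you name is the entire open content of the conjecture. Two smaller points of precision are worth keeping in mind. First, even in the good ordinary, $p$-distinguished case, Theorem \ref{thm:skinner-urban-wan} does not give the full conjecture as stated here: Wan's result excludes the prime $\pi\Lambda$ (the $\mu$-invariant issue) and Skinner--Urban's requires extra hypotheses such as $k \equiv 2 \pmod{p-1}$ and a ramified prime exactly dividing $N$, so what is actually known is the conjecture localized at height one primes away from $\pi\Lambda$. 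Second, you are right that for the paper's main theorems only the localization at the augmentation ideal is needed, and since the augmentation ideal is a height one prime different from $\pi\Lambda$, Theorem \ref{thm:skinner-urban-wan} does supply that weaker input in the ordinary case --- which is exactly how the paper uses it in Theorem \ref{thm:non-triviality-kn}.
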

We say that \textbf{$\ks^{\mathrm{Kato}, k-r,\infty}$ is $\Lambda$-primitive} if $\ks^{\mathrm{Kato}, k-r,\infty} \pmod{\mathfrak{P}}$ does not vanish for every height one prime $\mathfrak{P}$ of $\Lambda$ \cite[Def. 5.3.9]{mazur-rubin-book}.
\begin{thm}[Kato, Mazur--Rubin] \label{thm:mazur-rubin-main-conjecture}
Assume that $\rho_f$ has large image.
Let $\ks^{\mathrm{Kato}, k-r,\infty} \in \KS(T_f(k-r) \otimes \Lambda)$ be the $\Lambda$-adic Kato's Kolyvagin system for $T_f(k-r)$.
\begin{enumerate}
\item The one-sided divisibility of the Iwasawa main conjecture holds
$$ \mathrm{char}_{\Lambda} \left( \dfrac{\mathrm{H}^1_{\mathrm{Iw}}(\mathbb{Q}, T_f(k-r))}{\Lambda \kappa^{\mathrm{Kato}, k-r, \infty}_1 }   \right) \subseteq 
\mathrm{char}_{\Lambda} \left( \mathrm{Sel}_0(\mathbb{Q}_{\infty}, W_{\overline{f}}(r) )^\vee  \right) .$$
\item The specialization of $\ks^{\mathrm{Kato}, k-r,\infty}$ at a height one prime $\mathfrak{P}$ of $\Lambda$ is non-trivial if and only if the Iwasawa main conjecture  for $T_f(k-r)$ localized at $\mathfrak{P}$ holds, i.e.
$$ \mathrm{ord}_{\mathfrak{P}} \left( \mathrm{char}_{\Lambda} \left( \dfrac{\mathrm{H}^1_{\mathrm{Iw}}(\mathbb{Q}, T_f(k-r))}{\Lambda \kappa^{\mathrm{Kato}, k-r, \infty}_1 }  \right)  \right) = 
\mathrm{ord}_{\mathfrak{P}} \left( \mathrm{char}_{\Lambda} \left( \mathrm{Sel}_0(\mathbb{Q}_{\infty}, W_{\overline{f}}(r) )^\vee  \right) \right) .$$
\item The $\Lambda$-adic Kato's Kolyvagin system $\ks^{\mathrm{Kato}, k-r,\infty}$  is $\Lambda$-primitive if and only if
$$ \mathrm{char}_{\Lambda} \left( \dfrac{\mathrm{H}^1_{\mathrm{Iw}}(\mathbb{Q}, T_f(k-r))}{\Lambda \kappa^{\mathrm{Kato}, k-r, \infty}_1 }  \right)  = 
 \mathrm{char}_{\Lambda} \left( \mathrm{Sel}_0(\mathbb{Q}_{\infty}, W_{\overline{f}}(r) )^\vee  \right) .$$
\end{enumerate}
\end{thm}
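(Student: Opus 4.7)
The plan is to apply Mazur--Rubin's $\Lambda$-adic Kolyvagin system machinery \cite[\S5.3]{mazur-rubin-book} to $\ks^{\mathrm{Kato},k-r,\infty}$. First I would verify the required structural inputs: Theorem \ref{thm:kato-12-4} provides that $\mathrm{H}^1_{\mathrm{Iw}}(\mathbb{Q}, T_f(k-r))$ is free of rank one over $\Lambda$ and that $\mathrm{Sel}_0(\mathbb{Q}_\infty, W_{\overline{f}}(r))^\vee$ is $\Lambda$-torsion, while Theorem \ref{thm:core-rank-modular-forms-char-zero} (whose proof rests on Propositions \ref{prop:chebotarev-mazur-rubin} and \ref{prop:chebotarev-sakamoto}) establishes that the canonical Selmer structure on $T_f(k-r)$ has core rank one; these ingredients upgrade to core rank one for $T_f(k-r) \otimes \Lambda$. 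Theorem \ref{thm:kato-rohrlich} guarantees $\kappa^{\mathrm{Kato},k-r,\infty}_1 \neq 0$, so $\ks^{\mathrm{Kato},k-r,\infty}$ is a non-trivial element of $\KS(T_f(k-r) \otimes \Lambda)$.

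For (1) I would invoke the general $\Lambda$-adic divisibility bound \cite[Thm. 5.3.10.(iii)]{mazur-rubin-book}: for any non-trivial $\Lambda$-adic Kolyvagin system in a core rank one setting, the characteristic ideal of the dual Selmer module divides the index of $\Lambda \cdot \kappa^{\mathrm{Kato},k-r,\infty}_1$ inside the free rank one module $\mathrm{H}^1_{\mathrm{Iw}}(\mathbb{Q}, T_f(k-r))$. This gives the claimed one-sided divisibility directly.

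For (2), I would localize at $\mathfrak{P}$ and reduce. For each $m \geq 1$, specializing to $\Lambda_\mathfrak{P}/\mathfrak{P}^m \Lambda_\mathfrak{P}$ places us in the principal Artinian setting, where Theorem \ref{thm:structure-fine-selmer-p^k} applies. The key dichotomy is that the $\mathfrak{P}$-part of the index of $\kappa^{\mathrm{Kato},k-r,\infty}_1$ in $\mathrm{H}^1_{\mathrm{Iw}}$ exceeds the $\mathfrak{P}$-part of the characteristic ideal of $\mathrm{Sel}_0(\mathbb{Q}_\infty, W_{\overline{f}}(r))^\vee$ by exactly the divisibility index of the $\mathfrak{P}$-specialization inside the associated stub Selmer module (the analogue of Theorem \ref{thm:kato-kolyvagin-main}.(8)). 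This divisibility index vanishes precisely when the mod $\mathfrak{P}$ specialization is non-trivial, which gives both directions of the claimed equivalence.

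Finally, (3) follows by running (2) over every height one prime simultaneously, since $\Lambda$-primitivity is by definition non-vanishing of the $\mathfrak{P}$-specialization at every such $\mathfrak{P}$, and characteristic ideals of finitely generated torsion $\Lambda$-modules are detected locally at height one primes. The main obstacle is the ``only if'' direction of (2): extracting the sharp equality from the Artinian structure theorem and lifting it back to $\Lambda$ requires controlling how core vertices and stub Selmer submodules behave under reduction mod $\mathfrak{P}$; in particular, in the $p=3$ residually self-dual case one must substitute the weaker Chebotarev input Proposition \ref{prop:chebotarev-sakamoto} for Proposition \ref{prop:chebotarev-mazur-rubin}, forcing a more delicate choice of useful primes to ensure that enough core vertices survive specialization.
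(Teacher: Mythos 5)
Your proposal follows essentially the same route as the paper: the paper's proof simply cites Kato (\cite[Thm. 12.5.(4)]{kato-euler-systems}) and Mazur--Rubin (\cite[Thm. 5.3.10]{mazur-rubin-book}) together with Theorem \ref{thm:kato-rohrlich} for the non-vanishing of $\kappa^{\mathrm{Kato},k-r,\infty}_1$, remarking that the proof of \cite[Thm. 5.3.10.(ii),(iii)]{mazur-rubin-book} yields the claimed equivalences, and your sketch of (2)--(3) via specialization at height one primes and the Artinian structure theorem is precisely an unpacking of that remark (the Chebotarev substitution at $p=3$ included). The only blemish is a citation slip: the one-sided divisibility in (1) is \cite[Thm. 5.3.10.(i)]{mazur-rubin-book}, not part (iii).
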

\begin{proof}
See \cite[Thm. 12.5.(4)]{kato-euler-systems} and \cite[Thm. 5.3.10.(i)]{mazur-rubin-book} for the first statement.
Although \cite[Thm. 5.3.10.(ii) and (iii)]{mazur-rubin-book} claims one direction of the second and the third statements, the proof implies the equivalence.
Theorem \ref{thm:kato-rohrlich} is essential in the proof.
\end{proof}
\begin{thm}[Kato, Skinner--Urban, Wan] \label{thm:skinner-urban-wan}
Suppose that $\psi = \mathbf{1}$ and $f$ is good ordinary at $p$ and $p$-distinguished.
\begin{enumerate}
\item[(SU)]
If $\rho_f$ has large image, $k \equiv 2 \pmod{p-1}$, and there exists a prime $\ell$ exactly dividing $N$ where $\overline{\rho}_f$ is ramified, then
$$\mathrm{ord}_{\mathfrak{P}} \left( \mathrm{char}_{\Lambda} \left( \dfrac{\mathrm{H}^1_{\mathrm{Iw}}(\mathbb{Q}, T_f(k-1))}{\Lambda \kappa^{\mathrm{Kato}, k-1, \infty}_1 }  \right)  \right) = 
\mathrm{ord}_{\mathfrak{P}} \left(  \mathrm{char}_{\Lambda} \left( \mathrm{Sel}_0(\mathbb{Q}_{\infty}, W_{f}(1) )^\vee  \right)  \right) $$
for every height one prime $\mathfrak{P}$ of $\Lambda$.
\item[(Wan)] If $p \geq 5$ and $\overline{\rho}_f$ is irreducible, then
$$\mathrm{ord}_{\mathfrak{P}} \left( \mathrm{char}_{\Lambda} \left( \dfrac{\mathrm{H}^1_{\mathrm{Iw}}(\mathbb{Q}, T^{\dagger}_f)}{\Lambda \kappa^{\mathrm{Kato}, \dagger, \infty}_1 }  \right)  \right) = 
\mathrm{ord}_{\mathfrak{P}} \left(  \mathrm{char}_{\Lambda} \left( \mathrm{Sel}_0(\mathbb{Q}_{\infty}, W^{\dagger}_{f} )^\vee  \right)  \right) $$
for every height one prime $\mathfrak{P}$ except $\pi\Lambda$.
\end{enumerate}
\end{thm}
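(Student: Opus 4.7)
The plan is to assemble the two-sided equality out of Kato's one-sided divisibility, already recorded in Theorem \ref{thm:mazur-rubin-main-conjecture}.(1), and the opposite divisibility coming from the Eisenstein ideal techniques of Skinner--Urban and Wan. Under the stated hypotheses (good ordinary and $p$-distinguished), Kato's divisibility holds unconditionally as an inclusion of characteristic ideals
\[
\mathrm{char}_{\Lambda} \left( \dfrac{\mathrm{H}^1_{\mathrm{Iw}}(\mathbb{Q}, T_f(k-r))}{\Lambda \kappa^{\mathrm{Kato}, k-r, \infty}_1 }  \right)  \subseteq
 \mathrm{char}_{\Lambda} \left( \mathrm{Sel}_0(\mathbb{Q}_{\infty}, W_{\overline{f}}(r) )^\vee  \right),
\]
and it remains to upgrade this to equality at each relevant height one prime. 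The structure of the argument is the same in both (SU) and (Wan): produce enough congruences of cusp forms with Eisenstein series on a suitable unitary or symplectic group, extract a lattice in the associated Galois representation whose reducibility locus cuts out the characteristic ideal of the dual Selmer group, and compare with an $L$-function factor that controls the size of the Eisenstein ideal.

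For part (SU), I would invoke Skinner--Urban's work on $\mathrm{U}(2,2)$: the assumptions $\psi = \mathbf{1}$, $p$-ordinary and $p$-distinguished, $k \equiv 2 \pmod{p-1}$, large image, and the existence of a prime exactly dividing $N$ at which $\overline{\rho}_f$ is ramified are precisely the hypotheses under which their Eisenstein congruence construction produces the opposite divisibility
\[
 \mathrm{char}_{\Lambda} \left( \mathrm{Sel}_0(\mathbb{Q}_{\infty}, W_{f}(1) )^\vee  \right) \subseteq \mathrm{char}_{\Lambda} \left( \dfrac{\mathrm{H}^1_{\mathrm{Iw}}(\mathbb{Q}, T_f(k-1))}{\Lambda \kappa^{\mathrm{Kato}, k-1, \infty}_1 }  \right),
\]
at every height one prime of $\Lambda$. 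Combining with Kato yields the equality localized at every $\mathfrak{P}$. The auxiliary shift from their $p$-adic $L$-function formulation to Kato's zeta element formulation is carried out via Kato's explicit reciprocity law (Theorem \ref{thm:zeta-morphism}), which identifies the image of $\kappa^{\mathrm{Kato},k-1,\infty}_1$ under the $\Lambda$-adic dual exponential with the Mazur--Tate--Teitelbaum $p$-adic $L$-function up to a unit.

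For part (Wan), the corresponding input is Wan's work on the Iwasawa main conjecture for the central critical twist: under the residual irreducibility of $\overline{\rho}_f$ and $p \geq 5$, an analogous Eisenstein congruence construction (together with the reduction to Kato's zeta element via Theorem \ref{thm:zeta-morphism}) yields the opposite divisibility at every height one prime except possibly $\pi\Lambda$. Combined with Kato's divisibility, this gives the claimed equality away from $\pi\Lambda$.

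The main obstacle is not internal to the present paper at all, but rather the depth of the cited external results: both (SU) and (Wan) rely on highly technical Eisenstein congruence arguments on higher rank groups, and the exclusion of $\pi\Lambda$ in (Wan) reflects the fact that the residual Galois representation is used in the lattice construction, so the argument cannot control the $\mu$-invariant directly. Since we are content to state the theorem in the form above for use in Theorem \ref{thm:non-triviality-kn}.(ord), no additional work beyond invoking these references is required.
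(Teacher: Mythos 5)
Your overall route is the same as the paper's: the theorem is proved by assembling Kato's one-sided divisibility (Theorem \ref{thm:mazur-rubin-main-conjecture}.(1), i.e. \cite[Thm. 12.5]{kato-euler-systems}) with the opposite divisibilities supplied by \cite[Thm. 3.29]{skinner-urban} for (SU) and \cite[Thm. 4]{wan_hilbert} for (Wan), and no new argument beyond these citations is needed.

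The one place where your write-up is incomplete is the translation step. Skinner--Urban and Wan prove the main conjecture \emph{with} $p$-adic $L$-functions, whose algebraic side is the Greenberg (ordinary) Selmer group over $\mathbb{Q}_\infty$, whereas Conjecture \ref{conj:main-conjecture} compares $\mathrm{Sel}_0(\mathbb{Q}_\infty, \cdot)^\vee$ with $\mathrm{H}^1_{\mathrm{Iw}}(\mathbb{Q}, \cdot)/\Lambda \kappa^{\mathrm{Kato},\infty}_1$. Invoking the explicit reciprocity law (Theorem \ref{thm:zeta-morphism}) only matches the two \emph{analytic} sides, by identifying the image of the zeta element under the Coleman/dual-exponential map with the $p$-adic $L$-function; by itself it does not transfer a divisibility between characteristic ideals of two different Selmer groups. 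The missing ingredient is the global Poitou--Tate duality exact sequence (as in \cite[$\S$17.13]{kato-euler-systems}), which relates $\mathrm{Sel}_0(\mathbb{Q}_\infty,\cdot)^\vee$, the ordinary Selmer group, the semi-local $\mathrm{H}^1_{\mathrm{Iw}}$ at $p$ modulo the local condition, and the image of the zeta element, and thereby makes the two formulations of the main conjecture equivalent prime-by-prime (away from $\pi\Lambda$ in the (Wan) case). This is exactly the equivalence the paper cites; adding it closes the gap, and the rest of your argument, including the remark that the exclusion of $\pi\Lambda$ reflects the lack of $\mu$-invariant control in Wan's construction, is correct.
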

\begin{proof}
See \cite[Thm. 12.5]{kato-euler-systems}, \cite[Thm. 3.29]{skinner-urban}, and \cite[Thm. 4 (= Thm. 103)]{wan_hilbert}.
The latter two results concern the Iwasawa main conjecture with $p$-adic $L$-functions, which is equivalent to Conjecture \ref{conj:main-conjecture} via  the global Poitou--Tate duality \cite[$\S$17.13]{kato-euler-systems}.
\end{proof}
\section{The explicit reciprocity law with torsion coefficients} \label{sec:explicit-reciprocity-law}
In order to develop the precise connection between Kato's Kolyvagin systems and Kurihara numbers, it is natural to apply Kato's explicit reciprocity law to Kato's Kolyvagin system. However, it does not make sense literally since Kolyvagin systems are cohomology classes with torsion coefficients.
We first need to construct the dual exponential map with torsion coefficients.
The best way would be to compute the integral image of the dual exponential map precisely and take the mod $\pi^m$ reduction.
Computing this integral image is more or less equivalent to the computation of Fontaine--Perrin-Riou's local Tamagawa ideals at $p$ \cite{fontaine-perrin-riou, perrin-riou-book}, which seems out of reach in general. 
The computation requires a significant development of integral $p$-adic Hodge theory which we do not have yet.
See $\S$\ref{sec:formulation-refined-conjecture} for the ``idealistic" version we want.

In order to avoid this computation,  we just take the integral lattice in the target of the dual exponential map by the integral image of the dual exponential map itself. 
Then the dual exponential map with torsion coefficients can be defined by taking the mod $\pi^m$ reduction of this lattice.
As a consequence,  we will only have the ``equality" between the image of Kato's Kolyvagin system under the dual exponential map and the collection of Kurihara numbers \emph{up to a non-zero uniform constant}. 
Thus, we are not able to know the exact valuation of the image of \emph{each} Kato's Kolyvagin system class under the dual exponential map (with torsion coefficients). 

We ignore this uniform constant according to our design since it is cancelled out when we compute the difference between the valuations of the images of two different Kato's Kolyvagin system classes under the dual exponential map.  The difference is only important when we describe the structure of Selmer groups.
To sum up, the \emph{integral} nature of the explicit reciprocity law is extremely crucial for us, but \emph{no} integral $p$-adic Hodge theory is needed for our purpose.

\subsection{The integral lattice in the Bloch--Kato dual exponential map} \label{subsec:integrality-bloch-kato}
For $n \in \mathcal{N}_1$, consider exact sequence
\[
\xymatrix{
\dfrac{\mathrm{H}^1(\mathbb{Q}_p, T_f(k-r))}{I_n \mathrm{H}^1(\mathbb{Q}_p, T_f(k-r))} \ar@{^{(}->}[r] &
\mathrm{H}^1(\mathbb{Q}_p, T_f(k-r)/I_n) \ar@{->>}[r] &
\mathrm{H}^2(\mathbb{Q}_p, T_f(k-r))[I_n]  .
}
\]
Since $\mathrm{H}^1_f(\mathbb{Q}_p, T_f(k-r)/I_n)$ is defined by the image of 
$\dfrac{\mathrm{H}^1_f(\mathbb{Q}_p, T_f(k-r))}{ I_n\mathrm{H}^1_f(\mathbb{Q}_p, T_f(k-r))}$ in $\mathrm{H}^1(\mathbb{Q}_p, T_f(k-r)/I_n)$,
we obtain the following exact sequence
\begin{equation} \label{eqn:extending-dual-exp}
\xymatrix{
 \dfrac{\mathrm{H}^1_{/f}(\mathbb{Q}_p, T_f(k-r))}{I_n \mathrm{H}^1_{/f}(\mathbb{Q}_p, T_f(k-r)) } \ar@{^{(}->}[r] &
\mathrm{H}^1_{/f}(\mathbb{Q}_p, T_f(k-r)/I_n)  \ar@{->>}[r] &
\mathrm{H}^2(\mathbb{Q}_p, T_f(k-r))[I_n] 
}
\end{equation}
where
$\dfrac{\mathrm{H}^1_{/f}(\mathbb{Q}_p, T_f(k-r))}{I_n \mathrm{H}^1_{/f}(\mathbb{Q}_p, T_f(k-r))  } = \dfrac{\mathrm{H}^1(\mathbb{Q}_p, T_f(k-r))}{I_n \mathrm{H}^1(\mathbb{Q}_p, T_f(k-r))  + \mathrm{H}^1_f(\mathbb{Q}_p, T_f(k-r)) }$.

Thanks to Theorem \ref{thm:bloch-kato-exponential} and the local Tate duality, we have an isomorphism of one-dimensional $F$-vector spaces
$\mathrm{exp}^{*} : \mathrm{H}^1_{/f}(\mathbb{Q}_p, V_f(k-r)) \simeq \mathrm{Fil}^0\mathbf{D}_{\mathrm{dR}}(V_f(k-r)) $.
Denote the image of $\mathrm{H}^1_{/f}(\mathbb{Q}_p, T_f(k-r))$ under $\mathrm{exp}^{*} $ by
$$\mathcal{L} := \mathrm{exp}^{*} \left(  \mathrm{H}^1_{/f}(\mathbb{Q}_p, T_f(k-r)) \right) \subseteq \mathrm{Fil}^0\mathbf{D}_{\mathrm{dR}}(V_f(k-r)).$$
Since $\mathcal{L}$ is a (non-zero) $\mathcal{O}$-lattice in $\mathrm{Fil}^0\mathbf{D}_{\mathrm{dR}}(V_f(k-r))$, $\mathcal{L}$ is a free $\mathcal{O}$-module of rank one, and we fix a generator $\omega_{f,r}$ of $\mathcal{L}$.
We expect that the ``local torsion" $\mathrm{length}_{\mathcal{O}}\mathrm{H}^0(\mathbb{Q}_p, W_{\overline{f}}(r))$ and the Euler factor at $p$ are involved in the choice of $\omega_{f,r}$.

The mod $I_n$ reduction of $\mathcal{L}$ yields an isomorphism of free $\mathcal{O}/I_n\mathcal{O}$-modules of rank one induced from the dual exponential map
$$\dfrac{\mathrm{H}^1_{/f}(\mathbb{Q}_p, T_f(k-r))}{I_n \mathrm{H}^1_{/f}(\mathbb{Q}_p, T_f(k-r))  } \simeq  \mathcal{L} / I_n \mathcal{L} \simeq \mathcal{O}/I_n\mathcal{O} .$$
Thus, the sequence (\ref{eqn:extending-dual-exp}) splits as $\mathcal{O}/I_n \mathcal{O}$-modules.
By using this splitting, the dual exponential map on $\mathrm{H}^1(\mathbb{Q}_p, T_f(k-r)/I_n)$ is defined by
$$\mathrm{exp}^{*}: \mathrm{H}^1(\mathbb{Q}_p, T_f(k-r)/I_n) \to \mathrm{H}^1_{/f}(\mathbb{Q}_p, T_f(k-r)/I_n)
\to
\dfrac{\mathrm{H}^1_{/f}(\mathbb{Q}_p, T_f(k-r))}{I_n \mathrm{H}^1_{/f}(\mathbb{Q}_p, T_f(k-r)) } 
\simeq
\mathcal{L} / I_n \mathcal{L} .$$

\subsection{From Kato's Kolyvagin systems to Kurihara numbers} \label{subsec:from-kato-to-kurihara}
Recall Birch's lemma \cite[Lem. 5.4.5]{bellaiche-book}
$$\sum_{a \in (\mathbb{Z}/n\mathbb{Z})^\times} \lambda^{\pm, \mathrm{min}}_{\overline{f}}(z^{r-1}; a, n) \cdot \chi(a)  = 
\dfrac{  (r-1)! \cdot \tau(\chi) }{ (-2 \pi \sqrt{-1})^r} \cdot \dfrac{L(\overline{f}, \chi^{-1}, r)}{\Omega^{\pm}_{\overline{f}, \mathrm{min}}} .$$
By factorizing Gauss sum $\tau(\chi)$ with $\chi(-1) \cdot \dfrac{\tau(\chi^{-1})}{n} =  \tau(\chi)^{-1}$, we have
$$
\dfrac{  (r-1)!  }{ (-2 \pi \sqrt{-1})^r} \cdot \dfrac{L(\overline{f}, \chi^{-1}, r)}{\Omega^{\pm}_{\overline{f}, \mathrm{min}}}  = \sum_{c \in (\mathbb{Z}/n\mathbb{Z})^\times} \sigma_c \left( \dfrac{1}{n} \cdot  \sum_{a \in (\mathbb{Z}/n\mathbb{Z})^\times} \zeta^{-a}_n  \cdot \lambda^{\pm, \mathrm{min}}_{\overline{f}}(z^{r-1}; a, n) \right) \cdot \chi^{-1}(c)  
$$
where $\sigma_c  \in \mathrm{Gal}(\mathbb{Q}(\zeta_n)/\mathbb{Q})$ sends $\zeta_n$ to $\zeta^c_n$.
Write
$ \mathrm{exp}^*\circ \mathrm{loc}_p\left( z^{\mathrm{Kato}, k-r}_{\mathbb{Q}(\zeta_n)} \right) = c_{k-r,n} \cdot \omega_{f,r}$
with $c_{k-r,n} \in \mathbb{Q}(\zeta_n) \otimes F$.
Then Theorem \ref{thm:zeta-morphism} says
$$
 \sum_{c \in (\mathbb{Z}/n\mathbb{Z})^\times} \sigma_c \left(  c_{k-r,n}  \right) \cdot \chi^{-1}(c) \cdot \mathrm{per}_f \left( \omega_{f,r} \right) 
=  (2\pi \sqrt{-1})^{k-r-1} \cdot L^{(p)}(\overline{f}, \chi^{-1}, r) \cdot \gamma^{\pm}
$$
where $\mathrm{per}_f \left( \omega_{f,r} \right) = \Omega^{+}_{\gamma, \omega_{f,r}} \cdot \gamma^+ +  \Omega^{-}_{\gamma, \omega_{f,r}} \cdot \gamma^-$.
In \cite[$\S$7.6]{kato-euler-systems}, the convention of the period integral is given by 
\begin{equation}
\label{eqn:period-kato-mtt}
\Omega(f,r)  = (2\pi \sqrt{-1})^{k-1} \cdot(-2\pi \sqrt{-1})^{-r}  \cdot (r-1)! \cdot L(f, r) .
\end{equation}
See also \cite[(7.13.6) and Thm. 16.2]{kato-euler-systems}.
Inverting Kato's period map $\mathrm{per}_f$ with Kato's convention (\ref{eqn:period-kato-mtt}), we have
\begin{align*}
\begin{split}
& \sum_{c \in (\mathbb{Z}/n\mathbb{Z})^\times} \sigma_c \left(  c_{k-r,n}  \right) \cdot \chi^{-1}(c) \cdot  \omega_{f,r}  \\
= \ & \chi^{-1}(E_p(\overline{f}, r))  \cdot \sum_{c \in (\mathbb{Z}/n\mathbb{Z})^\times} \sigma_c \left( \dfrac{1}{n} \cdot  \sum_{a \in (\mathbb{Z}/n\mathbb{Z})^\times} \zeta^{-a}_n  \cdot \lambda^{\pm, \mathrm{min}}_{\overline{f}}(z^{r-1}; a, n) \right) \cdot \chi^{-1}(c) \cdot C^{\pm}_{\mathrm{per}} \cdot \omega_{f,r}
\end{split}
\end{align*}
where 
\begin{itemize}
\item $\chi^{-1}(E_p(\overline{f}, r)) =  1 - a_p(\overline{f})  \cdot p^{-r} \cdot \chi^{-1}(p) - \psi^{-1}(p)  \cdot p^{k-1-2r} \cdot (\chi^{-1}(p) )^2$, 
\item the sign of $\Omega^{\pm}_{\gamma, \omega_{f,r}}$ is that of $(-1)^{k-r-1} \cdot \chi(-1)$,
\item the sign of $\Omega^{\pm}_{\overline{f}, \mathrm{min}}$ is that of $(-1)^{r-1} \cdot \chi(-1) $, and
\item  $C^{\pm}_{\mathrm{per}}$ is an algebraic integer such that
the sign of $C^{\pm}_{\mathrm{per}}$ is  that of $(-1)^{r-1} \cdot \chi(-1) $
and it satisfies
$$(2\pi \sqrt{-1})^{k-1} \cdot \Omega^{\pm}_{\overline{f}, \mathrm{min}} =  C^{\pm}_{\mathrm{per}}  \cdot (r-1)! \cdot \Omega^{\pm}_{\gamma, \omega_{f,r}} $$
up to $\mathcal{O}^\times$ with the relevant signs of periods. See also \cite[$\S$4.5.6]{fukaya-kato-sharificonj} for the sign convention of periods.
\end{itemize}
\begin{rem}
The constant $C^{\pm}_{\mathrm{per}}$ essentially measures the difference between Kato's period integrals with respect to $(\gamma, \omega_{f,r})$ and the minimal integral periods. The explicit computation of $C^{\pm}_{\mathrm{per}}$ seems out of reach in general.
\end{rem}
Since the above equality holds for every character $\chi$ on $(\mathbb{Z}/n\mathbb{Z})^\times$,
the equality lifts to ($p$-inverted!) group ring $F[\mathrm{Gal}(\mathbb{Q}(\zeta_n)/\mathbb{Q})] \otimes_{F} \mathrm{Fil}^0\mathbf{D}_{\mathrm{dR}}(V_f(k-r))$ (e.g. \cite[Cor. 5.13]{ota-thesis} and \cite[$\S$7.2]{kks})
\begin{align} \label{eqn:equality-in-group-ring}
\begin{split}
& \sum_{c \in (\mathbb{Z}/n\mathbb{Z})^\times} \sigma_c \left(  c_{k-r,n}  \right) \cdot \sigma_c \cdot  \omega_{f,r}  \\
= \ & 
E_p(\overline{f}, r) \cdot \sum_{c \in (\mathbb{Z}/n\mathbb{Z})^\times} \sigma_c \left( \dfrac{1}{n} \cdot  \sum_{a \in (\mathbb{Z}/n\mathbb{Z})^\times} \zeta^{-a}_n  \cdot \lambda^{\pm, \mathrm{min}}_{\overline{f}}(z^{r-1}; a, n) \right) \cdot \sigma_c \cdot C^{\pm}_{\mathrm{per}} \cdot \omega_{f,r}
\end{split}
\end{align}
where $E_p(\overline{f}, r) = 1 - a_p(\overline{f})  \cdot p^{-r} \cdot \sigma_p - \psi^{-1}(p)  \cdot p^{k-1-2r} \cdot (\sigma_p )^2$ and
 $\sigma_p$ is the arithmetic Frobenius at $p$ in $\mathrm{Gal}(\mathbb{Q}(\zeta_n)/\mathbb{Q})$.
Since the modular symbols are integrally normalized, the equality indeed holds in $\mathcal{O}[\mathrm{Gal}(\mathbb{Q}(\zeta_n)/\mathbb{Q})] \otimes_{\mathcal{O}} \mathcal{L}$.
By comparing the coefficients of $\sigma_c$ in (\ref{eqn:equality-in-group-ring}), we have
$$c_{k-r,n}   = E_p(\overline{f}, r) \cdot \dfrac{1}{n} \cdot  \sum_{a \in (\mathbb{Z}/n\mathbb{Z})^\times} \zeta^{-a}_n  \cdot  \lambda^{\pm, \mathrm{min}}_{\overline{f}}(z^{r-1}; a, n) \cdot C^{\pm}_{\mathrm{per}} .$$
Since $ D_{\mathbb{Q}(\zeta_n)}\mathrm{exp}^*\circ \mathrm{loc}_p\left( z^{\mathrm{Kato}, k-r}_{\mathbb{Q}(\zeta_n)} \right) =  D_{\mathbb{Q}(\zeta_n)} c_{k-r,n} \cdot \omega_{f,r}$,
we have
\begin{align*}
& E_p(\overline{f}, r)  \cdot D_{\mathbb{Q}(\zeta_n)} \left( \dfrac{1}{n} \cdot  \sum_{a \in (\mathbb{Z}/n\mathbb{Z})^\times} \zeta^{-a}_n  \cdot  \lambda^{\pm, \mathrm{min}}_{\overline{f}}(z^{r-1}; a, n) \right) \cdot C^{\pm}_{\mathrm{per}}  \\
\equiv  \ & E_p(\overline{f}, r) \cdot  \left( \dfrac{1}{n} \cdot  \sum_{a \in (\mathbb{Z}/n\mathbb{Z})^\times}   \lambda^{\pm, \mathrm{min}}_{\overline{f}}(z^{r-1}; a, n) \cdot \prod_{\ell \vert n} \mathrm{log}_{\eta_\ell}(a)   \right) \cdot C^{\pm}_{\mathrm{per}}  \pmod{I_n}  \\ 
= \ & \mathbf{1}(E_p(\overline{f}, r)) \cdot 
C^{\pm}_{\mathrm{per}} \cdot \widedelta^{\mathrm{min}, r}_n \in   \mathcal{O}/I_n\mathcal{O}
\end{align*}
where the congruence modulo $I_n$ follows from \cite[Lem. 4.4]{kurihara-documenta} (cf. \cite[Thm. 7.5]{kks}), and the last equality follows from the trivial action of $\mathrm{Gal}(\mathbb{Q}(\zeta_n)/\mathbb{Q})$ on $\widedelta^{\mathrm{min}, r}_n$ and  $n \in \mathcal{N}_1$.
Thus, we have
$$\mathrm{exp}^* \circ \mathrm{loc}^s_p ( \kappa^{\mathrm{Kato}, k-r}_n ) = \left( 1 - a_p(\overline{f})  \cdot p^{-r}  - \psi^{-1}(p)  \cdot p^{k-1-2r}  \right) \cdot  
C^{\pm}_{\mathrm{per}} \cdot \widedelta^{\mathrm{min}, r}_n \cdot \omega_{f,r} $$
and 
$\left( 1 - a_p(\overline{f})  \cdot p^{-r}  - \psi^{-1}(p)  \cdot p^{k-1-2r}  \right) \cdot C^{\pm}_{\mathrm{per}}$ is
independent of $n$.
\begin{prop} \label{prop:equivalence-nonvanishing}
Assume that $\rho_f$ has large image and $1 - a_p(\overline{f})  \cdot p^{-r}  - \psi^{-1}(p)  \cdot p^{k-1-2r}  \neq 0$.
Then
$$\ks^{\mathrm{Kato}, k-r} \neq \left\lbrace 0 \right\rbrace \Leftrightarrow \kn^{\mathrm{min}, r} \neq \left\lbrace 0 \right\rbrace .$$
\end{prop}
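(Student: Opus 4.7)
The proof will rest on the explicit formula derived at the end of $\S$\ref{subsec:from-kato-to-kurihara}:
\[
\mathrm{exp}^* \circ \mathrm{loc}^s_p\bigl(\kappa^{\mathrm{Kato}, k-r}_n\bigr) \;=\; c \cdot \widetilde{\delta}^{\mathrm{min}, r}_n \cdot \omega_{f,r} \quad \text{in } \mathcal{L}/I_n\mathcal{L},
\]
where the constant $c = \bigl(1 - a_p(\overline{f}) p^{-r} - \psi^{-1}(p) p^{k-1-2r}\bigr) \cdot C^{\pm}_{\mathrm{per}}$ is a nonzero element of $F$ (by the Euler-factor hypothesis of the proposition together with the non-vanishing of $C^{\pm}_{\mathrm{per}}$) and is independent of $n$. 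The plan is to extract both implications from this single identity, with the harder input being the rigidity of Kolyvagin systems.

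For the direction $(\Leftarrow)$, I will choose $n \in \mathcal{N}_1$ with $\widetilde{\delta}^{\mathrm{min}, r}_n \neq 0$; if the $\pi$-adic valuation of $c$ is too large compared to $\mathrm{ord}_\pi(I_n)$, I will instead work with a prime $\ell \in \mathcal{P}_m$ for $m$ chosen to dominate $\mathrm{ord}_\pi(c)$, using Proposition \ref{prop:chebotarev-mazur-rubin} or \ref{prop:chebotarev-sakamoto} and the density of primes in $\mathcal{P}_m$ to ensure a Kurihara number of sufficiently low valuation relative to the ambient modulus. Then the right-hand side of the formula is nonzero in $\mathcal{L}/I_n\mathcal{L}$, so $\mathrm{exp}^* \circ \mathrm{loc}^s_p\bigl(\kappa^{\mathrm{Kato}, k-r}_n\bigr) \neq 0$, and a fortiori $\kappa^{\mathrm{Kato}, k-r}_n \neq 0$.

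For the direction $(\Rightarrow)$, I will argue by contraposition. Assuming $\widetilde{\delta}^{\mathrm{min}, r}_n = 0$ for every $n \in \mathcal{N}_1$, the formula forces $\mathrm{exp}^* \circ \mathrm{loc}^s_p\bigl(\kappa^{\mathrm{Kato}, k-r}_n\bigr) = 0$ for all $n$. Unwinding the construction of the torsion-coefficient $\mathrm{exp}^*$ via the splitting of (\ref{eqn:extending-dual-exp}) and using that the composite $\mathrm{H}^1_{/f}(\mathbb{Q}_p, T_f(k-r))/I_n \xrightarrow{\sim} \mathcal{L}/I_n\mathcal{L}$ is an isomorphism, this should yield $\mathrm{loc}^s_p\bigl(\kappa^{\mathrm{Kato}, k-r}_n\bigr) = 0$. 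Consequently each $\kappa^{\mathrm{Kato}, k-r}_n$ satisfies the Bloch--Kato local condition at $p$, so the family $\ks^{\mathrm{Kato}, k-r}$ becomes a Kolyvagin system in $\KS(T_f(k-r), \mathcal{F}_{\mathrm{BK}}, \mathcal{P}_1)$. By the core-rank-zero rigidity Theorem \ref{thm:core-rank-modular-forms-char-zero}.(1), this module is zero, whence $\ks^{\mathrm{Kato}, k-r} = \{0\}$.

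The main obstacle will be the second direction: strictly speaking, the kernel of $\mathrm{exp}^*$ on $\mathrm{H}^1_{/f}(\mathbb{Q}_p, T_f(k-r)/I_n)$ contains the image of $\mathrm{H}^2(\mathbb{Q}_p, T_f(k-r))[I_n]$ coming from the splitting (\ref{eqn:extending-dual-exp}), so vanishing of $\mathrm{exp}^* \circ \mathrm{loc}^s_p\bigl(\kappa^{\mathrm{Kato}, k-r}_n\bigr)$ does not a priori force $\mathrm{loc}^s_p\bigl(\kappa^{\mathrm{Kato}, k-r}_n\bigr) = 0$. I expect to eliminate this nuisance by passing to mod-$\pi^m$ Kolyvagin systems for $m \gg 0$, so that any $\mathrm{H}^2$-obstruction is absorbed into the stub filtration of $\kappa^{\mathrm{Kato}, k-r, (m)}_n$ appearing in Theorem \ref{thm:kolyvagin-system-location}, and then taking the inverse limit via the identification $\KS(T_f(k-r), \mathcal{F}_{\mathrm{can}}, \mathcal{P}_1) = \varprojlim_m \KS(T_f(k-r)/\pi^m, \mathcal{F}_{\mathrm{can}}, \mathcal{P}_m)$.
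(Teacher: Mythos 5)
Your proposal is correct and follows essentially the same route as the paper: the explicit reciprocity formula of \S\ref{subsec:from-kato-to-kurihara} gives the straightforward direction ($\kn^{\mathrm{min},r}\neq \{0\} \Rightarrow \ks^{\mathrm{Kato},k-r}\neq \{0\}$), while the other direction is, by contraposition, exactly the rigidity statement $\KS(T_f(k-r),\mathcal{F}_{\mathrm{BK}},\mathcal{P}_1)=0$ of Theorem \ref{thm:core-rank-modular-forms-char-zero}, which is precisely the citation in the paper's proof. The two wrinkles you flag (the $\pi$-valuation of the uniform constant $\left(1-a_p(\overline{f})p^{-r}-\psi^{-1}(p)p^{k-1-2r}\right)\cdot C^{\pm}_{\mathrm{per}}$, and the $\mathrm{H}^2(\mathbb{Q}_p,T_f(k-r))$-part in the kernel of the torsion-coefficient $\mathrm{exp}^*$) are likewise left implicit in the paper's two-line proof, so your sketched patches only add care beyond what the paper itself records.
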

\begin{proof}
The $\Rightarrow$ direction follows from Theorem \ref{thm:core-rank-modular-forms-char-zero}, and the opposite direction is straightforward.
\end{proof}

\section{A ``relative" Kolyvagin system argument} \label{sec:proof-main-formulas}
\subsection{Overview of the strategy}
We first explain the idea of proof of Theorem \ref{thm:main-central-critical}.  (Theorem \ref{thm:main-all-critical} is easier.)
What we need to do is to upgrade Mazur--Rubin's structure theorem for $p$-strict Selmer groups (Theorem \ref{thm:kato-kolyvagin-main}) to that for Bloch--Kato Selmer groups.
Then it is natural to compare $\mathrm{Sel}_{0,n}(\mathbb{Q}, W^\dagger_f[I_n])$ with $\mathrm{Sel}_n(\mathbb{Q}, W^\dagger_f[I_n])$  for every $n  \in \mathcal{N}_1$,
and it can be done by applying the global duality argument for every $n  \in \mathcal{N}_1$. 
In the corresponding zeta element side,  we consider the difference between $\pi$-valuations of $\kappa^{\mathrm{Kato}, \dagger}_n$ and
$\mathrm{loc}^s_p \kappa^{\mathrm{Kato}, \dagger}_n$. Although each object is very difficult to compute, it is not terribly difficult to observe the following equality
$$\mathrm{length}_{\mathcal{O}}\mathrm{Sel}_n(\mathbb{Q}, W^\dagger_f[I_n]) - \mathrm{length}_{\mathcal{O}}\mathrm{Sel}_{0,n}(\mathbb{Q}, W^\dagger_f[I_n]) = \mathrm{ord}_\pi ( \mathrm{loc}^s_p \kappa^{\mathrm{Kato}, \dagger}_n ) - \mathrm{ord}_\pi ( \kappa^{\mathrm{Kato}, \dagger}_n) .$$
However, this equality is not strong enough to transplant the structure theorem. 
In particular, $\mathrm{loc}^s_p \kappa^{\mathrm{Kato}, \dagger}_n = 0$ if $\nu(n) \not\equiv \mathrm{cork}_{\mathcal{O}} \mathrm{Sel}(\mathbb{Q}, W^\dagger_f) \pmod{2}$ since $\widedelta^{\mathrm{min}, \dagger}_{n} = \mathrm{exp}^* \circ \mathrm{loc}^s_p \kappa^{\mathrm{Kato}, \dagger}_n$ up to a non-zero uniform constant and it vanishes by functional equation.  Thus, we do not expect to control $\mathrm{Sel}_n(\mathbb{Q}, W^\dagger_f[I_n])$ by using $\widedelta^{\mathrm{min}, \dagger}_{n}$ when the parity of $n$ does not match up with the parity of the root number of $f$.
However, this lack of control can be filled by using Flach's generalized Cassels--Tate pairing.
Furthermore, the vanishing of $\widedelta^{\mathrm{min}, \dagger}_{n}$ implies $\kappa^{\mathrm{Kato}, \dagger}_n \in \mathrm{Sel}_{n}(\mathbb{Q}, T^\dagger_f / I_n)$.
Since we have the self-duality $T^\dagger_f / I_n \simeq W^\dagger_f[I_n]$, the Chebotarev density type argument with choice of \emph{two} primes in $\mathcal{N}_1$ and some tricks yields the conclusion.

This argument should be viewed as a structural refinement of the four term exact sequence argument in Iwasawa theory,  which is used to obtain the equivalence between two different main conjectures (e.g. \cite[$\S$17.13]{kato-euler-systems}). 

Mazur--Rubin's Kolyvagin system argument gives us a filtration of $p$-strict Selmer groups by annihilating generators (Theorem \ref{thm:structure-fine-selmer-p^k}). Our refined four term exact sequence argument also yields a filtration of Bloch--Kato Selmer groups ``with parity constraint". As we emphasized, this parity constraint is natural due to the generalized Cassels--Tate pairing and the functional equation.

\subsection{The toolbox: global duality and generalized Cassels--Tate pairings} \label{subsec:toolbox}
\subsubsection{}
From now on, write
\begin{align*}
\mathrm{coker} ( \mathrm{loc}^s_p(\mathrm{Sel}^{k-r}_{\mathrm{rel},n}) )^\vee & = \left( \dfrac{  \mathrm{H}^1_{/f}(\mathbb{Q}_p, T_f(k-r)/I_n)  }{ \mathrm{loc}^s_p \left( \mathrm{Sel}_{\mathrm{rel},n}(\mathbb{Q}, T_f(k-r)/I_n) \right) } \right)^\vee , \\
\mathrm{coker} ( \mathrm{loc}^s_p(\mathrm{Sel}^\dagger_{\mathrm{rel},n}) )^\vee & = \left( \dfrac{  \mathrm{H}^1_{/f}(\mathbb{Q}_p, T^\dagger_f/I_n)  }{ \mathrm{loc}^s_p \left( \mathrm{Sel}_{\mathrm{rel},n}(\mathbb{Q}, T^\dagger_f/I_n) \right) } \right)^\vee
\end{align*}
for convenience, and these are cyclic modules. When $n=1$, we omit $n$.

\subsubsection{Global duality}
By the global Poitou--Tate duality, we have the exact sequences
\begin{equation} \label{eqn:kappa_n-widedelta_n-sequence}
\begin{split}
\xymatrix@R=0em@C=2em{
		 \mathrm{Sel}_{0}(\mathbb{Q}, W_{\overline{f}}(r)) 
		 \ar@{^{(}->}[r]  & \mathrm{Sel}(\mathbb{Q}, W_{\overline{f}}(r)) 
       	\ar@{->>}[r]^-{\mathrm{loc}_p} & \mathrm{coker} ( \mathrm{loc}^s_p(\mathrm{Sel}^{k-r}_{\mathrm{rel}}) )^\vee   , \\
       			\mathrm{Sel}_{0,n}(\mathbb{Q}, W_{\overline{f}}(r)[I_n]) 
		 \ar@{^{(}->}[r]  & \mathrm{Sel}_n(\mathbb{Q}, W_{\overline{f}}(r)[I_n]) 
       	\ar@{->>}[r]^-{\mathrm{loc}_p} & \mathrm{coker} ( \mathrm{loc}^s_p(\mathrm{Sel}^{k-r}_{\mathrm{rel},n}) )^\vee  .
}
\end{split}
\end{equation}
Fix a \emph{non-trivial} Kolyvagin system 
$\ks^{k-r} = \left\lbrace \kappa^{k-r}_n : n \in \mathcal{N}_1 \right\rbrace \in \KS(T_f(k-r))$,
and denote by $\mathrm{loc}^s_p \kappa^{k-r}_n$ the image of $\kappa^{k-r}_n$ under the map
$\mathrm{loc}^s_p : \mathrm{Sel}_{\mathrm{rel},n}(\mathbb{Q}, T_f(k-r)/I_n) \to \mathrm{H}^1_{/f}(\mathbb{Q}_p, T_f(k-r)/I_n)$.
For notational convenience, we write
\begin{align*}
\mathrm{ord}_{\pi}(\kappa^{k-r}_n) & = \mathrm{min} \left\lbrace j : \kappa^{k-r}_n \in \pi^j  \mathrm{Sel}_{\mathrm{rel},n}(\mathbb{Q}, T_f(k-r)/I_n) \right\rbrace  , \\
\mathrm{ord}_{\pi}( \mathrm{loc}^s_p \kappa^{k-r}_n ) & = \mathrm{min} \left\lbrace j : \mathrm{loc}^s_p \kappa^{k-r}_n \in \pi^j  \mathrm{H}^1_{/f}(\mathbb{Q}_p, T_f(k-r)/I_n)  \right\rbrace .
\end{align*}
By Theorem \ref{thm:kolyvagin-system-location}, we have
\begin{align} \label{eqn:kappa_n-widedelta_n}
\begin{split}
& \mathrm{ord}_\pi(\kappa^{k-r}_1)
+ \mathrm{length}_{\mathcal{O}} ( \mathrm{coker} ( \mathrm{loc}^s_p(\mathrm{Sel}^{k-r}_{\mathrm{rel}}) )^\vee )
= \mathrm{ord}_\pi(\mathrm{loc}^s_p \kappa^{k-r}_1) , \\
& \mathrm{ord}_\pi(\kappa^{k-r}_n)
+ \mathrm{length}_{\mathcal{O}} ( \mathrm{coker} ( \mathrm{loc}^s_p(\mathrm{Sel}^{k-r}_{\mathrm{rel},n}) )^\vee )
= \mathrm{ord}_\pi(\mathrm{loc}^s_p \kappa^{k-r}_n)  .
\end{split} 
\end{align}
From (\ref{eqn:kappa_n-widedelta_n}), we have the following statement immediately.
\begin{prop} \label{prop:behavior-kappa-n-localization}
Let $n \in \mathcal{N}_m$.
The followings are equivalent.
\begin{enumerate}
\item $\kappa^{k-r}_n \in \mathrm{Sel}_{n}(\mathbb{Q}, T_f(k-r)/I_n)$.
\item $\mathrm{loc}^s_p \kappa^{k-r}_n =0$.
\item $\mathrm{ord}_\pi(\kappa^{k-r}_n)
+ \mathrm{length}_{\mathcal{O}} ( \mathrm{coker} ( \mathrm{loc}^s_p(\mathrm{Sel}^{k-r}_{\mathrm{rel},n}) )^\vee ) \geq  \mathrm{length}_{\mathcal{O}}( \mathcal{O} /I_n\mathcal{O} )$.
\end{enumerate}
\end{prop}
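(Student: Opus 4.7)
The plan is to derive all three equivalences directly from the definitions of the two Selmer structures and the divisibility identity (\ref{eqn:kappa_n-widedelta_n}) just established; no new input is needed.

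For (1) $\iff$ (2), I would unwind the definitions from $\S$\ref{subsec:selmer-structure}. The Selmer structures $\mathcal{F}_{\mathrm{BK}}(n)$ and $\mathcal{F}_{\mathrm{can}}(n)$ coincide at every place of $\Sigma$ outside $p$ and differ only at $p$, where the former imposes the finite local condition $\mathrm{H}^1_f(\mathbb{Q}_p, T_f(k-r)/I_n)$ while the latter relaxes it to the full local group $\mathrm{H}^1(\mathbb{Q}_p, T_f(k-r)/I_n)$. Hence $\mathrm{Sel}_n$ is exactly the kernel of $\mathrm{loc}^s_p$ restricted to $\mathrm{Sel}_{\mathrm{rel},n}$, which is the verbatim translation of (1) $\iff$ (2).

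For (2) $\iff$ (3), I would read off (\ref{eqn:kappa_n-widedelta_n}). By the integral splitting (\ref{eqn:extending-dual-exp}) together with the dual exponential identification of $\S$\ref{subsec:integrality-bloch-kato}, the image of $\mathrm{loc}^s_p$ on $\mathrm{Sel}_{\mathrm{rel},n}$ is contained in the cyclic $\mathcal{O}/I_n\mathcal{O}$-module $\mathrm{H}^1_{/f}(\mathbb{Q}_p, T_f(k-r))/I_n\mathrm{H}^1_{/f}(\mathbb{Q}_p, T_f(k-r))$, whose length equals $\mathrm{length}_{\mathcal{O}}(\mathcal{O}/I_n\mathcal{O})$. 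An element of this cyclic module vanishes if and only if its $\pi$-adic order attains its maximum possible value, namely $\mathrm{length}_{\mathcal{O}}(\mathcal{O}/I_n\mathcal{O})$. Substituting the identity
$$\mathrm{ord}_\pi(\mathrm{loc}^s_p \kappa^{k-r}_n) = \mathrm{ord}_\pi(\kappa^{k-r}_n) + \mathrm{length}_{\mathcal{O}}(\mathrm{coker}(\mathrm{loc}^s_p(\mathrm{Sel}^{k-r}_{\mathrm{rel},n}))^\vee)$$
from (\ref{eqn:kappa_n-widedelta_n}) converts this vanishing criterion directly into condition (3), and the argument is reversible.

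The proof is essentially bookkeeping; the one place requiring care is the verification that $\mathrm{loc}^s_p \kappa^{k-r}_n$ lands inside the cyclic integral piece of (\ref{eqn:extending-dual-exp}) rather than contributing a genuine $\mathrm{H}^2$-component. This is already implicit in the derivation of (\ref{eqn:kappa_n-widedelta_n}) via the global Poitou--Tate sequences (\ref{eqn:kappa_n-widedelta_n-sequence}) and the cyclicity statement noted at the start of $\S$\ref{subsec:toolbox}, so it does not constitute a substantive obstacle.
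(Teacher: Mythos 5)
Your argument is correct and is essentially the paper's own proof (the paper simply declares the proposition immediate from (\ref{eqn:kappa_n-widedelta_n})): the equivalence (1) $\Leftrightarrow$ (2) is just the fact that $\mathcal{F}_{\mathrm{can}}(n)$ and $\mathcal{F}_{\mathrm{BK}}(n)$ differ only in the condition at $p$, and (2) $\Leftrightarrow$ (3) follows from (\ref{eqn:kappa_n-widedelta_n}) together with the observation that an element of a module killed by $I_n$ vanishes exactly when its $\pi$-order is at least $\mathrm{length}_{\mathcal{O}}(\mathcal{O}/I_n\mathcal{O})$. One small remark: the step you flag as delicate --- that $\mathrm{loc}^s_p$ of the relaxed Selmer group lands inside the cyclic integral piece of (\ref{eqn:extending-dual-exp}) rather than meeting the $\mathrm{H}^2(\mathbb{Q}_p, T_f(k-r))[I_n]$ component --- is not actually needed (and is not obviously true), since the vanishing criterion already holds in all of $\mathrm{H}^1_{/f}(\mathbb{Q}_p, T_f(k-r)/I_n)$ because that whole module is annihilated by $I_n$.
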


\begin{lem} \label{lem:vanishing-localization}
Let $\ks^{k-r}$ is a non-trivial Kolyvagin system.
Then
$$\partial^{(\infty)}(\ks^{k-r}) = \partial^{(\infty)}(\mathrm{loc}^s_p\ks^{k-r}) $$
where
$\partial^{(\infty)}(\mathrm{loc}^s_p\ks^{k-r}) = \mathrm{min} \left\lbrace \mathrm{ord}_\pi(\mathrm{loc}^s_p\kappa^{k-r}_n) : n \in \mathcal{N}_1 \right\rbrace $.
\end{lem}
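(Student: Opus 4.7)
The plan is to combine the global-duality identity (\ref{eqn:kappa_n-widedelta_n}) with the vanishing of the Bloch--Kato core rank. The easy inequality $\partial^{(\infty)}(\mathrm{loc}^s_p\ks^{k-r}) \geq \partial^{(\infty)}(\ks^{k-r})$ is immediate from (\ref{eqn:kappa_n-widedelta_n}): for every $n \in \mathcal{N}_1$ one has
\[
\mathrm{ord}_\pi(\mathrm{loc}^s_p\kappa_n^{k-r}) = \mathrm{ord}_\pi(\kappa_n^{k-r}) + \mathrm{length}_{\mathcal{O}}\left(\mathrm{coker}(\mathrm{loc}^s_p(\mathrm{Sel}^{k-r}_{\mathrm{rel},n}))^\vee\right) \geq \mathrm{ord}_\pi(\kappa_n^{k-r}),
\]
so minimizing over $n$ gives the claim.

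For the reverse inequality I would exhibit a single $n \in \mathcal{N}_1$ at which $\mathrm{ord}_\pi(\kappa_n^{k-r})$ achieves the minimal value $\partial^{(\infty)}(\ks^{k-r})$ \emph{and} the cokernel in (\ref{eqn:kappa_n-widedelta_n}) vanishes. First, fix $m$ large enough that $j := \partial^{(\infty)}(\ks^{k-r,(m)})$ coincides with $\partial^{(\infty)}(\ks^{k-r})$, which is legitimate by Theorem \ref{thm:kato-kolyvagin-main}.(1). Then invoke the vanishing of the Bloch--Kato core rank (\cite[Prop. 6.2.2]{mazur-rubin-book}, recalled just before Theorem \ref{thm:splitting-mazur-rubin}) to produce $n \in \mathcal{N}_m \subseteq \mathcal{N}_1$ such that $\mathrm{Sel}_n(\mathbb{Q}, W_{\overline{f}}(r)[\pi^m]) = 0$.

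For this choice of $n$, the second row of (\ref{eqn:kappa_n-widedelta_n-sequence}) forces both $\mathrm{Sel}_{0,n}(\mathbb{Q}, W_{\overline{f}}(r)[\pi^m]) = 0$ and $\mathrm{coker}(\mathrm{loc}^s_p(\mathrm{Sel}^{k-r}_{\mathrm{rel},n}))^\vee = 0$. In particular $\lambda(n, W_{\overline{f}}(r)[\pi^m]) = 0$, so $n$ is simultaneously a canonical core vertex; Theorem \ref{thm:kolyvagin-system-location}.(2) then says $\kappa_n^{k-r,(m)}$ generates $\pi^j \mathcal{H}(n)$, so $\mathrm{ord}_\pi(\kappa_n^{k-r,(m)}) = j$. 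Substituting the vanishing cokernel into (\ref{eqn:kappa_n-widedelta_n}) yields $\mathrm{ord}_\pi(\mathrm{loc}^s_p\kappa_n^{k-r,(m)}) = j$, and hence $\partial^{(\infty)}(\mathrm{loc}^s_p\ks^{k-r}) \leq j = \partial^{(\infty)}(\ks^{k-r})$.

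The main obstacle is arranging both conditions on $n$ simultaneously: the equality of orders in (\ref{eqn:kappa_n-widedelta_n}) requires a surjective singular localization at $p$, while the minimality of $\mathrm{ord}_\pi(\kappa_n^{k-r})$ requires $n$ to be a canonical core vertex. This is resolved by the observation that any Bloch--Kato core vertex is automatically a canonical core vertex, because $\mathrm{Sel}_{0,n} \subseteq \mathrm{Sel}_n$ passes vanishing downward. The compatibility between integral and mod $\pi^m$ data is also automatic since $n \in \mathcal{N}_m$ forces $I_n \subseteq \pi^m\mathcal{O}$, so the integral class $\kappa_n^{k-r}$ and its reduction $\kappa_n^{k-r,(m)}$ coincide, and no further limit argument is needed.
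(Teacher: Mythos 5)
Your argument is correct in substance, but it takes a genuinely different route from the paper's. The paper argues by contradiction: assuming $\partial^{(\infty)}(\ks^{k-r}) < \partial^{(\infty)}(\mathrm{loc}^s_p\ks^{k-r})$, it takes $n$ achieving the minimum for $\ks^{k-r}$, observes via Theorem \ref{thm:structure-fine-selmer-p^k} and (\ref{eqn:kappa_n-widedelta_n-sequence}) that $\mathrm{Sel}_{0,n}$ vanishes while $\mathrm{Sel}_{n}(\mathbb{Q}, W_{\overline{f}}(r)[I_n])$ is non-trivial and cyclic, identifies the latter with $\mathrm{Sel}_{n}(\mathbb{Q}, T_f(k-r)/I_n)$ by the core-rank-zero property of the Bloch--Kato structure, and then kills it with a single Chebotarev-chosen prime $\ell$ together with Lemma \ref{lem:surjectivity-at-ell}, reaching a contradiction. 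You instead go straight to a vertex where every obstruction collapses: a Bloch--Kato core vertex $n \in \mathcal{N}_m$ with $\mathrm{Sel}_n(\mathbb{Q}, W_{\overline{f}}(r)[\pi^m])=0$, so that (\ref{eqn:kappa_n-widedelta_n-sequence}) forces both $\mathrm{Sel}_{0,n}=0$ and the vanishing of the cokernel of $\mathrm{loc}^s_p$, and Theorem \ref{thm:kolyvagin-system-location}.(2) then reads off $\mathrm{ord}_\pi(\kappa^{k-r,(m)}_n)=\mathrm{ord}_\pi(\mathrm{loc}^s_p\kappa^{k-r,(m)}_n)=j$. This is cleaner in organization (no dichotomy, no contradiction), at the cost of invoking the full core-vertex machinery for the Bloch--Kato structure rather than only for the canonical one; both proofs ultimately rest on the same Mazur--Rubin inputs (stub modules, global duality, Chebotarev).

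Two points need tightening. First, the vanishing of the Bloch--Kato core rank does not by itself \emph{produce} a vertex with $\mathrm{Sel}_n(\mathbb{Q}, W_{\overline{f}}(r)[\pi^m])=0$; you need the existence of core vertices for the triple $(T_f(k-r)/\pi^m, \mathcal{F}_{\mathrm{BK}}, \mathcal{P}_m)$, which is a separate Chebotarev-based theorem in \cite{mazur-rubin-book} and requires their running hypotheses for the Bloch--Kato structure, notably the cartesian condition at $p$. This does hold here, since $\mathrm{H}^1(\mathbb{Q}_p, T_f(k-r))/\mathrm{H}^1_f(\mathbb{Q}_p, T_f(k-r))$ is torsion-free, and the paper itself applies Mazur--Rubin's results to this structure (e.g.\ in its own proof of this lemma), so this is a citation gap rather than a mathematical one -- but it should be made explicit, since the recollection before Theorem \ref{thm:splitting-mazur-rubin} only defines core vertices for the canonical structure. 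Second, your closing claim that the integral class $\kappa^{k-r}_n$ and its reduction $\kappa^{k-r,(m)}_n$ ``coincide'' is false unless $I_n = \pi^m\mathcal{O}$; what you actually need is only the one-sided compatibility $\mathrm{ord}_\pi(\mathrm{loc}^s_p\kappa^{k-r}_n) \leq \mathrm{ord}_\pi(\mathrm{loc}^s_p\kappa^{k-r,(m)}_n)$, which is immediate because reduction sends $\pi^a$-multiples to $\pi^a$-multiples, and this suffices for $\partial^{(\infty)}(\mathrm{loc}^s_p\ks^{k-r}) \leq j = \partial^{(\infty)}(\ks^{k-r})$. With these repairs your proof is sound.
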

\begin{proof}
We first suppose that $\mathrm{ord}_\pi(\kappa^{k-r}_n) = \mathrm{ord}_\pi(\mathrm{loc}^s_p\kappa^{k-r}_n) $ for some $n \in \mathcal{N}_1$.
Then 
$$ \mathrm{length}_{\mathcal{O}}  \mathrm{Sel}_{n}(\mathbb{Q}, W_{\overline{f}}(r) [I_{n}]) =  \mathrm{length}_{\mathcal{O}}  \mathrm{Sel}_{0,n}(\mathbb{Q}, W_{\overline{f}}(r) [I_n]) $$
by (\ref{eqn:kappa_n-widedelta_n-sequence}) and (\ref{eqn:kappa_n-widedelta_n}), so
$\mathrm{Sel}_{n}(\mathbb{Q}, W_{\overline{f}}(r) [I_{n}]) \simeq  \mathrm{Sel}_{0,n}(\mathbb{Q}, W_{\overline{f}}(r) [I_n])$.
By the Kolyvagin system argument (Theorem \ref{thm:structure-fine-selmer-p^k}), there exists $m \in \mathcal{N}_1$ dividing $n$ such that
\begin{itemize}
\item $\mathrm{ord}_\pi \kappa^{k-r}_m = \partial^{(\infty)}(\ks^{k-r}) < \infty$, so 
\item $\mathrm{length}_{\mathcal{O}}  \mathrm{Sel}_{m}(\mathbb{Q}, W_{\overline{f}}(r) [I_{m}]) =   \mathrm{length}_{\mathcal{O}}  \mathrm{Sel}_{0,m}(\mathbb{Q}, W_{\overline{f}}(r) [I_m]) = 0  $.
\end{itemize}
This implies $\partial^{(\infty)}(\ks^{k-r}) = \partial^{(\infty)}(\mathrm{loc}^s_p\ks^{k-r}) $.

We suppose that  $\partial^{(\infty)}(\ks^{k-r}) < \partial^{(\infty)}(\mathrm{loc}^s_p\ks^{k-r}) $.
Let $n \in \mathcal{N}_1$ satisfying $\mathrm{ord}_\pi(\kappa^{k-r}_n) = \partial^{(\infty)}(\ks^{k-r})$.
By Theorem \ref{thm:structure-fine-selmer-p^k}, we have 
$\mathrm{Sel}_{0,n}(\mathbb{Q}, W_{\overline{f}}(r) [I_n]) = 0$,
but $\mathrm{Sel}_{n}(\mathbb{Q}, W_{\overline{f}}(r) [I_n])$ is non-trivial and cyclic by (\ref{eqn:kappa_n-widedelta_n-sequence})  and (\ref{eqn:kappa_n-widedelta_n}).
This implies that
$\mathrm{Sel}_{n'}(\mathbb{Q}, W_{\overline{f}}(r) [I_{n'}])$ is non-trivial and cyclic for every $n' \in \mathcal{N}_1$ dividing $n$.
By using the core rank zero property of the self-dual Selmer structure \cite[Thm. 4.1.13.(ii)]{mazur-rubin-book},
we have a non-canonical isomorphism of cyclic modules
$$ \mathrm{Sel}_{n}(\mathbb{Q}, W_{\overline{f}}(r) [I_n]) \simeq \mathrm{Sel}_{n}(\mathbb{Q}, T_f(k-r) / I_n T_f(k-r)).$$
By using the Chebotarev density argument (Propositions  \ref{prop:chebotarev-mazur-rubin} and \ref{prop:chebotarev-sakamoto}), 
there exists a prime $\ell \in \mathcal{P}_1$ such that the following restriction maps
\begin{align*}
& \mathrm{loc}^{(k-r)}_\ell : \mathrm{Sel}_n(\mathbb{Q}, T_f(k-r) / \pi T_f(k-r)  ) \simeq \mathrm{H}^1_f(\mathbb{Q}_\ell, T_f(k-r) / \pi T_f(k-r) ), \\
& \mathrm{loc}^{(r)}_\ell : \mathrm{Sel}_n(\mathbb{Q}, W_{\overline{f}}(r)[\pi] ) \simeq \mathrm{H}^1_f(\mathbb{Q}_\ell,  W_{\overline{f}}(r)[\pi] )
\end{align*}
are isomorphisms.
Then Lemma \ref{lem:surjectivity-at-ell} with the surjectivity of $\mathrm{loc}^{(k-r)}_\ell$ implies
$\mathrm{Sel}_{n\ell}(\mathbb{Q}, W_{\overline{f}}(r)[\pi]) = \mathrm{Sel}_{n, \ell\textrm{-str}}(\mathbb{Q}, W_{\overline{f}}(r)[\pi]) $,
and the surjectivity of $\mathrm{loc}^{(r)}_\ell$ implies that $\mathrm{Sel}_{n\ell}(\mathbb{Q}, W_{\overline{f}}(r) [\pi])$ is trivial, so we get contradiction.
\end{proof}

\subsection{Proof of Theorems \ref{thm:main-central-critical} I: the corank part} \label{subsec:proof-theorem-structure-1}
In this subsection, we prove the corank part $\mathrm{cork}_{\mathcal{O}}\left( \mathrm{Sel}(\mathbb{Q}, W^{\dagger}_f) \right)  = \mathrm{ord}(\kn^{\mathrm{min},\dagger} )$ of the structure theorem.

Fix a non-trivial Kolyvagin system 
$\ks^{\dagger} = \left\lbrace \kappa^{\dagger}_n : n \in \mathcal{N}_1 \right\rbrace \in \KS(T^\dagger_f)$.
We use $\ks^{\mathrm{Kato},\dagger}$ only when the connection with $\kn^{\mathrm{min}, \dagger}$ is needed.
\subsubsection{}
We first consider the corank zero case.
\begin{prop} \label{prop:corank-zero}
The following statements are equivalent.
\begin{enumerate}
\item $\widedelta^{\mathrm{min},\dagger}_1 \neq 0$.
\item $\mathrm{Sel}(\mathbb{Q}, W^\dagger_{f})$ is finite.
\end{enumerate}
\end{prop}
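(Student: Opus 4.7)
The plan is to use the explicit reciprocity law from \S\ref{sec:explicit-reciprocity-law}, which identifies $\widedelta^{\mathrm{min},\dagger}_1$ with $\mathrm{exp}^{*}\circ\mathrm{loc}^s_p(\kappa^{\mathrm{Kato},\dagger}_1)$ up to a nonzero scalar, and then to exploit the global duality exact sequence (\ref{eqn:kappa_n-widedelta_n-sequence}) together with the Kolyvagin system structure theorem (Corollary \ref{cor:kato-kolyvagin-main}) to translate this into a statement about $\mathrm{Sel}(\mathbb{Q}, W^\dagger_f)$. A preliminary observation is that the Euler factor $1 - a_p(f) p^{-k/2} - p^{-1}$ appearing in the reciprocity law does not vanish at $r = k/2$ when $\psi = \mathbf{1}$: none of the exceptional configurations (NVE$_{\mathrm{cris}}$), (NVE$_{\mathrm{st}}$), (NVE$_\psi$) listed after Theorem \ref{thm:main-all-critical} occurs at the central critical point. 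Consequently $\widedelta^{\mathrm{min},\dagger}_1 \neq 0$ is equivalent to $\mathrm{loc}^s_p(\kappa^{\mathrm{Kato},\dagger}_1) \neq 0$ inside the rank-one free $\mathcal{O}$-module $\mathrm{H}^1_{/f}(\mathbb{Q}_p, T^\dagger_f)$.

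For $(1)\Rightarrow (2)$, assume $\widedelta^{\mathrm{min},\dagger}_1 \neq 0$. Then $\kappa^{\mathrm{Kato},\dagger}_1 \neq 0$, so by Proposition \ref{prop:equivalence-nonvanishing} the Kato Kolyvagin system $\ks^{\mathrm{Kato},\dagger}$ is non-trivial and Corollary \ref{cor:kato-kolyvagin-main}.(1) yields the finiteness of $\mathrm{Sel}_0(\mathbb{Q}, W^\dagger_f)$. The cokernel appearing in (\ref{eqn:kappa_n-widedelta_n-sequence}) is then a quotient of $\mathrm{H}^1_{/f}(\mathbb{Q}_p, T^\dagger_f)$ by the $\mathcal{O}$-line generated by $\mathrm{loc}^s_p(\kappa^{\mathrm{Kato},\dagger}_1)$, and is therefore finite. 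Hence $\mathrm{Sel}(\mathbb{Q}, W^\dagger_f)$ sits in a short exact sequence of finite $\mathcal{O}$-modules, and is itself finite.

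For $(2)\Rightarrow (1)$, suppose $\mathrm{Sel}(\mathbb{Q}, W^\dagger_f)$ is finite. From (\ref{eqn:kappa_n-widedelta_n-sequence}) both $\mathrm{Sel}_0(\mathbb{Q}, W^\dagger_f)$ and the cokernel term $\mathrm{coker}(\mathrm{loc}^s_p(\mathrm{Sel}^\dagger_{\mathrm{rel}}))^\vee$ are finite. Finiteness of $\mathrm{Sel}_0$, combined with the standard control theorem, implies that the characteristic ideal $\mathrm{char}_\Lambda\mathrm{Sel}_0(\mathbb{Q}_\infty, W^\dagger_f)^\vee$ is coprime to the augmentation ideal of $\Lambda$; Kato's one-sided divisibility (Theorem \ref{thm:mazur-rubin-main-conjecture}.(1)) together with the $\Lambda$-freeness of rank one of $\mathrm{H}^1_{\mathrm{Iw}}(\mathbb{Q}, T^\dagger_f)$ (Theorem \ref{thm:kato-12-4}.(1)) then forces $\kappa^{\mathrm{Kato},\dagger,\infty}_1$ itself to be coprime to the augmentation ideal, which via the specialization map delivers $\kappa^{\mathrm{Kato},\dagger}_1 \neq 0$. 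Now the identity (\ref{eqn:kappa_n-widedelta_n}) at $n=1$ combined with the finite length of the cokernel gives $\mathrm{ord}_\pi(\mathrm{loc}^s_p\kappa^{\mathrm{Kato},\dagger}_1) < \infty$, so $\mathrm{loc}^s_p(\kappa^{\mathrm{Kato},\dagger}_1) \neq 0$, and the reciprocity law then gives $\widedelta^{\mathrm{min},\dagger}_1 \neq 0$. The genuine subtlety lies in this second direction: one must descend from the $\Lambda$-adic divisibility back to the specific zeta class at cyclotomic level zero, rather than settle for non-triviality of the $\Lambda$-adic Kolyvagin system alone.
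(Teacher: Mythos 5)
Your $(1)\Rightarrow(2)$ direction is sound and follows the paper's own route: the paper runs the same combination of (\ref{eqn:kappa_n-widedelta_n-sequence}), Corollary \ref{cor:kato-kolyvagin-main} and the identification of $\widedelta^{\mathrm{min},\dagger}_1$ with $\mathrm{exp}^*\circ\mathrm{loc}^s_p(\kappa^{\mathrm{Kato},\dagger}_1)$ up to a nonzero constant (and your check that none of (NVE$_{\mathrm{cris}}$), (NVE$_{\mathrm{st}}$), (NVE$_{\psi}$) occurs at $r=k/2$ is correct); the paper merely pushes the same computation further, through (\ref{eqn:kappa_n-widedelta_n}), Lemma \ref{lem:vanishing-localization} and Theorem \ref{thm:core-rank-modular-forms-char-zero}, to get the exact length formula rather than bare finiteness.

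The genuine gap is in your $(2)\Rightarrow(1)$: you use Kato's one-sided divisibility in the wrong direction. Theorem \ref{thm:mazur-rubin-main-conjecture}.(1) says $\mathrm{char}_{\Lambda}\bigl(\mathrm{H}^1_{\mathrm{Iw}}(\mathbb{Q},T^\dagger_f)/\Lambda\kappa^{\mathrm{Kato},\dagger,\infty}_1\bigr)\subseteq\mathrm{char}_{\Lambda}\bigl(\mathrm{Sel}_0(\mathbb{Q}_\infty,W^\dagger_f)^\vee\bigr)$, i.e.\ the algebraic characteristic ideal \emph{divides} the zeta-element side. So knowing that $\mathrm{char}_{\Lambda}\bigl(\mathrm{Sel}_0(\mathbb{Q}_\infty,W^\dagger_f)^\vee\bigr)$ is prime to the augmentation ideal imposes no constraint whatsoever on $\mathrm{char}_{\Lambda}\bigl(\mathrm{H}^1_{\mathrm{Iw}}/\Lambda\kappa^{\mathrm{Kato},\dagger,\infty}_1\bigr)$ at that prime; the implication you need -- finiteness of $\mathrm{Sel}_0(\mathbb{Q},W^\dagger_f)$ forces $\kappa^{\mathrm{Kato},\dagger,\infty}_1$ to be nonzero modulo the augmentation ideal -- is exactly the Iwasawa main conjecture localized at the augmentation ideal (Theorem \ref{thm:mazur-rubin-main-conjecture}.(2)), which in this paper is a hypothesis (equivalent to the non-vanishing of $\kn^{\mathrm{min},\dagger}$ by Proposition \ref{prop:equivalence-nonvanishing}), not a consequence of the divisibility. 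Were your step valid, it would prove unconditionally that finiteness of the Selmer group implies $L(f,k/2)\neq 0$, which is far beyond what the one-sided divisibility can give. The paper instead stays at finite level: it fixes an (unconditionally existing) non-trivial Kolyvagin system $\ks^{\dagger}$ generating the rank-one module $\KS(T^\dagger_f)$ (Theorem \ref{thm:core-rank-modular-forms-char-zero}), deduces $\kappa^{\dagger}_1\neq 0$ from finiteness of $\mathrm{Sel}_0$ via Corollary \ref{cor:kato-kolyvagin-main}, then $\mathrm{loc}^s_p\kappa^{\dagger}_1\neq 0$ from finiteness of the cokernel together with (\ref{eqn:kappa_n-widedelta_n}), and only at the last step transfers to Kato's system -- using the rank-one rigidity together with the non-triviality of $\ks^{\mathrm{Kato},\dagger}$, which is available because $\kn^{\mathrm{min},\dagger}\neq 0$ is standing in the proof of Theorem \ref{thm:main-central-critical} -- to conclude $\widedelta^{\mathrm{min},\dagger}_1\neq 0$. (Your auxiliary control-theorem step for $\mathrm{Sel}_0$ over $\mathbb{Q}_\infty$ also introduces error terms the paper's finite-level argument never has to address, but the decisive failure is the direction of the divisibility.)
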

\begin{proof}
(1) implies (2): We have
\begin{align*}
& \mathrm{length}_{\mathcal{O}} \mathrm{Sel}(\mathbb{Q}, W^\dagger_{f}) \\
& =\mathrm{length}_{\mathcal{O}} \mathrm{Sel}_0(\mathbb{Q}, W^\dagger_{f}) +\mathrm{length}_{\mathcal{O}} (\mathrm{coker} ( \mathrm{loc}^s_p(\mathrm{Sel}^{\dagger}_{\mathrm{rel}}) )^\vee) & (\ref{eqn:kappa_n-widedelta_n-sequence}) \\
& = \mathrm{ord}_\pi(\kappa^{\dagger}_1) - \partial^{(\infty)}(\ks^{\dagger})
+ \mathrm{length}_{\mathcal{O}} ( \mathrm{coker} ( \mathrm{loc}^s_p(\mathrm{Sel}^{\dagger}_{\mathrm{rel}}) )^\vee ) & \textrm{(Thm. \ref{thm:kato-kolyvagin-main})} \\
& = \mathrm{ord}_\pi(\mathrm{loc}^s_p \kappa^{\dagger}_1) - \partial^{(\infty)}(\ks^{\dagger}) &  (\ref{eqn:kappa_n-widedelta_n})\\
& = \mathrm{ord}_\pi(\mathrm{loc}^s_p \kappa^{\dagger}_1) - \partial^{(\infty)}(\mathrm{loc}^s_p\ks^{\dagger}) & \textrm{(Lem. \ref{lem:vanishing-localization})} \\
& = \mathrm{ord}_\pi(\mathrm{loc}^s_p \kappa^{\mathrm{Kato},\dagger}_1) - \partial^{(\infty)}(\mathrm{loc}^s_p\ks^{\mathrm{Kato},\dagger}) & \textrm{(Thm. \ref{thm:core-rank-modular-forms-char-zero})} \\
& = \mathrm{ord}_\pi(\widedelta^{\mathrm{min},\dagger}_1) - \partial^{(\infty)}(\kn^{\mathrm{min},\dagger}) . & 
\end{align*}
The non-triviality of $\ks^{\mathrm{Kato},\dagger}$ implies the non-triviality of $\kn^{\mathrm{min},\dagger}$ by  Proposition \ref{prop:equivalence-nonvanishing}. Thus, $\partial^{(\infty)}(\kn^{\mathrm{min},\dagger}) < \infty$ and the conclusion holds.

(2) implies (1):
Assume that $\mathrm{Sel}(\mathbb{Q}, W^\dagger_{f})$ is finite.
Then, by (\ref{eqn:kappa_n-widedelta_n-sequence}), we have
$\mathrm{length}_{\mathcal{O}} \left( \mathrm{Sel}_0(\mathbb{Q}, W^\dagger_{f}) \right) < \infty$ and
$\mathrm{length}_{\mathcal{O}} ( \mathrm{coker} ( \mathrm{loc}^s_p(\mathrm{Sel}^{\dagger}_{\mathrm{rel}}) )^\vee ) < \infty$.
Corollary \ref{cor:kato-kolyvagin-main} says that the  finiteness of $\mathrm{Sel}_0(\mathbb{Q}, W^\dagger_{f})$ is equivalent to $\mathrm{ord}_\pi \kappa^{\dagger}_1 < \infty$.
Combining this with the finiteness of $\mathrm{coker} ( \mathrm{loc}^s_p(\mathrm{Sel}^{\dagger}_{\mathrm{rel}}) )^\vee$, (\ref{eqn:kappa_n-widedelta_n}) implies that  $\mathrm{loc}^s_p \kappa^{\dagger}_1 \neq 0$, which is equivalent to $\widedelta^{\mathrm{min},\dagger}_1 \neq 0$.
\end{proof}
\begin{rem} \label{rem:corank-zero}
The proof of Theorem \ref{thm:main-all-critical} is identical with the (1) $\Rightarrow$ (2) direction of Proposition \ref{prop:corank-zero}.
This argument is independent of the self-duality.
\end{rem}
\subsubsection{}
We discuss the higher Selmer corank case, which generalizes Proposition \ref{prop:corank-zero}. 
The self-duality is used in this part.

Fix an integer $m \gg 0$.
Let $n \in \mathcal{N}_m$ satisfying
\begin{itemize}
\item $I_n = \pi^m\mathcal{O}$
\item $\mathrm{ord}(\ks^\dagger) = \mathrm{cork}_{\mathcal{O}} \mathrm{Sel}_0(\mathbb{Q}, W^\dagger_f)  = \nu(n)$, and
\item $\kappa^{\dagger}_n \neq 0$.
\end{itemize}
\begin{rem} \label{rem:choice-of-n-via-kolyvagin}
Due to the second condition, we may assume that our choice of the prime divisors of $n$ \emph{is} the choice of primes in the standard Kolyvagin system argument \cite[Prop. 4.5.8]{mazur-rubin-book}. 
\end{rem}
Theorem \ref{thm:kolyvagin-system-location} says that
$$\langle \kappa^{\dagger}_n \rangle = \pi^j \mathcal{H}'(n) = \pi^{j+ \lambda(n, W^\dagger_f[I_n])}\mathrm{Sel}_{\mathrm{rel}, n}(\mathbb{Q},  T^\dagger_f/I_n)$$ where $\lambda(n, W^\dagger_f[I_n]) = \mathrm{length}_{\mathcal{O}} \mathrm{Sel}_{0,n}(\mathbb{Q}, W^\dagger_f[I_n])$.
Due to Remark \ref{rem:choice-of-n-via-kolyvagin}, we have 
\begin{equation} \label{eqn:sel_0_div}
\mathrm{Sel}_{0,n}(\mathbb{Q}, W^\dagger_f[I_n]) \simeq \mathrm{Sel}_{0}(\mathbb{Q}, W^\dagger_f)_{/\mathrm{div}},
\end{equation}
 so $\lambda(n, W^\dagger_f[I_n])$ is actually bounded independent of the choice of $n$.
 In addition, $j = \partial^{(\infty)}( \ks^{\dagger})$ since $m \gg 0$; thus, $j$ is also independent of the choice of $n$.

With our choice of $n$, we have the surjective map and the isomorphism
\begin{align} \label{eqn:application-chebotarev-structure}
\begin{split}
\mathrm{Sel}(\mathbb{Q}, T^\dagger_{f}/I_n) & \twoheadrightarrow \oplus_{\ell \vert n} \mathrm{H}^1_f(\mathbb{Q}_\ell, T^\dagger_{f}/I_n) , \\
\mathrm{Sel}_{n\textrm{-str}}(\mathbb{Q}, W^\dagger_{f}[I_n]) & \simeq \mathrm{Sel}_{n}(\mathbb{Q}, W^\dagger_{f}[I_n])  ,
\end{split}
\end{align}
and the latter isomorphism follows from the first surjective map and  Lemma \ref{lem:surjectivity-at-ell}.
By using the self-duality
\begin{equation} \label{eqn:self-duality}
T^\dagger_f/I_n \simeq W^\dagger_f[I_n] ,
\end{equation}
(\ref{eqn:application-chebotarev-structure}) becomes
\begin{equation} \label{eqn:chebotarev-density-argument}
\begin{split}
\xymatrix{
0 \ar[r] &\mathrm{Sel}_{n}(\mathbb{Q}, W^\dagger_{f}[I_n]) \ar[r]  &
\mathrm{Sel}(\mathbb{Q}, W^\dagger_{f}[I_n]) \ar[r] & \oplus_{\ell \vert n} \mathrm{H}^1_f(\mathbb{Q}_\ell, W^\dagger_{f}[I_n]) \ar[r] & 0 
}
\end{split}
\end{equation}
Following Proposition \ref{prop:behavior-kappa-n-localization}, $\mathrm{loc}^s_p \kappa^{\dagger}_n = 0$ if and only if $\kappa^{\dagger}_n \in \mathrm{Sel}_n(\mathbb{Q}, T^\dagger_f/I_n)$. We consider two possible cases separately.
\begin{enumerate}
\item $\mathrm{loc}^s_p \kappa^{\dagger}_n \neq 0$.
\item $\mathrm{loc}^s_p \kappa^{\dagger}_n = 0$.
\end{enumerate}

\subsubsection{}
\underline{Suppose that  $\mathrm{loc}^s_p \kappa^{\dagger}_n \neq 0$.}
Then
\begin{align*}
& \mathrm{length}_{\mathcal{O}} ( \mathrm{Sel}_{n}(\mathbb{Q}, W^{\dagger}_f[I_n]) ) \\
& = \mathrm{length}_{\mathcal{O}} ( \mathrm{Sel}_{0,n}(\mathbb{Q}, W^{\dagger}_f[I_n]) )
+ \mathrm{length}_{\mathcal{O}} ( \mathrm{coker} ( \mathrm{loc}^s_p(\mathrm{Sel}^\dagger_{\mathrm{rel},n}) )^\vee ) \\
& =\mathrm{ord}_\pi ( \kappa^{\dagger}_n) - \partial^{(\infty)}( \ks^{\dagger})
+ \mathrm{length}_{\mathcal{O}} ( \mathrm{coker} ( \mathrm{loc}^s_p(\mathrm{Sel}^\dagger_{\mathrm{rel},n}) )^\vee ) \\
& = \mathrm{ord}_\pi (\mathrm{loc}^s_p \kappa^{\dagger}_n) - \partial^{(\infty)}(\mathrm{loc}^s_p \ks^{\dagger}) < m-j .
\end{align*}
This computation with (\ref{eqn:chebotarev-density-argument}) implies that 
$\mathrm{cork}_{\mathcal{O}} \mathrm{Sel}(\mathbb{Q}, W^{\dagger}_f) = \nu(n)$.
Therefore, we have
$$\mathrm{ord} ( \mathrm{loc}^s_p \ks^{\dagger} ) =
\mathrm{ord} (  \ks^{\dagger} ) = \nu(n) =
\mathrm{cork}_{\mathcal{O}} \mathrm{Sel}_{0}(\mathbb{Q}, W^{\dagger}_{f}) =  
\mathrm{cork}_{\mathcal{O}} \mathrm{Sel}(\mathbb{Q}, W^{\dagger}_{f}) .$$
If we replace $\ks^{\dagger}$ by $\ks^{\mathrm{Kato},\dagger}$, then we also have $\widedelta^{\mathrm{min}, \dagger}_n \neq 0$ since $m \gg 0$, so
$\mathrm{ord} ( \mathrm{loc}^s_p \ks^{\mathrm{Kato}, \dagger} ) = \mathrm{ord} ( \kn^{\mathrm{min}, \dagger} ) = \nu(n)$.

\subsubsection{}
\underline{Suppose that  $\mathrm{loc}^s_p \kappa^{\dagger}_n = 0$.}
Thus, we have
$\kappa^{\dagger}_n \in  \mathrm{Sel}_n(\mathbb{Q}, T^\dagger_f/I_n) \simeq \mathrm{Sel}_n(\mathbb{Q}, W^\dagger_f[I_n])$ by using (\ref{eqn:self-duality}).
Theorem \ref{thm:kolyvagin-system-location} and (\ref{eqn:sel_0_div}) imply
\begin{align} \label{eqn:kappa_n-order}
\begin{split}
\mathrm{ord}_\pi \kappa^{\dagger}_n & = m - j + \lambda(n, W^\dagger_f[I_n]) \\
& = m- \partial^{(\infty)}( \ks^{\dagger}) - \mathrm{length}_{\mathcal{O}}  \mathrm{Sel}_{0}(\mathbb{Q}, W^\dagger_f)_{/\mathrm{div}} .
\end{split}
\end{align}
Suppose that $\mathrm{cork}_{\mathcal{O}} \mathrm{Sel}(\mathbb{Q}, W^\dagger_f) = \nu(n)$.
Then (\ref{eqn:chebotarev-density-argument}) implies that
$\mathrm{Sel}_{n}(\mathbb{Q}, W^{\dagger}_f[I_n]) \simeq \mathrm{Sel}(\mathbb{Q}, W^\dagger_f)_{/\mathrm{div}}$,
so its length is bounded independent of the choice of $n$. However, it is impossible because $m$ can be arbitrarily large in (\ref{eqn:kappa_n-order}) and $\kappa^{\dagger}_n \in \mathrm{Sel}_{n}(\mathbb{Q}, W^{\dagger}_f[I_n])$ via (\ref{eqn:self-duality}).
Thus, we have $\mathrm{cork}_{\mathcal{O}} \mathrm{Sel}(\mathbb{Q}, W^\dagger_f) = \nu(n) +1 $ by (\ref{eqn:kappa_n-widedelta_n-sequence}).
Considering (\ref{eqn:chebotarev-density-argument}) again, $\mathrm{Sel}_n(\mathbb{Q}, W^\dagger_f[I_n])$ has rank one over $\mathcal{O}/I_n\mathcal{O}$.

Since $m \gg 0$ and $\mathrm{Sel}_{0,n}(\mathbb{Q}, W^\dagger_f[I_n]) \simeq \mathrm{Sel}_{0}(\mathbb{Q}, W^\dagger_f)_{/\mathrm{div}}$ with our choice of $n$, we may assume that
$\mathrm{length}_{\mathcal{O}} \mathrm{Sel}_{0,n}(\mathbb{Q}, W^\dagger_f[I_n]) < m$.
We also have
$$\mathrm{Sel}_{0,n}(\mathbb{Q}, W^\dagger_f[I_n]) 
\subseteq 
\mathrm{Sel}_{n}(\mathbb{Q}, W^\dagger_f[I_n]) 
\subseteq
\mathrm{Sel}_{\mathrm{rel},n}(\mathbb{Q}, W^\dagger_f[I_n]) \simeq \mathcal{O}/I_n\mathcal{O} \oplus \mathrm{Sel}_{0,n}(\mathbb{Q}, W^\dagger_f[I_n])$$
where the last isomorphism follows from Theorem \ref{thm:splitting-mazur-rubin} and (\ref{eqn:self-duality}),
so the $\mathcal{O}/I_n \mathcal{O}$-component in $\mathrm{Sel}_{\mathrm{rel},n}(\mathbb{Q}, W^\dagger_f[I_n])$
is also contained in $\mathrm{Sel}_{n}(\mathbb{Q}, W^\dagger_f[I_n])$.

By using the Chebotarev density argument (Propositions  \ref{prop:chebotarev-mazur-rubin} and \ref{prop:chebotarev-sakamoto}), we choose a useful prime $\ell \in \mathcal{P}_m$ for $\kappa^\dagger_n$ such that
\begin{itemize}
\item $I_{n\ell} = I_n = \pi^m$,
\item $\kappa^\dagger_{n\ell} \neq 0$, and
\item $\mathrm{loc}_{\ell} : \mathrm{Sel}_{n}(\mathbb{Q}, T^\dagger_f/I_n) \to \mathrm{H}^1_f(\mathbb{Q}_\ell, T^\dagger_f/I_n)$ is surjective. 
\end{itemize}
The last condition implies that
$\mathrm{Sel}_{n\ell }(\mathbb{Q}, W^\dagger_f[I_n]) = \mathrm{Sel}_{n, \ell\textrm{-str}}(\mathbb{Q}, W^\dagger_f[I_n])$ by Lemma \ref{lem:surjectivity-at-ell}.
By using (\ref{eqn:self-duality}) again,  $\mathrm{Sel}_{n\ell }(\mathbb{Q}, W^\dagger_f[I_n])$ has rank zero over $\mathcal{O}/I_n\mathcal{O}$,.
Since $m \gg 0$, we have
$\mathrm{Sel}_{n\ell }(\mathbb{Q}, W^\dagger_f[I_n]) \subseteq \mathrm{Sel}(\mathbb{Q}, W^\dagger_f)_{/\mathrm{div}}$.
Here, we regard $\mathrm{Sel}(\mathbb{Q}, W^\dagger_f)_{/\mathrm{div}}$ as a submodule of $\mathrm{Sel}(\mathbb{Q}, W^\dagger_f)$.
This shows that 
$$\kappa^\dagger_{n\ell}  \not\in \mathrm{Sel}_{n\ell }(\mathbb{Q}, T^\dagger_f/I_n) \simeq \mathrm{Sel}_{n\ell }(\mathbb{Q}, W^\dagger_f[I_n]),$$
so
$\mathrm{loc}^s_p \kappa^\dagger_{n\ell} \neq 0.$
Therefore, we have
$$\mathrm{ord} ( \mathrm{loc}^s_p \ks^{\dagger} ) =
\mathrm{ord} (  \ks^{\dagger} ) +1 = \nu(n) +1 =
\mathrm{cork}_{\mathcal{O}} \mathrm{Sel}_{0}(\mathbb{Q}, W^{\dagger}_{f}) +1 =  
\mathrm{cork}_{\mathcal{O}} \mathrm{Sel}(\mathbb{Q}, W^{\dagger}_{f}) .$$
If we replace $\ks^{\dagger}$ by $\ks^{\mathrm{Kato},\dagger}$, then we also have $\widedelta^{\mathrm{min}, \dagger}_{n\ell} \neq 0$ since $m \gg 0$, so
$\mathrm{ord} ( \mathrm{loc}^s_p \ks^{\mathrm{Kato}, \dagger} ) = \mathrm{ord} ( \kn^{\mathrm{min}, \dagger} ) = \nu(n)+1$.

\subsection{Proof of Theorems \ref{thm:main-central-critical}  II: the structure part} \label{subsec:proof-theorem-structure-2}
In this subsection, we describe the structure of $\mathrm{Sel}(\mathbb{Q}, W^{\dagger}_f)_{/\mathrm{div}}$ in terms of $\kn^{\mathrm{min}, \dagger}$.

Let $m \gg 0$ be an integer.
We fix $n \in \mathcal{N}_m$ such that
\begin{itemize}
\item $\mathcal{O}/I_n\mathcal{O} = \pi^m \mathcal{O}$
\item $\mathrm{loc}^s_p \kappa^{\dagger}_n \neq 0$,
\item $\nu(n)  = r = \mathrm{cork}_{\mathcal{O}}\mathrm{Sel}(\mathbb{Q}, W^\dagger_f)$, and
\item $\mathrm{length}_{\mathcal{O}} \mathrm{Sel}_n(\mathbb{Q}, W^\dagger_f[I_n]) < m = \mathrm{length}_{\mathcal{O}} (\mathcal{O}/I_n\mathcal{O})$
\end{itemize}
We investigate the structure of $\mathrm{Sel}_n(\mathbb{Q}, W^{\dagger}_{f}[I_n])$.
From now on, we explicitly use Kato's Kolyvagin system $\ks^{\mathrm{Kato}, \dagger}$ in order to invoke the functional equation for $\kn^{\mathrm{min}, \dagger}$ (Proposition \ref{prop:vanishing-delta-n}).
\subsubsection{} \label{subsubsec:determine-str-n}
By using Flach's generalized Cassels--Tate pairing \cite{flach-cassels-tate, howard-kolyvagin}, 
we have
\begin{equation} \label{eqn:cassels-tate-pairing-M_n}
\mathrm{Sel}_n(\mathbb{Q}, W^\dagger_{f}[I_n]) \simeq M_n \oplus M_n
\end{equation}
 for some finite abelian $p$-group $M_n$.
We first recall (\ref{eqn:kappa_n-widedelta_n-sequence}) for $r = k/2$
\[
\xymatrix{
\mathrm{Sel}_{0,n}(\mathbb{Q}, W^\dagger_{f}[I_n])
\ar@{^{(}->}[r] &
\mathrm{Sel}_{n}(\mathbb{Q}, W^\dagger_{f}[I_n])
\ar@{->>}[r] &
\mathrm{coker} ( \mathrm{loc}^s_p(\mathrm{Sel}^{k-r}_{\mathrm{rel},n}) )^\vee .
}
\]
This sequence determines the structure of $\mathrm{Sel}_{n}(\mathbb{Q}, W^\dagger_{f}[I_n])$
 uniquely 
thanks to the structure theorem for $\mathrm{Sel}_{0,n}(\mathbb{Q}, W^\dagger_{f}[I_n])$ (Theorem \ref{thm:structure-fine-selmer-p^k}),
(\ref{eqn:cassels-tate-pairing-M_n}), and the cyclicity of $\mathrm{coker} ( \mathrm{loc}^s_p(\mathrm{Sel}^{k-r}_{\mathrm{rel},n}) )^\vee$.

\subsubsection{}
For convenience, we write
$$\mathrm{Sel}_{0,n}(\mathbb{Q}, W^\dagger_{f}[I_n]) = \mathrm{Sel}_{0,n}(\mathbb{Q}, W^\dagger_{f}[\pi^m]) \simeq \bigoplus_{i \geq 1}  \mathcal{O} / \pi^{d_i} \mathcal{O}$$
with non-negative integers $m > d_1 \geq d_2 \geq \cdots \geq 0$
following Theorem \ref{thm:structure-fine-selmer-p^k}.

By using the Chebotarev density argument (Propositions  \ref{prop:chebotarev-mazur-rubin} and \ref{prop:chebotarev-sakamoto}), choose  $\ell_1 \in \mathcal{P}_m$ such that
\begin{itemize}
\item $I_{n\ell_1} \mathcal{O} = \pi^m \mathcal{O}$,
\item $\kappa^{\mathrm{Kato}, \dagger}_{n\ell_1} \neq 0$, i.e. useful for $\kappa^{\mathrm{Kato}, \dagger}_{n}$, 
\item the restriction map $\mathrm{loc}_{\ell_1} : \pi^{m-1}\mathrm{Sel}_{\mathrm{rel}, n}(\mathbb{Q}, T^\dagger_f/I_n) \to \mathrm{H}^1_f(\mathbb{Q}_{\ell_1}, T^\dagger_f/I_n)$ is non-zero, and
\item the restriction map $\mathrm{loc}_{\ell_1} : \pi^{d_1 - 1}\mathrm{Sel}_{0, n}(\mathbb{Q}, W^\dagger_f[I_n]) \to \mathrm{H}^1_f(\mathbb{Q}_{\ell_1}, W^\dagger_f[I_n])$ is non-zero.
\end{itemize}
Since this choice of $\ell_1$ is compatible with the Kolyvagin system argument \cite[Prop. 4.5.8]{mazur-rubin-book}, we have
$$\mathrm{Sel}_{0, n\ell_1}(\mathbb{Q}, W^\dagger_f[\pi^m])  \simeq \bigoplus_{i \geq 2} \mathcal{O} / \pi^{d_i} \mathcal{O}. $$
The third condition implies that 
$$\mathrm{loc}_{\ell_1} : \mathrm{Sel}_{\mathrm{rel}, n}(\mathbb{Q}, T^\dagger_f/I_n) \to \mathrm{H}^1_f(\mathbb{Q}_{\ell_1}, T^\dagger_f/I_n)$$
is surjective, so we have
$\mathrm{Sel}_{\mathrm{rel}, n, \ell_1\textrm{-}\mathrm{str}}(\mathbb{Q}, T^\dagger_f/I_n) = \mathrm{Sel}_{\mathrm{rel}, n\ell_1}(\mathbb{Q}, T^\dagger_f/I_n)$
thanks to Lemma \ref{lem:surjectivity-at-ell}.
\subsubsection{}
Consider inclusions
\begin{align*}
\mathrm{Sel}_{0, n \ell_1} (\mathbb{Q}, W^\dagger_{f}[I_n])
\subseteq
\mathrm{Sel}_{n \ell_1} (\mathbb{Q}, W^\dagger_{f}[I_n])
& \subseteq
\mathrm{Sel}_{\mathrm{rel}, n \ell_1} (\mathbb{Q}, W^\dagger_{f}[I_n]) \\
& \simeq 
\mathrm{Sel}_{0, n \ell_1} (\mathbb{Q}, W^\dagger_{f}[I_n]) \oplus \mathcal{O}/I_{n}\mathcal{O}
\end{align*}
where the last isomorphism follows from Theorem \ref{thm:splitting-mazur-rubin} with the self-duality.

We explicitly characterize the ``free component of $\mathrm{Sel}_{n \ell_1} (\mathbb{Q}, W^\dagger_{f}[I_n])$".
We have $\widedelta^{\mathrm{min},\dagger}_n \neq 0$ since $\mathrm{loc}^s_p(\kappa^{\mathrm{Kato}, \dagger}_n) \neq 0$ and $m \gg 0$.
However, $\widedelta^{\mathrm{min},\dagger}_{n\ell_1} = 0$ due to Proposition \ref{prop:vanishing-delta-n}.
Thus, we have
\begin{align} \label{eqn:kappa_ell_1_local_condition}
\begin{split}
\kappa^{\mathrm{Kato}, \dagger}_{n\ell_1} & \in \mathrm{Sel}_{n\ell_1}(\mathbb{Q}, T^\dagger_f/I_{n\ell_1}) \\
& \simeq \mathrm{Sel}_{n\ell_1}(\mathbb{Q}, W^\dagger_f[I_{n\ell_1}]) \\
& = \mathrm{Sel}_{n\ell_1}(\mathbb{Q}, W^\dagger_f[\pi^m]) .
\end{split}
\end{align}
Theorem \ref{thm:kolyvagin-system-location} implies that
\begin{align*}
\langle \kappa^{\mathrm{Kato}, \dagger}_{n\ell_1} \rangle & = \pi^{\lambda(n\ell_1, W^\dagger_f[I_{n\ell_1}]) + j} \mathrm{Sel}_{\mathrm{rel}, n\ell_1}(\mathbb{Q}, T^\dagger_f / I_{n\ell_1} ) \\
& \simeq \mathcal{O} / \pi^{m - \lambda(n\ell_1, W^\dagger_f[I_{n\ell_1}]) - j } \mathcal{O} .
\end{align*}
\subsubsection{}
Write 
$$m_2 = m - \lambda(n\ell_1, W^\dagger_f[I_{n\ell_1}]) - j$$
for convenience.
Then we have
\begin{align*}
\begin{split}
 \langle \kappa^{\mathrm{Kato}, \dagger}_{n\ell_1} \rangle & =  \pi^{\lambda(n\ell_1, E[I_{n\ell_1}]) + j} \mathrm{Sel}_{\mathrm{rel}, n\ell_1}(\mathbb{Q}, T^\dagger_f / I_{n\ell_1} )  \\
&\simeq \pi^{\lambda(n\ell_1, W^\dagger_f[I_{n\ell_1}]) + j} \mathrm{Sel}_{\mathrm{rel}, n\ell_1}(\mathbb{Q}, W^\dagger_f[I_{n\ell_1}])  \\
& \subseteq \mathrm{Sel}_{\mathrm{rel}, n\ell_1}(\mathbb{Q}, W^\dagger_f[I_{n\ell_1}]) [\pi^{m - \lambda(n\ell_1, E[I_{n\ell_1}]) + j}] \\
& = \mathrm{Sel}_{\mathrm{rel}, n\ell_1}(\mathbb{Q}, W^\dagger_f[\pi^{m_2}]) .
\end{split}
\end{align*}
By (\ref{eqn:kappa_ell_1_local_condition}), we know $ \left\langle \kappa^{\mathrm{Kato}, \dagger}_{n\ell_1} \right\rangle \subseteq \mathrm{Sel}_{n\ell_1}(\mathbb{Q}, W^\dagger_f[\pi^{m_2}]) $.
Since $m$ is sufficiently large and $\lambda(n\ell_1, W^\dagger_f[I_{n\ell_1}]) + j$ is bounded independent of the choice of $n \ell_1$,
 we may assume that $d_2 < m_2 = m - \lambda(n\ell_1, W^\dagger_f[I_{n\ell_1}]) - j$.
In particular, $\mathrm{Sel}_{n\ell_1}(\mathbb{Q}, W^\dagger_f[\pi^{m_2}])$ has rank one over $\mathcal{O}/\pi^{m_2}\mathcal{O}$, and the rank one component is generated by $\kappa^{\mathrm{Kato}, \dagger}_{n\ell_1}$.

\subsubsection{}
By using the Chebotarev density argument (Propositions  \ref{prop:chebotarev-mazur-rubin} and \ref{prop:chebotarev-sakamoto}) again, we are able to choose a useful prime $\ell_2 \in \mathcal{P}_{m_2}$ for $\kappa^{\mathrm{Kato}, \dagger}_{n\ell_1}$ such that
\begin{itemize}
\item  $I_{n\ell_1 \ell_2}  = \pi^{m_2} \mathcal{O}$,
\item $\kappa^{\mathrm{Kato}, \dagger}_{n\ell_1 \ell_2} \neq 0$, i.e. useful for $\kappa^{\mathrm{Kato}, \dagger}_{n\ell_1}$, 
\item $\mathrm{loc}_{\ell_2} : \pi^{m_2-1} \mathrm{Sel}_{\mathrm{rel}, n\ell_1}(\mathbb{Q}, T^\dagger_f/I_{n\ell_1\ell_2}) \to \mathrm{H}^1_f( \mathbb{Q}_{\ell_2}, T^\dagger_f/I_{n\ell_1\ell_2})$ is non-zero, and
\item $\mathrm{loc}_{\ell_2} : \pi^{d_2 - 1}\mathrm{Sel}_{0, n\ell_1}(\mathbb{Q}, W^\dagger_f[\pi^{m_2}]) \to \mathrm{H}^1_f( \mathbb{Q}_{\ell_2}, W^\dagger_f[\pi^{m_2}])$ is non-zero.
\end{itemize}
Since the $\mathcal{O}/\pi^{m_2}\mathcal{O}$-component of $\mathrm{Sel}_{\mathrm{rel}, n\ell_1}(\mathbb{Q}, T^\dagger_f/\pi^{m_2})$
lies in $\mathrm{Sel}_{n\ell_1}(\mathbb{Q}, W^\dagger_f[\pi^{m_2}])$,
the third condition on $\ell_2$ also implies that we have exact sequence
\[
\xymatrix{
0 \ar[r] & \mathrm{Sel}_{n\ell_1, \ell_2\textrm{-}\mathrm{str}}(\mathbb{Q}, W^\dagger_f[\pi^{m_2}]) \ar[r] & \mathrm{Sel}_{n\ell_1}(\mathbb{Q}, W^\dagger_f[\pi^{m_2}]) \ar[r]^-{\mathrm{loc}_{\ell_2}} & \mathrm{H}^1_f(\mathbb{Q}_{\ell_2}, W^\dagger_f[\pi^{m_2}]) \ar[r] & 0 
}
\]
and isomorphism
 $$\mathrm{Sel}_{n\ell_1, \ell_2\textrm{-}\mathrm{str}}(\mathbb{Q}, W^\dagger_f[\pi^{m_2}]) \simeq \mathrm{Sel}_{n\ell_1 \ell_2}(\mathbb{Q}, W^\dagger_f[\pi^{m_2}]) $$ by Lemma \ref{lem:surjectivity-at-ell}.
In particular, $\mathrm{Sel}_{n \ell_1 \ell_2} (\mathbb{Q}, W^\dagger_{f}[\pi^{m_2}])$ has rank zero over $\mathcal{O}/\pi^{m_2}\mathcal{O}$.
By applying the generalized Cassels--Tate pairing again, we have
\begin{equation} \label{eqn:cassels-tate-pairing-M_nell1ell2} 
\mathrm{Sel}_{n\ell_1 \ell_2}(\mathbb{Q}, W^\dagger_f[\pi^{m_2}]) \simeq M_{n\ell_1 \ell_2}  \oplus M_{n\ell_1 \ell_2}
\end{equation}
for an $\mathcal{O}$-module $M_{n\ell_1 \ell_2}$  with finite cardinality.

As in $\S$\ref{subsubsec:determine-str-n}, the structure of $\mathrm{Sel}_{n\ell_1 \ell_2}(\mathbb{Q}, W^\dagger_f[\pi^{m_2}])$ is also determined uniquely since
we have exact sequence
\[
\xymatrix{
\mathrm{Sel}_{0,n\ell_1\ell_2}(\mathbb{Q}, W^\dagger_{f}[I_n])
\ar@{^{(}->}[r] &
\mathrm{Sel}_{n\ell_1\ell_2}(\mathbb{Q}, W^\dagger_{f}[I_n])
\ar@{->>}[r] &
\mathrm{coker} ( \mathrm{loc}^s_p(\mathrm{Sel}^{\dagger}_{\mathrm{rel},\ell_1\ell_2}) )^\vee ,
}
\]
the structure theorem for $\mathrm{Sel}_{0,n\ell_1\ell_2}(\mathbb{Q}, W^\dagger_{f}[\pi^{m_2}])$ (Theorem \ref{thm:structure-fine-selmer-p^k}), (\ref{eqn:cassels-tate-pairing-M_nell1ell2}), and the cyclicity of $\mathrm{coker} ( \mathrm{loc}^s_p(\mathrm{Sel}^{\dagger}_{\mathrm{rel},n\ell_1\ell_2}) )^\vee$.
\subsubsection{}
We now compare
$\mathrm{Sel}_{n\ell_1\ell_2}(\mathbb{Q}, W^\dagger_{f}[\pi^{m_2}]) \simeq M_{n\ell_1 \ell_2}  \oplus M_{n\ell_1 \ell_2}$
with
$\mathrm{Sel}_{n}(\mathbb{Q}, W^\dagger_{f}[\pi^{m}]) \simeq M_{n}  \oplus M_{n}$.
Write $\mathrm{Sel}_{0,n}(\mathbb{Q}, W^\dagger_{f}[\pi^{m}]) \simeq M_{n}  \oplus M'_{n}$
with $M'_n \subseteq M_n$.
Then $M_n/M'_n $ is cyclic since it is isomorphic to $\mathrm{coker} ( \mathrm{loc}^s_p(\mathrm{Sel}^{\dagger}_{\mathrm{rel},n}) )^\vee$.
In the same manner, we also write
 $\mathrm{Sel}_{0,n\ell_1\ell_2}(\mathbb{Q}, W^\dagger_{f}[\pi^{m_2}]) \simeq M_{n\ell_1\ell_2}  \oplus M'_{n\ell_1\ell_2}$
with $M'_{n\ell_1\ell_2} \subseteq M_{n\ell_1\ell_2}$.
\begin{itemize}
\item Suppose that $M_n/M'_n $ is not isomorphic to the largest cyclic factor of $M_n$.
Then, by the Kolyvagin system argument for $\mathrm{Sel}_{0,n}(\mathbb{Q}, W^\dagger_{f}[\pi^{m}]) \simeq M_{n}  \oplus M'_{n}$, we have that $M_{n} /M_{n\ell_1\ell_2}$ is isomorphic to the largest cyclic factor of $M_n$.
\item Suppose that $M_n/M'_n $ is isomorphic to the largest cyclic factor of $M_n$.
Then we have
 $\mathrm{Sel}_{0,n\ell_1\ell_2}(\mathbb{Q}, W^\dagger_{f}[\pi^{m_2}]) \simeq M_{n\ell_1\ell_2}  \oplus M'_{n\ell_1\ell_2} \hookrightarrow M'_{n} \oplus M'_{n}$ with cyclic cokernel.
This implies that 
 $\mathrm{Sel}_{n\ell_1\ell_2}(\mathbb{Q}, W^\dagger_{f}[\pi^{m_2}]) \simeq M'_{n}  \oplus M'_{n}$.
\end{itemize}
In any case,  $\mathrm{Sel}_{n\ell_1\ell_2}(\mathbb{Q}, W^\dagger_{f}[\pi^{m_2}])$
is (non-canonically) isomorphic to 
the quotient of  $\mathrm{Sel}_{n}(\mathbb{Q}, W^\dagger_{f}[\pi^{m}])$ by the two copies of its largest factor.
\subsubsection{}
Putting it all together, we have
\begin{align*}
& \mathrm{length}_{\mathcal{O}} \mathrm{Sel}_n(\mathbb{Q}, W^\dagger_f[I_n]) -  \mathrm{length}_{\mathcal{O}} \mathrm{Sel}_{n\ell_1 \ell_2}(\mathbb{Q}, W^\dagger_f[I_{n\ell_1 \ell_2}]) \\
& = 
\left( \mathrm{length}_{\mathcal{O}} \mathrm{Sel}_{0,n}(\mathbb{Q}, W^\dagger_f[I_n]) + \mathrm{length}_{\mathcal{O}} \mathrm{coker} ( \mathrm{loc}^s_p(\mathrm{Sel}^\dagger_{\mathrm{rel},n}) )^\vee \right) \\
& \ \ \ \ -
\left(
 \mathrm{length}_{\mathcal{O}} \mathrm{Sel}_{0,n\ell_1 \ell_2}(\mathbb{Q}, W^\dagger_f[I_{n\ell_1 \ell_2}])  + 
 \mathrm{length}_{\mathcal{O}} \mathrm{coker} ( \mathrm{loc}^s_p(\mathrm{Sel}^\dagger_{\mathrm{rel},n\ell_1\ell_2}) )^\vee \right) \\
& = 
\left( \mathrm{ord}_\pi ( \kappa^{\mathrm{Kato}, \dagger}_{n} )  + \mathrm{length}_{\mathcal{O}} \mathrm{coker} ( \mathrm{loc}^s_p(\mathrm{Sel}^\dagger_{\mathrm{rel},n}) )^\vee \right) \\
& \ \ \ \ -
\left(
  \mathrm{ord}_\pi ( \kappa^{\mathrm{Kato}, \dagger}_{n\ell_1\ell_2} )  + 
 \mathrm{length}_{\mathcal{O}} \mathrm{coker} ( \mathrm{loc}^s_p(\mathrm{Sel}^\dagger_{\mathrm{rel},n\ell_1\ell_2}) )^\vee \right) \\
& =  \mathrm{ord}_\pi ( \mathrm{loc}^s_p(\kappa^{\mathrm{Kato}, \dagger}_{n}) )
 - \mathrm{ord}_\pi ( \mathrm{loc}^s_p(\kappa^{\mathrm{Kato}, \dagger}_{n\ell_1\ell_2}) ) \\
& =  \mathrm{ord}_\pi ( \widedelta^{\mathrm{min}, \dagger}_{n} ) - \mathrm{ord}_\pi ( \widedelta^{\mathrm{min}, \dagger}_{n\ell_1\ell_2} ) .
\end{align*}
By replacing $n \ell_1 \ell_2$ by $n$, we repeat the same process until we get $\mathrm{Sel}_n(\mathbb{Q}, W^\dagger_f[I_n]) =0$.
Thus, the structure part of Theorem \ref{thm:main-central-critical} is complete.

\section{The rank one $p$-converse to the theorem of Nekov\'{a}\v{r} and S.-W. Zhang} \label{sec:p-converse}
In this section, we keep the notation of $\S$\ref{subsubsec:p-converse-intro}.
We first recall the theorem of Nekov\'{a}\v{r} and S.-W. Zhang, which is a higher weight generalization of Gross--Zagier and Kolyvagin.
\begin{thm}[Nekov\'{a}\v{r}, Zhang] \label{thm:nekovar-zhang}
Let $f \in S_k(\Gamma_0(N))$ be a newform with $k \geq 4$ and $p \geq 3$ a prime such that $p$ does not divide $N$. 
Let $\mathcal{K}$ be an imaginary quadratic field of discriminant $-D_{\mathcal{K}} <0$ such that every prime divisor of $N$ splits in $\mathcal{K}$, and $ L(f, \chi_{\mathcal{K}/\mathbb{Q}}, k/2) \neq 0$.
 If $\Phi_{\mathcal{K}} \otimes \mathbb{Q}_p$ is injective on $\mathrm{CM}_{k/2}(X(N)_{\mathcal{K}})$ (Assumption \ref{assu:abel-jacobi-injectivity}),
then
$$\mathrm{ord}_{s=k/2} L(f, s) = 1 \Rightarrow \mathrm{cork}_{\mathcal{O}}\mathrm{Sel}(\mathbb{Q}, W^\dagger_f) = 1 \textrm{ and } \mathrm{length}_{\mathcal{O}}\sha(f^\dagger/\mathbb{Q})[\pi^\infty] < \infty.$$
\end{thm}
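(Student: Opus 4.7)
The plan is to follow the Gross--Zagier--Kolyvagin strategy adapted to higher weight modular forms by S.-W. Zhang (analytic input) and Nekov\'{a}\v{r} (algebraic input). The first step is to promote the analytic hypotheses to the non-triviality of the Heegner cycle class. Since $\mathrm{ord}_{s=k/2}L(f,s)=1$ and $L(f,\chi_{\mathcal{K}/\mathbb{Q}},k/2)\neq 0$, the factorization $L(f/\mathcal{K},s) = L(f,s)\cdot L(f,\chi_{\mathcal{K}/\mathbb{Q}},s)$ gives $\mathrm{ord}_{s=k/2}L(f/\mathcal{K},s)=1$. The generalized Gross--Zagier formula of S.-W. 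Zhang for Kuga--Sato varieties of weight $k\geq 4$ identifies $L'(f/\mathcal{K},k/2)$ with a non-zero multiple of the Beilinson--Bloch height of $s_{\mathcal{K}}$, so $s_{\mathcal{K}}$ is non-torsion in $\mathrm{CM}_{k/2}(X(N)_{\mathcal{K}})\otimes F$. The injectivity assumption (AJ) of Assumption \ref{assu:abel-jacobi-injectivity} then transfers this non-triviality to $\kappa^{\mathrm{Hg},\dagger,\mathcal{K}}_{1}\neq 0$ in $\mathrm{Sel}(\mathcal{K},V^{\dagger}_{f})$.

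The second step is to run Nekov\'{a}\v{r}'s Euler system argument for Heegner cycles over anticyclotomic ring class extensions $\mathcal{K}[n]$ with $n$ ranging over squarefree products of Kolyvagin primes inert in $\mathcal{K}/\mathbb{Q}$. The trace compatibilities of CM cycles under the Hecke correspondences yield Kolyvagin derivative classes $\kappa^{\mathrm{Hg},\dagger,\mathcal{K}}_{n}\in \mathrm{H}^{1}(\mathcal{K},T^{\dagger}_{f}/\pi^{M})$ satisfying the finite--singular compatibility (\ref{eqn:axiom-kolyvagin-systems}). Combining the non-triviality of $\kappa^{\mathrm{Hg},\dagger,\mathcal{K}}_{1}$ with the Chebotarev density arguments of Propositions \ref{prop:chebotarev-mazur-rubin} and \ref{prop:chebotarev-sakamoto} bounds $\mathrm{Sel}(\mathcal{K},W^{\dagger}_{f})$: the corank is exactly one and the maximal $\mathcal{O}$-torsion submodule has finite length. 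Identifying $\sha(f^{\dagger}/\mathcal{K})[\pi^{\infty}]$ with the cokernel of $\Phi_{f,\mathcal{K}}\otimes F/\mathcal{O}$ then yields finiteness of its $\mathcal{O}$-length over $\mathcal{K}$.

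The third step is descent from $\mathcal{K}$ to $\mathbb{Q}$ using the action of complex conjugation $c$ and the self-duality of $V^{\dagger}_{f}$. One has $\mathrm{Sel}(\mathcal{K},W^{\dagger}_{f}) = \mathrm{Sel}(\mathbb{Q},W^{\dagger}_{f})\oplus \mathrm{Sel}(\mathbb{Q},W^{\dagger}_{f}\otimes\chi_{\mathcal{K}/\mathbb{Q}})$ decomposed into the $c=\pm 1$ eigenspaces. Since $L(f\otimes\chi_{\mathcal{K}/\mathbb{Q}},k/2)\neq 0$, Theorem \ref{thm:main-all-critical} (or directly Kato's one-sided divisibility \cite[Thm.~14.5]{kato-euler-systems}) applied to the quadratic twist forces the second summand to be $\mathcal{O}$-cotorsion with finite length. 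Because $\mathrm{ord}_{s=k/2}L(f,s)=1$ is odd, the sign of the functional equation of $f$ is $w(f)=-1$, so $\kappa^{\mathrm{Hg},\dagger,\mathcal{K}}_{1}$ lies in the $-w(f)=+1$ eigenspace of $c$ and therefore projects non-trivially into $\mathrm{Sel}(\mathbb{Q},V^{\dagger}_{f})$. Combined with the bound of Step 2, this yields $\mathrm{cork}_{\mathcal{O}}\mathrm{Sel}(\mathbb{Q},W^{\dagger}_{f})=1$, and $\mathrm{length}_{\mathcal{O}}\sha(f^{\dagger}/\mathbb{Q})[\pi^{\infty}]<\infty$ follows from the corresponding finiteness over $\mathcal{K}$.

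The main obstacle is Step 1, i.e.\ the higher weight Gross--Zagier formula of S.-W. Zhang, whose proof requires a delicate analytic computation of local heights of CM cycles on Kuga--Sato varieties and comparison with derivatives of Rankin--Selberg $L$-functions. The Heegner cycle Euler system norm relations in Step 2 are also technically demanding but can be invoked as a black box from Nekov\'{a}\v{r}'s work; Step 3 is essentially formal once Steps 1 and 2 are available.
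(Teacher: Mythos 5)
Your proposal is correct, but note that the paper does not prove this statement at all: Theorem \ref{thm:nekovar-zhang} is recalled as a known result, with the proof delegated to the cited works of S.-W. Zhang (the higher-weight Gross--Zagier formula for Heegner cycles on Kuga--Sato varieties) and Nekov\'{a}\v{r} (the Heegner-cycle Euler system/Kolyvagin argument), and your three steps reproduce exactly that strategy, including the eigenspace descent from $\mathcal{K}$ to $\mathbb{Q}$ via $w(f)=-1$. The only presentational caveat is that the Chebotarev statements you cite (Propositions \ref{prop:chebotarev-mazur-rubin} and \ref{prop:chebotarev-sakamoto}) are the paper's cyclotomic versions, whereas the Heegner-cycle argument runs over ring class fields and uses the anticyclotomic analogues from Nekov\'{a}\v{r}'s work; this is a matter of attribution rather than a gap.
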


The following theorem provides a partial converse to Theorem \ref{thm:nekovar-zhang}.
This generalizes the rank one $p$-converse results of Skinner and the author \cite{skinner-converse, kim-p-converse} to higher weight modular forms.
See \cite{haining-heegner-cycles, longo-vigni-tamagawa} for Heegner cycle approaches.

One difference from the theorem of Nekov\'{a}\v{r}--Zhang (Theorem \ref{thm:nekovar-zhang}) is that Theorem \ref{thm:p-converse-higher-weight} is independent of the injectivity of the $p$-adic Abel--Jacobi map (Assumption \ref{assu:abel-jacobi-injectivity}.(AJ)).
Note that (IMC at $\mathbf{1}$) and (cork1) imply $w(f) = -1$ by Corollary \ref{cor:discrete-bsd}.
As we mentioned in Remark \ref{rem:abel-jacobi-injectivity},  the finiteness of the $\pi$-primary part of Tate--Shafarevich groups implies the injectivity of the restriction map when $k = 2$. Although it is unclear for us whether the same implication is an easy statement for higher weight modular forms, we still expect that the restriction map is always injective under this setting.

\begin{thm} \label{thm:p-converse-higher-weight}
Let $f \in S_k(\Gamma_0(N))$ be a newform with $k \geq 4$ and $p \geq 3$ a prime such that
 $\rho_f$ has large image,
  $p^2 \nmid N$, and
 $\alpha_p \neq \beta_p$ (when $p \nmid N$).
If
\begin{itemize}
\item[(IMC at $\mathbf{1}$)]   the Iwasawa main conjecture for $T^\dagger_f$ at the augmentation ideal holds,
\item[(cork1)] $\mathrm{cork}_{\mathcal{O}}\mathrm{Sel}(\mathbb{Q}, W^\dagger_f) = 1$, and 
 \item[({$\mathrm{loc}_p$})] the restriction map
$\mathrm{loc}_p : \mathrm{Sel}(\mathbb{Q}, V^\dagger_f) \to \mathrm{H}^1(\mathbb{Q}_p, V^\dagger_f)$ is injective (Assumption \ref{assu:abel-jacobi-injectivity}), 
\end{itemize}
then
$\kappa^{\mathrm{Hg},\dagger,  \mathcal{K}}_1 \neq 0 $
for every imaginary quadratic field $\mathcal{K}$ such that every prime divisor of $Np$ splits in $\mathcal{K}$,  and  $L(f, \chi_{\mathcal{K}/\mathbb{Q}}, k/2) \neq 0$.
For such an imaginary quadratic field $\mathcal{K}$, we have
\begin{align*}
\mathrm{cork}_{\mathcal{O}} \left( \mathrm{Sel}(\mathbb{Q}, W^\dagger_f) \right) & = \mathrm{dim}_{F} \left( \mathrm{im}(\Phi_{f, \mathbb{Q}} \otimes F) \right) = \mathrm{dim}_{F} \left( \mathrm{im}(\Phi_{f, \mathcal{K}} \otimes F)^{+} \right) = 1,\\
 \mathrm{dim}_{F} \left( \mathrm{im}(\Phi_{f, \mathcal{K}} \otimes F)^{-} \right) & = 0, \textrm{and}\\
\mathrm{length}_{\mathcal{O}} \sha(f^\dagger/\mathbb{Q})[\pi^\infty] & < \infty .
\end{align*}
If we further 
assume 
\begin{itemize}
\item[(non-deg)] the height pairing\footnote{Here we use the notion of the height pairing defined by Gillet--Soul\'{e} via arithmetic intersection theory following \cite{shou-wu-zhang-heegner-cycles}. Its non-degeneracy is conjectured by Beilinson \cite[Conj. 5.4]{beilinson-height-pairings} and Bloch \cite[Conj.]{bloch-height-pairings}, independently.} is non-degenerate on $\mathrm{CM}_{k/2}(X(N)_{\mathcal{K}})$ for the same $\mathcal{K}$,
\end{itemize}
then the rank one case of the Beilinson--Bloch--Kato conjecture for the motive associated to $f$ holds, i.e.
$$\mathrm{cork}_{\mathcal{O}}\mathrm{Sel}(\mathbb{Q}, W^\dagger_f) = \mathrm{dim}_{F} \left( \mathrm{im}(\Phi_{f, \mathbb{Q}} \otimes F) \right) = \mathrm{ord}_{s=k/2}L(f, s) = 1 .$$
\end{thm}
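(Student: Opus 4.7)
The plan is to reduce Theorem \ref{thm:p-converse-higher-weight} to non-triviality of the Heegner cycle class $\kappa^{\mathrm{Hg},\dagger,\mathcal{K}}_1$, after which the higher-weight Gross--Zagier--Zhang formula under (non-deg) produces the Beilinson--Bloch--Kato equality. The argument proceeds in four steps.

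\emph{Setup over $\mathcal{K}$.} By Corollary \ref{cor:discrete-bsd}.(3), the hypotheses (IMC at $\mathbf{1}$) and (cork1) force $\mathrm{ord}_{s=k/2}L(f,s)$ to be odd, hence $w(f)=-1$. Bump--Friedberg--Hoffstein (or Murty--Murty) supplies an imaginary quadratic $\mathcal{K}$ in which every prime divisor of $Np$ splits and $L(f,\chi_{\mathcal{K}/\mathbb{Q}},k/2)\neq 0$. Applying Theorem \ref{thm:main-all-critical} to the quadratic twist gives finiteness of $\mathrm{Sel}(\mathbb{Q},W^\dagger_f\otimes\chi_{\mathcal{K}/\mathbb{Q}})$, so the eigenspace decomposition $\mathrm{Sel}(\mathcal{K},W^\dagger_f)\simeq\mathrm{Sel}(\mathbb{Q},W^\dagger_f)\oplus\mathrm{Sel}(\mathbb{Q},W^\dagger_f\otimes\chi_{\mathcal{K}/\mathbb{Q}})$ yields $\mathrm{cork}_{\mathcal{O}}\mathrm{Sel}(\mathcal{K},W^\dagger_f)=1$, concentrated in the $(-w(f))$-eigenspace. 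This is precisely the eigenspace where $\kappa^{\mathrm{Hg},\dagger,\mathcal{K}}_1$ would sit.

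\emph{Non-triviality of Kato's class over $\mathbb{Q}$.} Theorem \ref{thm:mazur-rubin-main-conjecture}.(2) identifies (IMC at $\mathbf{1}$) with non-triviality of $\ks^{\mathrm{Kato},\dagger}$. Assumption ($\mathrm{loc}_p$) combined with (cork1) forces $\mathrm{Sel}_0(\mathbb{Q},V^\dagger_f)=0$, so $\mathrm{cork}_{\mathcal{O}}\mathrm{Sel}_0(\mathbb{Q},W^\dagger_f)=0$ and Theorem \ref{thm:kato-kolyvagin-main}.(5) yields $\mathrm{ord}(\ks^{\mathrm{Kato},\dagger})=0$, i.e.\ $\kappa^{\mathrm{Kato},\dagger}_1$ is non-torsion. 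Since $L(f,k/2)=0$, the computation of $\S$\ref{subsec:from-kato-to-kurihara} gives $\mathrm{exp}^{\ast}\circ\mathrm{loc}_p\,\kappa^{\mathrm{Kato},\dagger}_1=0$, so $\kappa^{\mathrm{Kato},\dagger}_1\in\mathrm{Sel}(\mathbb{Q},T^\dagger_f)$ and generates $\mathrm{Sel}(\mathbb{Q},V^\dagger_f)$ over $F$; by ($\mathrm{loc}_p$) its $p$-localization is non-zero in $\mathrm{H}^1_f(\mathbb{Q}_p,V^\dagger_f)$.

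\emph{Main step: non-triviality of $\kappa^{\mathrm{Hg},\dagger,\mathcal{K}}_1$.} Following the rank one $p$-converse blueprint from the elliptic curve case \cite{skinner-converse, kim-p-converse}, the plan is to pass to the anticyclotomic tower $\mathcal{K}_{\infty}/\mathcal{K}$ and compare Kato's and Heegner's classes through the split prime $p$. A higher-weight BDP-type explicit reciprocity law (extending Brako\v{c}evi\'{c}, Castella, and Liu--Zhang--Zhang) relates the appropriate filtered piece of $\mathrm{loc}_p\,\kappa^{\mathrm{Hg},\dagger,\mathcal{K}}_1$ to the BDP $p$-adic $L$-function for $f/\mathcal{K}$, while Kato's class controls the complementary filtration after base change to $\mathcal{K}$. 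A one-sided Heegner-anticyclotomic Iwasawa divisibility (or, alternatively, a direct Kolyvagin-system comparison as in Burungale--Castella--Grossi--Skinner) then transports $\mathrm{loc}_p\,\kappa^{\mathrm{Kato},\dagger}_1\neq 0$ into $\kappa^{\mathrm{Hg},\dagger,\mathcal{K}}_1\neq 0$. Once this is in hand, the claimed equalities of coranks and Abel--Jacobi image dimensions follow formally from Step~1 and the $\mathcal{O}$-linearity of the $f$-isotypic Abel--Jacobi map, and $\mathrm{length}_{\mathcal{O}}\sha(f^\dagger/\mathbb{Q})[\pi^\infty]<\infty$ follows by comparison with Nekov\'{a}\v{r}--Zhang. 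The hard part is precisely this step: the strategy is classical, but transporting the anticyclotomic machinery to higher weight without imposing ordinarity or Fontaine--Laffaille hypotheses is delicate, which is exactly why the clean corollary Theorem \ref{thm:p-converse-higher-weight-ordinary} invokes Theorem \ref{thm:skinner-urban-wan} to supply (IMC at $\mathbf{1}$).

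\emph{Upgrade under (non-deg).} Given $\kappa^{\mathrm{Hg},\dagger,\mathcal{K}}_1\neq 0$ and non-degeneracy of the Gillet--Soul\'{e} height on $\mathrm{CM}_{k/2}(X(N)_{\mathcal{K}})$, Zhang's arithmetic Gross--Zagier formula for Heegner cycles on Kuga--Sato varieties yields $L'(f/\mathcal{K},k/2)\neq 0$. Factoring $L(f/\mathcal{K},s)=L(f,s)\cdot L(f\otimes\chi_{\mathcal{K}/\mathbb{Q}},s)$ and using $L(f\otimes\chi_{\mathcal{K}/\mathbb{Q}},k/2)\neq 0$ from Step~1 forces $\mathrm{ord}_{s=k/2}L(f,s)=1$, completing the Beilinson--Bloch--Kato equality in rank one.
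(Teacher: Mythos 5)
Your opening and closing steps track the paper's argument: under (cork1) and ($\mathrm{loc}_p$) one gets $\mathrm{cork}_{\mathcal{O}}\mathrm{Sel}_0(\mathbb{Q},W^\dagger_f)=0$, which together with (IMC at $\mathbf{1}$) and Theorem \ref{thm:mazur-rubin-main-conjecture} gives $\kappa^{\mathrm{Kato},\dagger}_1\neq 0$; since $L(f,k/2)=0$ the class lies in the Bloch--Kato Selmer group, so ($\mathrm{loc}_p$) yields $\mathrm{loc}_p(\kappa^{\mathrm{Kato},\dagger}_1)\neq 0$; and the endgame (Nekov\'a\v{r}--Zhang for the corank and $\sha$ statements, then $s_{\mathcal{K}}\neq 0$ plus (non-deg) and Zhang's Gross--Zagier formula for $\mathrm{ord}_{s=k/2}L(f,s)=1$) is exactly the paper's. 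The problem is your ``Main step.'' You do not prove the pivotal implication $\mathrm{loc}_p(\kappa^{\mathrm{Kato},\dagger}_1)\neq 0\Rightarrow\kappa^{\mathrm{Hg},\dagger,\mathcal{K}}_1\neq 0$: you outline an anticyclotomic program (a higher-weight BDP-type reciprocity law, a one-sided Heegner--anticyclotomic divisibility, or a Kolyvagin-system comparison \`a la Burungale--Castella--Grossi--Skinner) and concede it is ``delicate.'' None of that machinery is available in the generality of the theorem (no ordinarity or Fontaine--Laffaille hypothesis is assumed), so as written this is a genuine gap, not an alternative proof. The paper gets this implication in one stroke from the settled Perrin-Riou conjecture on Kato's zeta elements, \cite[Thm. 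B]{bertolini-darmon-venerucci} combined with \cite[Thm. 14.2]{kato-euler-systems}, which directly compares $\mathrm{loc}_p(\kappa^{\mathrm{Kato},\dagger}_1)$ with $\mathrm{loc}_p(\kappa^{\mathrm{Hg},\dagger,\mathcal{K}}_1)$; this is also precisely where the hypotheses $p^2\nmid N$ and $\alpha_p\neq\beta_p$ enter (they are the conditions under which the Perrin-Riou input applies, cf. (rk1 $+\epsilon$) in Theorem \ref{thm:non-triviality-kn}), a role your proposed route never accounts for.

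Two smaller points. Your parity claim $w(f)=-1$ is cited to Corollary \ref{cor:discrete-bsd}.(3), which formally assumes $p\geq 5$, good ordinary, $p$-distinguished; under (IMC at $\mathbf{1}$) you should instead deduce it from Theorem \ref{thm:main-central-critical} together with the functional equation (\ref{eqn:functional-equation-delta_n}) (i.e. the corollary's proof, not its statement). Also, your remark that Theorem \ref{thm:p-converse-higher-weight-ordinary} invokes Theorem \ref{thm:skinner-urban-wan} ``to circumvent the anticyclotomic step'' is off: that citation only supplies the hypothesis (IMC at $\mathbf{1}$) in the ordinary case; the Heegner-cycle comparison is still carried by the Perrin-Riou conjecture.
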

\begin{proof}
We apply the same strategy of \cite{kim-p-converse} to the higher weight setting.
We have
\begin{align*}
\mathrm{cork}_{\mathcal{O}} \mathrm{Sel}(\mathbb{Q}, W^\dagger_f) = 1  & \Rightarrow \mathrm{cork}_{\mathcal{O}} \mathrm{Sel}_0(\mathbb{Q}, W^\dagger_f) = 0  & \textrm{({$\mathrm{loc}_p$})} \\
&\Leftrightarrow \kappa^{\mathrm{Kato}, \dagger}_1 \neq 0 & \textrm{(Thm. \ref{thm:mazur-rubin-main-conjecture})} \\
&\Rightarrow \mathrm{loc}_p \left( \kappa^{\mathrm{Kato}, \dagger}_1 \right) \neq 0 & \textrm{({$\mathrm{loc}_p$})} \\
&\Rightarrow \mathrm{loc}_p \left( \kappa^{\mathrm{Hg},\dagger,  \mathcal{K}}_1 \right) \neq 0 & \textrm{(PR)} \\
&\Rightarrow  \kappa^{\mathrm{Hg},\dagger,  \mathcal{K}}_1 \neq 0 & \\
&\Rightarrow  \mathrm{length}_{\mathcal{O}}\sha(f^\dagger/\mathbb{Q})[\pi^\infty] < \infty . & \textrm{(Thm.  \ref{thm:nekovar-zhang})}
\end{align*}
and
\begin{align*}
\kappa^{\mathrm{Hg},\dagger,  \mathcal{K}}_1 \neq 0  & \Rightarrow s_{\mathcal{K}} \neq 0 \\
& \Rightarrow \mathrm{ord}_{s=k/2}L(f,s) =1 . & \textrm{(non-deg)}  
\end{align*}
where (PR) means the settlement of Perrin-Riou's conjecture \cite[Thm. B]{bertolini-darmon-venerucci} with \cite[Thm. 14.2]{kato-euler-systems}.

\end{proof}

\section{The quantitative refinement of the non-vanishing conjecture} \label{sec:formulation-refined-conjecture}
Although we completely determine the structure of Bloch--Kato Selmer groups via $\kn^{\mathrm{min},\dagger}$ in Theorem \ref{thm:main-central-critical}, we do not have any precise control of $\partial^{(\infty)} \left( \kn^{\mathrm{min},\dagger} \right) $ mainly because we could not compute the integral image of the dual exponential map.

Considering the compatibility with the classical Tamagawa number conjecture of Bloch--Kato \cite[Conj. 5.15 and (5.15.1)]{bloch-kato} , it is natural to ask the relation between 
$\partial^{(\infty)} \left( \kn^{\mathrm{min},\dagger} \right) $ and the $\pi$-valuation of the product of the local Tamagawa ideals of $T^\dagger_f$.
This is why we call $\partial^{(\infty)} \left( \kn^{\mathrm{min},\dagger} \right) $ in $\S$\ref{subsubsec:numerical-invariants-kurihara-numbers} the analytic \emph{fudge factor} \cite{rubin-fudge-factors}.

The goal of this section is to formulate the precise conjectures on this question.
In some sense,  our main results can be viewed as the reduction of Kato--Fontaine--Perrin-Riou's formulation of Tamagawa number conjecture for modular motives to the arithmetic interpretation of  $\partial^{(\infty)} \left( \kn^{\mathrm{min},\dagger} \right) $.
Our conjectures also indicate that the information of local Tamagawa ideals cannot be obtained from the Kolyvagin system argument solely.
In order to prove these conjectures,  it is expected that both the Iwasawa main conjecture and integral $p$-adic Hodge theory are needed.

\subsection{The good ordinary or Fontaine--Laffaille case}
When Hida theory or Fontaine--Laffaille theory is applicable, we are able to control the local Tamagawa number at $p$ and have the notion of canonical periods \cite{vatsal-cong}. In this case, it seems reasonable to formulate the following conjecture, which claims equality between two completely different ``fudge factors".
\begin{conj} \label{conj:kn-tamagawa}
Let $f \in S_k(\Gamma_0(N))$ be a newform with $k \geq 2$ and $p \geq 3$ such that $p \nmid N$ and $\rho_f$ has large image.
If $f$ is ordinary at $p$ and $p$-distinguished or $2 \leq k \leq p-1$, then
\begin{equation} \label{eqn:canonical-periods-tamagawa-numbers}
\partial^{(\infty)}(\kn^{\mathrm{can}, \dagger}) \overset{?}{=} \sum_{\ell \vert N} \mathrm{length}_{\mathcal{O}} \left( \dfrac{\mathrm{H}^1_{\mathrm{ur}}(\mathbb{Q}_\ell, W^\dagger_f ) }{\mathrm{H}^1_{f}(\mathbb{Q}_\ell, W^\dagger_f ) }  \right) 
\end{equation}
where 
$\kn^{\mathrm{can},\dagger}$ is the collection of Kurihara numbers for $f$ at $s = k/2$ normalized by the canonical periods.
\end{conj}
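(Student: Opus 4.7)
The plan is to derive (\ref{eqn:canonical-periods-tamagawa-numbers}) by combining our structural formula (\ref{eqn:bsd-type-formulas}) with the classical Bloch--Kato Tamagawa number conjecture at the central critical point, both of which are available in the ordinary $p$-distinguished or Fontaine--Laffaille regime---the latter via the full Iwasawa main conjecture of Kato, Skinner--Urban, and Wan (Theorem \ref{thm:skinner-urban-wan}).

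In the analytic rank zero case, Corollary \ref{cor:analytic-rank-zero} restated with canonical periods (which agree with the minimal integral periods up to a $\pi$-adic unit in these regimes, by Vatsal and Ash--Stevens) reads
\begin{equation*}
\mathrm{length}_{\mathcal{O}}\mathrm{Sel}(\mathbb{Q}, W^\dagger_f) = \mathrm{ord}_\pi\!\left(\dfrac{L(f,k/2)}{(-2\pi\sqrt{-1})^{k/2}\,\Omega^{\pm}_{f,\mathrm{can}}}\right) - \partial^{(\infty)}(\kn^{\mathrm{can},\dagger}).
\end{equation*}
The classical Bloch--Kato formula, proved via the full IMC in these settings, expresses the same length as $\mathrm{ord}_\pi(L(f,k/2)/\Omega^{\pm}_{f,\mathrm{can}})$ minus the sum over $\ell \vert N$ of $\mathrm{length}_{\mathcal{O}}(\mathrm{H}^1_{\mathrm{ur}}(\mathbb{Q}_\ell, W^\dagger_f)/\mathrm{H}^1_f(\mathbb{Q}_\ell, W^\dagger_f))$; the Fontaine--Perrin-Riou local contribution at $p$ is trivial because canonical periods are calibrated to the natural integral structure at $p$ (Dieudonn\'e lattice in the ordinary $p$-distinguished case, strongly divisible Fontaine--Laffaille module otherwise). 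Subtracting the two formulas proves (\ref{eqn:canonical-periods-tamagawa-numbers}) whenever $L(f,k/2) \neq 0$.

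To remove the rank zero hypothesis I would deform cyclotomically. By Theorem \ref{thm:mazur-rubin-main-conjecture}.(3) the $\Lambda$-primitivity of $\ks^{\mathrm{Kato},\dagger,\infty}$---equivalent under our hypotheses to the full IMC---shows that $\partial^{(\infty)}(\kn^{\mathrm{can},\dagger})$ is rigidly determined by the $\Lambda$-adic Kato zeta element, while the right-hand side of (\ref{eqn:canonical-periods-tamagawa-numbers}) is insensitive to unramified cyclotomic twists at $\ell \vert N$ since it depends only on inertia. Picking a Dirichlet character $\chi$ of $p$-power conductor with $L(f,\chi^{-1},k/2) \neq 0$ (which exists by Rohrlich), applying the rank zero case to $f \otimes \chi$, and interpolating the Kurihara-number fudge factor back to the augmentation ideal should yield (\ref{eqn:canonical-periods-tamagawa-numbers}) in general. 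The main obstacle is that the finite-level invariant $\partial^{(\infty)}(\kn^{\mathrm{can},\dagger})$ depends on a period normalization defined only at a single level, whereas the $\Lambda$-adic zeta element is canonical upstairs; bridging this gap requires either a Hida-theoretic interpolation of canonical periods along the cyclotomic variable or, equivalently, an integral dual exponential map at $p$---precisely the piece of integral $p$-adic Hodge theory that our main approach was designed to avoid but which resurfaces when one wants to interpret $\partial^{(\infty)}(\kn^{\mathrm{can},\dagger})$ in terms of local Tamagawa ideals.
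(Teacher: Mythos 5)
The statement you are trying to prove is stated in the paper as a \emph{conjecture} (Conjecture \ref{conj:kn-tamagawa}); the paper offers no proof, only the remark that when $L(f,k/2)\neq 0$ it is equivalent to the classical Tamagawa number conjecture and follows from the full Iwasawa main conjecture \cite{skinner-urban,wan-iwasawa-2017}, and that the elliptic-curve good ordinary case was recently settled in \cite{burungale-castella-grossi-skinner-indivisibility}. Your first paragraph essentially reproduces that remark, so the rank zero case of your argument is fine in outline (modulo the point that even there the vanishing of the Fontaine--Perrin-Riou local term at $p$ and the agreement of canonical with minimal integral periods are genuine inputs from Hida/Fontaine--Laffaille theory, not formal consequences of the definition of canonical periods; for higher weight the paper only asserts minimality of canonical periods in the weight two case via Ash--Stevens).

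The step that fails is the removal of the rank zero hypothesis, and you have in fact named the obstruction yourself: $\Lambda$-primitivity of $\ks^{\mathrm{Kato},\dagger,\infty}$ controls non-vanishing modulo each height one prime, but it does not compute $\partial^{(\infty)}(\kn^{\mathrm{can},\dagger})$, because passing from the $\Lambda$-adic zeta element to Kurihara numbers goes through the dual exponential map and the period comparison constant $C^{\pm}_{\mathrm{per}}$ of \S\ref{subsec:from-kato-to-kurihara}, which the paper explicitly says is out of reach in general (its $\pi$-valuation appears as an extra unknown term in Conjecture \ref{conj:kn-tamagawa-general}, and controlling it is exactly the content of Assumption \ref{assu:idealistic-integral-p-adic-hodge-theory}). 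Twisting by a character $\chi$ of $p$-power conductor gives you the analogue of the rank zero identity for $f\otimes\chi$ with its own periods and its own fudge factor, but no mechanism is provided for transporting that identity back to the augmentation ideal: the fudge factors of $f$ and of its twists are a priori unrelated without an integral interpolation of the dual exponential map (or of canonical periods) along the cyclotomic variable, and the self-dual framework of the paper (Kurihara numbers at $s=k/2$, functional equation, $\dagger$-twist) is set up only for trivial nebentypus, so the twisted objects do not even literally fit the definitions being compared. In short, the proposal reduces the conjecture to exactly the integral $p$-adic Hodge-theoretic input whose absence is the reason the statement is left as a conjecture; it is a reasonable strategy sketch, not a proof.
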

When $L(f, k/2) \neq 0$, Conjecture \ref{conj:kn-tamagawa} is equivalent to the classical Tamagawa number conjecture and it follows from the Iwasawa main conjecture for $T^\dagger_f$ \cite{skinner-urban,wan-iwasawa-2017}.
In this sense,  Conjecture \ref{conj:kn-tamagawa} is a refinement of the Tamagawa number conjecture for the central critical twist of $f$. 
Also, Conjecture \ref{conj:kn-tamagawa} is verified numerically for several elliptic curves (of positive rank) in \cite[$\S$8]{kks}.
For elliptic curves with good ordinary reduction,  Conjecture \ref{conj:kn-tamagawa} is confirmed recently \cite{burungale-castella-grossi-skinner-indivisibility}.
See \cite[Conj. 4.5]{wei-zhang-cdm} for the Heegner point version.
\subsection{The general case}
We begin with the following natural question.
\begin{ques} \label{ques:kn-tamagawa}
Without the good ordinary or Fontaine--Laffaille type assumption, what is the correct generalization of Conjecture \ref{conj:kn-tamagawa} including the notion of canonical periods? For example,  can the canonical periods be characterized as the periods satisfying equality (\ref{eqn:canonical-periods-tamagawa-numbers}) in general?
How much are such choices different from the minimal integral periods?
\end{ques}
The precise formulation of Question \ref{ques:kn-tamagawa} should be considered as the refined Tamagawa number conjecture for the central critical twist of $f$ and should be compatible with Kato--Fontaine--Perrin-Riou's formulation of the Tamagawa number conjecture.

\subsubsection{}
In order to formulate the general conjecture,  we first describe the ``idealistic" integral $p$-adic Hodge theory.
We closely follow \cite[Lem.  14.18]{kato-euler-systems} but removing the low weight assumption. 

Recall that $\mathcal{L} = \mathrm{exp}^*\left(  \mathrm{H}^1_{/f}(\mathbb{Q}_p, T^\dagger_f) \right) \subseteq \mathrm{Fil}^0\mathbf{D}_{\mathrm{dR}}(V^\dagger_f)
$ is the integral image of the dual exponential map in $\S$\ref{sec:explicit-reciprocity-law}.
Suppose that there exists a natural $\varphi$-stable filtered $\mathcal{O}$-lattice $\mathcal{M} \subseteq \mathbf{D}_{\mathrm{dR}}(V^\dagger_f)$
such that the integral Galois representation associated to $\mathcal{M}$ is the restriction of $T^\dagger_f$ to $\mathrm{Gal}(\overline{\mathbb{Q}}_p/\mathbb{Q}_p)$.
Let
$\mathcal{N}^0 = \mathcal{L} + \mathcal{M}^0 \subseteq \mathrm{Fil}^0\mathbf{D}_{\mathrm{dR}}(V^\dagger_f)$
where $ \mathcal{M}^0  = \mathcal{M} \cap  \mathrm{Fil}^0\mathbf{D}_{\mathrm{dR}}(V^\dagger_f)$.
Then define
\begin{align*}
& \mathrm{ord}_\pi \left[ \mathrm{exp}^*:  \mathrm{H}^1_{/f}(\mathbb{Q}_p, T^\dagger_f) \dashrightarrow \mathcal{M}^0 \right] \\
 := & \ \mathrm{length}_{\mathcal{O}} (\mathcal{N}^0/\mathcal{L}) - \mathrm{length}_{\mathcal{O}} (\mathrm{ker}(\mathrm{exp}^*:  \mathrm{H}^1_{/f}(\mathbb{Q}_p, T^\dagger_f)  \to \mathcal{N}^0 )) \\
& - \mathrm{length}_{\mathcal{O}} (\mathcal{M}^0/\mathcal{N}^0) + \mathrm{length}_{\mathcal{O}} (\mathrm{ker}(\mathcal{M}^0 \to \mathcal{N}^0 )) .
\end{align*}
Let $\mathcal{M}' = \mathrm{Hom}_{\mathcal{O}}( \mathcal{M}, \mathcal{O})(1)$. Then define
\begin{align*}
& \mathrm{ord}_\pi  \left[ 1-\varphi:  \mathcal{M}' \dashrightarrow \mathcal{M}' \right] \\
 := & \ \mathrm{length}_{\mathcal{O}} ( \dfrac{ \mathcal{M}' + (1-\varphi)\mathcal{M}'}{(1-\varphi)\mathcal{M}'}  ) - \mathrm{length}_{\mathcal{O}} (\mathrm{ker}(1-\varphi: \mathcal{M}'  \to \mathcal{M}' + (1-\varphi)\mathcal{M}' )) \\
& - \mathrm{length}_{\mathcal{O}} (\dfrac{ \mathcal{M}' + (1-\varphi)\mathcal{M}'}{\mathcal{M}'} ) + \mathrm{length}_{\mathcal{O}} (\mathrm{ker}( \mathcal{M}'  \to \mathcal{M}' + (1-\varphi)\mathcal{M}' )) .
\end{align*}
Following the computation in \cite[Prop. 14.21]{kato-euler-systems}, we \emph{expect} 
$$\mathrm{ord}_\pi  \left[ 1-\varphi:  \mathcal{M}' \dashrightarrow \mathcal{M}' \right] =  \mathrm{ord}_\pi ( 1 - a_p(f)  \cdot p^{-k/2}  - \mathbf{1}(p)  \cdot p^{-1} ) ,$$
so we assume the following statement.
\begin{assu} \label{assu:idealistic-integral-p-adic-hodge-theory}
In addition to the existence of $\mathcal{M}$, we assume the following equality
$$\mathrm{ord}_\pi \left[ \mathrm{exp}^*:  \mathrm{H}^1_{/f}(\mathbb{Q}_p, T^\dagger_f) \dashrightarrow \mathcal{M}^0 \right]  
+ \mathrm{length}_{\mathcal{O}} \mathrm{H}^2(\mathbb{Q}_p, T^\dagger_f) 
=  \mathrm{ord}_\pi ( 1 - a_p(f)  \cdot p^{-k/2}  - \mathbf{1}(p)  \cdot p^{-1} ) .$$
\end{assu}
When the Fontaine--Laffaille case,  Assumption \ref{assu:idealistic-integral-p-adic-hodge-theory} is true and $\mathcal{M}$ comes from the Fontaine--Laffaille modules.
We do not know how to characterize the most general case satisfying Assumption \ref{assu:idealistic-integral-p-adic-hodge-theory}.
We are informed that an important progress on the computation of the local Tamagawa ideals at $p$ is made by Colmez--Wang \cite{colmez-wang}.
\subsubsection{}
We are now ready to state two general refined conjectures. The first one is the ``before taking $\mathrm{exp}^*$" version and the latter is the ``after taking $\mathrm{exp}^*$" version.
\begin{conj} \label{conj:ks-tamagawa-general}
Let $f \in S_k(\Gamma_0(N))$ be a newform with $k \geq 2$ and $p \geq 3$ such that $\rho_f$ has large image.
Then
\[
\partial^{(\infty)}(\ks^{\mathrm{Kato}, \dagger}) \overset{?}{=} \sum_{\ell \vert N_0} \mathrm{length}_{\mathcal{O}} \left( \dfrac{\mathrm{H}^1_{\mathrm{ur}}(\mathbb{Q}_\ell, W^{\dagger}_f ) }{\mathrm{H}^1_{f}(\mathbb{Q}_\ell, W^{\dagger}_f ) }  \right) 
\]
where $N_0$ is the prime-to-$p$ part of $N$.
\end{conj}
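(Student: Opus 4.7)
The plan is to deduce this conjectural identity from the full Iwasawa main conjecture for $T^\dagger_f$ combined with a descent calculation that isolates the contribution of the tame bad primes. First I would reformulate $\partial^{(\infty)}(\ks^{\mathrm{Kato},\dagger})$ as a Selmer-theoretic quantity. By Theorem \ref{thm:core-rank-modular-forms-char-zero} the module $\KS(T^\dagger_f, \mathcal{F}_{\mathrm{can}}, \mathcal{P}_1)$ is free of rank one over $\mathcal{O}$, so $\partial^{(\infty)}(\ks^{\mathrm{Kato},\dagger})$ is the divisibility index of $\ks^{\mathrm{Kato},\dagger}$ inside a chosen generator; by Corollary \ref{cor:kato-kolyvagin-main} this difference computes
\[
\mathrm{length}_{\mathcal{O}}\!\left(\dfrac{\mathrm{Sel}_{\mathrm{rel}}(\mathbb{Q}, T^\dagger_f)}{\mathcal{O}\cdot\kappa^{\mathrm{Kato},\dagger}_1}\right) - \mathrm{length}_{\mathcal{O}}\mathrm{Sel}_0(\mathbb{Q}, W^\dagger_f).
\]
So it suffices to show that the right-hand side of the conjecture computes this difference.

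Next, I would use the full IMC: by Theorem \ref{thm:mazur-rubin-main-conjecture}.(3) it is equivalent to the $\Lambda$-primitivity of $\ks^{\mathrm{Kato},\dagger,\infty}$ and to the equality of characteristic ideals of $\mathrm{H}^1_{\mathrm{Iw}}(\mathbb{Q}, T^\dagger_f)/\Lambda\kappa^{\mathrm{Kato},\dagger,\infty}_1$ and $\mathrm{Sel}_0(\mathbb{Q}_\infty, W^\dagger_f)^\vee$. Taking $\mathcal{O}$-lengths of the $\Gamma$-coinvariants of both characteristic ideals, and then comparing via the global Poitou--Tate four-term sequence the three Selmer modules $\mathrm{Sel}_0$, $\mathrm{Sel}$, $\mathrm{Sel}_{\mathrm{rel}}$ (the strict-at-$p$, Bloch--Kato, and relaxed-at-$p$ Selmer groups), the two $p$-contributions on the relaxed and the strict side are ``dual'' to each other and cancel in the difference, leaving only the tame bad primes. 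At each $\ell \mid N_0$, the discrepancy between the Iwasawa-theoretic local condition and the Bloch--Kato condition is exactly $\mathrm{length}_\mathcal{O}(\mathrm{H}^1_{\mathrm{ur}}(\mathbb{Q}_\ell, W^\dagger_f)/\mathrm{H}^1_f(\mathbb{Q}_\ell, W^\dagger_f))$, which contributes the summand on the right-hand side of the conjecture. A sanity check in the extreme case where $\overline{\rho}_f$ is ramified at every $\ell\mid N$: all these summands vanish, the conjecture predicts $\partial^{(\infty)}=0$, and this is precisely Corollary \ref{cor:main-all-critical-before-dual-exp}, confirming consistency.

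The main obstacle is the descent step. Even granting the IMC, passing from an equality of characteristic ideals in $\Lambda$ to the equality of finite $\mathcal{O}$-lengths at the augmentation requires controlling the pseudo-null part of $\mathrm{Sel}_0(\mathbb{Q}_\infty, W^\dagger_f)^\vee$ and handling possible trivial zeros of the local Euler factors at $\ell\mid N_0$ as one descends. In parallel one must refine the Euler system divisibility prime by prime, matching, for each $\ell \mid N_0$, the Kolyvagin-derivative behavior of $\kappa^{\mathrm{Kato},\dagger}_n$ (as $n$ ranges over $\mathcal{N}_1$) against the local ramification invariant $\dim_{\mathbb{F}}(W^\dagger_f)^{I_\ell}/\dim_{\mathbb{F}}(W^\dagger_f)^{G_\ell}$. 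I expect this to be the genuinely hard technical input: it amounts to an explicit ``tame'' reciprocity law at bad primes, finer than the finite-singular comparison $\phi^{\mathrm{fs}}_\ell$, that detects not just the class of $\kappa_n$ modulo $I_n$ but also how much of a tame Tamagawa factor has been absorbed. Once such a local comparison is established, the three ingredients---the rigidity of Kolyvagin systems, the IMC, and the local Tamagawa computation at $\ell \mid N_0$---would combine to give the conjectural formula.
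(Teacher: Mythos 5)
First, note that the statement you are addressing is Conjecture \ref{conj:ks-tamagawa-general}: the paper does not prove it, and in fact \S\ref{sec:formulation-refined-conjecture} explicitly states that a proof is expected to require \emph{both} the Iwasawa main conjecture \emph{and} integral $p$-adic Hodge theory, and that the information about local Tamagawa ideals ``cannot be obtained from the Kolyvagin system argument solely.'' So your proposal must be judged as an attempted proof of an open statement, and as such it has genuine gaps beyond the one you flag yourself. (a) Your opening reformulation of $\partial^{(\infty)}(\ks^{\mathrm{Kato},\dagger})$ as
$\mathrm{length}_{\mathcal{O}}\bigl(\mathrm{Sel}_{\mathrm{rel}}(\mathbb{Q},T^\dagger_f)/\mathcal{O}\cdot\kappa^{\mathrm{Kato},\dagger}_1\bigr)-\mathrm{length}_{\mathcal{O}}\mathrm{Sel}_0(\mathbb{Q},W^\dagger_f)$
rests on Theorem \ref{thm:kato-kolyvagin-main}.(7)/Corollary \ref{cor:main-all-critical-before-dual-exp}, which give $\partial^{(\mathrm{ord}(\ks))}-\partial^{(\infty)}$, not $\partial^{(0)}-\partial^{(\infty)}$; the identification you want holds only when $\mathrm{ord}(\ks^{\mathrm{Kato},\dagger})=0$, i.e.\ $\kappa^{\mathrm{Kato},\dagger}_1\neq 0$. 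For the self-dual twist this forces the analytic-rank-zero case; when $L(f,k/2)=0$ both lengths in your difference are infinite and the reduction collapses. Any argument must instead work with the full family $\{\kappa_n\}_{n\in\mathcal{N}_1}$, which is exactly why the quantity is a minimum over all $n$. (b) You also quietly add the full IMC as a hypothesis, which is not among the hypotheses of the conjecture (large image only), so even a successful execution of your plan would prove a strictly weaker, conditional statement.

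(c) The decisive gap is the claim that in the Poitou--Tate comparison of $\mathrm{Sel}_0$, $\mathrm{Sel}$, $\mathrm{Sel}_{\mathrm{rel}}$ the two contributions at $p$ are ``dual and cancel,'' leaving only the tame primes. The relaxed and strict conditions at $p$ are dual to each other, but the conjecture compares the Kolyvagin-system datum (which lives on the relaxed/strict axis) with Bloch--Kato Selmer data only through the tame primes; the local discrepancy at $p$ between $\mathrm{H}^1(\mathbb{Q}_p,\cdot)$, $\mathrm{H}^1_f(\mathbb{Q}_p,\cdot)$ and the integral image of the dual exponential map is precisely the local Tamagawa ideal at $p$ together with the Euler factor and $\mathrm{H}^2(\mathbb{Q}_p,T^\dagger_f)$, and it does not cancel formally: controlling it is exactly the content of Assumption \ref{assu:idealistic-integral-p-adic-hodge-theory} and the reason the companion statement Conjecture \ref{conj:kn-tamagawa-general} carries the period constant $C^{\pm}_{\mathrm{per}}$ and the integral $p$-adic Hodge theory hypothesis. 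The descent from the IMC through Theorem \ref{thm:mazur-rubin-main-conjecture}.(3) to an equality of finite lengths at the augmentation ideal likewise needs control of exceptional zeros and of the specialization of characteristic ideals, which you mention but do not supply. Finally, the ``tame reciprocity law at bad primes'' that you correctly identify as the crux is left entirely open, so what you have is a plausible reduction-of-the-problem sketch (consistent with the paper's own expectations about what a proof would need), not a proof.
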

\begin{conj} \label{conj:kn-tamagawa-general}
Let $f \in S_k(\Gamma_0(N))$ be a newform with $k \geq 2$ and $p \geq 3$ such that
\begin{enumerate}
\item  $\rho_f$ has large image, 
\item $f$ is not congruent to any anomalous ordinary CM modular form modulo $\pi$,  and
\item the ``idealistic" integral $p$-adic Hodge theory described above works (Assumption \ref{assu:idealistic-integral-p-adic-hodge-theory}). 
\end{enumerate}
Then
\[
\mathrm{ord}_{\pi}(C^{\pm}_{\mathrm{per}}) + \partial^{(\infty)}(\kn^{\mathrm{min}, \dagger}) \overset{?}{=} \sum_{\ell \vert N_0} \mathrm{length}_{\mathcal{O}} \left( \dfrac{\mathrm{H}^1_{\mathrm{ur}}(\mathbb{Q}_\ell, W^{\dagger}_f ) }{\mathrm{H}^1_{f}(\mathbb{Q}_\ell, W^{\dagger}_f ) }  \right) 
\]
where the sign of $C^{\pm}_{\mathrm{per}}$ coincides with that of $(-1)^{k/2 - 1 }$ and $N_0$ is the prime-to-$p$ part of $N$.
\end{conj}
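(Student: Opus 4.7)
The plan splits into two tasks: (a) reducing Conjecture \ref{conj:kn-tamagawa-general} to Conjecture \ref{conj:ks-tamagawa-general} using the explicit reciprocity law of $\S$\ref{sec:explicit-reciprocity-law} together with Assumption \ref{assu:idealistic-integral-p-adic-hodge-theory}; and (b) attacking Conjecture \ref{conj:ks-tamagawa-general} by comparing, under the full Iwasawa main conjecture for $T^\dagger_f$, our structure formula with Kato--Fontaine--Perrin-Riou's formulation of the Tamagawa number conjecture via global Poitou--Tate duality.

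\textbf{Reduction.} Lemma \ref{lem:vanishing-localization} gives $\partial^{(\infty)}(\ks^{\mathrm{Kato},\dagger}) = \partial^{(\infty)}(\mathrm{loc}^s_p\ks^{\mathrm{Kato},\dagger})$. The computation in $\S$\ref{subsec:from-kato-to-kurihara} yields
$$\mathrm{exp}^* \circ \mathrm{loc}^s_p(\kappa^{\mathrm{Kato},\dagger}_n) = \mathbf{1}(E_p(\overline{f}, k/2)) \cdot C^{\pm}_{\mathrm{per}} \cdot \widedelta^{\mathrm{min},\dagger}_n \cdot \omega_{f, k/2} \in \mathcal{L}/I_n\mathcal{L},$$
whereas the $\pi$-valuation of $\mathrm{loc}^s_p \kappa^{\mathrm{Kato},\dagger}_n$ in $\mathrm{H}^1_{/f}(\mathbb{Q}_p, T^\dagger_f/I_n)$ is measured through the splitting (\ref{eqn:extending-dual-exp}). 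The discrepancy between this valuation and that of $\widedelta^{\mathrm{min},\dagger}_n$ in $\mathcal{O}/I_n\mathcal{O}$ is accounted for by $\mathrm{length}_{\mathcal{O}}\mathrm{H}^2(\mathbb{Q}_p, T^\dagger_f)$ plus the lattice defect between $\mathcal{L}$ and $\mathcal{M}^0$, both of which stabilize for $I_n$ small. Assumption \ref{assu:idealistic-integral-p-adic-hodge-theory} identifies the sum of these corrections precisely with $\mathrm{ord}_\pi(E_p(\overline{f}, k/2))$, so the Euler factor cancels and one obtains
$$\partial^{(\infty)}(\ks^{\mathrm{Kato},\dagger}) = \mathrm{ord}_\pi(C^{\pm}_{\mathrm{per}}) + \partial^{(\infty)}(\kn^{\mathrm{min},\dagger}),$$
bridging the two conjectures.

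\textbf{Proof of Conjecture \ref{conj:ks-tamagawa-general}.} Assuming the Iwasawa main conjecture for $T^\dagger_f$ at every height one prime of $\Lambda$, Theorem \ref{thm:mazur-rubin-main-conjecture}.(3) implies that $\ks^{\mathrm{Kato},\dagger,\infty}$ is $\Lambda$-primitive. The $\Lambda$-adic analogue of Kato's formulation of the Tamagawa number conjecture—namely, equality of the characteristic ideals of $\mathrm{H}^2_{\mathrm{Iw}}(\mathbb{Q}, T^\dagger_f)$ and $\mathrm{H}^1_{\mathrm{Iw}}(\mathbb{Q}, T^\dagger_f)/\Lambda\kappa^{\mathrm{Kato},\dagger,\infty}_1$—follows, and descending to the augmentation ideal by standard Euler-characteristic bookkeeping produces a finite-level identity. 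On the other hand, Theorem \ref{thm:kato-kolyvagin-main}.(7) supplies $\mathrm{length}_{\mathcal{O}} \mathrm{Sel}_0(\mathbb{Q}, W^\dagger_f)_{/\mathrm{div}} = \partial^{(\mathrm{ord})}(\ks^{\mathrm{Kato},\dagger}) - \partial^{(\infty)}(\ks^{\mathrm{Kato},\dagger})$. The global Poitou--Tate sequence relates $\mathrm{H}^2(\mathbb{Z}[1/p], T^\dagger_f)$ to $\mathrm{Sel}_0(\mathbb{Q}, W^\dagger_f)^\vee$ (and $\mathrm{H}^1(\mathbb{Z}[1/p], T^\dagger_f)$ to $\mathrm{Sel}_{\mathrm{rel}}(\mathbb{Q}, T^\dagger_f)$) with defect equal to the sum $\sum_{\ell \mid N_0}\mathrm{length}_{\mathcal{O}}(\mathrm{H}^1_{\mathrm{ur}}(\mathbb{Q}_\ell, W^\dagger_f)/\mathrm{H}^1_f(\mathbb{Q}_\ell, W^\dagger_f))$, together with an $\mathrm{H}^2(\mathbb{Q}_p, T^\dagger_f)$ contribution at $p$ absorbed by Assumption \ref{assu:idealistic-integral-p-adic-hodge-theory}. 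Matching the two identities isolates $\partial^{(\infty)}(\ks^{\mathrm{Kato},\dagger})$ as exactly this sum of local terms, which is Conjecture \ref{conj:ks-tamagawa-general}.

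\textbf{Main obstacle.} Both input hypotheses are precisely the ones that the main theorems of this paper deliberately circumvent: the Iwasawa main conjecture for $T^\dagger_f$ is established only in the good ordinary $p$-distinguished regime (Theorem \ref{thm:skinner-urban-wan}) with partial advances in non-ordinary settings, while Assumption \ref{assu:idealistic-integral-p-adic-hodge-theory} demands a computation of Fontaine--Perrin-Riou's local Tamagawa ideal at $p$ via a refined integral $p$-adic Hodge theory. The forthcoming work of Colmez--Wang cited in $\S$\ref{sec:formulation-refined-conjecture} should be the decisive input for the latter. Controlling both simultaneously in the generality of the conjecture—together with the delicate uniform bound on $\mathrm{ord}_\pi(C^{\pm}_{\mathrm{per}})$ that compares Kato's period map with the minimal integral periods—is the hardest step.
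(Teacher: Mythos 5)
There is nothing in the paper to check your argument against: the statement you were given is Conjecture \ref{conj:kn-tamagawa-general}, stated with $\overset{?}{=}$, and the paper proves neither it nor Conjecture \ref{conj:ks-tamagawa-general}; it only formulates them and remarks that a proof is \emph{expected} to require both the Iwasawa main conjecture and a genuinely integral $p$-adic Hodge theory at $p$. Your first step --- relating the two conjectures through the reciprocity computation of $\S$\ref{subsec:from-kato-to-kurihara}, Lemma \ref{lem:vanishing-localization}, the vanishing of $\mathrm{H}^2(\mathbb{Q}_p,T^\dagger_f)$ forced by hypothesis (2), and Assumption \ref{assu:idealistic-integral-p-adic-hodge-theory} to absorb the Euler factor and the lattice defect between $\mathcal{L}$ and $\mathcal{M}^0$ --- is consistent with the paper's own ``before/after $\mathrm{exp}^*$'' framing and is the unproblematic part of your proposal.

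The gap is in the second half, and it is exactly the open content of the conjecture. You import the \emph{full} Iwasawa main conjecture for $T^\dagger_f$, which is not among the hypotheses of Conjecture \ref{conj:kn-tamagawa-general} and is known only in restricted settings (Theorem \ref{thm:skinner-urban-wan} and its non-ordinary variants), so at best you would obtain a conditional statement, not the conjecture. More seriously, even granting it, Theorem \ref{thm:mazur-rubin-main-conjecture}.(3) only gives $\Lambda$-primitivity of $\ks^{\mathrm{Kato},\dagger,\infty}$, i.e.\ control at each height-one prime of $\Lambda$; passing from this to the finite-level invariant $\partial^{(\infty)}(\ks^{\mathrm{Kato},\dagger})$ is not ``standard Euler-characteristic bookkeeping.'' It requires an exact comparison between the specialization of the $\Lambda$-adic Kolyvagin system and the residual one, and the error terms in that comparison are precisely the local Tamagawa lengths at $\ell \mid N_0$ together with a $p$-local contribution; the paper stresses that the Kolyvagin system machinery by itself cannot produce these local Tamagawa ideals, which is why the statement is left as a conjecture rather than deduced from Theorem \ref{thm:kato-kolyvagin-main}. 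Likewise, your identification of the Poitou--Tate defect between $\mathrm{Sel}_0$, $\mathrm{Sel}_{\mathrm{rel}}$ and $\mathrm{H}^i(\mathbb{Z}[1/p],T^\dagger_f)$ with $\sum_{\ell\mid N_0}\mathrm{length}_{\mathcal{O}}\bigl(\mathrm{H}^1_{\mathrm{ur}}(\mathbb{Q}_\ell,W^\dagger_f)/\mathrm{H}^1_f(\mathbb{Q}_\ell,W^\dagger_f)\bigr)$ holds on the nose only under extra hypotheses such as ramification of $\overline{\rho}_f$ at the bad primes (compare Corollary \ref{cor:main-all-critical-before-dual-exp}), which the conjecture does not assume. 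You candidly list these issues as the ``main obstacle,'' but they are not peripheral obstacles: they are the unproven steps, so the proposal is a strategy outline rather than a proof.
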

\begin{rem}
\begin{enumerate}
\item If $f$ is not congruent to an anomalous ordinary CM modular form modulo $\pi$,  then $\mathrm{H}^2(\mathbb{Q}_p, T^\dagger_f) = 0$.
\item  If $p \geq 5$, then the residual non-CM condition becomes vacuous since the residual image is already non-solvable.
\item In the setting of Conjecture \ref{conj:kn-tamagawa-general},  we choose $\omega_{f,k/2}$ such that $\mathcal{M}^0 = \mathcal{O} \cdot  \omega_{f,k/2} \subseteq \mathrm{Fil}^0\mathbf{D}_{\mathrm{dR}}(V^\dagger_f)$.
\item  Since minimal integral periods are determined purely by the values of modular symbols, we do not see any arithmetic properties of minimal integral periods like congruence ideals (e.g. \cite{vatsal-cong,kim-ota}) directly in general.
\end{enumerate}
\end{rem}

\section*{Acknowledgement}
We thank Henri Darmon, Karl Rubin, and Christopher Skinner for helpful and detailed discussions.
We are also benefited from discussions with Alberto Angurel Andres, Ashay Burungale, Francesc Castella, and Giada Grossi. 
We thank MSRI/SLMath at Berkeley for providing a wonderful environment for all these discussions.
We also thank Shinichi Kobayashi and Masato Kurihara for their helpful comments and Xin Wan for the detailed discussion on \cite{wan_hilbert}.

We thank Robert Pollack for the numerical computation of higher weight modular forms and for agreeing to write the appendix together.


\appendix{}
\section{Numerical examples (by Chan-Ho Kim and Robert Pollack)} \label{sec:appendix}
In this appendix, we apply Theorem \ref{thm:main-central-critical} to various elliptic curves and modular forms of higher weight.
We use the LMFDB index \cite{lmfdb-2024} and all the examples satisfy the large image assumption (e.g. \cite{mascot-modular-image}).
\subsection{Elliptic curves}
By applying the algorithm of Alexandru Ghitza \cite[Appendix A]{kim-survey}, we obtain several numerical examples.
The Sage code for this computation is available at:
\begin{center}
\url{https://github.com/aghitza/kurihara_numbers}
\end{center}

\subsubsection{The structure of Tate--Shafarevich groups}
We consider elliptic curves whose rank is zero and the size of the $p$-primary part of their Tate--Shafarevich group is $p^4$ or $p^6$.
We determine the structure of $\sha(E/\mathbb{Q})[p^\infty]$ by computing Kurihara numbers.
This structural information cannot be observed from Birch and Swinnerton-Dyer conjecture.
\begin{center}
{ \scriptsize
\begin{tabular}{ |c|c|c|c|c|c| } 
 \hline
 $p$ & $E$ & reduction at $p$ & $\widedelta_n \pmod{p}$ & $\sha(E/\mathbb{Q})[p^\infty]$ & non-vanishing/vanishing\\ 
 \hline
 3 & \href{https://www.lmfdb.org/EllipticCurve/Q/15675/ba/1}{15675.ba1} & non-split multiplicative &$\widedelta_{7 \cdot 13} \neq 0$ & $(\mathbb{Z}/9\mathbb{Z})^{\oplus 2}$ & 56/64  ($<$ 500) \\ 
 3 & \href{https://www.lmfdb.org/EllipticCurve/Q/19215/x/1}{19215.x1} & additive &$\widedelta_{7 \cdot 19} \neq 0$ & $(\mathbb{Z}/9\mathbb{Z})^{\oplus 2}$ & 48/72 ($<$ 500)\\ 
 3 & \href{https://www.lmfdb.org/EllipticCurve/Q/22077/g/1}{22077.g1} & additive & $\widedelta_{139 \cdot 151} \neq 0$ & $(\mathbb{Z}/9\mathbb{Z})^{\oplus 2}$ & 79/57 ($<$ 500)\\ 
 3 & \href{https://www.lmfdb.org/EllipticCurve/Q/31046/g/1}{31046.g1} & good supersingular & $\widedelta_{31 \cdot 61} \neq 0$ & $(\mathbb{Z}/9\mathbb{Z})^{\oplus 2}$ & 77/76 ($<$ 500)\\ 
3 & \href{http://www.lmfdb.org/EllipticCurve/Q/35090/f/1}{35090.f1} & good ordinary & $\widedelta_{103\cdot 193} \neq 0$ &$(\mathbb{Z}/9\mathbb{Z})^{\oplus 2}$ & 24/42 ($<$ 500) \\
3 & \href{http://www.lmfdb.org/EllipticCurve/Q/40075/p/1}{40075.p1} & good supersingular & $\widedelta_{7 \cdot 19} \neq 0$ & $(\mathbb{Z}/9\mathbb{Z})^{\oplus 2}$ & 77/59 ($<$ 500)\\
3 & \href{http://www.lmfdb.org/EllipticCurve/Q/46731/d/1}{46731.d1} & split multiplicative & $\widedelta_{19 \cdot 43} \neq 0$ & $(\mathbb{Z}/9\mathbb{Z})^{\oplus 2}$ & 39/66 ($<$ 500)\\ 
3 & \href{http://www.lmfdb.org/EllipticCurve/Q/55616/bd/1}{55616.bd1} & good supersingular &  $\widedelta_{31 \cdot 37} \neq 0$ & $(\mathbb{Z}/9\mathbb{Z})^{\oplus 2}$ & 78/75 ($<$ 500)\\
3 & \href{http://www.lmfdb.org/EllipticCurve/Q/55762/i/1}{55762.i1} & good ordinary & $\widedelta_{67 \cdot 193}\neq 0$ & $(\mathbb{Z}/9\mathbb{Z})^{\oplus 2}$ & 92/61 ($<$ 500)\\
3 & \href{http://www.lmfdb.org/EllipticCurve/Q/73206/r/1}{73206.r1} & additive & $\widedelta_{19 \cdot 67} \neq  0$ & $(\mathbb{Z}/9\mathbb{Z})^{\oplus 2}$ & 29/62 ($<$ 500) \\
3 & \href{http://www.lmfdb.org/EllipticCurve/Q/75400/w/1}{75400.w1} & good supersingular & $\widedelta_{7 \cdot 13} \neq  0$ & $(\mathbb{Z}/9\mathbb{Z})^{\oplus 2}$ & 103/68 ($<$ 500) \\
3 & \href{http://www.lmfdb.org/EllipticCurve/Q/83445/e/1}{83445.e1} & non-split multiplicative & $\widedelta_{7 \cdot 31} \neq  0$ & $(\mathbb{Z}/9\mathbb{Z})^{\oplus 2}$ & 68/68 ($<$ 500) \\
3 & \href{http://www.lmfdb.org/EllipticCurve/Q/83790/b/1}{83790.b1} & additive & $\widedelta_{19 \cdot 43} \neq  0$ & $(\mathbb{Z}/9\mathbb{Z})^{\oplus 2}$ & 58/62 ($<$ 500) \\
3 & \href{http://www.lmfdb.org/EllipticCurve/Q/84578/c/1}{84578.c1} &  good supersingular & $\widedelta_{13 \cdot 67} \neq  0$ & $(\mathbb{Z}/9\mathbb{Z})^{\oplus 2}$ & 35/43 ($<$ 500) \\
3 & \href{http://www.lmfdb.org/EllipticCurve/Q/84680/c/1}{84680.c1} & good ordinary & $\widedelta_{7 \cdot 43} \neq  0$ & $(\mathbb{Z}/9\mathbb{Z})^{\oplus 2}$ & 44/61 ($<$ 500) \\
3 & \href{http://www.lmfdb.org/EllipticCurve/Q/84825/y/1}{84825.y1} & additive & $\widedelta_{13 \cdot 19} \neq  0$ & $(\mathbb{Z}/9\mathbb{Z})^{\oplus 2}$ & 62/91 ($<$ 500) \\
3 & \href{http://www.lmfdb.org/EllipticCurve/Q/91605/c/1}{91605.c1} & non-split multiplicative & $\widedelta_{19 \cdot 37} \neq  0$ & $(\mathbb{Z}/9\mathbb{Z})^{\oplus 2}$ & 105/66 ($<$ 500) \\
3 & \href{http://www.lmfdb.org/EllipticCurve/Q/91960/be/1}{91960.be1} & good supersingular & $\widedelta_{7 \cdot 19} \neq  0$ & $(\mathbb{Z}/9\mathbb{Z})^{\oplus 2}$ & 69/84 ($<$ 500) \\
 \hline
 3 & \href{http://www.lmfdb.org/EllipticCurve/Q/441285/e/1}{441285.e1} & non-split multiplicative &$\widedelta_{13 \cdot 61} \neq 0$ & $(\mathbb{Z}/27\mathbb{Z})^{\oplus 2}$ & 13/23  ($<$ 200) \\ 
 3 & \href{http://www.lmfdb.org/EllipticCurve/Q/484080/p/1}{484080.p1} & split multiplicative &$\widedelta_{67 \cdot 151} \neq 0$ & $(\mathbb{Z}/27\mathbb{Z})^{\oplus 2}$ & 2/8 ($<$ 200)\\ 
 \hline
 5 & \href{https://www.lmfdb.org/EllipticCurve/Q/287175/n/2}{287175.n2} & additive & $\widedelta_{131 \cdot 271} \neq 0$ & $(\mathbb{Z}/25\mathbb{Z})^{\oplus 2}$  & 24/4 ($<$ 1,000) \\ 
 5 & \href{https://www.lmfdb.org/EllipticCurve/Q/321398/d/1}{321398.d1} & good ordinary & $\widedelta_{191 \cdot 401} \neq 0$ & $(\mathbb{Z}/25\mathbb{Z})^{\oplus 2}$ &  30/25 ($<$ 1,500) \\ 
 5 & \href{https://www.lmfdb.org/EllipticCurve/Q/366100/y/1}{366100.y1} & additive & $\widedelta_{31 \cdot 101} \neq 0$ & $(\mathbb{Z}/25\mathbb{Z})^{\oplus 2}$ & 39/6 ($<$ 1,500)  \\ 
5 & \href{http://www.lmfdb.org/EllipticCurve/Q/423801/ci/1}{423801.ci1} & good ordinary & $\widedelta_{11 \cdot 41} \neq 0$ & $(\mathbb{Z}/25\mathbb{Z})^{\oplus 2}$ & 12/24 ($<$ 1,500) \\
 5 & \href{https://www.lmfdb.org/EllipticCurve/Q/472834/a/1}{472834.a1} & good ordinary & $\widedelta_{251 \cdot 271} \neq 0$ & $(\mathbb{Z}/25\mathbb{Z})^{\oplus 2}$  & 58/8 ($<$ 1,500) \\ 
 \hline
\end{tabular}
}
\end{center}
In non-vanishing/vanishing, $a/b \ (< c)$ means that
\begin{itemize}
\item $a$ = the number of non-vanishing $\widedelta_n$, and
\item $b$ = the number of vanishing $\widedelta_n$
\end{itemize}
with $\nu(n) = 2$ where each prime factor of $n$ is smaller than $c$.

\subsubsection{Non-trivial Tamagawa defects}
Consider an elliptic curve $E$ of conductor 20787 (\href{http://www.lmfdb.org/EllipticCurve/Q/20787/e/1}{20787.e1}).
Then $E$ has rank zero,  the Tamagawa factor of $E$ at 41 is divisible by 3, and the size of $\sha(E/\mathbb{Q})[3^\infty]$ is $3^4$.
Due to the non-trivial Tamagawa defect, $\widedelta_{n}$ vanishes mod 3 for every $n \in \mathcal{N}_1$.
Since $37, 1783 \in \mathcal{N}_2$ and $\mathrm{ord}_3( \widedelta_{37 \cdot 1783} \pmod{9}) =1$, we still have
$$\mathrm{Sel}(\mathbb{Q}, E[3^\infty]) = \sha(E/\mathbb{Q})[3^\infty] \simeq (\mathbb{Z}/9\mathbb{Z})^{\oplus 2} .$$

\subsubsection{The relation with Cohen--Lenstra}
In all the examples, we have $\sha(E/\mathbb{Q})[p^\infty] \simeq M^{\oplus 2}$ with \emph{cyclic} module $M$.
This is not a strange phenomenon due to Delaunay's analogue of the Cohen--Lenstra heuristics for Tate--Shafarevich groups \cite[\S3]{delaunay-heuristics}.

\subsubsection{A rank one elliptic curve varying primes}
Consider an elliptic curve $E$ of conductor 43 (\href{http://www.lmfdb.org/EllipticCurve/Q/43/a/1}{43.a1}).
Then $E$ has rank one and $\sha(E/\mathbb{Q})$ is trivial. We obtain the following table regarding the distribution of non-zero $\widedelta_\ell$ with Kolyvagin prime $\ell$ varying $p$.
\begin{center}
{ \scriptsize
\begin{tabular}{ |c|c|c||c|c|c| } 
 \hline
 $p$ & reduction at $p$ &  non-vanishing/vanishing &  $p$ & reduction at $p$ &  non-vanishing/vanishing\\ 
 \hline
 3 & good ordinary & 138/81  ($<$ 10,000) & 13 &  good ordinary &  13/0 ($<$ 20,000) \\ 
 5 & good ordinary & 43/18 ($<$ 10,000) & 17 &  good ordinary &  8/1 ($<$ 30,000)\\ 
 7 &   good supersingular & 27/3 ($<$ 10,000) & 19 & good ordinary &  10/0 ($<$ 30,000)\\ 
 11 & good ordinary  &  17/1 ($<$ 10,000) & 23 & good ordinary & 6/0 ($<$ 30,000)\\ 
 \hline
\end{tabular}
}
\end{center}
In non-vanishing/vanishing, $a/b \ (< c)$ means that
\begin{itemize}
\item $a$ = the number of non-vanishing $\widedelta_\ell$, and
\item $b$ = the number of vanishing $\widedelta_\ell$ 
\end{itemize}
 with Kolyvagin prime $\ell$ is smaller than $c$.
 It will be very interesting if any pattern is recognized in the distribution of non-vanishing $\widedelta_n$.

\subsubsection{Elliptic curves of high rank}
The following computation confirms the triviality of the $p$-primary part of Tate--Shafarevich groups for several elliptic curves of rank $\geq 2$. 
These high rank examples cannot be obtained from any known result on Birch and Swinnerton-Dyer conjecture.
\begin{center}
{ \scriptsize
\begin{tabular}{ |c|c|c|c|c|c| } 
 \hline
 $p$ & $E$ &  rank  & reduction at $p$ & $\widedelta_n \pmod{p}$ &  $\mathrm{Sel}(\mathbb{Q}, E[p^\infty])$ \\ 
 \hline
 3 & \href{https://www.lmfdb.org/EllipticCurve/Q/389/a/1}{389.a1} & 2 & good ordinary &$\widedelta_{79 \cdot 109} \neq 0$ &  $(\mathbb{Q}_3/\mathbb{Z}_3)^{\oplus 2}$ \\ 
 3 & \href{https://www.lmfdb.org/EllipticCurve/Q/433/a/1}{433.a1} & 2 & good ordinary &$\widedelta_{19 \cdot 43} \neq 0$ &  $(\mathbb{Q}_3/\mathbb{Z}_3)^{\oplus 2}$ \\ 
 3 & \href{https://www.lmfdb.org/EllipticCurve/Q/446/a/1}{446.a1} & 2 & good supersingular &$\widedelta_{7 \cdot 31} \neq 0$ &  $(\mathbb{Q}_3/\mathbb{Z}_3)^{\oplus 2}$\\ 
 3 & \href{https://www.lmfdb.org/EllipticCurve/Q/563/a/1}{563.a1} & 2 & good ordinary &$\widedelta_{13 \cdot 61} \neq 0$ &  $(\mathbb{Q}_3/\mathbb{Z}_3)^{\oplus 2}$ \\ 
 3 & \href{https://www.lmfdb.org/EllipticCurve/Q/571/a/1}{571.a1} & 2 & good ordinary &$\widedelta_{7 \cdot 31} \neq 0$ &  $(\mathbb{Q}_3/\mathbb{Z}_3)^{\oplus 2}$ \\ 
 3 & \href{https://www.lmfdb.org/EllipticCurve/Q/643/a/1}{643.a1} & 2 & good ordinary &$\widedelta_{13 \cdot 19} \neq 0$ &  $(\mathbb{Q}_3/\mathbb{Z}_3)^{\oplus 2}$ \\ 

 \hline
 5 & \href{https://www.lmfdb.org/EllipticCurve/Q/655/a/1}{655.a1} & 2 & non-split multiplicative &$\widedelta_{71 \cdot 191} \neq 0$ &  $(\mathbb{Q}_5/\mathbb{Z}_5)^{\oplus 2}$ \\ 
 5 & \href{https://www.lmfdb.org/EllipticCurve/Q/664/a/1}{664.a1} & 2 & good ordinary &$\widedelta_{11 \cdot 191} \neq 0$ &  $(\mathbb{Q}_5/\mathbb{Z}_5)^{\oplus 2}$ \\ 

 5 & \href{https://www.lmfdb.org/EllipticCurve/Q/681/a/1}{681.a1} & 2 & good ordinary &$\widedelta_{61 \cdot 241} \neq 0$ &  $(\mathbb{Q}_5/\mathbb{Z}_5)^{\oplus 2}$ \\ 
 \hline
 7 & \href{https://www.lmfdb.org/EllipticCurve/Q/707/a/1}{707.a1} & 2 & non-split multiplicative &$\widedelta_{281 \cdot 449} \neq 0$ &  $(\mathbb{Q}_7/\mathbb{Z}_7)^{\oplus 2}$ \\ 
 101 & \href{https://www.lmfdb.org/EllipticCurve/Q/707/a/1}{707.a1} & 2 & split multiplicative &$\widedelta_{58379 \cdot 80983} \neq 0$ &  $(\mathbb{Q}_{101}/\mathbb{Z}_{101})^{\oplus 2}$ \\

 \hline
 3 & \href{https://www.lmfdb.org/EllipticCurve/Q/11197/a/1}{11197.a1} & 3 & good supersingular &$\widedelta_{13 \cdot 31 \cdot 73} \neq 0$ &  $(\mathbb{Q}_3/\mathbb{Z}_3)^{\oplus 3}$ \\ 
 3 & \href{https://www.lmfdb.org/EllipticCurve/Q/11642/a/1}{11642.a1} & 3 & good supersingular & $\widedelta_{7 \cdot 97 \cdot 313} \neq 0$ &  $(\mathbb{Q}_3/\mathbb{Z}_3)^{\oplus 3}$ \\ 
 3 & \href{https://www.lmfdb.org/EllipticCurve/Q/12279/a/1}{12279.a1} & 3 & non-split multiplicative & $\widedelta_{7 \cdot 19 \cdot 43} \neq 0$ &  $(\mathbb{Q}_3/\mathbb{Z}_3)^{\oplus 3}$ \\ 
 3 & \href{https://www.lmfdb.org/EllipticCurve/Q/13766/a/1}{13766.a1} & 3 & good ordinary & $\widedelta_{7 \cdot 37 \cdot 79} \neq 0$ &  $(\mathbb{Q}_3/\mathbb{Z}_3)^{\oplus 3}$ \\ 
 3 & \href{https://www.lmfdb.org/EllipticCurve/Q/16811/a/1}{16811.a1} & 3 & good supersingular & $\widedelta_{7 \cdot 19 \cdot 37} \neq 0$ &  $(\mathbb{Q}_3/\mathbb{Z}_3)^{\oplus 3}$ \\ 
 3 & \href{https://www.lmfdb.org/EllipticCurve/Q/18097/a/1}{18097.a1} & 3 & good ordinary & $\widedelta_{7 \cdot 19 \cdot 31} \neq 0$ &  $(\mathbb{Q}_3/\mathbb{Z}_3)^{\oplus 3}$ \\ 

 \hline
 5 & \href{https://www.lmfdb.org/EllipticCurve/Q/18562/a/1}{18562.a1} & 3 & good ordinary & $\widedelta_{41 \cdot 271 \cdot 281} \neq 0$ &  $(\mathbb{Q}_5/\mathbb{Z}_5)^{\oplus 3}$ \\ 
 5 & \href{https://www.lmfdb.org/EllipticCurve/Q/18745/a/1}{18745.a1} & 3 & non-split multiplicative & $\widedelta_{101  \cdot 181  \cdot 541} \neq 0$ &  $(\mathbb{Q}_5/\mathbb{Z}_5)^{\oplus 3}$ \\ 
 5 & \href{https://www.lmfdb.org/EllipticCurve/Q/20888/a/1}{20888.a1} & 3 & good ordinary & $\widedelta_{ 191 \cdot 241 \cdot 331} \neq 0$ &  $(\mathbb{Q}_5/\mathbb{Z}_5)^{\oplus 3}$ \\ 

 \hline

 3 & \href{https://www.lmfdb.org/EllipticCurve/Q/234446/a/1}{234446.a1} & 4 & good supersingular & $\widedelta_{31 \cdot 43 \cdot 61 \cdot 103} \neq 0$  & $(\mathbb{Q}_3/\mathbb{Z}_3)^{\oplus 4}$  \\ 
 3 & \href{https://www.lmfdb.org/EllipticCurve/Q/501029/a/1}{501029.a1} & 4 & good ordinary & $\widedelta_{31 \cdot 43 \cdot 61 \cdot 109} \neq 0$  & $(\mathbb{Q}_3/\mathbb{Z}_3)^{\oplus 4}$  \\ 
 3 & \href{https://www.lmfdb.org/EllipticCurve/Q/545723/a/1}{545723.a1} & 4 & good supersingular & $\widedelta_{19 \cdot 61 \cdot 109 \cdot 139} \neq 0$  & $(\mathbb{Q}_3/\mathbb{Z}_3)^{\oplus 4}$  \\

 \hline
\end{tabular}
}
\end{center}

\subsection{Modular forms}
\subsubsection{Some remarks on the computing algorithm}
The computation of $\widetilde{\delta}^{\mathrm{min},\dagger}_n$ boils down to being able to compute $\lambda^{\pm,\mathrm{min}}_{f}(z^{(k-2)/2},a,n)$ for an eigenform $f$ of weight $k$ and for varying $a$ and $n$.  Standard modular symbols packages in Sage \cite{sage2023} or Magma \cite{magma} allow one to compute $\lambda^{\pm}_f(z^{(k-2)/2},a,n) / \Omega^\pm_f$ where $\Omega^\pm_f \in \mathbb{C}$ are periods such that these normalized period integrals lie in $K_f$, the field generated by the Hecke-eigenvalues of $f$.  That is, these computer packages normalize the period integrals to be algebraic, but do not give finer control of their integrality.  

To fix this normalization problem, we construct the corresponding modular symbols $\varphi_f^\pm \in \mathrm{Hom}_{\Gamma_0(N)}(\mathrm{Div}^0(\mathbb{P}^1(\mathbb{Q})),\mathrm{Hom}({\mathcal P}_{k-2},K_f))$ with respect to the periods $\Omega^\pm_f$ where ${\mathcal P}_{k-2}$ is the space of polynomials of degree at most $k-2$ with coefficients in $\mathbb{Z}$ using the methods of \cite{pollack-stevens}.

Let $\mathfrak{p}$ be the prime lying over $p$ in $K_f$ corresponding to the fixed isomorphism $\mathbb{C} \simeq \overline{\mathbb{Q}}_p$ of $\S$\ref{subsubsec:fixed-isom}.  Then $(K_f)_{\mathfrak p}$ is a finite extension of $\mathbb{Q}_p$ and plays the role of the $p$-adic field $F$ in the main text of this paper.  

If $\pi \in K_f$ is a uniformizer at $\mathfrak{p}$, there is a unique $m \in \mathbb{Z}$ so that  the valuation of $\pi^m \cdot \varphi^\pm_f(D)(Q)$ at $\mathfrak p$ is non-negative for all $D \in \mathrm{Div}^0(\mathbb{P}^1(\mathbb{Q}))$ and all $Q \in \mathcal P_{k-2}$ and has valuation 0 for at least one $D$ and $Q$.  We note that $\varphi_f^\pm$ is determined by its values on finitely many divisors and so determining $m$ involves checking only finitely many divisors $D$.  

We then have that $\pi^{-m} \Omega^\pm_f$ are minimal integral periods in the sense of Definition \ref{defn:minimal-periods}.  With this renormalization in hand, we can compute $\lambda^{\pm,\min}_f(z^{(k-2)/2},a,n) / \Omega^\pm_f$ and thus $\widetilde{\delta}^{\mathrm{min},\dagger}_n$.

Code which carries out these computations is available at:
\begin{center}
\href{https://github.com/rpollack9974/OMS/tree/master/Kurihara_numbers}{https://github.com/rpollack9974/OMS/tree/master/Kurihara\_numbers}
\end{center}
\subsubsection{Modular forms of weight 4}
For a newform $f_0$ and a quadratic Dirichlet character $\chi$,  
write $$f = f_0 \otimes \chi.$$
\begin{center}
{ \scriptsize
\begin{tabular}{ |c|c|c|c|c|c|c|c| } 
 \hline
$p$ &  $f_0$ & $\mathrm{cond}(\chi)$ & reduction at $p$ & central $L$-values &  $\widedelta^{\dagger}_n \pmod{\pi}$ & $\mathrm{Sel}( \mathbb{Q}, W^\dagger_f)$ \\ 
 \hline
3 & \href{https://www.lmfdb.org/ModularForm/GL2/Q/holomorphic/5/4/a/a/}{5.4.a.a} & 61 
   & ordinary & $\mathrm{ord}_3 \left( \frac{L(f,2)}{\Omega_f} \right)  =2$ &   $\widedelta^{\dagger}_{43 \cdot 97} \neq 0$ & $(\mathbb{Z}/3\mathbb{Z})^{\oplus 2}$  \\ 
3 & \href{https://www.lmfdb.org/ModularForm/GL2/Q/holomorphic/5/4/a/a/}{5.4.a.a} & 89 
   & ordinary & $\mathrm{ord}_{s=2}L(f,s) \geq 2$ &   $\widedelta^{\dagger}_{13 \cdot 19} \neq 0$ & $(\mathbb{Q}_3/\mathbb{Z}_3)^{\oplus 2}$  \\ 
3 & \href{https://www.lmfdb.org/ModularForm/GL2/Q/holomorphic/5/4/a/a/}{5.4.a.a} & 457
   & ordinary & odd parity &   $\widedelta^{\dagger}_{13 \cdot 43 \cdot 61} \neq 0$ & $\approx (\mathbb{Q}_3/\mathbb{Z}_3)^{\oplus 3}$  \\ 
 \hline
3 & \href{https://www.lmfdb.org/ModularForm/GL2/Q/holomorphic/7/4/a/a/}{7.4.a.a} & $37$ 
  & ordinary & $\mathrm{ord}_{s=2}L(f,s) \geq 2$   & $\widedelta^{\dagger}_{13 \cdot 19} \neq 0$ &  $(\mathbb{Q}_3/\mathbb{Z}_3)^{\oplus 2}$  \\
3 & \href{https://www.lmfdb.org/ModularForm/GL2/Q/holomorphic/7/4/a/a/}{7.4.a.a} & $88$ 
  & ordinary & $\mathrm{ord}_3 \left( \frac{L(f,2)}{\Omega_f} \right)  =2$  & $\widedelta^{\dagger}_{19 \cdot 79} \neq 0$ &  $(\mathbb{Z}/3\mathbb{Z})^{\oplus 2}$  \\
3 & \href{https://www.lmfdb.org/ModularForm/GL2/Q/holomorphic/7/4/a/a/}{7.4.a.a} & $92$ 
  & ordinary & $\mathrm{ord}_{s=2}L(f,s) \geq 2$ & $\widedelta^{\dagger}_{43 \cdot 61} \neq 0$ &  $(\mathbb{Q}_3/\mathbb{Z}_3)^{\oplus 2}$  \\
 \hline
3 & \href{https://www.lmfdb.org/ModularForm/GL2/Q/holomorphic/11/4/a/a/}{11.4.a.a} & $53$ 
  & ordinary & $\mathrm{ord}_{\pi} \left( \frac{L(f,2)}{\Omega_f} \right)  =2$ & $\widedelta^{\dagger}_{31 \cdot 73} \neq 0$ &  $(\mathcal{O}/\pi\mathcal{O})^{\oplus 2}$  \\
3 & \href{https://www.lmfdb.org/ModularForm/GL2/Q/holomorphic/11/4/a/a/}{11.4.a.a}  & $97$ 
  & ordinary & $\mathrm{ord}_{\pi} \left( \frac{L(f,2)}{\Omega_f} \right)  =2$ & $\widedelta^{\dagger}_{7 \cdot 13} \neq 0$ &  $(\mathcal{O}/\pi\mathcal{O})^{\oplus 2}$ \\
 \hline
3 & \href{https://www.lmfdb.org/ModularForm/GL2/Q/holomorphic/13/4/a/a/}{13.4.a.a} & $37$ 
  & ordinary & $\mathrm{ord}_3 \left( \frac{L(f,2)}{\Omega_f} \right)  =2$ & $\widedelta^{\dagger}_{7 \cdot 73} \neq 0$ &  $(\mathbb{Z}/3\mathbb{Z})^{\oplus 2}$ \\
3 & \href{https://www.lmfdb.org/ModularForm/GL2/Q/holomorphic/13/4/a/a/}{13.4.a.a} &  $44$ 
  & ordinary & $\mathrm{ord}_{s=2}L(f,s) \geq 2$  & $\widedelta^{\dagger}_{7 \cdot 61} \neq 0$ & $(\mathbb{Q}_3/\mathbb{Z}_3)^{\oplus 2}$  \\
 \hline
5 & \href{https://www.lmfdb.org/ModularForm/GL2/Q/holomorphic/17/4/a/a/}{17.4.a.a} & $44$ 
  & ordinary & $\mathrm{ord}_5 \left( \frac{L(f,2)}{\Omega_f} \right)  =2$  & $\widedelta^{\dagger}_{31 \cdot 41} \neq 0$ &  $(\mathbb{Z}/5\mathbb{Z})^{\oplus 2}$  \\
5 & \href{https://www.lmfdb.org/ModularForm/GL2/Q/holomorphic/17/4/a/a/}{17.4.a.a} & $61$ 
  & ordinary & $\mathrm{ord}_5 \left( \frac{L(f,2)}{\Omega_f} \right)  =2$  & $\widedelta^{\dagger}_{31 \cdot 181} \neq 0$ &  $(\mathbb{Z}/5\mathbb{Z})^{\oplus 2}$  \\
 \hline
3 & \href{https://www.lmfdb.org/ModularForm/GL2/Q/holomorphic/17/4/a/b/}{17.4.a.b} &  $13$ 
  & non-ordinary & $\mathrm{ord}_\pi \left( \frac{L(f,2)}{\Omega_f} \right)  =2$  & $\widedelta^{\dagger}_{7 \cdot 31} \neq 0$ & $(\mathcal{O}/\pi\mathcal{O})^{\oplus 2}$  \\
 \hline
5 & \href{https://www.lmfdb.org/ModularForm/GL2/Q/holomorphic/19/4/a/a/}{19.4.a.a}  & $33$ 
  & ordinary & $\mathrm{ord}_{s=2}L(f,s) \geq 2$ & $\widedelta^{\dagger}_{181 \cdot 241} \neq 0$ & $(\mathbb{Q}_5/\mathbb{Z}_5)^{\oplus 2}$  \\
 \hline
3 & \href{https://www.lmfdb.org/ModularForm/GL2/Q/holomorphic/19/4/a/b/}{19.4.a.b} & $17$ 
  & ordinary & $\mathrm{ord}_\pi \left( \frac{L(f,2)}{\Omega_f} \right)  =6$ & $\widedelta^{\dagger}_{7 \cdot 13} \neq 0$ & $(\mathcal{O}/\pi^3\mathcal{O})^{\oplus 2}$  \\
3 & \href{https://www.lmfdb.org/ModularForm/GL2/Q/holomorphic/19/4/a/b/}{19.4.a.b} & $61$ 
  & ordinary & $\mathrm{ord}_\pi \left( \frac{L(f,2)}{\Omega_f} \right)  =4$ & $\widedelta^{\dagger}_{7 \cdot 13} \neq 0$ & $(\mathcal{O}/\pi^2\mathcal{O})^{\oplus 2}$  \\
3 & \href{https://www.lmfdb.org/ModularForm/GL2/Q/holomorphic/19/4/a/b/}{19.4.a.b} & $92$ 
  & ordinary & $\mathrm{ord}_\pi \left( \frac{L(f,2)}{\Omega_f} \right)  =4$ & $\widedelta^{\dagger}_{13 \cdot 109} \neq 0$ & $(\mathcal{O}/\pi^2\mathcal{O})^{\oplus 2}$  \\
 \hline
3 & \href{https://www.lmfdb.org/ModularForm/GL2/Q/holomorphic/23/4/a/a/}{23.4.a.a} & $37$ 
  & ordinary & $\mathrm{ord}_3 \left( \frac{L(f,2)}{\Omega_f} \right)  =2$ & $\widedelta^{\dagger}_{31 \cdot 151} \neq 0$ & $(\mathbb{Z}/3\mathbb{Z})^{\oplus 2}$  \\
3 & \href{https://www.lmfdb.org/ModularForm/GL2/Q/holomorphic/23/4/a/a/}{23.4.a.a} &  $56$ 
  & ordinary & $\mathrm{ord}_3 \left( \frac{L(f,2)}{\Omega_f} \right)  =2$ & $\widedelta^{\dagger}_{67 \cdot 73} \neq 0$ & $(\mathbb{Z}/3\mathbb{Z})^{\oplus 2}$  \\
3 & \href{https://www.lmfdb.org/ModularForm/GL2/Q/holomorphic/23/4/a/a/}{23.4.a.a} &  $61$ 
  & ordinary & $\mathrm{ord}_{s=2}L(f,s) \geq 2$ & $\widedelta^{\dagger}_{7 \cdot 79} \neq 0$ & $(\mathbb{Q}_3/\mathbb{Z}_3)^{\oplus 2}$  \\
 \hline
5 & \href{https://www.lmfdb.org/ModularForm/GL2/Q/holomorphic/23/4/a/a/}{23.4.a.a}  & $61$ 
  & ordinary & $\mathrm{ord}_{s=2}L(f,s) \geq 2$ & $\widedelta^{\dagger}_{31 \cdot 311} \neq 0$ & $(\mathbb{Q}_5/\mathbb{Z}_5)^{\oplus 2}$  \\
 \hline
3 & \href{https://www.lmfdb.org/ModularForm/GL2/Q/holomorphic/24/4/a/a/}{24.4.a.a}  & $53$ 
  & Steinberg & $\mathrm{ord}_3 \left( \frac{L(f,2)}{\Omega_f} \right)  =2$ & $\widedelta^{\dagger}_{61 \cdot 73} \neq 0$ & $(\mathbb{Z}/3\mathbb{Z})^{\oplus 2}$  \\
 \hline
3 &   \href{https://www.lmfdb.org/ModularForm/GL2/Q/holomorphic/127/4/a/a/}{127.4.a.a} & \textrm{no twist} &  ordinary & $\mathrm{ord}_{s=2}L(f,s) \geq 2$ &  $\widedelta^\dagger_{7 \cdot 13} \neq 0$& $(\mathbb{Q}_3/\mathbb{Z}_3)^{\oplus 2}$\\ 
3 & \href{https://www.lmfdb.org/ModularForm/GL2/Q/holomorphic/159/4/a/a/}{159.4.a.a} & \textrm{no twist} &  non-ordinary & $\mathrm{ord}_{s=2}L(f,s) \geq 2$ & $\widedelta^\dagger_{7 \cdot 37} \neq 0$ & $(\mathbb{Q}_3/\mathbb{Z}_3)^{\oplus 2}$\\ 
 \hline
\end{tabular}
}
\end{center}
\begin{itemize}
\item In \href{https://www.lmfdb.org/ModularForm/GL2/Q/holomorphic/5/4/a/a/}{5.4.a.a} twisted by quadratic character of conductor 457,  we know that
\begin{itemize}
\item the vanishing order of $L(f,s)$ at $s=2$ is odd, and 
\item the corresponding $p$-adic $L$-function is divisible by $X^3$ in $\Lambda$ \emph{modulo a high power of $p$}.
\end{itemize}
Our computation $\widedelta^{\dagger}_{13 \cdot 43 \cdot 61} \neq 0$ confirms the Iwasawa main conjecture. 
Thus, the Selmer corank is \emph{probably} 3, but we cannot prove the the order of vanishing of the $p$-adic $L$-function at $X$ is 3.
\item In \href{https://www.lmfdb.org/ModularForm/GL2/Q/holomorphic/11/4/a/a/}{11.4.a.a}, the defining polynomial of $F$ is $x^2-2x-2$ over $\mathbb{Q}_3$ and 3 is ramified in $F/\mathbb{Q}_3$. Denote by $\pi$ the prime of $F$ lying over $3$, so $(\pi)^2 = (3)$ in $\mathcal{O}$.
\item In \href{https://www.lmfdb.org/ModularForm/GL2/Q/holomorphic/17/4/a/b/}{17.4.a.b}, the defining polynomial of $F$ is $x^3 -x^2 - 24x + 32$  over $\mathbb{Q}_3$ and 3 splits into two primes. We choose $\pi$ which has $e =f=1$. The other prime has $e=1$ and $f=2$.
\item In \href{https://www.lmfdb.org/ModularForm/GL2/Q/holomorphic/19/4/a/b/}{19.4.a.b}, the defining polynomial of $F$ is $x^3 - 3x^2 - 18x + 38$ over $\mathbb{Q}_3$ and 3 splits into two primes. We choose $\pi$ which has $e=f=1$. The other prime has $e=2$ and $f=1$.
\item The last two examples come from \cite[\S7]{dummigan-stein-watkins}.
\end{itemize}

\bibliographystyle{amsalpha}
\bibliography{library}

\end{document}